\renewcommand{\Top}{\categ{Spc}}
\title[Parametrized higher category theory]{Parametrized higher category theory}
\author{Jay Shah}
\address{Fachbereich Mathematik und Informatik, WWU Münster, 48149 M\"{u}nster, Germany}
\email{jayhshah@gmail.com}
\begin{document}

\begin{abstract} We develop foundations for the theory of $\infty$-categories parametrized by a base $\infty$-category. Our main contribution is a theory of indexed homotopy limits and colimits, which specializes to a theory of $G$-colimits for $G$ a finite group when the base is chosen to be the orbit category of $G$. We apply this theory to show that the $G$-$\infty$-category of $G$-spaces is freely generated under $G$-colimits by the contractible $G$-space, thereby affirming a conjecture of Mike Hill.
\end{abstract}

\maketitle

\tableofcontents


\section{Introduction}

\noindent \textbf{Motivation from equivariant homotopy theory}: This paper lays foundations for a theory of $\infty$-categories parametrized by a base $\infty$-category $S$. Our interest in this project originates in attempting to locate the core homotopy theories of interest in equivariant homotopy theory - those of $G$-spaces and $G$-spectra - within the appropriate $\infty$-categorical framework. To explain, let $G$ be a finite group and let us review the definitions of the $\infty$-categories of $G$-spaces and $G$-spectra, with a view towards endowing them with universal properties.
\\

Consider a category $\categ{Top}_G$ of (nice) topological spaces equipped with $G$-action, with morphisms given by the $G$-equivariant continuous maps. There are various homotopy theories that derive from this category, depending on the class of weak equivalences that one chooses to invert. At one end, we can invert the class $\sW_1$ of $G$-equivariant maps which induce a weak homotopy equivalence of underlying topological spaces, forgetting the $G$-action. If we let $\Top$ denote the $\infty$-category of spaces (i.e., $\infty$-groupoids), then inverting $\sW_1$ obtains the \emph{$\infty$-category of spaces with $G$-action}
\[ \categ{Top}_G[\sW_1^{-1}] \simeq \Fun(BG, \Top). \]
For many purposes, $\Fun(BG, \Top)$ is the homotopy theory that one wishes to contemplate, but here we instead highlight its main deficiency. Namely, passing to this homotopy theory blurs the distinction between homotopy fixed points and actual fixed points, in that the functor $\categ{Top}_G \to \Fun(BG, \Top)$ forgets the homotopy types of the various spaces $X^H$ for $H$ a nontrivial subgroup of $G$. Because many arguments in equivariant homotopy theory involve comparing $X^H$ with the homotopy fixed points $X^{h H}$, we want to retain this data. To this end, we can instead let $\sW$ be the class of $G$-equivariant maps which induce an equivalence on $H$-fixed points for every subgroup $H$ of $G$. Let $\Top_G \coloneq \categ{Top}_G[\sW^{-1}]$; this is the \emph{$\infty$-category of $G$-spaces}.

\nomenclature[Spaces]{$\Top$}{$\infty$-category of spaces}

Like with $\categ{Top}_G[\sW_1^{-1}]$, we would like a description of $\Top_G$ which eliminates any reference to topological spaces with $G$-action, for the purpose of comprehending its universal property. Elmendorf's theorem grants such a description: we have 
\[ \Top_G \simeq \Fun(\OO_G^\op, \Top), \]
where $\OO_G$ is the category of orbits of the group $G$. Thus, as an $\infty$-category, $\Top_G$ is the \emph{free cocompletion} of $\OO_G$.

It is a more subtle matter to define the homotopy theory of $G$-spectra. There are at least three possibilities:

\begin{enumerate}[leftmargin=6ex]
	\item The $\infty$-category of \emph{Borel $G$-spectra}, i.e. spectra with $G$-action:  This is $$\Sp^{h G} \coloneq \Fun(B G, \Sp),$$ which is the stabilization of $\Fun(B G, \Top)$.
	\item The $\infty$-category of \emph{`naive' $G$-spectra}, i.e. spectral presheaves on $\OO_G$: This is $$\Sp_G \coloneq \Fun(\OO_G^\op, \Sp),$$ which is the stabilization of $\Top_G$.\footnote{The usage of a subscript $G$ to indicate presheaves on $\OO_G$ (whether valued in spaces or spectra) is consistent with our later notation for the $S$-category of $S$-objects in an arbitrary $\infty$-category -- see Construction~\ref{constr:Sobjects}.}
	\item The $\infty$-category of \emph{`genuine' $G$-spectra}, i.e. spectral Mackey functors on the category $\FF_G$ of finite $G$-sets: Let $A^{\eff}(\FF_G)$ be the effective Burnside $(2,1)$-category of $G$, given by taking as objects finite $G$-sets, as morphisms spans of finite $G$-sets, and as $2$-morphisms isomorphisms between spans. Then, the $\infty$-category of genuine $G$-spectra is defined to be
	$$\Sp^G \coloneq \Fun^{\oplus}(A^{\eff}(\FF_G),\Sp),$$
	the $\infty$-category of direct-sum preserving functors from $A^{\eff}(\FF_G)$ to $\Sp$.\footnote{This is not the definition which first appeared in the literature for $G$-spectra, but it is equivalent to e.g. the homotopy theory of orthogonal $G$-spectra by the pioneering work of Guillou-May \cite{guillou2}. For an $\infty$-categorical treatment, see \cite{M1}.}
\end{enumerate}

The third possibility incorporates essential examples of cohomology theories for $G$-spaces, such as equivariant $K$-theory, because $G$-spectra in this sense possess transfers along maps of finite $G$-sets, encoded by the covariant maps in $A^{\eff}(\FF_G)$. It is thus what homotopy theorists customarily mean by $G$-spectra. However, from a categorical perspective it is a more mysterious object than the $\infty$-category of naive $G$-spectra, since it is \emph{not} the stabilization of $G$-spaces. We are led to ask:
\\

\textbf{Question}: What is the universal property of $\Sp^G$? More precisely, we have an adjunction
\[ \adjunct{\Sigma^\infty_+}{\Top_G}{\Sp^G}{\Omega^\infty} \]
with the right adjoint given by taking $\Omega^{\infty}: \Sp \to \Spc$ objectwise and restricting along the evident map $\OO_G^\op \to A^{\eff}(\FF_G)$, and we would like a universal property for $\Sigma^\infty_+$ or $\Omega^\infty$.

Put another way, what is the categorical procedure which manufactures $\Sp^G$ from $\Top_G$?
\\

The key idea is that for this procedure of `$G$-stabilization' one needs to enforce `$G$-additivity' over and above the usual additivity satisfied by a stable $\infty$-category: that is, one wants the coincidence of coproducts and products indexed not just by finite sets but by finite sets with $G$-action. Reflecting upon the possible homotopical meaning of such a $G$-(co)product, we see that for a transitive $G$-set $G/H$, $\coprod_{G/H}$ and $\prod_{G/H}$ should be interpreted to mean the left and right adjoints to the restriction functor $\Sp^G \to \Sp^H$, i.e. the induction and coinduction functors, and $G$-additivity then becomes the Wirthm\"{u}ller isomorphism. In particular, we see that $G$-additivity is not a property that $\Sp^G$ can be said to enjoy in isolation, but rather one satisfied by the \emph{presheaf} $\underline{\Sp}^G$ of $\infty$-categories indexed by $\OO_G$; here, for every $G$-orbit $U$, a choice of basepoint specifying an isomorphism $U \cong G/H$ yields an equivalence $\underline{\Sp}^G(U) \simeq \Sp^{H}$, and the functoriality in maps of orbits is that of conjugation and restriction (in particular, recording the residual actions of the Weyl groups on $\Sp^H$). Correspondingly, we must rephrase our question so as to inquire after the universal property of the \emph{morphism of $\OO_G$-presheaves}
$$\underline{\Sigma}^\infty_+: \underline{\Top}_G \to \underline{\Sp}^G,$$
where $\underline{\Sigma}^\infty_+$ is objectwise given by genuine $H$-suspension ranging over all subgroups $H \leq G$.

We now pause to observe that for the purpose of this analysis the group $G$ is of secondary importance as compared to its associated category of orbits $\OO_G$. Indeed, we focused on $G$-additivity as the distinguishing feature of genuine vs. naive $G$-spectra, as opposed to the invertibility of representation spheres, in order to evade representation theoretic aspects of equivariant stable homotopy theory. In order to frame our situation in its proper generality, let us now dispense with the group $G$ and replace $\OO_G$ by an arbitrary $\infty$-category $T$. Call a presheaf of $\infty$-categories on $T$ a \emph{$T$-category}. The \emph{$T$-category of $T$-spaces} $\underline{\Top}_T$ is given by the functor $T^\op \to \Cat_\infty$, $t \mapsto \Fun((T^{/t})^\op, \Top)$. Note that this specializes to $\underline{\Top}_G$ when $T = \OO_G$ because $\OO_H \simeq (\OO_G)^{/(G/H)}$; slice categories stand in for subgroups in our theory. With the theory of $T$-colimits advanced in this paper, we can then supply a universal property for $\underline{\Top}_T$ \emph{as a $T$-category}. Write $\underline{\Fun}_T$ for the internal hom in the $\infty$-category of $T$-categories, which is cartesian closed. 

\begin{thm} \label{thm:UniversalPropertyOfTSpaces} Suppose $T$ is any $\infty$-category. Then $\underline{\Top}_T$ is $T$-cocomplete, and for any $T$-category $E$ which is $T$-cocomplete, the $T$-functor of evaluation at the $T$-final object\footnote{We define $\ast_T$ to be the constant $T$-presheaf valued at $\ast$, which is the final object in the $\infty$-category of $T$-categories.}
\[ \underline{\Fun}^L_T(\underline{\Top}_T,E) \to \underline{\Fun}_T(\ast_T, E) \simeq E \]
induces an equivalence from the $T$-category of $T$-functors $\underline{\Top}_T \to E$ which strongly preserve $T$-colimits to $E$. In other words, $\underline{\Top}_T$ is freely generated under $T$-colimits by the final $T$-category.
\end{thm}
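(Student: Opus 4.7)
I would prove this as a parametrized version of the classical universal property $\Top \simeq \mathcal{P}(\ast)$ exhibiting $\Top$ as the free cocompletion of a point. The principal observation is that although each fiber $\underline{\Top}_T(t) = \mathcal{P}(T_{/t})$ is the \emph{unparametrized} free cocompletion of $T_{/t}$ (not of a point), the extra ``induction'' $T$-colimits indexed along morphisms of $T$ are precisely what upgrade this to the free $T$-cocompletion of $\underline{\ast}_T$.

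First I would verify $T$-cocompleteness of $\underline{\Top}_T$. Each fiber is cocomplete, and for every morphism $\alpha : s \to t$ in $T$ the restriction functor $\alpha^* \colon \mathcal{P}(T_{/t}) \to \mathcal{P}(T_{/s})$ admits a left adjoint $\alpha_!$ given by left Kan extension along post-composition with $\alpha$. By the criterion for $T$-cocompleteness developed earlier in the paper, it remains to verify that the pairs $(\alpha^*, \alpha_!)$ satisfy the Beck--Chevalley condition with respect to composition in $T$. This reduces via the pointwise Kan-extension formula to the identification $(T_{/t})_{/\alpha_!(x \to s)} \simeq (T_{/s})_{/(x \to s)}$, which is standard.

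Next I would prove the universal property by a parametrized Yoneda/density argument. The $T$-final object of $\underline{\Top}_T$ is the section $t \mapsto h_{\id_t}$: the representable at the identity, which is also the fiberwise final object. The key density statement is that every $P \in \mathcal{P}(T_{/t})$ is canonically a $T$-colimit of the $T$-final object. Indeed, write $P$ as an ordinary colimit of representables $h_{\alpha_i \colon s_i \to t}$, and observe that $h_{\alpha_i} = \alpha_{i,!}(h_{\id_{s_i}})$ is obtained by cocartesian transport of the $T$-final object along $\alpha_i$. The candidate inverse to evaluation thus sends $e \in E$ to the $T$-functor whose value on $P$ is the corresponding $T$-colimit of the constant $T$-diagram at $e$, indexed by the slice $T$-category over $P$. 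Fiberwise this reproduces the classical universal property of $\mathcal{P}(T_{/t})$ as the free cocompletion of $T_{/t}$; the parametric content is the cocartesian identification of representables just described.

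The main obstacle will be making the density/parametrized-Yoneda statement precise, since one needs to cohere ``every object is a $T$-colimit of the $T$-final object'' naturally in $T$, i.e.\ simultaneously across fibers. Once that is settled, the two composites are straightforward: restriction recovers $e$ because the candidate functor sends $h_{\id}$ to $e$ by construction, and conversely any $T$-cocontinuous $F$ is recovered from $F(h_{\id})$ because every object of $\underline{\Top}_T$ is canonically such a $T$-colimit, while the required equivalence of $\infty$-categories (not merely a bijection on objects) follows by running the same argument on mapping spaces, which are themselves limits over the relevant slice $T$-categories. Assembling the fiberwise equivalences coherently amounts to a final Beck--Chevalley verification of the same character as the one in Step~1.
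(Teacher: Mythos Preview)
Your strategy is morally the paper's, but there are two genuine gaps to flag.

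\textbf{Cocompleteness.} The criterion you invoke---fiberwise cocompleteness plus left adjoints to restriction plus Beck--Chevalley implies $T$-cocompleteness---is not established in the paper, and it is not clear it holds for arbitrary $T$. The paper's route is more direct and bypasses Beck--Chevalley entirely: Prp.~\ref{prp:identifyingColimitsInCatOfObjects} identifies $S$-colimit diagrams in $\underline{E}_S$ with ordinary left Kan extensions in $E$, so cocompleteness of $\Top$ immediately yields $S$-cocompleteness of $\underline{\Top}_S$. The Beck--Chevalley verification you sketch is correct but establishes only admission of finite $S$-coproducts (Prp.~5.11), which together with fiberwise cocompleteness suffices only under an orbitality hypothesis (Cor.~\ref{cor:DecomposingColimits}).

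\textbf{Universal property.} Your density argument is the $S$-Yoneda lemma (Lm.~\ref{lm:Yoneda}) specialized to $C = S$, and the paper indeed proves Thm.~\ref{thm:UniversalPropertyOfTSpaces} as the case $C = S$ of the general presheaf universal property Thm.~\ref{thm:UniversalPropertyOfPresheaves}. One terminological slip: $h_{\alpha_i} = (\alpha_i)_!(h_{\id_{s_i}})$ is not obtained by \emph{cocartesian transport}---cocartesian transport along $\alpha_i$ is restriction $\alpha_i^\ast$, going the other way---but rather by the $T$-coproduct along $\alpha_i$, which is its left adjoint (Prp.~\ref{colimitOverCorepresentableDiagramIsLeftAdjointToRestriction}). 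This is exactly what you want, so the idea survives. Where you are vague (and acknowledge it) is in coherently constructing the candidate inverse; the paper resolves this by building the $S$-Kan extension machinery (Thm.~\ref{thm:leftKanExtensionIsLeftAdjointToRestriction}) so that the inverse is simply $j_!$, and then characterizing its essential image via the mapping-space criterion Cor.~\ref{cor:MappingSpaceDetectsIfFunctorPreservesColimits}. Your proposed endgame---check the two composites and upgrade to mapping spaces---would work, but amounts to reproving these results by hand in the special case.
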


\begin{rem} The notion of $T$-cocompleteness needed for the theorem is slightly more elaborate than one might naively expect. Namely, we say that a $T$-category $C$ is \emph{$T$-cocomplete} if for all $t \in T$, the pullback of $C$ to a $T^{/t}$-category $C_{\underline{t}}$ (Notation~\ref{ntn:parametrizedFibers}) admits all (small) $T^{/t}$-colimits (Definition~\ref{dfn:cocomplete}). Correspondingly, we say that a $T$-functor $F: C \to D$ strongly preserves $T$-colimits if for all $t \in T$, the pulled-back $T^{/t}$-functor $F_{\underline{t}}: C_{\underline{t}} \to D_{\underline{t}}$ preserves all $T^{/t}$-colimits (Definition~\ref{def:strong-preservation}).
\end{rem}

When $T = \OO_G$, this result was originally conjectured by Mike Hill.

To go further and define $T$-spectra, we need a condition on $T$ so that it supports a theory of spectral Mackey functors. We say that $T$ is \emph{orbital} if $T$ admits multipullbacks, by which we mean that its finite coproduct completion $\FF_T$ admits pullbacks. The purpose of the orbitality assumption is to ensure that the effective Burnside category $A^{\eff}(\FF_T)$ is well-defined. Note that the slice categories $T_{/t}$ are orbital if $T$ is. We define the \emph{$T$-category of $T$-spectra} $\underline{\Sp}^T$ to be the functor $T^\op \to \Cat_\infty$ given by $t \mapsto \Fun^\oplus(A^{\eff}(\FF_{T_{/t}}), \Sp)$. We then have the following theorem of Denis Nardin concerning $\underline{\Sp}^T$ from \cite{Nardin}, which resolves our question:

\begin{thm}[{\cite[Theorem~7.4]{Nardin}}] Suppose $T$ is an atomic\footnote{This is an additional technical hypothesis which we do not explain here. It will not concern us in the body of the paper.} orbital $\infty$-category. Then $\underline{\Sp}^T$ is $T$-stable, and for any pointed $T$-category $C$ which has all finite $T$-colimits, the functor of postcomposition by $\Omega^\infty$
\[ (\Omega^\infty)_\ast: \Fun_T^{T-\rex}(C, \underline{\Sp}^T) \to \Lin^T(C, \underline{\Top}_T) \]
induces an equivalence from the $\infty$-category of $T$-functors $C \to \underline{\Sp}^T$ which preserve finite $T$-colimits to the $\infty$-category of $T$-linear functors $C \to \underline{\Top}_T$, i.e. those $T$-functors which are fiberwise linear and send finite $T$-coproducts to $T$-products.
\end{thm}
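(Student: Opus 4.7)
The plan is to proceed in three stages: first verify that $\underline{\Sp}^T$ is $T$-stable, then realize both sides of the desired equivalence via a parametrized stabilization of $\underline{\Top}_{T,\ast}$, and finally identify that stabilization with the Mackey-functor model to extract the universal property.

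For $T$-stability, each fiber $\Fun^\oplus(A^{\eff}(\FF_{T_{/t}}), \Sp)$ is stable because product-preserving functors from a semiadditive $\infty$-category into a stable $\infty$-category form a stable $\infty$-category. The restriction functors along morphisms $t' \to t$ in $T$, obtained by functoriality of the Burnside construction, admit both left and right adjoints given by Kan extension along the corresponding map of Burnside categories, and the atomic orbitality of $T$ is precisely what forces these two adjoints to coincide -- the parametrized Wirthm\"{u}ller isomorphism. Combining fiberwise stability with the resulting agreement of finite $T$-coproducts and finite $T$-products yields $T$-stability. The same analysis shows $(\Omega^\infty)_\ast$ is well-defined: $\Omega^\infty$ is fiberwise right adjoint to $\Sigma^\infty_+$, so it preserves fiberwise finite limits, and $T$-stability of $\underline{\Sp}^T$ converts the finite $T$-coproducts in the source into $T$-products that $\Omega^\infty$ carries to $T$-products in $\underline{\Top}_T$.

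The heart of the proof is to construct an inverse to $(\Omega^\infty)_\ast$. The natural candidate is the parametrized stabilization of $\underline{\Top}_{T,\ast}$, realized as the $T$-limit
\[
\lim\bigl(\cdots \xrightarrow{\Omega_T} \underline{\Top}_{T,\ast} \xrightarrow{\Omega_T} \underline{\Top}_{T,\ast}\bigr),
\]
so that by construction the $\infty$-category of $T$-rex functors from $C$ into this limit is equivalent to $\Lin^T(C, \underline{\Top}_T)$. The key remaining step is to identify this parametrized stabilization with the Mackey-functor definition of $\underline{\Sp}^T$; this is the parametrized analogue of the Guillou--May theorem, and amounts to recognizing spectral Mackey functors as $\Omega$-spectrum objects whose structure maps assemble coherently over the spans in $A^{\eff}(\FF_T)$. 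Once this identification is made, evaluation at the final $T$-object recovers $(\Omega^\infty)_\ast$ and the theorem follows.

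The main obstacle will be this last identification. Already in the classical case $T = \ast$ this is a substantive theorem of Guillou--May, and its parametrization requires tracking a web of Beck--Chevalley isomorphisms between restriction, induction, and coinduction functors attached to every morphism in $T$. Managing this data coherently, rather than merely on homotopy categories, is the delicate point, and is where the atomicity hypothesis on $T$ is genuinely used.
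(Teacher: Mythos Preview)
The paper does not contain a proof of this theorem. It is stated in the introduction as a result of Nardin, with the explicit attribution ``We then have the following theorem of Denis Nardin concerning $\underline{\Sp}^T$ from \cite{Nardin}'' --- the citation is to Nardin's separate paper, and the result is quoted here only as motivation for the foundational $S$-category theory that the present paper develops. There is therefore no proof in this paper to compare your proposal against.

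As for the proposal itself: what you have written is an outline rather than a proof. You correctly identify the architecture (fiberwise stability plus Wirthm\"{u}ller gives $T$-stability; parametrized stabilization gives the universal property; identification of the stabilization with spectral Mackey functors closes the loop), and you are honest that the last identification is the substantive step. But you do not actually carry out any of these steps, and in particular the ``parametrized Guillou--May'' identification you defer to is precisely the content of Nardin's paper. Your proposal is thus a correct summary of the strategy Nardin employs, not an independent argument.
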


We hope that the two aforementioned theorems will serve to impress upon the reader the utility of the purely $\infty$-categorical work that we undertake in this paper.

\begin{wrn} \label{warning:op} In contrast to this introduction thus far and the conventions adopted elsewhere (e.g. in \cite{Nardin}), we will henceforth speak of $S$-categories, $S$-colimits, etc. for $S = T^\op$.
\end{wrn}

\subsection*{What is parametrized \texorpdfstring{$\infty$}{infinity}-category theory?} Roughly speaking, parametrized $\infty$-category theory is an interpretation of the familiar notions of ordinary or `absolute' $\infty$-category theory within the $(\infty,2)$-category of functors $\Fun(S, \Cat_\infty)$, done relative to a fixed `base' $\infty$-category $S$. By `interpretation', we mean something along the lines of the program of Emily Riehl and Dominic Verity \cite{riehl_verity_2022}, which axiomatizes the essential properties of an $(\infty,2)$-category that one needs to do formal category theory into the notion of an \emph{$\infty$-cosmos}, of which $\Fun(S, \Cat_\infty)$ is an example. In an $\infty$-cosmos, one can write down in a formal way notions of limits and colimits, adjunctions, Kan extensions, and so forth. Working out what this means in the example of $\Cat_\infty$-valued functors is the goal of this paper. In the classical $2$-categorical setting, such limits and colimits are referred to as ``indexed'' limits and colimits, so another perspective on this paper is that it extends indexed category theory to the $\infty$-categorical setting.

In contrast to Riehl--Verity, we will work within the model of \emph{quasi-categories} and not hesitate to use special aspects of our model (e.g., combinatorial arguments involving simplicial sets). We are motivated in this respect by the existence of a highly developed theory of \emph{cocartesian fibrations} due to Jacob Lurie, which we review in \S 2. Cocartesian fibrations are our preferred way to model $\Cat_\infty$-valued functors, for two reasons:

\begin{enumerate}[leftmargin=6ex]
	\item The data of a functor $F: S \to \Cat_\infty$ is overdetermined vs. that of a cocartesian fibration over $S$, in the sense that to define $F$ one must prescribe an infinite hierarchy of coherence data, which under the functor-fibration correspondence amounts to prescribing an infinite sequence of compatible horn fillings.\footnote{It is for this reason that one speaks of \emph{straightening} a cocartesian fibration to a functor.} Because of this, specifying a cocartesian fibration (which one ultimately needs to do in order to connect our theory to applications) is typically an easier task than specifying the corresponding functor to $\Cat_{\infty}$.
	\item The Grothendieck construction on a functor $S \to \Cat_\infty$ is made visible in the cocartesian fibration setup, as the total category of the cocartesian fibration. Many of our arguments involve direct manipulation of the Grothendieck construction, in order to relate or reduce notions of parametrized $\infty$-category theory to absolute $\infty$-category theory.
\end{enumerate}

\noindent We have therefore tailored our exposition to the reader familiar with the first five chapters of \cite{HTT}; the only additional major prerequisite is the part of \cite[App.~B]{HA} dealing with variants of the cocartesian model structure of \cite[\S 3]{HTT} and functoriality in the base.

\subsection*{Linear overview} 
Let us now give a section-by-section summary of the contents of this paper.

\begin{enumerate}[leftmargin=6ex]
\item[(\S 2)] We define an \emph{$S$-category} as a cocartesian fibration over $S$, and then collect some necessary preliminaries on cocartesian fibrations and model structures on categories of marked simplicial sets. In particular, we recapitulate Lurie's theorem that establishes conditions under which change-of-base adjunctions are Quillen (Theorem~\ref{thm:FunctorialityOfCocartesianModelStructure}); this theorem will allow us to efficiently verify the fibrancy of many of the simplicial set constructions introduced in this paper.

\item[(\S 3)] We first define and study the internal hom $\underline{\Fun}_S(-,-)$ of $S$-categories (Definition~\ref{dfn:FunctorInternalHom}). We then recall the \emph{$S$-category of $S$-objects} $\underline{E}_{S}$ in an $\infty$-category $E$ from \cite{BDGNS} (Construction~\ref{constr:Sobjects}), which computes the right adjoint to the forgetful functor $[C \to S] \mapsto C$. When $S = \OO_G^{\op}$ and $E = \Spc$, this recovers the $G$-category of $G$-spaces $\underline{\Spc}_G$.

\item[(\S 4)] We first introduce the $S$-join $(- \star_S -)$ (Definition~\ref{dfn:join}), which in terms of presheaves computes the fiberwise join. We then define and study two (canonically equivalent) $S$-slice constructions: for a $S$-functor $p: K \to C$, we have $S$-undercategories $C_{(p,S)/}$ and $C^{(p,S)/}$ and $S$-overcategories $C_{/(p,S)}$ and $C^{/(p,S)}$. The `lower' construction (Definition~\ref{dfn:lowerSlice}) is a direct generalization of Joyal's slice construction (cf. \cite[Proposition~1.2.9.2]{HTT}) and participates in a Quillen adjunction with the $S$-join. The `upper' construction (Definition~\ref{dfn:AltSlice}) proceeds by taking an $S$-fiber of the relevant map of $S$-functor categories. In practice, the upper $S$-slice is far easier to work with as its definition is less bound up with the intricate combinatorics of the $S$-join (which need to be thoroughly understood to even establish the fibrancy of the lower $S$-slice; cf. Proposition~\ref{joinprp}). However, it is easier to establish the universal mapping property of the $S$-slice using its lower incarnation (Proposition~\ref{prp:SliceCompare}).

\item[(\S 5)] We initiate our study of $S$-colimits and $S$-limits by giving the basic definition \ref{dfn:colimit}, and then discuss a few special cases: $S$-(co)limits in an $S$-category of $S$-objects, $S$-colimits indexed by constant $S$-diagrams, and $S$-colimits indexed by $S$-points (i.e., $S$-coproducts). We then explain how to deduce results about $S$-limits from $S$-colimits (or vice-versa) by means of the vertical opposite construction (Corollary~\ref{cor:LimitToColimit}).

\item[(\S 6)] Our main goal in this section is to establish an $S$-analogue of Joyal's cofinality theorem \cite[Theorem~4.1.3.1]{HTT}: an $S$-functor $C \to D$ is \emph{$S$-final} if and only if it is fiberwise final\footnote{We write final and initial for what Lurie calls (left) cofinal and right cofinal, respectively.} (Theorem~\ref{thm:cofinality}). Our strategy is to control the functoriality encoded by the $S$-slice category in terms of a construction, the \emph{twisted slice} (Definition~\ref{dfn:twistedSlice}), fibered over the twisted arrow category $\twa(S)$; the right Kan extension of the latter will then obtain the former (Theorem~\ref{thm:ordinarySliceToParamSlice}). In fact, we first do the same for the internal hom $\underline{\Fun}_S$ itself (Equation~\ref{eqn:FunctorAsEndComparisonMap}). This may be thought of as a refinement of the end formula for an $\infty$-category of natural transformations (cf. Remark~\ref{rem:EndFormula}).

\item[(\S 7)] In this brief section, we introduce the notions of $S$-fibration, $S$-(co)cartesian fibration, and $S$-bifibration (Definition~\ref{dfn:ScocartesianFibration} and Definition~\ref{dfn:Sbifibration}). We also introduce the free $S$-(co)cartesian fibration as an example (Definition~\ref{dfn:freeCocartesian}).

\item[(\S 8)] We recall Lurie's definition of a relative adjunction and specialize it to the notion of an $S$-adjunction (Definition~\ref{dfn:sAdjunction}). We then prove a number of fundamental results about $S$-adjunctions \textemdash most notably, the fact that a left $S$-adjoint preserve $S$-colimits (Corollary~\ref{cor:leftAdjointPreservesColimits}).

\item[(\S 9)] Given an $S$-cocartesian fibration $\phi: C \to D$ and an $S$-functor $F: C \to E$, we construct the left $S$-Kan extension $\phi_! F: D \to E$, which will also call the \emph{$D$-parametrized $S$-colimit} of $F$. With our assumption on $\phi$, we have that for every object $x \in D_s$, $(\phi_! F)(x)$ is computed as the $S^{s/}$-colimit of the restriction of $F$ to the $S^{s/}$-fiber $C_{\underline{x}}$; this is precisely analogous to the situation where the left Kan extension along a cocartesian fibration is computed by taking colimits fiberwise. In order to construct $\phi_! F$, we need to solve the coherence problem of assembling the individual $S^{s/}$-colimits of $F_{\underline{s}}: C_{\underline{x}} \to E_{\underline{s}}$ (ranging over all $x \in D_s$) into a single $S$-functor out of $D$. We introduce the $S$-pairing construction \ref{paramPair}, and subsequently the $D$-parametrized slice (Construction~\ref{cnstr:pairingSlice}), to facilitate this. The problem of constructing $\phi_! F$ then ultimately reduces to choosing a section of a certain trivial Kan fibration defined in terms of the $D$-parametrized slice (Theorem~\ref{thm:ExistenceAndUqnessOfParamColimit}).

\item[(\S 10)] We define left $S$-Kan extensions in general (Definition~\ref{dfn:SLKE}) and prove the basic existence and uniqueness result about them (Theorem~\ref{thm:existenceOfKanExtensions}). In contrast to the brutal simplex-by-simplex approach taken in \cite[\S 4.3.2]{HTT} to the construction of Kan extensions (cf. \cite[Lemma~4.3.2.13]{HTT}), we instead reduce to the solved coherence problem for $D$-parametrized $S$-colimits via factoring the $S$-functor $\phi: C \to D$ to be extended along through the free $S$-cocartesian fibration on it.
We remark that, to our knowledge, the approach of \S 9 and \S 10 gave a novel\footnote{All these results date to 2017.} and more conceptual construction of Kan extensions even in the context of ordinary $\infty$-category theory. Lurie has since independently written up a treatment of (relative) Kan extensions along these lines in Kerodon \cite[\href{https://kerodon.net/tag/02Y1}{Tag 02Y1}]{kerodon}.

\item[(\S 11)] We recall the $S$-category of presheaves $\PP_S(-)$, prove the $S$-Yoneda lemma \ref{lm:Yoneda}, discuss $S$-mapping spaces, and establish the universal property of $\PP_S(-)$ as free $S$-cocompletion (Theorem~\ref{thm:UniversalPropertyOfPresheaves}), thereby proving Theorem~\ref{thm:UniversalPropertyOfTSpaces}.

\item[(\S 12)] We prove two Bousfield-Kan style\footnote{By this, we mean to refer to generalizations of the classical formula for writing a colimit as a coequalizer of coproducts, which were studied by Bousfield and Kan in the context of homotopy colimits with coequalizers replaced by geometric realization.} decomposition results that express an arbitrary $S$-colimit as a geometric realization of either $S$-coproducts or $S$-space-indexed $S$-colimits (Theorem~\ref{thm:relativeBKwithCoproducts} and Theorem~\ref{thm:relativeBKwithSpaces}). The essential content behind such formulas lies in replacing a given diagram $C$ with one fibered over $\Delta^\op \times S$ that possesses an $S$-final map to $C$. As a warmup, we first explain how this goes when $S$ is a point (Corollary~\ref{cor:ordinaryBKformula} and Corollary~\ref{cor:BousfieldKanFormulaHomotopyInvariantVersion}); the resulting formula appears to be new in the case of coproducts, whereas the case of spaces was first obtained by Aaron Mazel-Gee in \cite{MAZELGEE20194602}. We then apply the $S$-Bousfield-Kan formula to show that, supposing $S^\op$ admits multipullbacks, an $S$-category is $S$-cocomplete if and only if it admits all $S$-(co)products and geometric realizations (Corollary~\ref{cor:DecomposingColimits}).
\end{enumerate}

\subsection*{Notation and conventions}
Let $C$ be an $\infty$-category. We write
$$\sO(C) \coloneq \Fun(\Delta^1, C)$$
for the $\infty$-category of arrows in $C$. In this paper, we will frequently encounter fiber products of the form
\[ A \times_{F,C,\ev_0} \sO(C) \times_{\ev_1,C, G} B \]
where $F:A \to C$ and $G: B \to C$ are functors. To avoid notational clutter, we adopt the global convention that, unless otherwise decorated, fiber products with the source functor $\ev_0$ are to be written on the left, and fiber products with the target functor $\ev_1$ are to written on the right. Moreover, we will drop $F$ and $G$ from the notation if they are understood from context. For instance, we would write the preceding expression as $A \times_{C} \sO(C) \times_{C} B$.

\subsection*{Acknowledgements} This paper is a lightly revised version of my thesis, which was originally part of a joint project with my advisor Clark Barwick, Emanuele Dotto, Saul Glasman, and Denis Nardin. I would like to thank them and the other participants of the Bourbon seminar - Lukas Brantner, Peter Haine, Marc Hoyois, Akhil Mathew, and Sune Precht Reeh - for innumerable conversations and mathematical inspiration, without which this work would not have been possible. I would also like to thank the referee for writing an extremely detailed report that has helped to improve the readability of this paper.

\section{Cocartesian fibrations and model categories of marked simplicial sets}

Let $S$ be an $\infty$-category. In this section, we give a rapid review of the theory of cocartesian fibrations and the surrounding apparatus of marked simplicial sets. This primarily serves to fix some of our notation and conventions for the remainder of the paper; for a more detailed exposition of these concepts, we refer the reader to \cite{BarShah}. In particular, the reader should be aware of our special notation (Notation~\ref{ntn:parametrizedFibers}) for the $S$-fibers of a $S$-functor.

\subsection*{Cocartesian fibrations} We begin with the basic definitions:

\begin{dfn} \label{dfn:CocartesianFibration} Let $\pi: X \to S$ be a map of simplicial sets. Then $\pi$ is a \emph{cocartesian fibration} if 
\begin{enumerate} \item $\pi$ is an \emph{inner fibration}: for every $n>1$, $0<k<n$ and commutative square
\[ \begin{tikzcd}[row sep=2em, column sep=2em]
\Lambda^n_k \ar{r} \ar{d} & X \ar{d}{\pi} \\
\Delta^n \ar{r} \ar[dotted]{ru} & S,
\end{tikzcd} \]
the dotted lift exists.
\item For every edge $\alpha: s_0 \rightarrow s_1$ in $S$ and $x_0 \in X$ with $\pi(x_0) = s_0$, there exists an edge $e: x_0 \rightarrow x_1$ in $X$ with $\pi(e) = \alpha$, such that $e$ is \emph{$\pi$-cocartesian}: for every $n>1$ and commutative square
\[ \begin{tikzcd}[row sep=2em, column sep=2em]
\Lambda^n_0 \ar{r}{f} \ar{d} & X \ar{d}{\pi} \\
\Delta^n \ar{r} \ar[dotted]{ru} & S
\end{tikzcd} \]
with $f|_{\Delta^{\{ 0,1 \}} } = e$, the dotted lift exists.
\end{enumerate}

Dually, $\pi$ is a \emph{cartesian fibration} if $\pi^\op$ is a cocartesian fibration.

A cocartesian resp. cartesian fibration $\pi: X \to S$ is said to be a \emph{left} resp. \emph{right} fibration if for every object $s \in S$ the fiber $X_s$ is a Kan complex.

Now suppose $\pi: X \to S$ and $\rho: Y \to S$ are (co)cartesian fibrations. Then a \emph{map of (co)cartesian fibrations} $f: X \to Y$ is a map of simplicial sets such that $\rho \circ f = \pi$ and $f$ carries $\pi$-(co)cartesian edges to $\rho$-(co)cartesian edges. The collection of cocartesian fibrations over $S$ and maps thereof organize into a subcategory $\Cat_{\infty/S}^\cocart$ of the overcategory $\Cat_{\infty/S}$.
\end{dfn}

In this paper, owing to the importance of these notions we see fit to introduce more concise and suggestive terminology for cocartesian fibrations and left fibrations over $S$.

\begin{dfn} An \emph{$S$-category} resp. \emph{$S$-space} $C$ is a cocartesian resp. left fibration $\pi: C \to S$. An \emph{$S$-functor} $F: C \to D$ between $S$-categories $C$ and $D$ is a map of cocartesian fibrations over $S$.

Given an $S$-category $\pi: C \to S$, an \emph{$S$-subcategory} $D \subset C$ is a subcategory such that the restriction $\pi|_D$ is a cocartesian fibration and an edge in $D$ is $\pi|_D$-cocartesian if and only if it is $\pi$-cocartesian. The inclusion functor then necessarily preserves cocartesian edges, so is an $S$-functor. We further say that $D$ is a \emph{full $S$-subcategory} if $D \subset C$ is in addition a full subcategory, or equivalently, for every $s \in S$, $D_s \subset C_s$ is a full subcategory.
\end{dfn}

\begin{exm}[Arrow $\infty$-categories] The arrow $\infty$-category $\sO(S)$ of $S$ is cocartesian over $S$ via the target morphism $\ev_1$, and cartesian over $S$ via the source morphism $\ev_0$. An edge
\[ e:[s_0 \rightarrow t_0] \to [s_1 \rightarrow t_1] \]
in $\sO(S)$ is $\ev_1$-cocartesian resp. $\ev_0$-cartesian if and only if $\ev_0(e)$ resp. $\ev_1(e)$ is an equivalence in $S$.

The fiber of $\ev_0: \sO(S) \to S$ over $s$ is isomorphic to Lurie's `alternative' slice $\infty$-category $S^{s/}$. Using our knowledge of the $\ev_1$-cocartesian edges, we see that $\ev_1$ restricts to a left fibration $S^{s/} \to S$. In the terminology of \cite[Proposition~4.4.4.5]{HTT}, this is a \emph{corepresentable} left fibration. We will refer to the corepresentable left fibrations as \emph{$S$-points}. Further emphasizing this viewpoint, we will often let $\underline{s}$ denote $S^{s/}$.
\end{exm}

\nomenclature[ArrowCategory]{$\sO(S)$}{Arrow $\infty$-category of $S$}
\nomenclature[ArrowSlice]{$S^{s/}$}{Slice $\infty$-category of $S$ under the object $s$, Lurie's ``alternative'' version \cite[\S 4.2.1]{HTT}}

To a beginner, the lifting conditions of Definition~\ref{dfn:CocartesianFibration} can seem opaque. Under our standing assumption that $S$ is an $\infty$-category, we have a reformulation of the definition of cocartesian edge, and hence that of cocartesian fibration, which serves to illuminate its homotopical meaning.

\begin{prp} \label{prp:MappingSpacesOfCocartesianFibration} Let $\pi: X \to S$ be an inner fibration (so $X$ is an $\infty$-category). Then an edge $e: x_0 \rightarrow x_1$ in $X$ is $\pi$-cocartesian if and only if for every $x_2 \in X$, the commutative square of mapping spaces
\[ \begin{tikzcd}[row sep=2em, column sep=2em]
\Map_X(x_1,x_2) \ar{r}{e^\ast} \ar{d}{\pi} & \Map_X(x_0,x_2) \ar{d}{\pi} \\
\Map_S(\pi(x_1), \pi(x_2)) \ar{r}{\pi(e)^\ast} & \Map_S(\pi(x_0), \pi(x_2))
\end{tikzcd} \]
is homotopy cartesian.
\end{prp}

With some work, Proposition~\ref{prp:MappingSpacesOfCocartesianFibration} can be used to supply an alternative, model-independent definition of a cocartesian fibration: we refer to Mazel-Gee's paper \cite{mazelgeeCocartesianFibration} for an exposition along these lines.

\nomenclature[catscocart]{$\Cat_{\infty/S}^\cocart$}{$\infty$-category of cocartesian fibrations over $S$}
\nomenclature[cats]{$\Cat_{\infty/S}$}{$\infty$-category of $\infty$-categories over $S$}

\begin{exm}[{\cite[\S 3.2.2]{HTT}}] \label{exm:UniversalCocartesianFibration} Let $\Cat_\infty$ denote the (large) $\infty$-category of (small) $\infty$-categories. Then there exists a \emph{universal cocartesian fibration} $\sU \to \Cat_\infty$, which is characterized up to contractible choice by the requirement that any cocartesian fibration $\pi: X \to S$ (with essentially small fibers) fits into a homotopy pullback square
\[ \begin{tikzcd}[row sep=2em, column sep=2em]
X \ar{r} \ar{d}{\pi} & \sU \ar{d} \\
S \ar{r}{F_\pi} & \Cat_\infty.
\end{tikzcd} \]
Concretely, one can take $\sU$ to be the subcategory of the arrow category $\sO(\Cat_\infty)$ spanned by the representable right fibrations and morphisms thereof.
\end{exm}

As suggested by Example~\ref{exm:UniversalCocartesianFibration}, the functor
\[ \Fun(S,\Cat_\infty) \to \Cat_{\infty/S}^\cocart \]
given by pulling back $\sU \to \Cat_\infty$ is an equivalence. The composition
\[ \Gr: \Fun(S,\Cat_\infty) \xrightarrow{\simeq} \Cat_{\infty/S}^\cocart \subset \Cat_{\infty/S}  \] 
is the \emph{Grothendieck construction} functor. Since equivalences in $\Fun(S, \Cat_\infty)$ are detected objectwise, $\Gr$ is conservative. Moreover, one can check that $\Gr$ preserves limit and colimits, so by the adjoint functor theorem $\Gr$ admits both a left and a right adjoint.

\begin{ntn} \label{ntn:groth} Let \[ \Fr \dashv \Gr \dashv H \]
denote the left and right adjoints of $\Gr$.
\end{ntn}

We call $\Fr$ the \emph{free cocartesian fibration} functor (see also \cite{GHN}): concretely, one has
$$\Fr(X \to S) = X \times_S \sO(S) \xrightarrow{ev_1} S,$$
or as a functor $s \mapsto X \times_S S_{/s}$ with functoriality obtained from $S_{/(-)}$. The functor $H$ can also be concretely described using its universal mapping property: since $\Fr(\{s\} \subset S) = S_{s/}$, the fiber $H(X)_s$ is given by $\Fun_{/S}(S_{s/},X)$, and the functoriality in $S$ is obtained from that of $S_{(-)/}$.

\subsection*{A model structure for cocartesian fibrations} We want a model structure which has as its fibrant objects the cocartesian fibrations over a fixed simplicial set. However, it is clear that to define it we need some way to remember the data of the cocartesian edges. This leads us to introduce \emph{marked simplicial sets}.

\begin{dfn} A marked simplicial set $(X, \cE)$ is the data of a simplicial set $X$ and a subset $\cE \subset X_1$ of the edges of $X$, such that $\cE$ contains all of the degenerate edges. We call $\cE$ the set of \emph{marked edges} of $X$. A map of marked simplicial sets $f: (X,\cE) \to (Y,\cF)$ is a map of simplicial sets $f: X \to Y$ such that $f(\cE) \subset \cF$.
\end{dfn}

\nomenclature[MarkedSimplicialSet]{$(X, \cE)$}{Marked simplicial set}

\begin{ntn} We introduce notation for certain classes of marked simplicial sets. Let $X$ be a simplicial set.
\begin{itemize} \item $X^\flat$ is $X$ with only the degenerate edges marked. To avoid notational clutter, we will sometimes suppress this notation and simply write $X$ for $X^{\flat}$.
\item $X^\sharp$ is $X$ with all of its edges marked.
\item Suppose that $X$ is an $\infty$-category. Then $X^\sim$ is $X$ with its equivalences marked.
\item Suppose that $\pi: X \to S$ is an inner fibration. Then $\leftnat{X}$ is $X$ with its $\pi$-cocartesian edges marked, and $\rightnat{X}$ is $X$ with its $\pi$-cartesian edges marked.
\item Let $n > 0$. Let $\leftnat{\Delta^n}$ resp. $\leftnat{\Lambda^n_0}$ denote $\Delta^n$ resp. $\Lambda^n_0$ with the edge $\{0,1\}$ marked (if it exists, i.e. excluding $\Delta^0$ and $\Lambda^1_0 = \{ 0 \}$) along with the degenerate edges. Dually, let $\rightnat{\Delta^n}$ resp. $\rightnat{\Lambda^n_n}$ denote $\Delta^n$ resp. $\Lambda^n_n$ with the edge $\{n-1,n\}$ marked.
\end{itemize}

Note that our choice of notation $\leftnat{\Delta^n}$ and $\leftnat{\Lambda^n_0}$ is not meant to be interpreted as a special instance of marking cocartesian edges (though the map $\Delta^n \to \Delta^1$ given by $0 \mapsto 0$ and $1,..., n \mapsto 1$ renders it as such for the former); rather, we mean to indicate that the relevant lifting problem for a cocartesian fibration as a marked simplicial set is to lift along the marked horn inclusion $\leftnat{\Lambda^n_0} \to \leftnat{\Delta^n}$ (cf. Definition~\ref{dfn:fiberedObject} below), and vice-versa for cartesian fibrations and $\rightnat{\Lambda^n_n} \to \rightnat{\Delta^n}$.
\end{ntn}

\nomenclature[Xflat]{$X^{\flat}$}{Simplicial set $X$ with its degenerate edges marked}
\nomenclature[Xsharp]{$X^{\sharp}$}{Simplicial set $X$ with all its edges marked}
\nomenclature[Xequiv]{$X^{\sim}$}{$\infty$-category $X$ with its equivalences marked}
\nomenclature[Xleftnat]{$\leftnat{X}$}{Inner fibration $\pi: X \to S$ with its $\pi$-cocartesian edges marked}
\nomenclature[Xrightnat]{$\rightnat{X}$}{Inner fibration $\pi: X \to S$ with its $\pi$-cartesian edges marked}
\nomenclature[Dleftnat]{${\leftnat{\Delta^n}}$}{$\Delta^n$ with the edge $\{0,1\}$ marked}
\nomenclature[Drightnat]{${\rightnat{\Delta^n}}$}{$\Delta^n$ with the edge $\{n-1,n\}$ marked}
\nomenclature[Dhornleftnat]{${\leftnat{\Lambda^n_0}}$}{${\leftnat{\Lambda^n_0}}$ with the edge $\{0,1\}$ marked}
\nomenclature[Dhornrightnat]{${\rightnat{\Lambda^n_0}}$}{${\rightnat{\Lambda^n_0}}$ with the edge $\{n-1,n\}$ marked}

For the rest of this section, fix a marked simplicial set $(Z,\cE)$ where $Z$ is an $\infty$-category and $\cE$ contains all of the equivalences in $Z$; in our applications, $Z$ will generally be some type of fibration over $S$. Let $s\Set^+_{/(Z,\cE)}$ denote the category of marked simplicial sets over $(Z,\cE)$; following Lurie \cite[Notation~3.1.0.2]{HTT}, we will also denote $s\Set^+_{/Z^\sharp}$ more simply as $s\Set^+_{/Z}$. We will frequently abuse notation by referring an object $\pi: (X, \cF) \to (Z, \cE)$ of $s\Set^+_{/(Z,\cE)}$ by its domain $(X,\cF)$, or even just by $X$.

\nomenclature[markedsset]{$s\Set^+_{/(Z,\cE)}$}{The category of marked simplicial sets over $(Z,\cE)$}
\nomenclature[markedsset2]{$s\Set^+_{/Z}$}{The category of marked simplicial sets over $Z^\sharp$}

\begin{dfn} \label{dfn:fiberedObject} An object $(X,\cF)$ in $s\Set^+_{/(Z,\cE)}$ is \emph{$(Z,\cE)$-fibered}\footnote{This differs from \cite[Definition~B.0.19]{HA}, but nonetheless defines the correct class of anodyne morphisms \cite[Definition~B.1.1]{HA}.} if 
\begin{enumerate} \item $\pi: X \to Z$ is an inner fibration.
\item For every $n>0$ and commutative square 
\[ \begin{tikzcd}[row sep=2em, column sep=2em]
\leftnat{\Lambda^n_0} \ar{r} \ar{d} & (X,\cF) \ar{d} \\
\leftnat{\Delta^n} \ar{r} \ar[dotted]{ru} & (Z,\cE),
\end{tikzcd} \]
a dotted lift exists. In other words, letting $n=1$, $\pi$-cocartesian lifts exist over marked edges in $Z$, and letting $n>1$, marked edges in $X$ are $\pi$-cocartesian.\footnote{Note that condition (2) already guarantees that $X \to Z$ is a cocartesian fibration if $\sE = Z_1$; however, one additionally needs condition (4) to ensure that \emph{all} of the $\pi$-cocartesian edges are marked in $X$.}
\item For every commutative square
\[ \begin{tikzcd}[row sep=2em, column sep=2em]
(\Lambda^2_1)^\sharp \cup_{(\Lambda^2_1)^\flat} (\Delta^2)^\flat \ar{r} \ar{d} & (X,\cF) \ar{d} \\
(\Delta^2)^\sharp \ar{r} \ar[dotted]{ru} & (Z,\cE),
\end{tikzcd} \]
a dotted lift exists. In other words, marked edges are closed under composition.\footnote{Strictly speaking, condition (3) by itself only guarantees that for any pair of composable marked edges, there exists a composite that is again marked. One additionally needs condition (4) to ensure that \emph{all} compositions of marked edges are again marked.}
\item Let $Q = \Delta^0 \coprod_{\Delta^{ \{0,2\} }}  \Delta^3  \coprod_{\Delta^{ \{1,3\} }} \Delta^0$. For every commutative square
\[ \begin{tikzcd}[row sep=2em, column sep=2em]
Q^\flat \ar{r} \ar{d} & (X,\cF) \ar{d} \\
Q^\sharp \ar{r} \ar[dotted]{ru} & (Z,\cE),
\end{tikzcd} \]
a dotted lift exists. Since we assumed that $\cE$ contains all equivalences in $Z$, this implies that all equivalences in $X$ are marked.
\end{enumerate}
\end{dfn}

\begin{exm} Let $\pi: X \to Z$ be an inner fibration. Comparing with Definition~\ref{dfn:CocartesianFibration}, it is clear that $(X,\cF)$ is $Z^\sharp$-fibered if and only if $\pi$ is a cocartesian fibration and $(X,\cF) = \leftnat{X}$. At the other extreme, $(X,\cF)$ is $Z^{\sim}$-fibered if and only if $\pi$ is a categorical fibration and $(X,\cF) = X^{\sim}$.
\end{exm}

Recall that a model structure, if it exists, is determined by its cofibrations and fibrant objects. Collecting results of Lurie from \cite[App.~B]{HA}, we now define a model structure on $s\Set^+_{/(Z,\cE)}$ with cofibrations the monomorphisms and fibrant objects given by the $(Z,\cE)$-fibered objects. 

\begin{dfn} Define functors\footnote{In \cite[App.~B]{HA}, these functors are denoted as $\Map_Z^\sharp$ and $\Map_Z^\flat$ respectively.}
\begin{align*} \Map_Z(-,-): & {s\Set^+_{/(Z,\cE)}}^\op \times s\Set^+_{/(Z,\cE)} \to s\Set  \\
\Fun_Z(-,-): & {s\Set^+_{/(Z,\cE)}}^\op \times s\Set^+_{/(Z,\cE)} \to s\Set
\end{align*} 
by
\begin{align*} \Hom(A,\Map_Z(X,Y)) & = \Hom_{/(Z,\cE)}(A^\sharp \times X,Y), \\
\Hom(A,\Fun_Z(X,Y)) & = \Hom_{/(Z,\cE)}(A^\flat \times X,Y).
\end{align*}
\end{dfn}

\nomenclature[Map]{$\Map_{(-)}(-,-)$}{Mapping simplicial set relative to marked simplicial set, excludes non-invertible morphisms, $\infty$-groupoid when fibrant}

\nomenclature[Fun]{$\Fun_{(-)}(-,-)$}{Mapping simplicial set relative to marked simplicial set, includes non-invertible morphisms, $\infty$-category when fibrant}

\begin{dfn} A map $f: A \to B$ in $s\Set^+_{/(Z,\cE)}$ is a \emph{cocartesian equivalence} (with respect to $(Z,\cE)$) if the following equivalent conditions obtain:
\begin{enumerate} \item For all $(Z,\cE)$-fibered $X$, $f^\ast: \Map_Z(B,X) \to \Map_Z(A,X)$ is an equivalence of Kan complexes.
\item For all $(Z,\cE)$-fibered $X$, $f^\ast: \Fun_Z(B,X) \to \Fun_Z(A,X)$ is an equivalence of $\infty$-categories.
\end{enumerate}
\end{dfn}

\begin{thm}[{\cite[Theorem~B.0.20]{HA}}] There exists a left proper combinatorial model structure on the category $s\Set^+_{/(Z,\cE)}$, which we call the \textbf{cocartesian model structure}, such that:
\begin{enumerate} \item The cofibrations are the monomorphisms.
\item The weak equivalences are the cocartesian equivalences. 
\item The fibrant objects are the $(Z,\cE)$-fibered objects.
\end{enumerate}
Dually, we define the \textbf{cartesian model structure} on $s\Set^+_{/(Z,\cE)}$ to be the cocartesian model structure on $s\Set^+_{/(Z,\cE)^\op}$ under the isomorphism given by taking opposites.
\end{thm}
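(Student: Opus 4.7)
The plan is to construct the model structure via Jeff Smith's recognition theorem for combinatorial model categories, following the template laid out in \cite[App.~B]{HA} but with the modified notion of fibered object dictated by the four lifting conditions of our definition. I would proceed in four steps.

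First, I would fix a generating set of cofibrations $I$ consisting of the boundary inclusions $(\partial \Delta^n)^\flat \to (\Delta^n)^\flat$ together with the marking map $(\Delta^1)^\flat \to (\Delta^1)^\sharp$; their weak saturation (slicing over $(Z,\cE)$ in all possible ways) is precisely the class of monomorphisms in $s\Set^+_{/(Z,\cE)}$. Second, I would identify a set $J$ of ``marked anodyne'' morphisms over $(Z,\cE)$ whose injective objects are exactly the $(Z,\cE)$-fibered objects. Concretely $J$ consists of: pushout-products of inner horn inclusions $\Lambda^n_k \subset \Delta^n$ with boundaries (for condition (1)); the maps $\leftnat{\Lambda^n_0} \to \leftnat{\Delta^n}$ sliced over every $\leftnat{\Delta^n} \to (Z,\cE)$ (for condition (2)); the map $(\Lambda^2_1)^\sharp \cup_{(\Lambda^2_1)^\flat} (\Delta^2)^\flat \to (\Delta^2)^\sharp$ sliced over $(Z,\cE)$ (for condition (3)); and $Q^\flat \to Q^\sharp$ sliced over $(Z,\cE)$ (for condition (4)). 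Augmenting $J$ by the pushout-products of each of these with $(\partial \Delta^m)^\flat \to (\Delta^m)^\flat$ ensures that injectivity with respect to $J$ produces not merely lifts but lifts in a compatible way against all anodyne monomorphisms, hence ensures $(Z,\cE)$-fiberedness of the target of a $J$-injective map.

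Third, I would verify Smith's axioms: the category $s\Set^+_{/(Z,\cE)}$ is locally presentable; the class $W$ of cocartesian equivalences (defined as those maps inducing equivalences on $\Map_Z(-,X)$ for every fibered $X$) is accessible and satisfies the two-out-of-three and retract axioms by formal nonsense, using that the collection of fibrant objects is itself accessible. I would then check that every map in $J$ is a cocartesian equivalence: this is immediate from the definition, since for any fibered $X$ the relevant induced map on $\Map_Z(-,X)$ is a trivial fibration of Kan complexes by the injectivity characterization of fibered objects. Finally, one needs that the class $\cof(I) \cap W$ (cofibrant weak equivalences) is stable under cobase change and transfinite composition, which follows formally once fibrant replacement functorially exists.

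Fourth, I would identify fibrant objects with $(Z,\cE)$-fibered objects. By construction, any fibrant object is $J$-injective, hence $(Z,\cE)$-fibered. Conversely, for a $(Z,\cE)$-fibered $X$ I must show that $X \to \ast$ has the right lifting property against all monomorphisms that are cocartesian equivalences; this reduces, via the small object argument applied to $J$, to checking that if $A \to B$ is a $J$-cell map then $\Fun_Z(B, X) \to \Fun_Z(A, X)$ is a trivial fibration, which follows from the pushout-product enhancement in step two. The cartesian version then follows formally by taking opposites.

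The main obstacle, and the technically involved step, is the last half of step three: establishing that every trivial cofibration in the sense of the model structure is weakly generated by $J$, equivalently, that a map between fibered objects is a cocartesian equivalence iff it is a $J$-injective trivial fibration. The standard route, as in \cite[\S 3.1.3]{HTT} and \cite[App.~B]{HA}, is to promote cocartesian equivalences to an equivalence relation on the homotopy category of $s\Set^+_{/(Z,\cE)}$ built out of the cylinder $(\Delta^1)^\sharp$, and then argue that fibered objects detect these equivalences strictly. The bookkeeping needed to handle conditions (3) and (4) alongside the cocartesian lifting condition (2) is the only real wrinkle beyond Lurie's treatment, and is addressed by the enlargement of the generating set to include the composition and stability-under-equivalence cells.
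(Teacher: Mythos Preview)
The paper does not give a proof of this theorem: it appears in \S 2, which is explicitly a ``rapid review of the theory of cocartesian fibrations and the surrounding apparatus of marked simplicial sets,'' and the result is stated without argument as background imported from Lurie (\cite[\S 3]{HTT} and \cite[App.~B]{HA}). The footnote attached to the definition of $(Z,\cE)$-fibered (``This differs from the definition in \cite[B.0.19]{HA}'') signals that the result is a mild repackaging of Lurie's categorical-pattern model structure.

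Your outline is the right one and is essentially the argument Lurie gives in \cite[App.~B]{HA}: the four lifting conditions in the definition of $(Z,\cE)$-fibered are exactly designed so that the corresponding inclusions, sliced over $(Z,\cE)$, form a generating set of trivial cofibrations, and Smith's theorem then assembles the model structure. A couple of small comments on the sketch. First, in step three you invoke accessibility of ``the collection of fibrant objects'' before the model structure exists; what you mean is accessibility of the class of $(Z,\cE)$-fibered objects, which is indeed a small-injectivity class and hence accessible. Second, the claim that maps in $J$ are cocartesian equivalences is not quite ``immediate from the definition'': the definition of fibered gives only surjectivity of $\Map_Z(B,X)\to\Map_Z(A,X)$ on vertices, and you need the pushout-product enhancement (which you build in) to upgrade this to a trivial Kan fibration. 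Finally, the genuinely delicate step you flag at the end --- that cocartesian equivalences between fibered objects coincide with homotopy equivalences for the cylinder $(\Delta^1)^\sharp$ --- is handled in Lurie's framework by the comparison with $\mathfrak{P}$-equivalences in \cite[B.2]{HA}; the additional conditions (3) and (4) pose no new difficulty there since they concern marking rather than the underlying simplicial set.
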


\begin{rem}
The underlying $\infty$-category of $s\Set^+_{/(Z,\cE)}$ identifies as the subcategory of $\Cat_{\infty/Z}$ on those isofibrations\footnote{Note that with this choice, the resulting subcategory is not stable under equivalence. One could alternatively appeal to a homotopy-invariant notion of cocartesian fibration and instead replace isofibrations with functors -- cf. \cite{mazelgeeCocartesianFibration}, which admits an obvious generalization to this setting.} $X \to Z$ that admit cocartesian lifts over $\cE$, and with morphisms preserving cocartesian edges. In particular, passing to the closure of $\cE$ under composition does not change the underlying $\infty$-category.
\end{rem}

We have the following characterization of the cocartesian equivalences between fibrant objects (which is unsurprising, in light of the equivalence $\Cat_{\infty/Z}^\cocart \simeq \Fun(Z,\Cat_\infty)$).

\begin{prp}[{\cite[Lemma~B.2.4]{HA}}] Let $X$ and $Y$ be fibrant objects in $s\Set^+_{/(Z,\cE)}$ equipped with the cocartesian model structure, and let $f: X \to Y$ be a map in $s\Set^+_{/(Z,\cE)}$. Then the following are equivalent:
\begin{enumerate}
	\item $f$ is a cocartesian equivalence.
	\item $f$ is a homotopy equivalence, i.e. $f$ admits a homotopy inverse: there exists a map $g: Y \to X$ and homotopies $h: (\Delta^1)^\sharp \times X \to X$, $h': (\Delta^1)^\sharp \times Y \to Y$ in $s\Set^+_{/(Z,\cE)}$ connecting $g \circ f$ to $\id_X$ and $f \circ g$ to $\id_Y$, respectively.
	\item $f$ is a categorical equivalence.
	\item For every (not necessarily marked) edge $\alpha: \Delta^1 \to Z$, $f_\alpha: \Delta^1 \times_Z X \to \Delta^1 \times_Z Y$ is a categorical equivalence.	
\end{enumerate}
If every edge of $Z$ is marked, then (4) can be replaced by the following apparently weaker condition:
\begin{enumerate}
	\item[(4\textquotesingle)] For every object $z \in Z$, $f_z: X_z \to Y_z$ is a categorical equivalence.
\end{enumerate}
\end{prp}

We also have the following characterization of the fibrations between fibrant objects.

\begin{prp}[{\cite[Proposition~B.2.7]{HA}}] \label{prp:FibrationBetweenFibrantObjects} Let $Y = (Y,\cF)$ be a fibrant object in $s\Set^+_{/(Z,\cE)}$ equipped with the cocartesian model structure, and let $f: X \to Y$ be a map in $s\Set^+_{/(Z,\cE)}$. Then the following are equivalent:
\begin{enumerate}
	\item $f$ is a fibration.
	\item $X$ is fibrant, and $f$ is a categorical fibration.	
	\item $f$ is fibrant in $s\Set^+_{/(Y,\cF)}$.
\end{enumerate}
\end{prp}

\begin{cor} \label{cor:CocartesianModelStructureIsSliceModelStructure} Suppose $Z \to S$ is a cocartesian fibration. Then the cocartesian model structure $s\Set^+_{/\leftnat{Z}}$ coincides with the `slice' model structure on $(s\Set^+_{/S})_{/\leftnat{Z}}$ created by the forgetful functor to $s\Set^+_{/S}$ equipped with its cocartesian model structure.
\end{cor}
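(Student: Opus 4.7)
My strategy is to show that the two model structures share the same cofibrations and the same fibrant objects, whence they must coincide: both are cofibrantly generated, and such a model structure is determined by these two classes of morphisms.

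Cofibrations agree immediately: they are the monomorphisms in the cocartesian model structure on $s\Set^+_{/\leftnat{Z}}$ by the defining theorem, while the slice model structure on $(s\Set^+_{/S})_{/\leftnat{Z}}$ inherits its cofibrations from $s\Set^+_{/S}$, where they are again the monomorphisms.

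For fibrant objects I will appeal to Prp \ref{prp:FibrationBetweenFibrantObjects}. By definition of the slice model structure, an object $(p: X \to \leftnat{Z})$ is fibrant in $(s\Set^+_{/S})_{/\leftnat{Z}}$ if and only if $p$ is a fibration in $s\Set^+_{/S}$. Since $Z \to S$ is a cocartesian fibration, $\leftnat{Z}$ is itself fibrant in $s\Set^+_{/S}$, so Prp \ref{prp:FibrationBetweenFibrantObjects} applies with $(Z, \cE)$ there taken to be $S^\sharp$ and $Y$ taken to be $\leftnat{Z}$. The equivalence of its conditions (1) and (3) now reads: $p$ is a fibration in $s\Set^+_{/S}$ if and only if $X$ is fibrant in $s\Set^+_{/\leftnat{Z}}$ equipped with the cocartesian model structure, i.e., $X$ is $\leftnat{Z}$-fibered. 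Hence the two classes of fibrant objects coincide.

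\textbf{Main obstacle.} There is essentially no obstacle beyond the invocation of Prp \ref{prp:FibrationBetweenFibrantObjects}, which has already been stated and does all the work. The only delicate point is the uniqueness principle invoked at the end. To bypass it entirely, I can use Prp \ref{prp:FibrationBetweenFibrantObjects} a second time, in each of the two model structures separately, to observe that fibrations with fibrant target are characterized in both cases as the categorical fibrations from a fibrant source; combined with the matching cofibrations, this identifies the two model structures on the nose.
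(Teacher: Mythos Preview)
Your proposal is correct and is precisely the intended argument. The paper states the corollary without proof, but it is positioned immediately after Prp.~\ref{prp:FibrationBetweenFibrantObjects} and the paper has already recalled that a model structure is determined by its cofibrations and fibrant objects; your argument fills in exactly those two steps, with the equivalence (1)$\Leftrightarrow$(3) of Prp.~\ref{prp:FibrationBetweenFibrantObjects} doing the real work.
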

\begin{proof}
This immediately follows from Proposition~\ref{prp:FibrationBetweenFibrantObjects}.
\end{proof}

\begin{exm} Suppose that $Z$ is a Kan complex. Then the cocartesian and cartesian model structures on $s\Set^+_{/Z}$ coincide. In particular, taking $Z = \Delta^0$, we will also refer to the cocartesian model structure on $s\Set^+$ as the \emph{marked model structure}. Since this model structure on $s\Set^+$ is unambiguous, we will always regard $s\Set^+$ as equipped with it. Then the fibrant objects of $s\Set^+$ are precisely the $\infty$-categories with their equivalences marked.
\end{exm}

\begin{exm} \label{exm:flatQuillenEquivalence} Suppose that $(Z,\cE) = Z^\sim$. Then the cocartesian and cartesian model structures on $s\Set^+_{/Z^\sim}$ coincide. Moreover, we have a Quillen equivalence
\[ \adjunct{(-)^\flat}{(s\Set_{\text{Joyal}})_{/Z}}{s\Set^+_{/Z^\sim}}{U} \]
where the functor $U$ forgets the marking. In particular, $(-)^\flat$ sends categorical equivalences to marked equivalences.
\end{exm}

\begin{exm} \label{exm:MarkedQuillenAdjunctions} The inclusion functor $\Top \subset \Cat_\infty$ admits left and right adjoints $B$ and $\iota$, where $B$ is the classifying space functor that inverts all edges and $\iota$ is the `core' functor that takes the maximal sub-$\infty$-groupoid. These two adjunctions are modeled by the two Quillen adjunctions
\[ \adjunct{U}{s\Set^+}{s\Set_{\text{Quillen}}}{(-)^\sharp}, \]
\[ \adjunct{(-)^\sharp}{s\Set_{\text{Quillen}}}{s\Set^+}{M}. \]
Here $M(X,\sE)$ is the maximal sub-simplicial set of $X$ such that all of its edges are marked. In particular, $(-)^\sharp$ sends weak homotopy equivalences to marked equivalences.
\end{exm}

\begin{prp}[{\cite[Remark~B.2.5]{HA}}] \label{prp:SimplicialEnrichmentOfCocartesianModelStructure} The bifunctor
\[ - \times -: s\Set^+_{/(Z_1,\cE_1)} \times s\Set^+_{/(Z_2,\cE_2)} \to s\Set^+_{/(Z_1 \times Z_2,\cE_1 \times \cE_2)} \]
is left Quillen. Consequently, the bifunctors
\begin{align*} \Map_Z(-,-): & {s\Set^+_{/(Z,\cE)}}^\op \times s\Set^+_{/(Z,\cE)} \to s\Set_{\text{Quillen}}  \\
\Fun_Z(-,-): & {s\Set^+_{/(Z,\cE)}}^\op \times s\Set^+_{/(Z,\cE)} \to s\Set_{\text{Joyal}}
\end{align*} 
are right Quillen, so $s\Set^+_{/(Z,E)}$ is both a $s\Set_{\text{Quillen}}$-enriched model category (with respect to $\Map_Z$) and $s\Set_{\text{Joyal}}$-enriched model category (with respect to $\Fun_Z$).
\end{prp}

\begin{rem} As explained in \cite[Digression~1.2.13]{riehl_verity_2022}, by Proposition~\ref{prp:SimplicialEnrichmentOfCocartesianModelStructure} the full subcategory of $s\Set^+_{/(Z,\cE)}$ spanned by the fibrant objects is an example of an $\infty$-cosmos \cite[Definition~1.2.1]{riehl_verity_2022}.
\end{rem}

Finally, we explain how the formalism of marked simplicial sets can be used to extract the pushforward functors implicitly defined by a cocartesian fibration. First, we need a lemma.

\begin{lem} \label{lm:generalizedCocartesianPushforward} For $n>0$, the inclusion $i_n: \Delta^{n-1} \cong \Delta^{ \{0\} } \star \Delta^{ \{2,...,n\} } \to \leftnat{\Delta^n}$ is left marked anodyne. Consequently, for a cocartesian fibration $C \to S$, the map
\[ \Fun(\leftnat{\Delta^n},\leftnat{C}) \to \Fun(\Delta^{n-1}, C) \times_{\Fun(\Delta^{n-1},C)} \Fun(\Delta^n,S) \]
induced by $i_n$ is a trivial Kan fibration.
\end{lem}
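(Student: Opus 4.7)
My plan is to first establish that $i_n$ is left marked anodyne, and then deduce the trivial Kan fibration claim by a pushout-product argument.

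To prove $i_n$ is left marked anodyne, I would filter the inclusion $\Delta^{\{0,2,\ldots,n\}} \hookrightarrow \leftnat{\Delta^n}$ through a sequence of pushouts of standard left marked anodyne generators. Set $X_0 = \Delta^{\{0,2,\ldots,n\}}$ (flat marking), and form $X_1 = X_0 \cup \leftnat{\Delta^{\{0,1\}}}$ by pushing out the basic generator $\{0\}^\flat \hookrightarrow \leftnat{\Delta^1}$ along the inclusion of the vertex $\{0\}$; this adjoins the marked edge $\{0,1\}$ together with the vertex $\{1\}$. Then, for each $k = 2, 3, \ldots, n$ and each non-degenerate $k$-simplex $\sigma \subseteq [n]$ with $\{0,1\} \subseteq \sigma$, push out the generator $\leftnat{\Lambda^k_0} \hookrightarrow \leftnat{\Delta^k}$ using the unique order-isomorphism $\sigma \cong [k]$, which sends $0 \mapsto 0$ and $1 \mapsto 1$ and so matches the marked edges. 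This pushout is legal because every face of $\sigma$ containing vertex $0$ already sits in the preceding stage: the face $\sigma \setminus \{1\}$ lies in $\Delta^{\{0,2,\ldots,n\}} \subseteq X_0$, while each $\sigma \setminus \{v\}$ for $v \in \sigma \setminus \{0,1\}$ has strictly smaller dimension and still contains $\{0,1\}$, hence was added at an earlier stage. The pushout adjoins $\sigma$ along with its opposite face $\sigma \setminus \{0\}$. Since every non-degenerate simplex $\tau \subseteq [n]$ containing vertex $1$ either contains $0$ (in which case $\tau$ itself is adjoined when we process $\sigma = \tau$) or does not (in which case $\tau$ appears as $\sigma \setminus \{0\}$ for $\sigma = \tau \cup \{0\}$), the filtration terminates at $\leftnat{\Delta^n}$, exhibiting $i_n$ as a transfinite composition of pushouts of left marked anodyne generators.

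For the consequence, I would verify the right lifting property against $\partial \Delta^k \hookrightarrow \Delta^k$ for every $k \geq 0$. By the hom-tensor adjunction, this becomes, in $s\Set^+_{/S}$, a lifting problem against the structural fibration $\leftnat{C} \to S^\sharp$, where the map on the left is the pushout-product of $(\partial \Delta^k)^\flat \hookrightarrow (\Delta^k)^\flat$ with $i_n$. Since the product bifunctor is left Quillen by Prp.~\ref{prp:SimplicialEnrichmentOfCocartesianModelStructure}, and $i_n$ is a trivial cofibration in $s\Set^+_{/S}$ (being left marked anodyne), this pushout-product is a trivial cofibration; the lift then exists since $\leftnat{C}$ is fibrant.

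The main obstacle I anticipate is the combinatorial bookkeeping for the filtration: we must order the pushouts so that each involves a legitimate subcomplex inclusion at the appropriate stage, with the marked edge $\{0,1\}$ in place and the horn $\Lambda^k_0$ of $\sigma$ already present. The key observation that makes this work is that within each dimension the various $\sigma$ can be processed in any order---none is a face of another of the same dimension---and the face $\sigma \setminus \{0\}$ is adjoined exactly once, namely when $\sigma$ itself is processed.
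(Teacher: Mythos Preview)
Your proposal is correct. The first part---that $i_n$ is left marked anodyne---is argued differently from the paper: you build an explicit cell-by-cell filtration, adjoining the marked edge $\{0,1\}$ and then, dimension by dimension, the simplices $\sigma$ containing $\{0,1\}$ via pushouts of $\leftnat{\Lambda^k_0} \hookrightarrow \leftnat{\Delta^k}$; the paper instead proceeds by induction on $n$, exhibiting a pushout square whose left leg is a union of copies of $i_{n-1}$ (one for each facet of $\Delta^{\{2,\ldots,n\}}$) and then composing with the generator $\leftnat{\Lambda^n_0} \to \leftnat{\Delta^n}$. Your approach is more elementary and self-contained, requiring only the generating set and no inductive hypothesis; the paper's is terser but hides the same combinatorics inside the induction. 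For the trivial Kan fibration consequence, both arguments are essentially the same transposition-plus-pushout-product step, with the paper citing \cite[3.1.2.3]{HTT} directly (left marked anodyne maps are closed under pushout-product with cofibrations) where you invoke the Quillen bifunctor property of Prp.~\ref{prp:SimplicialEnrichmentOfCocartesianModelStructure}.
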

\begin{proof} We proceed by induction on $n$, the base case $n=1$ being the left marked anodyne map $\Delta^{ \{0\}} \to \leftnat{\Delta^1} = (\Delta^1)^\sharp$. Consider the commutative diagram
\[ \begin{tikzcd}[row sep=2em, column sep=2em]
\Delta^{\{0\}} \star \partial \Delta^{n-2} \ar{d}{\bigcup i_{n-1}} \ar{r} & \Delta^{ \{0\} } \star \Delta^{ \{2,...,n\} } \ar{d} \ar[bend left]{dd}{i_n} \\
(\Delta^{\{0\}} \star \Lambda^{n-1}_0, \sE) \ar{r} & \leftnat{\Lambda^n_0} \ar{d} \\
& \leftnat{\Delta^n}
\end{tikzcd} \]
where $\sE$ is the collection of edges $\{ 0,i\}$, $0<i \leq n$ (and the degenerate edges). The square is a pushout, and by the inductive hypothesis, the lefthand vertical map is left marked anodyne. We deduce that $i_n$ is left marked anodyne. The second statement now follows because the lifting problem
\[ \begin{tikzcd}[row sep=2em, column sep=2em]
A \ar{r} \ar{d} & \Fun(\leftnat{\Delta^n},\leftnat{C}) \ar{d} \\
B \ar{r} \ar[dotted]{ru} & \Fun(\Delta^{n-1}, C) \times_{\Fun(\Delta^{n-1},C)} \Fun(\Delta^n,S)
\end{tikzcd} \]
transposes to
\[ \begin{tikzcd}[row sep=2em, column sep=2em]
A \times \leftnat{\Delta^n} \bigcup\limits_{A \times \Delta^{n-1}} B \times \Delta^{n-1} \ar{r} \ar{d} & \leftnat{C} \ar{d} \\
B \times \leftnat{\Delta^n} \ar{r} \ar[dotted]{ru} & S
\end{tikzcd} \]
and the lefthand vertical map is left marked anodyne for any cofibration $A \to B$ by \cite[Proposition~3.1.2.3]{HTT}.
\end{proof}

The main case of interest in Lemma~\ref{lm:generalizedCocartesianPushforward} is when $n=1$, which shows that $$\sO^\cocart(C) \to C \times_S \sO(S)$$ is a trivial Kan fibration. Let $$P: C \times_S \sO(S) \to \sO^\cocart(C)$$ be a section that fixes the inclusion $C \subset \sO^\cocart(C)$ (for this, note that $C \subset C \times_S \sO(S) $ is a cofibration as it is a monomorphism of simplicial sets). Then we say that $P$ or the further composite $P' = \ev_1 \circ P$ is a \emph{cocartesian pushforward} for $C \to S$. Given an edge $\alpha$ of $S$, $P'_{\alpha}: C_s \to C_t$ is the pushforward functor $\alpha_!$ determined under the equivalence $\Cat^\cocart_{\infty/S} \simeq \Fun(S, \Cat_\infty)$.

\subsection*{Functoriality in the base} Let $\pi: X \to Z$ be a map of simplicial sets. Then the pullback functor $\pi^\ast: s\Set_{/Z} \to s\Set_{/X}$ admits a left adjoint $\pi_!$, given by postcomposing with $\pi$. In addition, since $s\Set$ is a topos, $\pi^\ast$ also admits a right adjoint $\pi_\ast$, which may be thought of as the functor of relative sections because $\Hom_{/X}(A,\pi_\ast(B)) \cong \Hom_{/Z}(A \times_X Z, B)$.

Now supposing that $\pi$ is a map of marked simplicial sets, $\pi^\ast$, $\pi_!$, and $\pi_\ast$ extend to functors of marked simplicial sets over $X$ or $Y$ in an evident manner. We then seek conditions under which the adjunctions $\pi_! \dashv \pi^\ast$ and $\pi^\ast \dashv \pi_\ast$ are Quillen with respect to the cocartesian model structures. To this end, we have the following theorem of Lurie:

\begin{thm}[{\cite[Theorem~B.4.2]{HA}}] \label{thm:FunctorialityOfCocartesianModelStructure} Let
\[ (Z,\cE) \xleftarrow{\pi} (X,\cF) \xrightarrow{\rho} (X',\cF') \]
be a span of marked simplicial sets such that $Z, X, X'$ are $\infty$-categories and the collections of markings contain all the equivalences.
\begin{enumerate}[label=(\roman*)]
\item The adjunction
\[ \adjunct{\rho_!}{s\Set^+_{/(X,\cF)}}{s\Set^+_{/(X',\cF')}}{\rho^\ast} \]
is Quillen with respect to the cocartesian model structures.
\item Further suppose that
\begin{enumerate}[label=(\arabic*)]
	\item For every object $x \in X$ and marked edge $f: z \rightarrow \pi(x)$ in $Z$, there exists a locally $\pi$-cartesian edge $x_0 \rightarrow x$ in $X$ lifting $f$.
	\item $\pi$ is a flat categorical fibration.
	\item $\cE$ and $\cF$ are closed under composition.
	\item Suppose given a commutative diagram 
\[ \begin{tikzcd}[row sep=2em, column sep=2em]
& x_1 \ar{rd}{g} & \\
x_0 \ar{ru}{f} \ar{rr}{h} & & x_2
\end{tikzcd} \]
in $X$ where $g$ is locally $\pi$-cartesian, $\pi(g)$ is marked, and $\pi(f)$ is an equivalence. Then $f$ is marked if and only if $h$ is marked. (Note in particular that, taking $f$ to be an identity morphism, every locally $\pi$-cartesian edge lying over a marked edge is itself marked.)
\end{enumerate}
Then the adjunction
\[ \adjunct{\pi^\ast}{s\Set^+_{/(X,\cF)}}{s\Set^+_{/(Z,\cE)}}{\pi_\ast} \]
is Quillen with respect to the cocartesian model structures.
\end{enumerate}
\end{thm}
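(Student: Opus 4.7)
Cofibrations in both cocartesian model structures are precisely monomorphisms, and $\rho_!$ (postcomposition with $\rho$) leaves the underlying simplicial set unchanged, so it trivially preserves cofibrations. To finish showing $\rho_! \dashv \rho^*$ is Quillen, I would argue instead that $\rho^*$ preserves fibrant objects, from which preservation of cocartesian equivalences by $\rho_!$ follows by adjunction: for any cocartesian equivalence $f: A \to B$ in $s\Set^+_{/(X,\cF)}$ and fibrant $Y'$ in $s\Set^+_{/(X',\cF')}$, one has $\Map_{X'}(\rho_! f, Y') = \Map_X(f, \rho^* Y')$, which is an equivalence once $\rho^* Y'$ is fibrant. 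Verifying that $\rho^* Y'$ is $(X,\cF)$-fibered reduces each defining lifting problem (against $\leftnat{\Lambda^n_0} \to \leftnat{\Delta^n}$, the composition-closure square, and $Q^\flat \to Q^\sharp$) to a lifting problem for $Y'$ over $(X',\cF')$ by composing with $\rho$, which carries $\cF$ into $\cF'$ by hypothesis.

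\textbf{Plan for the second adjunction.} Here the left adjoint is $\pi^*$, which preserves monomorphisms by pullback stability. It remains to show $\pi^*$ preserves cocartesian equivalences, and for this I would exhibit a convenient set of generating trivial cofibrations and verify that pullback along $\pi$ sends each to a cocartesian equivalence. Four families of generators need to be handled: inner horn inclusions (with trivial markings), marked left-anodyne inclusions $\leftnat{\Lambda^n_0} \to \leftnat{\Delta^n}$ over marked edges, the composition-closure map $(\Lambda^2_1)^\sharp \cup_{(\Lambda^2_1)^\flat} (\Delta^2)^\flat \to (\Delta^2)^\sharp$, and the equivalence-stability map $Q^\flat \to Q^\sharp$, together with their pushout-products with boundary inclusions $\partial\Delta^n \to \Delta^n$. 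Hypothesis (2), flatness of $\pi$, ensures that pullbacks of inner anodyne maps remain categorical equivalences. Hypothesis (1) supplies the locally $\pi$-cartesian lifts needed to fill marked horns in $\pi^*Y$ after pullback, while hypothesis (4) guarantees that those locally $\pi$-cartesian lifts carry the correct marking, so the filled horn lives in the marked simplicial set over $(X,\cF)$ rather than merely over $X$. Closure under composition, hypothesis (3), then lets one propagate markings along the resulting composites, which is needed for both the composition-closure and $Q$ lifting problems.

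\textbf{Main obstacle.} The heart of the work is the interplay in Part 2 between the lifting conditions defining fibrancy and the control of markings via locally $\pi$-cartesian edges. Naively, a locally $\pi$-cartesian lift of a marked edge of $Z$ need not lie in $\cF$; without hypothesis (4) one cannot guarantee that the horn fillers produced upstairs respect the marked structure, so the pulled-back lifting problems fail to have solutions in $s\Set^+_{/(X,\cF)}$. Orchestrating the combinatorics of pushout-products of the generators above with general monomorphisms, and verifying in each case that the four hypotheses conspire to produce a marked-compatible lift, is where the bulk of the verification lies; once done, one concludes by the small object argument that $\pi^*$ preserves all trivial cofibrations, completing the Quillen adjunction.
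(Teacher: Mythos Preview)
The paper does not prove this theorem; it is stated as ``the following theorem of Lurie'' and is drawn from \cite[App.~B]{HA} (specifically B.4.2 there), so there is no in-paper proof to compare against.

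On the substance of your proposal: your treatment of the first adjunction is fine. For the second, there is a real gap. The four families you list are the \emph{marked anodyne} maps (cf.\ \cite[3.1.1.1]{HTT}), and these do \emph{not} generate the trivial cofibrations of the cocartesian model structure; they only generate a weakly saturated class that suffices to produce fibrant replacements. Checking that $\pi^\ast$ sends each marked anodyne map to a cocartesian equivalence is not enough by the small object argument alone: you must also handle trivial cofibrations between fibrant objects, i.e.\ categorical equivalences between $(Z,\cE)$-fibered objects (compare the criterion in Lm.~\ref{lm:showingFunctorLeftQuillen}). That second step is where flatness of $\pi$ really enters, and where Lurie's argument in \cite[B.4]{HA} does most of its work. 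Your sketch gestures at flatness for inner anodyne maps, but the crucial use of flatness is that $\pi^\ast$ preserves categorical equivalences between categorical fibrations over $Z$; without this, the argument does not close. You also need to be more careful about what ``pullback of $\leftnat{\Lambda^n_0} \to \leftnat{\Delta^n}$ along $\pi$'' means and why hypotheses (1) and (4) make the result a cocartesian equivalence in $s\Set^+_{/(X,\cF)}$ rather than merely in $s\Set^+_{/X^\sharp}$.
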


We formulated Theorem~\ref{thm:FunctorialityOfCocartesianModelStructure} as a theorem concerning a span $Z \xleftarrow{\pi} X \xrightarrow{\rho} X'$ because in applications we will typically be interested in the composite Quillen adjunction
\[ \adjunct{\rho_! \pi^\ast}{s\Set^+_{/(Z,\cE)}}{s\Set^+_{/(X',\cF')}}{\pi_\ast \rho^\ast}. \]

Here are two examples.

\begin{exm}[Pairing cartesian and cocartesian fibrations] \label{exm:pairing} Let $\pi: X \to Z$ be a cartesian fibration. Then the span
\[ Z^\sharp \xleftarrow{\pi} \rightnat{X} \xrightarrow{\pi} Z^\sharp \]
satisfies the hypotheses of Theorem~\ref{thm:FunctorialityOfCocartesianModelStructure}. Now given a cocartesian fibration $Y \to Z$, define
\[ \widetilde{\Fun}_Z(X,Y) \coloneq (\pi_\ast \pi^\ast)(\leftnat{Y} \rightarrow Z^\sharp). \]
Then the fiber of $\widetilde{\Fun}_Z(X,Y)$ over an object $z \in Z$ is $\Fun(X_z,Y_z)$, and given a morphism $\alpha: z_0 \to z_1$, the pushforward functor
$$\alpha_!: \Fun(X_{z_0},Y_{z_0}) \to \Fun(X_{z_1},Y_{z_1})$$
is given by precomposition in the source and postcomposition in the target. Note how this example highlights the relevance of condition (1) in Theorem~\ref{thm:FunctorialityOfCocartesianModelStructure}(ii).
\end{exm}

\nomenclature[pairingAbs]{$\widetilde{\Fun}_D(C,E)$}{Pairing construction}

\begin{exm}[Right Kan extension] \label{exm:RKEcocartesian} Let $f: Y \to Z$ be a functor. We can apply Theorem~\ref{thm:FunctorialityOfCocartesianModelStructure} to perform the operation of right Kan extension at the level of cocartesian fibrations. Consider the span
\[ Z^\sharp \xleftarrow{\ev_0} (\sO(Z) \times_{Z,f} Y)^\sharp \xrightarrow{\pr_Y} Y^\sharp. \]
Then the conditions of Theorem~\ref{thm:FunctorialityOfCocartesianModelStructure} are satisfied, so we obtain a Quillen adjunction
\[ \adjunct{(\pr_Y)_! (\ev_0)^\ast}{s\Set^+_{/Z}}{s\Set^+_{/Y}}{(\ev_0)_\ast (\pr_Y)^\ast}. \]
In addition, the map $C \times_Z Y^\sharp \to C \times_Z \sO(Z)^\sharp \times_Z Y^\sharp$ induced by the identity section $\iota: Z \to \sO(Z)$ is a cocartesian equivalence in $s\Set^+_{/Y}$ for $C \to Z$ \emph{fibrant} in $s\Set^+_{/Z}$, by \cite[Lemma~9.8]{BDGNS1}. Consequently, the induced adjunction of $\infty$-categories
\[ \adjunct{(\pr_Y)_! (\ev_0)^\ast}{\Cat_{\infty/Z}^\cocart}{\Cat_{\infty/Y}^\cocart}{(\ev_0)_\ast (\pr_Y)^\ast} \]
is equivalent to
\[ \adjunct{f^\ast}{\Fun(Z,\Cat_\infty)}{\Fun(Y,\Cat_\infty)}{f_\ast} \]
under the straightening/unstraightening equivalence (which is natural with respect to pullback).

Note that as a special case, if $Z = \Delta^0$ we recover the formula $\Fun_Y(Y^\sharp,\leftnat{C}) \simeq \lim\limits_{\ot} F_C$ of \cite[Corollary~3.3.3.2]{HTT} (where $C \to Y$ is a cocartesian fibration and $F_C: Y \to \Cat_\infty$ the corresponding functor). Indeed, this construction of the right Kan extension of a cocartesian fibration is suggested by that result and the pointwise formula for a right Kan extension.
\end{exm}

Finally, we will use the following two observations concerning the interaction of Theorem~\ref{thm:FunctorialityOfCocartesianModelStructure} with compositions and homotopy equivalences of spans (which we also recorded in \cite{BarShah}).

\begin{lem} \label{lm:SpanComposition} Suppose we have spans of marked simplicial sets
\[ X_0 \xleftarrow{\pi_0} Z_0 \xrightarrow{\rho_0} X_1 \]
and
\[ X_1 \xleftarrow{\pi_1} Z_1 \xrightarrow{\rho_1} X_2 \]
which each satisfy the hypotheses of Theorem~\ref{thm:FunctorialityOfCocartesianModelStructure}. Then the span
\[ Z_0 \xleftarrow{\pr_0} Z_0 \times_{X_1} Z_1 \xrightarrow{\pr_1} Z_1 \]
also satisfies the hypothesis of Theorem~\ref{thm:FunctorialityOfCocartesianModelStructure}.\footnote{However, one should beware that the ``long'' span
\[ X_0 \ot Z_0 \times_{X_1} Z_1 \to X_2 \]
may fail to satisfy the hypotheses of Theorem~\ref{thm:FunctorialityOfCocartesianModelStructure}, because the composition of locally cartesian fibrations may fail to again be locally cartesian; this explains the roundabout formulation of the statement.} Consequently, we obtain a Quillen adjunction
\[ \adjunct{(\rho_1 \circ \pr_1)_! (\pi_0 \circ \pr_0)^\ast}{s\Set^+_{/ X_0}}{s\Set^+_{X_2}}{(\pi_0 \circ \pr_0)_\ast (\rho_1 \circ \pr_1)^\ast}, \]
which is the composite of the Quillen adjunction from $s\Set^+_{/X_0}$ to $s\Set^+_{/X_1}$ with the one from $s\Set^+_{/X_1}$ to $s\Set^+_{/X_2}$.
\end{lem}
\begin{proof} The assertion that the span satisfies the hypotheses of Theorem~\ref{thm:FunctorialityOfCocartesianModelStructure} is by inspection. The other assertion that the Quillen adjunction factors as a composite follows from the base-change isomorphism $\rho_0^\ast \pi_{1,\ast} \cong \pr_{0,\ast} \circ \pr_1^\ast$.
\end{proof}

\begin{lem} \label{lm:SpanHomotopyInvariance} Suppose a morphism of spans of marked simplicial sets
\[ \begin{tikzcd}[row sep=2em, column sep=2em]
& Z \ar{ld}[swap]{\pi} \ar{rd}{\rho} \ar{d}{f} & \\
X & Z' \ar{l}{\pi'} \ar{r}[swap]{\rho'} & X' 
\end{tikzcd} \]

where $\rho_! \pi^\ast$ and $(\rho')_! (\pi')^\ast$ are left Quillen with respect to the cocartesian model structures on $X$ and $X'$. Suppose moreover that $f$ is a homotopy equivalence in $s\Set^+_{/X'}$, so that there exists a homotopy inverse $g$ and homotopies
\[h\colon \id \simeq g \circ f\text{\quad and\quad}k\colon \id \simeq f \circ g.\]
Then the natural transformation $\rho_! \pi^\ast \to (\rho')_! (\pi')^\ast$ induced by $f$ is a cocartesian equivalence on all objects, and, consequently, the adjoint natural transformation $(\pi')_\ast (\rho')^\ast \to \pi_\ast \rho^\ast$ is a cocartesian equivalence on all fibrant objects.
\begin{proof} The homotopies $h$ and $k$ pull back to show that for all $X \to C$, the map
\[\id_X \times_C f\colon X \times_C K \to X \times_C L\]
is a homotopy equivalence with inverse $\id_X \times_C g$. The last statement now follows from \cite[Corollary~1.4.4(b)]{Hovey}.
\end{proof}
\end{lem}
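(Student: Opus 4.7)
The plan is as follows. First, I would unwind the definitions: for an object $W \to X$ of $s\Set^+_{/X}$, the pushforward $\rho_! \pi^\ast(W) = W \times_X Z$ (viewed over $X'$ via $\rho$), and $(\rho')_! (\pi')^\ast(W) = W \times_X Z'$ (viewed over $X'$ via $\rho'$). The natural transformation induced by the morphism of spans evaluates at $W$ to the map $\id_W \times_X f \colon W \times_X Z \to W \times_X Z'$ over $X'$.

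Next, I would show that this map is a homotopy equivalence in $s\Set^+_{/X'}$. Taking $\id_W \times_X g$ as candidate inverse, the homotopies $h$ and $k$ — which, as components of a morphism of spans over $X$, are compatible with the structural maps to $X$ — pull back along the projections $W \times_X Z \to Z$ and $W \times_X Z' \to Z'$ to yield homotopies $\id_W \times_X h$ and $\id_W \times_X k$. Since $s\Set^+_{/X'}$ is a simplicial model category by Proposition \ref{prp:SimplicialEnrichmentOfCocartesianModelStructure} and $(\Delta^1)^\sharp$ is a contractible marked simplicial set that provides a good cylinder, any such homotopy equivalence induces a homotopy equivalence on mapping spaces $\Map_{X'}(-, F)$ into fibrant objects $F$, and is therefore a cocartesian equivalence. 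This establishes the first claim.

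For the second claim, I would invoke \cite[1.4.4(b)]{Hovey} applied to the two Quillen adjunctions $\rho_! \pi^\ast \dashv \pi_\ast \rho^\ast$ and $(\rho')_! (\pi')^\ast \dashv (\pi')_\ast (\rho')^\ast$: this result asserts that the mate of a pointwise weak equivalence between left Quillen functors is a pointwise weak equivalence between the right Quillen adjoints on fibrant objects. Since every object of $s\Set^+_{/X}$ is cofibrant, the first claim supplies the needed hypothesis, yielding the cocartesian equivalence $(\pi')_\ast (\rho')^\ast \to \pi_\ast \rho^\ast$ on fibrant objects.

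The main obstacle is purely bookkeeping — verifying that the pulled-back maps and homotopies indeed lie in the correct slice categories and respect the markings — but this is routine given that $f$, $g$ are morphisms of spans over $X$ and that $h$, $k$ respect the structure over $X'$; no substantive difficulty arises.
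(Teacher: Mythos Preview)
Your proposal is correct and follows essentially the same approach as the paper's proof: pull back the homotopy equivalence data along $W \times_X (-)$ to exhibit $\id_W \times_X f$ as a homotopy equivalence in $s\Set^+_{/X'}$, then invoke \cite[1.4.4(b)]{Hovey} for the statement about right adjoints. Your version is simply more explicit about why a homotopy equivalence is a cocartesian equivalence and about the cofibrancy hypothesis needed for Hovey's result.
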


\subsection*{Parametrized fibers} In this brief subsection, we record notation for the $S$-fibers of an $S$-functor.

\begin{ntn} \label{ntn:parametrizedFibers} Given an $S$-category $\pi: D \to S$ and an object $x \in D$, define
\[ \sO_{x \rightarrow}(D) \coloneq \{x\} \times_D \sO(D). \]
For the full subcategory of cocartesian edges $\sO^\cocart(D) \subset \sO(D)$, also define
\[ \underline{x} \coloneq \{ x \} \times_D \sO^\cocart(D). \]
Given an $S$-functor $\phi: C \to D$, define
\[ C_{\underline{x}} \coloneq \underline{x} \times_{D,\phi} C. \]
\end{ntn}

\nomenclature[ParamPoint]{$\underline{x}$}{Parametrized point}
\nomenclature[ParamFiber]{$C_{\underline{x}}$}{Parametrized fiber}

Note that by definition, the objects of $\underline{x}$ are $\pi$-cocartesian edges in $D$ with source $x$. Then by the right cancellative property of $\pi$-cocartesian edges \cite[Lemma~2.4.2.7]{HTT}, the morphisms in $\underline{x}$ are $2$-simplices of cocartesian edges with source $x$, hence $\underline{x}$ is an $S$-space (via the map $\ev_1: \underline{x} \to S$). In fact, by Lemma~\ref{lm:fattenedSlice}, $\ev_1: \underline{x} \to S^{\pi x/}$ is a trivial fibration, so we may think of $\underline{x}$ as an `$S$-point' of $D$.

In view of this, we will also regard $C_{\underline{x}}$ as a $S^{\pi x/}$-category (and we will sometimes be cavalier about the distinction between $\underline{x}$ and $S^{\pi x/}$). Note however, that the functor $\underline{x} \to D$ is canonical in our setup, whereas we need to make a choice of cocartesian pushforward to choose a $S$-functor $S^{\pi x/} \to D$ that selects $x \in D$.

\section{Functor categories}
Let $S$ be an $\infty$-category. Then $\Fun(S,\Cat_\infty)$ is cartesian closed, so it possesses an internal hom. As a basic application of Theorem~\ref{thm:FunctorialityOfCocartesianModelStructure}, we will define this internal hom at the level of cocartesian fibrations over $S$.

\begin{prp} \label{func} Let $C \to S$ be a cocartesian fibration. Let $\ev_0, \ev_1: \sO(S) \times_S C \to S$ denote the source and target maps. Then the functor
\[ (\ev_1)_! (\ev_0)^\ast: s\Set^+_{/S} \to s\Set^+_{/ \sO(S)^\sharp \times_S \leftnat{C}} \to s\Set^+_{/S} \]
is left Quillen with respect to the cocartesian model structures.
\end{prp} 
\begin{proof} We verify the hypotheses of Theorem~\ref{thm:FunctorialityOfCocartesianModelStructure} as applied to the span $S^\sharp \xleftarrow{\ev_0} \sO(S)^\sharp \times_S \leftnat{C} \xrightarrow{\ev_1} S^\sharp$. By \cite[Corollary~2.4.7.12]{HTT}, $\ev_0$ is a cartesian fibration and an edge $e$ in $\sO(S) \times_S C$ is $\ev_0$-cartesian if and only if its projection to $C$ is an equivalence. (1) thus holds. (2) holds since cartesian fibrations are flat categorical fibrations. (3) is obvious. (4) follows from the stability of cocartesian edges under equivalence.
\end{proof}

\begin{dfn} \label{dfn:FunctorInternalHom}
In the statement of Proposition~\ref{func}, let
\[ \underline{\Fun}_S(C,-) \coloneq (\ev_0)_\ast (\ev_1)^\ast: s\Set^+_{/S} \to s\Set^+_{/S}. \]
We will also write this as $\underline{\Fun}_S(\leftnat{C},-)$ if we wish to emphasize the marking.
\end{dfn}

Proposition~\ref{func} implies that if $D \to S$ is a cocartesian fibration, $\underline{\Fun}_S(C,D) \to S$ is a cocartesian fibration. Unwinding the definitions, we see that an object of $\underline{\Fun}_S(C,D)$ over $s \in S$ is a $S^{s/}$-functor
$$S^{s/} \times_S C \to S^{s/} \times_S D,$$ 
and a cocartesian edge of $\underline{\Fun}_S(C,D)$ over an edge $e: \Delta^1 \to S$ is a $\Delta^1 \times_S \mathscr{O}(S)$-functor
$$\Delta^1 \times_S \mathscr{O}(S) \times_S C \to \Delta^1 \times_S \mathscr{O}(S) \times_S D.$$
\nomenclature[FunctorCategoryParameterized]{$\underline{\Fun}_S(-,-)$}{$S$-category of $S$-functors}

Our first goal is to prove that the construction $\underline{\Fun}_S(C,-)$ implements the internal hom at the level of underlying $\infty$-categories. To this end, we have the following lemma and proposition.

\begin{lem} \label{funclem} Let $\iota: S \to \mathscr{O}(S)$ be the identity section and regard $\mathscr{O}(S)^\sharp$ as a marked simplicial set over $S$ via the target map. Then
\begin{enumerate}
\item[(1)] For every marked simplicial set $X \to S$ and cartesian fibration $C \to S$,
\[ \id_X \times \iota \times \id_C: X \times_S \rightnat{C} \to X \times_S \mathscr{O}(S)^\sharp \times_S \rightnat{C} \]
is a cocartesian equivalence in $s\Set^+_{/S}$.

\item[(1\textquotesingle)] For every marked simplicial set $X \to S$ and cartesian fibration $C \to S$,
\[ \iota \times \id_C: X \times_S \rightnat{C} \to \Fun((\Delta^1)^\sharp,X) \times_S \rightnat{C} \]
is a cocartesian equivalence in $s\Set^+_{/S}$, where the marked edges in $\Fun((\Delta^1)^\sharp,X)$ are the marked squares in $X$.

\item[(2)] For every marked simplicial set $X \to S$ and cocartesian fibration $C \to S$,
\[  \id_C \times \iota \times \id_X : \leftnat{C} \times_S X \to \leftnat{C} \times_S \mathscr{O}(S)^\sharp \times_S X \]
is a homotopy equivalence in $s\Set^+_{/S}$.
\end{enumerate}
\end{lem}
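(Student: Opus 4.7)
All three parts admit parallel proofs via explicit construction of marked homotopy equivalences, from which the cocartesian equivalences in (1) and (1') follow since homotopy equivalences are in particular cocartesian equivalences. I focus on (2) as the primary case.

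For (2), let $P\colon \leftnat{C} \times_S \sO(S)^\sharp \to \sO^\cocart(\leftnat{C})$ be a cocartesian pushforward furnished by Lemma \ref{lm:generalizedCocartesianPushforward}, normalized to fix the identity section $C \subset \sO^\cocart(C)$. Define
\[ r\colon \leftnat{C} \times_S \sO(S)^\sharp \times_S X \to \leftnat{C} \times_S X, \qquad (c, \alpha, x) \mapsto (\ev_1 P(c, \alpha), x), \]
so that $r \circ (\id_C \times \iota \times \id_X) = \id_{\leftnat{C} \times_S X}$ on the nose. For the reverse composition, construct a homotopy $h\colon (\Delta^1)^\sharp \times Y \to Y$ (where $Y$ denotes the codomain of $\iota$) from $\id_Y$ to $(\id_C \times \iota \times \id_X) \circ r$ by assembling three component maps: the map $\widetilde{P}\colon \Delta^1 \times C \times_S \sO(S) \to C$ adjoint to $P$ on the $C$-factor; the natural contraction $\mu\colon \Delta^1 \times \sO(S) \to \sO(S)$ given by $\mu(\tau, \alpha)(s) = \alpha(\max(\tau, s))$ on the $\sO(S)^\sharp$-factor; and the identity on the $X$-factor. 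At $\tau = 0$ this recovers $\id_Y$, and at $\tau = 1$ it recovers $(\id_C \times \iota \times \id_X) \circ r$. The projections to $S$ are compatible because $\ev_0 \mu(\tau, \alpha) = \alpha(\tau)$ tracks the base of the cocartesian lift produced by $\widetilde{P}$, while $\ev_1 \mu(\tau, \alpha) \equiv t$ stays constant, matching the stationary $X$-component.

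The critical verification is that $h$ preserves markings. The $X$-factor is the identity and the $\sO(S)^\sharp$-factor presents no obstruction (all edges marked); only the $C$-factor requires attention. The needed input is that $P$, applied to any 1-simplex $(e_C, e_\sO)$ of $\leftnat{C} \times_S \sO(S)$, produces a commuting square in $C$ whose two vertical edges are cocartesian---namely the cocartesian lifts of $\ev_0 e_\sO$ and $\ev_1 e_\sO$ starting at $c_0$ and $c_1$. Combined with the 2-of-3 property for cocartesian edges, this handles each edge type of $(\Delta^1)^\sharp \times Y$: on horizontal edges (varying $\tau$ at fixed $\eta$) the image is a cocartesian lift directly; on vertical edges at $\tau = 0$ or $\tau = 1$ one recovers $e_C$ or the functorially determined bottom edge $\alpha_{0,!} c_0 \to \alpha_{1,!} c_1$ respectively; and on the diagonal edge one gets the composite cocartesian edge $c_0 \to \alpha_{1,!} c_1$.

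For (1), apply the dual construction with a cartesian pullback $P^\cart$ for $C$ and the contraction $\mu'(\tau, \alpha)(s) = \alpha(\min(\tau, s))$; the retraction becomes $r(x, \alpha, c) = (x, \ev_0 P^\cart(c, \alpha))$, and the parallel 2-of-3 argument for cartesian edges handles markings. For (1'), substitute $\Fun((\Delta^1)^\sharp, X)$ for $\sO(S)^\sharp$: the retraction $r(e\colon x_0 \to x_1, c) = (x_0, \alpha^* c)$ uses cartesian pullback of $c$ along $\alpha = \pi(e)$, and the contraction on the $\Fun$-side is built from the max map applied to the internal $\Delta^1$-coordinate of $\Fun((\Delta^1)^\sharp, X) \subset X^{\Delta^1}$. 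The marking-preservation check is the principal technical obstacle throughout, but the (co)cartesian 2-of-3 argument is tailored to resolve it in each case.
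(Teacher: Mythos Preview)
Your argument for (2) is essentially the paper's own: the retraction via the cocartesian pushforward $P$ and the contracting homotopy built from the max map on $\Delta^1 \times \Delta^1$ are exactly what appear there. That part is fine.

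The gap is in (1) and (1'). Your opening claim that all three parts admit parallel homotopy-equivalence proofs is false, and the asymmetry in the statement of the lemma---``cocartesian equivalence'' for (1), (1') versus ``homotopy equivalence'' for (2)---is there for a reason. In the codomain $X \times_S \sO(S)^\sharp \times_S \rightnat{C}$, a point $(x,\alpha\colon s\to t,c)$ has $x$ over $s$ and $c$ over $t$, and the structure map to $S$ (via the target $\ev_1$) sends it to $t$. Your proposed retraction $r(x,\alpha,c)=(x,\ev_0 P^{\cart}(c,\alpha))$ lands both coordinates over $s$, hence lives over $s$, not $t$: it is not a map in $s\Set^+_{/S}$. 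Likewise your homotopy $h$, built from $\mu'(\tau,\alpha)(u)=\alpha(\min(\tau,u))$, has $\ev_1\mu'(\tau,\alpha)=\alpha(\tau)$ varying with $\tau$, so $h$ does not cover the projection to $S$. In (2) this problem does not arise because there the structure map factors through the stationary $X$-component, and the max contraction keeps $\ev_1$ fixed at $t$; there is no dual fix available for (1) since $X$ is an arbitrary marked simplicial set with no cocartesian pushforward along $\alpha$.

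The paper handles (1) by an entirely different route: it first reduces to $C=S$ using that pullback along the cartesian fibration $\rightnat{C}\to S^\sharp$ preserves cocartesian equivalences, and then tests $X\to X\times_S\sO(S)^\sharp$ against all fibrant $Z$ via a lifting argument, constructing the lift by factoring through the trivial fibration $\Fun((\Delta^1)^\sharp,\leftnat{Z})\to \leftnat{Z}\times_S\sO(S)^\sharp$. No homotopy inverse over $S$ is (or can be) produced.
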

\begin{proof}
\begin{enumerate}[leftmargin=*]
\item[(1)] Because $- \times_S \rightnat{C}$ preserves cocartesian equivalences, we reduce to the case where $C = S$. By definition, $X \to X \times_S \mathscr{O}(S)^\sharp$ is a cocartesian equivalence if and only if for every cocartesian fibration $Z \to S$, $\Map^\sharp_S(X \times_S \mathscr{O}(S)^\sharp, \leftnat{Z}) \to \Map^\sharp_S(X,\leftnat{Z})$ is a trivial Kan fibration. In other words, for every monomorphism of simplicial sets $A \to B$ and cocartesian fibration $Z \to S$, we need to provide a lift in the following commutative square
\[ \begin{tikzcd}[row sep=2em, column sep=2em]
B^\sharp \times X \bigsqcup_{A^\sharp \times X} (A^\sharp \times X) \times_S \mathscr{O}(S)^\sharp \ar{r}{\phi} \ar{d}  & \leftnat{Z} \ar{d} \\
(B^\sharp \times X) \times_S \mathscr{O}(S)^\sharp \ar{r} \ar[dotted]{ru} & S^\sharp
\end{tikzcd} \]

Define $h_0: \mathscr{O}(S)^\sharp \times (\Delta^1)^\sharp \to \mathscr{O}(S)^\sharp$ to be the adjoint to the map $\mathscr{O}(S)^\sharp \to \mathscr{O}(\mathscr{O}(S))^\sharp$ obtained by precomposing by the map of posets $\Delta^1 \times \Delta^1 \to \Delta^1$ which sends $(1,1)$ to $1$ and the other vertices to $0$. Precomposing $\phi$ by $\id_{A^\sharp \times X} \times h_0$, define a homotopy
\[ h: (A^\sharp \times X) \times_S \mathscr{O}(S)^\sharp \times (\Delta^1)^\sharp \to \leftnat{Z} \]
from $\phi|_{A^\sharp \times X} \circ \pr_{A^\sharp \times X}$ to $\phi|_{(A^\sharp \times X) \times_S \mathscr{O}(S)^\sharp}$. Using $h$ and $\phi|_{B^\sharp \times X}$, define a map
\[ \psi: B^\sharp \times X \bigsqcup_{A^\sharp \times X} (A^\sharp \times X) \times_S \mathscr{O}(S)^\sharp \to \Fun((\Delta^1)^\sharp, \leftnat{Z}) \]
such that $\psi|_{B^\sharp \times X}$ is adjoint to $\phi|_{B^\sharp \times X} \circ \pr_{B^\sharp \times X}$ and $\psi|_{(A^\sharp \times X) \times_S \mathscr{O}(S)^\sharp}$ is adjoint to $h$. Then we may factor the above square through the trivial fibration $\Fun((\Delta^1)^\sharp, \leftnat{Z}) \to \leftnat{Z} \times_S \mathscr{O}(S)^\sharp$ to obtain the commutative rectangle
\[ \begin{tikzcd}[row sep=2em, column sep=2em]
B^\sharp \times X \bigsqcup_{A^\sharp \times X} (A^\sharp \times X) \times_S \mathscr{O}(S)^\sharp \ar{r}{\psi} \ar{d} & \Fun((\Delta^1)^\sharp, \leftnat{Z}) \ar{r}{e_1} \ar[->>]{d}{\simeq} &  \leftnat{Z} \ar{d} \\
(B^\sharp \times X) \times_S \mathscr{O}(S)^\sharp \ar[dotted]{ru}[description]{\widetilde{\psi}} \ar{r}[swap]{\phi|_{B^\sharp \times X} \times \id} & \leftnat{Z} \times_S \mathscr{O}(S)^\sharp \ar{r}{e_1} & S^\sharp.
\end{tikzcd} \]
The dotted lift $\widetilde{\psi}$ exists, and $e_1 \circ \widetilde{\psi}$ is our desired lift.

\item[(1\textquotesingle)] Repeat the argument of (1) with $\Fun((\Delta^1)^\sharp,X)$ in place of $\sO(S)^\sharp$.

\item[(2)] Let $p: C \to S$ denote the structure map and let $P$ be a lift in the commutative square
\[ \begin{tikzcd}[row sep=2em, column sep=2em]
\leftnat{C} \ar{r}{\iota_C} \ar{d} & \Fun((\Delta^1)^\sharp, \leftnat{C}) \ar[->>]{d}{(e_0,\sO(p))}[swap]{\simeq} \\
\leftnat{C} \times_S \sO(S)^\sharp \ar{r}{=} \ar[dotted]{ru}{P} & \leftnat{C} \times_S \sO(S)^\sharp.
\end{tikzcd} \]
Let 
\[ g = (e_1 \times \id_X) \circ (P \times \id_X): \leftnat{C} \times_S \sO(S)^\sharp \times_S X \to \leftnat{C} \times_S X \]
and note that $g$ is a map over $S$. We claim that $g$ is a marked homotopy inverse of $f = \id_C \times \iota \times \id_X$. By construction, $g \circ f = \id$. For the other direction, define
$$h_0: \Fun((\Delta^1)^\sharp, \leftnat{C}) \times (\Delta^1)^\sharp \to \Fun((\Delta^1)^\sharp, \leftnat{C})$$
as the adjoint of the map $\Fun((\Delta^1)^\sharp, \leftnat{C}) \to \Fun((\Delta^1 \times \Delta^1)^\sharp, \leftnat{C})$ obtained by precomposing by the map of posets $\Delta^1 \times \Delta^1 \to \Delta^1$ which sends $(0,0)$ to $0$ and the other vertices to $1$. Define
\[ h: \leftnat{C} \times_S \sO(S)^\sharp \times_S X \times (\Delta^1)^\sharp \to \leftnat{C} \times_S \sO(S)^\sharp \times_S X \]
as the composite $((e_0, \mathscr{O}(p)) \times X) \circ (h_0 \times X) \circ (P \times \id_{X \times (\Delta^1)^\sharp})$. Then $h$ is a homotopy over $S$ from $\id$ to $f \circ g$.
\end{enumerate}
\end{proof}

\begin{prp} \label{funsect} Let $C, C', D \to S$ be cocartesian fibrations and let $F: C \to C'$ be a monomorphism of cocartesian fibrations over $S$ (so preserving cocartesian edges). For all marked simplicial sets $Y$ over $S$, the map
\[ \Fun_S(\leftnat{D}, \underline{\Fun}_S (\leftnat{C'},Y) ) \to \Fun_S(\leftnat{D} \times_S \leftnat{C'},Y) \times_{\Fun_S(\leftnat{D} \times_S \leftnat{C},Y)} \Fun_S(\leftnat{D},\underline{\Fun}_S(\leftnat{C},Y)) \]
which precomposes by $F$ is a trivial Kan fibration.
\end{prp}
\begin{proof} From the defining adjunction, for all $X,Y \in s\Set^+_{/S}$ we have a natural isomorphism
\[ \Fun_S(X,\underline{\Fun}_S(\leftnat{C},Y)) \cong \Fun_S(X \times_S \mathscr{O}(S)^\sharp \times_S \leftnat{C}, Y) \]
of simplicial sets. Since $\Fun_S(-,-)$ is a right Quillen bifunctor, the assertion reduces to showing that
\[ \leftnat{D} \times_S \leftnat{C'} \bigsqcup_{\leftnat{D} \times_S \leftnat{C}} \leftnat{D} \times_S \sO(S)^\sharp \times_S \leftnat{C} \to \leftnat{D} \times_S \sO(S)^\sharp \times_S \leftnat{C'} \]
is a trivial cofibration in $s\Set^+_{/S}$, which follows from Lemma~\ref{funclem}(2).
\end{proof}

In Proposition~\ref{funsect}, letting $C = \emptyset$ and $Y = \leftnat{E}$ for another cocartesian fibration $E \to S$, we deduce that $\underline{\Fun}_S(C',-)$ is right adjoint to $C' \times_S -$ as an endofunctor of $\Fun(S,\Cat_\infty)$. Further setting $D=S$, we deduce that the category of cocartesian sections of $\underline{\Fun}_S(\leftnat{C},\leftnat{E})$ is equivalent to $\Fun_S(\leftnat{C},\leftnat{E})$. We will employ the following notation to explicitly track objects under this correspondence.

\begin{ntn} \label{ntn:cocartesianSection}
Given a map $f: \leftnat{C} \to \leftnat{E}$, let $\sigma_f$ denote the cocartesian section $S^\sharp \to \underline{\Fun}_S( \leftnat{C},\leftnat{E})$ given by adjointing the map $\sO(S)^\sharp \times_S \leftnat{C} \xrightarrow{\pr_C} \leftnat{C} \xrightarrow{f} \leftnat{E}$.
\end{ntn}

\nomenclature[sectionOfFunctorCategory]{$\sigma_f$}{Cocartesian section $S \to \underline{\Fun}_S(C,E)$ classifying $S$-functor $f: C \to E$}

We next study varying the second variable in the construction $\underline{\Fun}_S(-,-)$.

\begin{lem} \label{funclem2} Let $C \to D$ be a fibration of marked simplicial sets over $S$.
\begin{enumerate} \item Let $K \to S$ be a cocartesian fibration. Then
\[ \underline{\Fun}_S(\leftnat{K},C) \to \underline{\Fun}_S(\leftnat{K},D) \times_D C \]
is a fibration in $s\Set^+_{/S}$.

\item The map
\[ \underline{\Fun}_S(S^\sharp,C) \to \underline{\Fun}_S(S^\sharp,D) \times_D C \]
is a trivial fibration in $s\Set^+_{/S}$.
\end{enumerate}
\end{lem}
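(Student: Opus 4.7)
The plan is to deduce both parts from the Quillen adjunction
\[
\leftnat{K} \times_S \sO(S)^\sharp \times_S (-) \;\dashv\; \underline{\Fun}_S(\leftnat{K},-)
\]
of Prp.~\ref{func}, combined with the cocartesian equivalences supplied by Lm.~\ref{funclem}, left-properness of the cocartesian model structure on $s\Set^+_{/S}$, and Prp.~\ref{prp:FibrationBetweenFibrantObjects} for translating between model-theoretic and categorical fibrations between fibrant objects.

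For part (1), I would verify the right lifting property of the map against trivial cofibrations $A \hookrightarrow B$ in $s\Set^+_{/S}$. Passing the lifting problem through the adjunction, while folding in the extra datum $B \to C$ coming from the $C$-factor of the fiber-product target, the problem becomes one of lifting against $C \to D$ from the pushout
\[
P \;=\; \bigl( A \times_S \sO(S)^\sharp \times_S \leftnat{K} \bigr) \;\cup_{A \times_S \leftnat{K}}\; \bigl( B \times_S \leftnat{K} \bigr),
\]
glued along the identity section $\iota\colon S \hookrightarrow \sO(S)$, into $B \times_S \sO(S)^\sharp \times_S \leftnat{K}$. It therefore suffices to show the resulting inclusion $P \hookrightarrow B \times_S \sO(S)^\sharp \times_S \leftnat{K}$ is a trivial cofibration in $s\Set^+_{/S}$. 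That it is a cofibration is immediate from pushout-product closure of monomorphisms. That it is a weak equivalence follows from Lm.~\ref{funclem}(2), which identifies each of the inclusions $X \times_S \leftnat{K} \hookrightarrow X \times_S \sO(S)^\sharp \times_S \leftnat{K}$ for $X = A$ and $X = B$ as a cocartesian equivalence, together with left-properness of the model structure and two-out-of-three.

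For part (2), I would first specialize (1) to $K = S$ (so that $\leftnat{K} = S^\sharp$) to obtain that the map is a fibration. By Prp.~\ref{prp:FibrationBetweenFibrantObjects}, it then suffices to show it is a weak equivalence. Lm.~\ref{funclem}(1), applied with $C = S$ (hence $\rightnat{C} = S^\sharp$), identifies $\iota\colon S^\sharp \hookrightarrow \sO(S)^\sharp$ as a cocartesian equivalence in $s\Set^+_{/S}$; passing this through the adjunction---alternatively, through Lm.~\ref{lm:SpanHomotopyInvariance}---yields that the natural evaluation map $\underline{\Fun}_S(S^\sharp,X) \to X$ is a cocartesian equivalence, and hence a trivial fibration, for every fibrant $X$. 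Applied to $X = D$ and combined with right-properness, this shows the projection $\underline{\Fun}_S(S^\sharp,D) \times_D C \to C$ is a trivial fibration; applied to $X = C$, it shows $\underline{\Fun}_S(S^\sharp,C) \to C$ is a weak equivalence. Two-out-of-three on the factorization $\underline{\Fun}_S(S^\sharp,C) \to \underline{\Fun}_S(S^\sharp,D) \times_D C \to C$ then yields the claim.

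The principal obstacle lies in part (1): after passing through the adjunction, the extra datum $B \to C$ carried by the fiber-product target must be identified with the restriction along the identity section $\iota$ of $\sO(S)$, so that it can be absorbed into a single pushout-product. Once this identification is made, the weak-equivalence half of the ``trivial cofibration'' assertion flows transparently from Lm.~\ref{funclem}(2) and left-properness, while the cofibration half is automatic.
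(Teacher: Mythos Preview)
Your argument for part (1) has a gap in the invocation of Lm.~\ref{funclem}(2). In the triple fiber product $A \times_S \sO(S)^\sharp \times_S \leftnat{K}$ produced by the adjunction of Prp.~\ref{func}, the arbitrary input $A$ sits over the \emph{source} leg of $\sO(S)$ while $K$ sits over the \emph{target} leg. But Lm.~\ref{funclem}(2) establishes the equivalence $\leftnat{C} \times_S X \to \leftnat{C} \times_S \sO(S)^\sharp \times_S X$ only when the source-side factor $C$ is a cocartesian fibration; here that role is played by $A$ (or $B$), which is arbitrary. (Lm.~\ref{funclem}(1) has the arbitrary factor on the source side, but then requires the target-side factor to be \emph{cartesian}, which $K$ is not.) Indeed the map $A \times_S \leftnat{K} \to A \times_S \sO(S)^\sharp \times_S \leftnat{K}$ need not be a cocartesian equivalence: take $S = \Delta^1$, $A = \{0\}$, $K = \{1\}$, where it becomes $\emptyset \to \ast$. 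The paper avoids this entirely by appealing to Prp.~\ref{func} directly: since $(-) \times_S \sO(S)^\sharp \times_S \leftnat{K}$ is left Quillen, the trivial cofibration $A \to B$ is carried to a trivial cofibration, and the pushout $B \sqcup_A (A \times_S \sO(S)^\sharp \times_S \leftnat{K}) \to B \times_S \sO(S)^\sharp \times_S \leftnat{K}$ (glued over $A$, not over $A \times_S K$) is then a trivial cofibration by pushout-stability plus two-out-of-three.

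For part (2) your two-step argument can be made to work, but the paper is more direct: it establishes the trivial-fibration property in one stroke by lifting against \emph{all} cofibrations $A \to B$. With $K = S$ the relevant inclusion is $A \hookrightarrow A \times_S \sO(S)^\sharp$, which Lm.~\ref{funclem}(1) (with $C = S$) shows is a trivial cofibration for every $A$; the same pushout argument then applies with $A \to B$ merely a cofibration, yielding triviality of the fibration without the detour through (1), right-properness, or a separate weak-equivalence check.
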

\begin{proof} Let $i: A \to B$ be a map of marked simplicial sets. For (1), we use that if $i$ is a trivial cofibration, then
\[ B \bigsqcup_A A \times_S \sO(S)^\sharp \times_S \leftnat{K} \to B \times_S \sO(S) \times_S \leftnat{K} \]
is a trivial cofibration, which follows from Proposition~\ref{func}. For (2), we use that if $i$ is a cofibration, then
\[ B \bigsqcup_A A \times_S \sO(S)^\sharp \to B \times_S \sO(S) \]
is a trivial cofibration, which follows from Lemma~\ref{funclem}(1).
\end{proof}

The following proposition indicates that we can promote the conclusion $\underline{\Fun}_S(S,-) \simeq \mathrm{id}$ (as an endofunctor of $\Fun(S, \Cat_{\infty})$) of Proposition~\ref{funsect} to the level of cocartesian model structures. It will not be used in the sequel and is included only for illustrative purposes.

\begin{prp} \label{funceqv} The Quillen adjunction 
\[ \adjunct{- \times_S \mathscr{O}(S)^\sharp}{s\Set^+_{/S}}{s\Set^+_{/S}}{\underline{\Fun}_S(S^\sharp,-)}  \]
is a Quillen equivalence.
\end{prp}
\begin{proof} We first check that for every cocartesian fibration $C \to S$, the counit map
\[ \underline{\Fun}_S(S^\sharp,\leftnat{C}) \times_S \sO(S)^\sharp \to \leftnat{C} \]
is a cocartesian equivalence. By Lemma~\ref{funclem}(1), it suffices to show that
\[ \underline{\Fun}_S(S^\sharp,\leftnat{C}) \to \leftnat{C} \]
is a trivial marked fibration, which follows from Lemma~\ref{funclem2}(2) (taking $D=S$). We now complete the proof by checking that $- \times_S \mathscr{O}(S)^\sharp$ reflects cocartesian equivalences: i.e., given the commutative square
\begin{equation*}
\begin{tikzpicture}[baseline]
\matrix(m)[matrix of math nodes,
row sep=4ex, column sep=4ex,
text height=1.5ex, text depth=0.25ex]
 { A  & B \\
 A \times_S \mathscr{O}(S)^\sharp & B \times_S \mathscr{O}(S)^\sharp. \\ };
\path[>=stealth,->,font=\scriptsize]
(m-1-1) edge (m-1-2)
edge (m-2-1)
(m-1-2) edge (m-2-2)
(m-2-1) edge (m-2-2);
\end{tikzpicture}
\end{equation*}
if the lower horizontal map is a cocartesian equivalence over $S$ (with respect to the target map) then the upper horizontal map is a cocartesian equivalence over $S$. But the vertical maps are cocartesian equivalences by Lemma~\ref{funclem}(1).
\end{proof}

The construction $\underline{\Fun}_S(-,-)$ does not make homotopical sense when the first variable is not fibrant, so it does not yield a Quillen bifunctor. Nevertheless, we can say the following about varying the first variable.

\begin{prp} \label{prp:FunctorFirstVariable} Let $K$, $L$, and $C$ be fibrant marked simplicial sets over $S$, let $f: K \to L$ be a map and let
\[ f^\ast: \underline{\Fun}_S(L,C) \to \underline{\Fun}_S(K,C) \]
denote the induced map.
\begin{enumerate}
\item Suppose that $f$ is a cocartesian equivalence over $S$. Then $f^\ast$ is a cocartesian equivalence over $S$.
\item Suppose that $f$ is a cofibration. Then $f^\ast$ is a fibration in $s\Set^+_{/S}$.
\end{enumerate}
\end{prp}

\begin{proof} (1): It suffices to check that for all $s \in S$, $f^\ast$ induces a categorical equivalence between the fibers over $s$, i.e. that
\[ \Fun_S((S^{s/})^\sharp \times_S L,C) \to \Fun_S((S^{s/})^\sharp \times_S K, C) \]
is a categorical equivalence. Our assumption implies that $(S^{s/})^\sharp \times_S K \to (S^{s/})^\sharp \times_S L$ is a cocartesian equivalence over $S$, so this holds.

(2): For any trivial cofibration $A \to B$ in $s\Set^+_S$, we need to check that
\[ A \times_S \sO(S) \times_S L \bigsqcup_{A \times_S \sO(S) \times_S K} B \times_S \sO(S) \times_S K \to B \times_S \sO(S) \times_S L \]
is a trivial cofibration in $s\Set^+_{/S}$. By Proposition~\ref{func}, $- \times_S \sO(S) \times_S K$ preserves trivial cofibrations and ditto for $L$. The result then follows.
\end{proof}

A final word on notation: since $\underline{\Fun}_S(-,-)$ is only homotopically meaningful (and fibrant) when both variables are fibrant, we will henceforth cease to denote the markings on the variables.

\subsection*{\texorpdfstring{$S$}{S}-categories of \texorpdfstring{$S$}{S}-objects}
For the convenience of the reader, we briefly review the construction and basic properties of the $S$-category of $S$-objects in an $\infty$-category $C$. This is a construction, at the level of marked simplicial sets, of the right adjoint to the Grothendieck construction functor\footnote{We write $\Gr_U$ to distinguish from Notation~\ref{ntn:groth}.}
\[ \Gr_U: \Cat^\cocart_{\infty/S} \to \Cat_\infty, \quad (C \to S) \mapsto C. \]
This material is originally due to Denis Nardin in \cite[\S 7]{BDGNS1}.

\begin{cnstr}[{\cite[Definition~7.4]{BDGNS1}}] \label{constr:Sobjects} The span
\[ \begin{tikzcd}[row sep = 2em, column sep = 2em]
S^\sharp & \rightnat{\sO(S)} \ar{l}[swap]{\ev_0} \ar{r}{\rho} & \Delta^0 
\end{tikzcd} \]
defines a right Quillen functor $(\ev_0)_* \rho^* : s\Set^+ \to s\Set^+_{/S}$, which sends an $\infty$-category $E$ to $\widetilde{\Fun}_S(\sO(S),E \times S)$ (cf. Example~\ref{exm:pairing}). This is the \emph{$S$-category of objects in $E$}, which we will denote by $\underline{E}_S$.
\end{cnstr}

\nomenclature[categoryOfObjects]{$\underline{C}_S$}{$S$-category of objects in an $\infty$-category $C$}

The next proposition shows that the functor $E \mapsto \underline{E}_S$ indeed implements the right adjoint to $\Gr_U$.

\begin{prp} \label{prp:UMPofSCatOfObjects} Suppose $C$ a $S$-category and $E$ an $\infty$-category. Then we have an equivalence
\begin{align*} \psi: \Fun_S(C, \underline{E}_S) &\xrightarrow{\simeq} \Fun(C,E)
\end{align*}
\end{prp}
\begin{proof} Consider the commutative diagram
\[ \begin{tikzcd}[row sep=2em, column sep=2em]
C^\sim \ar{r} \ar{d} & \rightnat{\sO(S)} \ar{r} \ar{d} & \Delta^0 \:. \\
\leftnat{C} \ar{r} \ar{d} & S^\sharp \\
\Delta^0
\end{tikzcd} \]
Given an $\infty$-category $E$, travelling along the outer span (i.e., pulling back and then pushing forward) yields $\Fun(C,E)$, travelling along the two inner spans yields $\Fun_S(C, \underline{E}_S)$, and the comparison functor $\psi$ is induced by the map $\iota: C^\sim \to \leftnat{C} \times_S \rightnat{\sO(S)}$. By \cite[Proposition~6.2]{BDGNS1}, $\iota$ is a homotopy equivalence in $s\Set^+_{/S}$. Therefore, combining Lemma~\ref{lm:SpanComposition} and Lemma~\ref{lm:SpanHomotopyInvariance}, we deduce the claim.
\end{proof}

\begin{ntn} \label{ntn:CatOfObjectsCorresp} Given a $S$-functor $p: C \to \underline{E}_S$, let $p^{\dagger}: C \to E$ denote the corresponding functor under the equivalence of Proposition~\ref{prp:UMPofSCatOfObjects}.
\end{ntn}

\nomenclature[dagger]{$p^\dagger$}{Corresponding functor under universal mapping property of $\underline{C}_S$}

\begin{exm} \label{exm:Gobjects} Let $E = \Top$ or $\Cat_\infty$. Then $\underline{\Top}_S$ resp. $\underline{\Cat}_{\infty,S}$ is the $S$-category of $S$-spaces resp. $S$-categories. In particular, suppose $E = \Top$ and $S = \OO_G^\op$. Then we also call $\underline{\Top}_{\OO_G^\op}$ the $G$-$\infty$-category of $G$-spaces. Note that the fiber of this cocartesian fibration over a transitive $G$-set $G/H$ is equivalent to the $\infty$-category of $H$-spaces $\Fun(\OO_H^\op, \Top)$, and the pushforward functors are given by restriction along a subgroup and conjugation.
\end{exm}

\begin{rem}
Let $C$ be an $S$-category and $\pi: X \to C$ a left fibration. Then $\pi$ straightens to a functor $F: C \to \Spc$, which under the equivalence of Proposition~\ref{prp:UMPofSCatOfObjects} corresponds to a $S$-functor $F': C \to \underline{\Spc}_S$. We will say that $\pi$ \emph{$S$-straightens} to $F'$. Similarly, if $\pi$ is a cocartesian fibration, then $\pi$ $S$-straightens to a $S$-functor valued in $\underline{\Cat}_{\infty,S}$.
\end{rem}

\section{Join and slice}

The join and slice constructions are at the heart of the $\infty$-categorical approach to limits and colimits. In this section, we introduce relative join and slice constructions and explore their homotopical properties.

\subsection*{The \texorpdfstring{$S$}{S}-join}

\begin{dfn} \label{dfn:join} Let $\iota: S \times \partial \Delta^1 \tohook S \times \Delta^1$ be the inclusion. Define the $S$-\emph{join} to be the functor 
\begin{equation*} (- \star_S -) \coloneq \iota_\ast: s\Set_{/ S \times \partial \Delta^1} \to s\Set_{/ S \times \Delta^1}.
\end{equation*}

Define the \emph{marked} $S$-\emph{join} to be the functor
\begin{equation*} (- \star_S -) \coloneq \iota_\ast: s\Set^+_{/ S^\sharp \times (\partial \Delta^1)^\flat} \to s\Set^+_{/ S^\sharp \times (\Delta^1)^\flat}.
\end{equation*}

\end{dfn}

\nomenclature[join]{$X \star_S Y$}{$S$-join}

\begin{ntn} Given $X,Y$ marked simplicial sets over $S$, we will usually refer to the structure maps to $S$ by $\pi_1: X \to S$, $\pi_2: Y \to S$, and $\pi: X \star_S Y \to S$. Explicitly, a $(i+j+1)$-simplex $\lambda$ of $X \star_S Y$ is the data of simplices $\sigma: \Delta^i \to X$, $\tau: \Delta^j \to Y$, and $\lambda': \Delta^i \star \Delta^j \to S$ such that the diagram

\begin{equation*}
\begin{tikzpicture}[baseline]
\matrix(m)[matrix of math nodes,
row sep=4ex, column sep=4ex,
text height=1.5ex, text depth=0.25ex]
 { \Delta^i & \Delta^i \star \Delta^j & \Delta^j \\
 X & S & Y \\ };
\path[>=stealth,->,font=\scriptsize]
(m-1-1) edge (m-1-2)
edge node[right]{$\sigma$} (m-2-1)
(m-1-2) edge node[right]{$\lambda'$} (m-2-2)
(m-1-3) edge node[right]{$\tau$} (m-2-3)
edge (m-1-2)
(m-2-1) edge node[above]{$\pi_1$} (m-2-2)
(m-2-3) edge node[above]{$\pi_2$} (m-2-2);
\end{tikzpicture}
\end{equation*}

commutes; we then have that $\lambda' = \pi \circ \lambda$. We will sometimes write $\lambda = (\sigma,\tau)$ so as to remember the data of the $i$-simplex of $X$ and the $j$-simplex of $Y$ in the notation. If given an $n$-simplex of $X \star_S Y$, we will indicate the decomposition of $\Delta^n$ given by the structure map to $\Delta^1$ as $\Delta^{n_0} \star \Delta^{n_1}$ (with either side possibly empty).
\end{ntn}

\begin{prp} \label{joinrt} Let $\iota: S \times \partial \Delta^1 \tohook S \times \Delta^1$ be the inclusion. Then
\begin{enumerate}
\item[(a)] $\iota_\ast : s\Set_{/ S \times \partial \Delta^1} \to s\Set_{/ S \times \Delta^1}$ is a right Quillen functor.
\item[(b)] $\iota_\ast : s\Set^+_{/ S^\sharp \times (\partial \Delta^1)^\flat} \to s\Set^+_{/ S^\sharp \times (\Delta^1)^\flat}$ is a right Quillen functor.
\end{enumerate}
Consequently, if $X$ and $Y$ are categorical resp. cocartesian fibrations over $S$, then $X \star_S Y$ is a categorical resp. cocartesian fibration over $S$, with the cocartesian edges given by those in $X$ and $Y$.
\end{prp}
\begin{proof} For (b), we verify the hypotheses of Theorem~\ref{thm:FunctorialityOfCocartesianModelStructure}(ii). All of the requirements are immediate except for (1) and (2).

(1): Let $(s,i)$ be a vertex of $S^\sharp \times (\partial \Delta^1)^\flat$, $i=0$ or $1$. Let $f: (s',i') \to (s,i)$ be a marked edge in $S^\sharp \times (\Delta^1)^\flat$. Then $i' = i$ and $f$ viewed as an edge in $S^\sharp \times (\partial \Delta^1)^\flat$ is locally $\iota$-cartesian.

(2): It is obvious that $\partial \Delta^1 \tohook \Delta^1$ is a flat categorical fibration, so by stability of flat categorical fibrations under base change, $S \times \partial \Delta^1 \tohook S \times \Delta^1$ is a flat categorical fibration.

(a) also follows from (2) by \cite[Proposition~B.4.5]{HA}. By (a), if $X$ and $Y$ are categorical fibrations over $S$, $X \star_S Y$ is a categorical fibration over $S \times \Delta^1$. The projection map $S \times \Delta^1 \to S$ is a categorical fibration, so $X \star_S Y$ is also a categorical fibration over $S$. By (b), if $X$ and $Y$ are cocartesian fibrations over $S$, $\leftnat{X} \star_S \leftnat{Y}$ is fibrant in $s\Set^+_{/ S^\sharp \times (\Delta^1)^\flat}$. Since $S^\sharp \times (\Delta^1)^\flat$ is marked as a cocartesian fibration over $S$, $\leftnat{X} \star_S \leftnat{Y}$ is marked as a cocartesian fibration over $S$.
\end{proof}

We have the compatibility of the relative join with base change.

\begin{lem} Let $f: T \to S$ be a functor and let $X$ and $Y$ be (marked) simplicial sets over $S$. Then we have a canonical isomorphism
\[ (X \star_S Y) \times_S T \cong (X \times_S T) \star_T (Y \times_S T). \]
\end{lem}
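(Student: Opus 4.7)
The strategy is to verify the claimed isomorphism of (marked) simplicial sets directly at the level of simplices, using the explicit description of the $S$-join provided in the notation immediately following its definition. Since both sides are built from pullbacks and the $S$-join of $X$ and $Y$ admits a transparent simplex-by-simplex description, this reduces to a bookkeeping exercise in the universal property of the pullback.

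Concretely, an $n$-simplex of $(X \star_S Y) \times_S T$ consists of a map $\rho \colon \Delta^n \to \Delta^1$ (equivalently, a decomposition $\Delta^n = \Delta^{n_0} \star \Delta^{n_1}$, with either side possibly empty), simplices $\sigma \colon \Delta^{n_0} \to X$ and $\tau \colon \Delta^{n_1} \to Y$, plus a simplex $\beta \colon \Delta^n \to T$, subject to the conditions $f \circ \beta|_{\Delta^{n_0}} = \pi_1 \circ \sigma$ and $f \circ \beta|_{\Delta^{n_1}} = \pi_2 \circ \tau$. Dually, an $n$-simplex of $(X \times_S T) \star_T (Y \times_S T)$ consists of the same $\rho$, simplices $\widetilde{\sigma} \colon \Delta^{n_0} \to X \times_S T$ and $\widetilde{\tau} \colon \Delta^{n_1} \to Y \times_S T$, and a simplex $\beta \colon \Delta^n \to T$ whose restrictions to $\Delta^{n_0}$ and $\Delta^{n_1}$ agree with the $T$-components of $\widetilde{\sigma}$ and $\widetilde{\tau}$. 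The universal property of the pullback $X \times_S T$ identifies $\widetilde{\sigma}$ with the pair $(\sigma, \beta|_{\Delta^{n_0}})$ satisfying $f \circ \beta|_{\Delta^{n_0}} = \pi_1 \circ \sigma$, and analogously for $\widetilde{\tau}$; so the two classes of simplex-data are literally the same set, naturally in $[n] \in \Delta^{\op}$.

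Naturality in $[n]$ means that face and degeneracy operators are respected automatically, yielding the desired isomorphism of simplicial sets. For the marked variant, by the construction of $\iota_{\ast}$ on marked simplicial sets, the marked edges of $X \star_S Y$ are precisely those inherited from $X$ and $Y$ via the canonical inclusions (edges lying over the two vertices of $\Delta^1$); pulling back along $T \to S$ preserves this property on either side, so the isomorphism respects markings.

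There is essentially no obstacle: the proof is a direct unwinding. The only subtle point worth naming is the edge cases $n_0 = -1$ or $n_1 = -1$, where the simplex lies entirely over one vertex of $\Delta^1$; in these cases both sides tautologically reduce to $(Y \times_S T)_n$ or $(X \times_S T)_n$, respectively, and the same universal-property argument applies verbatim.
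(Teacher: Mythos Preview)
Your proof is correct, but the paper takes a different and more conceptual route. Rather than unwinding the explicit simplex-by-simplex description of the $S$-join, the paper observes that $(- \star_S -) = (\iota_S)_\ast$ is defined as a right adjoint to pullback along $\iota_S : S \times \partial\Delta^1 \to S \times \Delta^1$, and then invokes the base-change isomorphism $f^\ast(\iota_S)_\ast \cong (\iota_T)_\ast f^\ast$ coming from the pullback square
\[
\begin{tikzcd}[row sep=2em, column sep=2em]
T \times \partial \Delta^1 \ar{r}{\iota_T} \ar{d}{f \times \id} & T \times \Delta^1 \ar{d}{f \times \id} \\
S \times \partial \Delta^1 \ar{r}{\iota_S} & S \times \Delta^1.
\end{tikzcd}
\]
This one-line argument works uniformly for the marked and unmarked cases and requires no case analysis for $n_0 = -1$ or $n_1 = -1$. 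Your approach has the virtue of being completely explicit and self-contained, relying only on the concrete description of simplices of the $S$-join; the paper's approach has the virtue of making the compatibility with base change an immediate formal consequence of the definition of $\star_S$ as $\iota_\ast$.
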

\begin{proof} From the pullback square
\[ \begin{tikzcd}[row sep=2em, column sep=2em]
T \times \partial \Delta^1 \ar{r}{\iota_T} \ar{d}{f \times \id} & T \times \Delta^1 \ar{d}{f \times \id} \\
S \times \partial \Delta^1 \ar{r}{\iota_S} & S \times \Delta^1
\end{tikzcd} \]
we obtain the base-change isomorphism $f^\ast (\iota_S)_\ast \cong (\iota_T)_\ast f^\ast$.
\end{proof}

In \cite[\S 4.2.2]{HTT}, Lurie introduces the relative `diamond' join operation $\diamond_S$, which we now recall. Given $X$ and $Y$ marked simplicial sets over $S$, define
\[ X \diamond_S Y = X \sqcup_{X \times_S Y \times \{0\}} X \times_S Y \times (\Delta^1)^\flat \sqcup_{X \times_S Y \times \{1\}} Y.\] 
There is a comparison map
$$\psi_{(X,Y)}: X \diamond_S Y \to X \star_S Y = \iota_\ast(X,Y),$$
adjoint to the isomorphism $\iota^\ast(X \star_S Y) \cong (X,Y)$.

\begin{lem} \label{lm:DiamondJoinComparison} Let $X$ be a marked simplicial set. Then $\psi_{(X,S)}: X \diamond_S S^\sharp \to X \star_S S^\sharp$ is a cocartesian equivalence in $s\Set^+_{/S}$. Dually, if $X$ is in addition fibrant, then $\psi_{(S,X)}: S^{\sharp} \diamond_S X \to S^{\sharp} \star_S X$ is a cocartesian equivalence in $s\Set^+_{/S}$.
\end{lem}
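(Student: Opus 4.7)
The plan is to adapt the absolute join-comparison $X \diamond Y \to X \star Y$ of \cite[4.2.1.2]{HTT} to the parametrized setting. I describe the proof of the first assertion; the dual follows by a parallel argument, the fibrancy hypothesis there being needed because the right-hand fiber over $s \in S$ becomes the non-trivial $X_s$ rather than $\Delta^0$.

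First I would reduce to the case where $X \to S$ is a cocartesian fibration (i.e., $X$ is fibrant in $s\Set^+_{/S}$). Given arbitrary $X$, factor $X \hookrightarrow X^{\mathrm{fib}}$ as a trivial cofibration into a cocartesian fibration. The functor $(-) \diamond_S S^\sharp$ preserves trivial cofibrations, since $X \diamond_S S^\sharp$ is built as a pushout of $X$, $X \times (\Delta^1)^\flat$, and $S^\sharp$ along cofibrations (apply left properness). For the right-hand functor, one verifies directly using the universal property $\Hom_{S \times \Delta^1}(W, X \star_S S^\sharp) = \Hom_S(W_0, X)$ (where $W_0 = W \times_{\Delta^1} \{0\}$; note the right factor of the adjunction is trivial since $S \to S \times \{1\}$ is an isomorphism) that $(-) \star_S S^\sharp$ preserves trivial cofibrations as well. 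By 2-out-of-3 applied to the naturality square for $X \hookrightarrow X^{\mathrm{fib}}$, we reduce to the case $X = X^{\mathrm{fib}}$.

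Now assume $X \to S$ is cocartesian. Then $X \star_S S^\sharp \to S$ is cocartesian with markings the cocartesian edges of $X$ together with all edges of $S^\sharp$, by Prp.~\ref{joinrt}. Direct inspection of the pushout shows $X \diamond_S S^\sharp \to S$ is also cocartesian with the \emph{same} marking: over an edge $\alpha: s_0 \to s_1$ in $S$, cocartesian lifts from $x \in X_{s_0}$ come from $X$ and from $s_0$ at level $1$ from $\alpha$ itself, while the cylinder edges cover degenerate edges of $S$ and need not be marked. Since $\psi_{(X,S)}$ is the identity on marked edges (it adds only unmarked join-simplices), and since between fibrant objects a cocartesian equivalence is precisely an underlying categorical equivalence (Prp.~\ref{prp:FibrationBetweenFibrantObjects}), it suffices to verify that $\psi_{(X,S)}$ is a categorical equivalence. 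For this, I would exploit that $\diamond_S$ and $\star_S = \iota_\ast$ both commute with base change in $S$ (the latter by the base-change square for the preceding lemma), so pulling back $\psi_{(X,S)}$ along each simplex $\sigma: \Delta^n \to S$ yields $\psi_{(X \times_S \Delta^n, \Delta^n)}$; taking $n = 0$ reduces this to the absolute comparison $X_s \diamond \Delta^0 \to X_s \star \Delta^0 = X_s^\triangleright$, which is a categorical equivalence by \cite[4.2.1.2]{HTT}. Combining these simplex-wise equivalences with the fiberwise criterion (over edges $\Delta^1 \to S$, proven similarly) concludes the proof.

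The main obstacle is the reduction in the first paragraph: verifying that $(-) \star_S S^\sharp$ preserves trivial cofibrations is not automatic from $\iota_\ast$ being right Quillen (which only gives preservation of equivalences between fibrant objects). The universal-property argument sketched above is the cleanest route; an alternative is to construct $X \star_S S^\sharp$ inductively from $X \diamond_S S^\sharp$ via marked anodyne cell attachments, directly exhibiting $\psi_{(X,S)}$ as a trivial cofibration without any fibrancy reduction.
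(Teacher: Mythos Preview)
Your overall strategy---reduce to $X$ fibrant, then argue fiberwise via \cite[4.2.1.2]{HTT}---is exactly the paper's. But both halves of your execution have gaps.

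\textbf{The reduction step.} The universal property you cite, $\Hom_{/S \times \Delta^1}(W, X \star_S S^\sharp) \cong \Hom_{/S}(W_0, X)$, exhibits $(-) \star_S S^\sharp$ as \emph{right} adjoint to $W \mapsto W \times_{\Delta^1} \{0\}$. That makes it right Quillen, which you already knew from Prp.~\ref{joinrt}; it says nothing about preserving trivial cofibrations. You correctly flag this as the main obstacle, but your proposed fix does not close it. The paper handles this by forward-reference to Thm.~\ref{thm:JoinFirstVariable}, which establishes that $(-) \star_S K$ is \emph{left} Quillen (via a different adjunction with the $S$-slice as right adjoint); this is a substantial combinatorial result resting on Prp.~\ref{joinprp}. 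Your alternative suggestion of building $X \star_S S^\sharp$ from $X \diamond_S S^\sharp$ by marked anodyne attachments would amount to reproving the lemma directly and is not obviously easier.

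\textbf{After the reduction.} You assert that $X \diamond_S S^\sharp \to S$ is itself a cocartesian fibration. This is not justified: the diamond join is a strict pushout $X \times (\Delta^1)^\flat \cup_{X \times \{1\}} S^\sharp$ along the non-monic structure map $X \to S$, and such pushouts need not be inner fibrations over $S$ (already when $S = \Delta^0$ the object $K \diamond \Delta^0$ is typically not a quasi-category). The fiberwise criterion you invoke requires both source and target to be fibrant. The paper sidesteps this: rather than claim $X \diamond_S S^\sharp$ is fibrant, it observes that $\psi_{(X,S)}$ being a cocartesian equivalence is equivalent to the square
\[
\begin{tikzcd}[row sep=2em, column sep=2em]
X \times \{1\} \ar{r} \ar{d} & X \times (\Delta^1)^\flat \ar{d} \\
S^\sharp \ar{r} & X \star_S S^\sharp
\end{tikzcd}
\]
being a homotopy pushout, and since all four corners \emph{are} fibrant (once $X$ is), this can be checked fiberwise in $\Cat_{\infty/S}^\cocart \simeq \Fun(S,\Cat_\infty)$, where it reduces to $X_s \diamond \Delta^0 \simeq X_s^\rhd$.
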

\begin{proof} We first address the map $\psi_{(X,S)}$. By left properness of the cocartesian model structure, the defining pushout for $X \diamond_S S^{\sharp}$ is a homotopy pushout. By Theorem~\ref{thm:JoinFirstVariable},\footnote{Note there is no circularity since Lemma~\ref{lm:DiamondJoinComparison} is only later referenced in this paper at the beginning of \S \ref{sec:paramcolimits}.} $- \star_S S^{\sharp}$ preserves cocartesian equivalences. Therefore, choosing a fibrant replacement for $X$ and using naturality of the comparison map $\psi_{(X,S)}$, we may reduce to the case that $X$ is fibrant. Then we have to check that
\[ \begin{tikzcd}[row sep=2em, column sep=2em]
X \times \{1\} \ar{r} \ar{d} & X \times (\Delta^1)^\flat \ar{d} \\
S^\sharp \ar{r} & X \star_S S^\sharp
\end{tikzcd} \]
is a homotopy pushout square. Since this is a square of fibrant objects, this assertion can be checked fiberwise, in which case it reduces to the equivalence $X_s \diamond \Delta^0 \xrightarrow{\simeq} X^\rhd$ of \cite[Proposition~4.2.1.2]{HTT}.

The second statement concerning $\psi_{(S,X)}$ follows by the same type of argument, but without the reduction step.
\end{proof}

\begin{wrn} In general, $\psi_{(X,Y)}$ is not a cocartesian equivalence. As a counterexample, consider $S = \Delta^1$, $X = \{0\}$, and $Y = \{1\}$. Then $\psi_{(X,Y)}$ is the inclusion of $X \diamond_S Y \cong \Delta^{\{0\}} \sqcup \Delta^{\{1\}}$ into $X \star_S Y \cong \Delta^1$, which is not a cocartesian equivalence over $\Delta^1$.
\end{wrn}

We will later need the following strengthening of the conclusion of Proposition~\ref{joinrt}.
\begin{prp} \label{joinIsSCategory}
	\begin{enumerate}[leftmargin=*]
		\item Let $C,C',D \to S$ be inner fibrations and let $C,C' \to D$ be functors over $S$.

		\hspace{.15ex} Then $C \star_D C' \to S$ is an inner fibration.
	\end{enumerate}
	\begin{enumerate} \setcounter{enumi}{1}
		\item Let $C,C',D \to S$ be $S$-categories and let $C,C' \to D$ be $S$-functors. Then $C \star_D C' \to S$ is a $S$-category with cocartesian edges given by those in $C$ or $C'$, and $C \star_D C' \to D$ is a $S$-functor.
\end{enumerate}
\end{prp}
\begin{proof}
\begin{enumerate}[leftmargin=*]
\item Let $0<k<n$. We need to solve the lifting problem
\[ \begin{tikzpicture}[baseline]
\matrix(m)[matrix of math nodes,
row sep=6ex, column sep=4ex,
text height=1.5ex, text depth=0.25ex]
 { \Lambda^n_k & C \star_D C'  \\
  \Delta^n & S. \\ };
\path[>=stealth,->,font=\scriptsize]
(m-1-1) edge node[above]{$\lambda_0$} (m-1-2)
edge (m-2-1)
(m-1-2) edge (m-2-2)
(m-2-1) edge (m-2-2)
edge[dotted] node[above]{$\lambda$} (m-1-2);
\end{tikzpicture} \]
Let $\overline{\lambda}: \Delta^n \to D$ be a lift in the commutative square
\[ \begin{tikzpicture}[baseline]
\matrix(m)[matrix of math nodes,
row sep=6ex, column sep=4ex,
text height=1.5ex, text depth=0.25ex]
 { \Lambda^n_k & D  \\
  \Delta^n & S. \\ };
\path[>=stealth,->,font=\scriptsize]
(m-1-1) edge (m-1-2)
edge (m-2-1)
(m-1-2) edge (m-2-2)
(m-2-1) edge (m-2-2)
edge node[above]{$\overline{\lambda}$} (m-1-2);
\end{tikzpicture} \]
Define $\lambda$ using the data $(\lambda_0|_{\Delta^{n_0}}, \lambda_0|_{\Delta^{n_1}}, \overline{\lambda})$. Then $\lambda$ is a valid lift.

\item Consider $C \star_D C'$ as a marked simplicial set with marked edges those in $\leftnat{C}$ or in $\leftnat{C'}$. We need to solve the lifting problem
\[ \begin{tikzpicture}[baseline]
\matrix(m)[matrix of math nodes,
row sep=6ex, column sep=4ex,
text height=1.5ex, text depth=0.25ex]
 { \leftnat{\Lambda^n_0} & C \star_D C'  \\
  \leftnat{\Delta^n} & S. \\ };
\path[>=stealth,->,font=\scriptsize]
(m-1-1) edge node[above]{$\lambda_0$} (m-1-2)
edge (m-2-1)
(m-1-2) edge (m-2-2)
(m-2-1) edge (m-2-2)
edge[dotted] node[above]{$\lambda$} (m-1-2);
\end{tikzpicture} \]
Let $\overline{\lambda}: \Delta^n \to D$ be a lift in the commutative square
\[ \begin{tikzpicture}[baseline]
\matrix(m)[matrix of math nodes,
row sep=6ex, column sep=4ex,
text height=1.5ex, text depth=0.25ex]
 { \leftnat{\Lambda^n_0} & \leftnat{D}  \\
  \leftnat{\Delta^n} & S. \\ };
\path[>=stealth,->,font=\scriptsize]
(m-1-1) edge (m-1-2)
edge (m-2-1)
(m-1-2) edge (m-2-2)
(m-2-1) edge (m-2-2)
edge node[above]{$\overline{\lambda}$} (m-1-2);
\end{tikzpicture} \]
Define $\lambda$ using the data $(\lambda_0|_{\Delta^{n_0}}, \lambda_0|_{\Delta^{n_1}}, \overline{\lambda})$. Then $\lambda$ is a valid lift. Finally, note that we may obviously lift against classes (3) and (4) of \cite[Definition~3.1.1.1]{HTT}. We conclude that $C \star_D C' \to S$ is fibrant in $s\Set^+_{/S}$, hence an $S$-category with cocartesian edges as marked.
\end{enumerate}
\end{proof}

Since the $S$-join is defined as a right Kan extension, it is simple to map into. In the other direction, we can offer the following lemma.
\begin{lem} \label{bifibration} Let $C$, $C'$, $D$, and $E$ be $S$-categories and let $C,C' \to D$ be $S$-functors. Then
\[ \Fun_S(C \star_D C', E) \to \Fun_S(C,E) \times \Fun_S(C',E) \]
is a bifibration \cite[Definition~2.4.7.2]{HTT}. Consequently,
\[ \Fun_S(C \star_D C', E) \to \Fun_S(C,E) \]
 is a cartesian fibration with cartesian edges those sent to equivalences in $\Fun_S(C',E)$, and
 \[ \Fun_S(C \star_D C', E) \to \Fun_S(C',E) \]
 is a cocartesian fibration with cocartesian edges those sent to equivalences in $\Fun_S(C',E)$.
\end{lem}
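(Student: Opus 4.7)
The plan is to verify the bifibration conditions of \cite[Dfn.~2.4.7.2]{HTT} directly, and then derive the characterizations of cartesian and cocartesian edges from the formal theory of bifibrations (\cite[Prp.~2.4.7.5]{HTT}). First, I would show that $q$ is a categorical fibration, hence an inner fibration: by Prp.~\ref{joinIsSCategory}, $C \star_D C'$ is an $S$-category, and the canonical inclusion $\leftnat{C} \sqcup \leftnat{C'} \hookrightarrow \leftnat{C \star_D C'}$ is a cofibration between fibrant objects in $s\Set^+_{/S}$. Applying the right Quillen bifunctor $\Fun_S(-,-)$ into $s\Set_{\text{Joyal}}$ (Prp.~\ref{prp:SimplicialEnrichmentOfCocartesianModelStructure}) with $E$ fibrant then yields the desired categorical fibration.

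Next, I would verify the cartesian edge condition in the first factor; the cocartesian case in the second factor is strictly dual, replacing $\Lambda^n_n$-horns with $\Lambda^n_0$-horns. Given $\alpha\colon F \to F'$ in $\Fun_S(C, E)$ and $\widetilde{F}' \in \Fun_S(C \star_D C', E)$ over $(F', G)$, I want to construct a $q$-cartesian lift $\widetilde{\alpha}\colon \widetilde{F} \to \widetilde{F}'$ with $q(\widetilde{\alpha}) = (\alpha, \id_G)$. By the lifting characterization of cartesian edges, this amounts to solving, for each $n \geq 1$, a lifting problem of the form
\[
\begin{tikzcd}[row sep=2em, column sep=2em]
(\Lambda^n_n \times \leftnat{(C \star_D C')}) \cup (\Delta^n \times (\leftnat{C} \sqcup \leftnat{C'})) \ar{r} \ar[hook]{d}{j_n} & \leftnat{E} \ar{d} \\
\Delta^n \times \leftnat{(C \star_D C')} \ar{r} \ar[dashed]{ru} & S^\sharp
\end{tikzcd}
\]
in $s\Set^+_{/S}$, where the pushout defining the source is taken along $\Lambda^n_n \times (\leftnat{C} \sqcup \leftnat{C'})$ and the top map is determined by the given data $\alpha$, $G$, $\widetilde{F}'$ (together with the partial extensions in the case $n \geq 2$).

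The main obstacle is the combinatorial verification that each $j_n$ admits a lift to $\leftnat{E}$ over $S^\sharp$. Using the base-change identity $\Delta^n \times (C \star_D C') \cong (\Delta^n \times C) \star_{\Delta^n \times D} (\Delta^n \times C')$ (a consequence of Beck--Chevalley for the right adjoint $\iota_\ast$ that defines the $D$-join), I would filter the cells adjoined by $j_n$ by the non-degenerate simplices of the target, ordered by appropriate dimension. Each cell then presents either an inner horn inclusion, liftable against the inner fibration $\leftnat{E} \to S^\sharp$, or an outer horn inclusion whose distinguished edge is marked cocartesian in $\leftnat{(C \star_D C')}$, liftable since $\leftnat{E} \to S^\sharp$ is fibrant in $s\Set^+_{/S}$. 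This parallels the standard combinatorial analyses found in \cite[\S 3.1.2]{HTT} and the proof of the analogous absolute statement (\cite[\S 2.4.7]{HTT}).

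Finally, the characterization of cartesian (resp.\ cocartesian) edges of $q$ as those sent to equivalences in $\Fun_S(C', E)$ (resp.\ $\Fun_S(C, E)$) follows formally from the bifibration property via \cite[Prp.~2.4.7.5]{HTT}.
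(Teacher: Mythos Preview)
Your approach is genuinely different from the paper's, and as sketched it has a gap.

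The paper does not verify the bifibration lifting conditions directly. Instead, it observes that the span $(\Delta^1)^\flat \xleftarrow{\pi} \leftnat{(C \star_D C')} \xrightarrow{\pi'} S^\sharp$ satisfies the hypotheses of Thm.~\ref{thm:FunctorialityOfCocartesianModelStructure} (flatness of $\pi$ over $\Delta^1$ being automatic for inner fibrations), so $M \coloneqq \pi_\ast\pi'^\ast(\leftnat{E})$ is a categorical fibration over $\Delta^1$, i.e.\ a correspondence with $M_0 \cong \Fun_S(C,E)$, $M_1 \cong \Fun_S(C',E)$, and $\Fun_{\Delta^1}(\Delta^1,M) \cong \Fun_S(C \star_D C', E)$. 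The bifibration claim is then exactly \cite[2.4.7.10]{HTT}, and the consequences follow from \cite[2.4.7.5]{HTT}. No new combinatorics is needed.

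Your direct route could in principle work, but the key step is not the filtration you describe. The assertion that each new cell attaches via either an inner horn or an outer horn \emph{whose distinguished edge is marked cocartesian in $\leftnat{(C \star_D C')}$} is not correct: with the marking $(\Delta^n)^\flat \times \leftnat{(C \star_D C')}$, an edge is marked only when it is degenerate in $\Delta^n$ and cocartesian in $C$ or $C'$, and the crossing simplices you must fill typically have no such edge. For instance, already for $n=1$ the $2$-simplex with $\gamma=(0,0,1)$, $C$-part a vertex, and $C'$-part an edge has its $\Lambda^2_2$-distinguished edge lying over the nondegenerate edge $0\to 1$ of $\Delta^1$, hence unmarked. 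What actually makes the lifting possible is the \emph{degeneracy constraint} on $\Delta^{\{n-1,n\}} \times C'$ coming from the bifibration axiom---this is precisely what you invoke when writing $q(\widetilde\alpha)=(\alpha,\id_G)$, but you never use it in the combinatorics. Exploiting that constraint to produce the required fillers is essentially the content of Lurie's proof of \cite[2.4.7.10]{HTT}; your sketch does not indicate how to carry it out, and pointing to ``standard analyses in \S3.1.2'' does not substitute for it. If you want a self-contained argument you must redo that proof in the relative setting; otherwise the paper's reduction is both shorter and complete.
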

\begin{proof} By inspection, the span
\[ (\Delta^1)^\flat \xleftarrow{\pi} \leftnat{(C \star_D C')} \xrightarrow{\pi'} S^\sharp \]
satisfies the hypotheses of Theorem~\ref{thm:FunctorialityOfCocartesianModelStructure}. Therefore, $\pi_\ast \pi'^\ast (\leftnat{E} \to S)$ is a categorical fibration over $\Delta^1$. The claim now follows from \cite[Proposition~2.4.7.10]{HTT}, and the consequence from \cite[Lemma~2.4.7.5]{HTT} and its opposite.
\end{proof}

\subsection*{The Quillen adjunction between \texorpdfstring{$S$}{S}-join and \texorpdfstring{$S$}{S}-slice}

Our next goal is to obtain a relative join and slice Quillen adjunction. To this end, we need a good understanding of the combinatorics of the relative join (Proposition~\ref{joinprp}). We prepare for the proof of that proposition with a few lemmas.

\begin{lem}
\label{lem1}
Let $i,l \geq -1$ and $j,k \geq 0$. Then the map
\begin{equation*} \Delta^i \star \Delta^j \star \partial \Delta^k \star \Delta^l \bigsqcup_{\Delta^j \star \partial \Delta^k \star \Delta^l} \Delta^{j+k+l+2} \tohook \Delta^{i+j+k+l+3} 
\end{equation*}
is inner anodyne.
\end{lem}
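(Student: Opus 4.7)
The plan is a direct combinatorial filtration of $\Delta^{i+j+k+l+3}$, beginning at the pushout subcomplex on the left-hand side and adjoining the remaining simplices in stages, each stage realized as a pushout of inner horn inclusions.

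First I would set up notation. Label the vertices of $\Delta^{i+j+k+l+3}$ in their natural order as $a_0, \ldots, a_i, b_0, \ldots, b_j, c_0, \ldots, c_k, d_0, \ldots, d_l$, and identify simplices with nonempty subsets of this vertex set. The source subcomplex $X$ consists of those $\sigma$ that either contain no $a$-vertex or omit some $c$-vertex. The \emph{bad} simplices --- those in the target but not in $X$ --- therefore have the form $\sigma = A' \cup B' \cup C \cup D'$ with $A'$ a nonempty subset of the $a$-vertices, $C = \{c_0, \ldots, c_k\}$ in full, and $B', D'$ arbitrary subsets of the $b$- and $d$-vertices respectively. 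My task is to adjoin all bad simplices via inner horn fillings.

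The key device is the pairing of bad simplices via the involution $\sigma \mapsto \sigma \triangle \{b_0\}$, which is well-defined because $j \geq 0$ and toggling membership of $b_0$ preserves badness. Each pair has the form $(\sigma, \tau)$ with $\tau = \sigma \cup \{b_0\}$ and $b_0 \notin \sigma$. I would then filter by $\dim \tau$: at stage $n$, simultaneously adjoin every pair with $\dim \tau = n$ by the pushout of the horn inclusions $\Lambda^n_{r(\tau)} \hookrightarrow \Delta^n \cong \tau$, where $r(\tau) = |A'|$ is the position of $b_0$ in $\tau$. Since $A'$ is nonempty and $\dim \tau = |A'| + |B' \setminus \{b_0\}| + k + |D'| + 1 > |A'|$ using $k \geq 0$, one has $0 < r(\tau) < n$, so each horn is inner. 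The filling of such a horn adjoins both $\tau$ and the missing face $d^{r(\tau)}(\tau) = \sigma$.

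The crucial verification is that at each stage the horn $\Lambda^n_{r(\tau)}(\tau)$ already lies in the previously constructed subcomplex. I would classify the $(n-1)$-dimensional faces $d^{r'}(\tau)$ with $r' \neq r(\tau)$: faces dropping a $c$-vertex miss $C$ and hence lie in $X$; faces dropping an $a$-vertex (in the case $|A'| = 1$ also land in $X$, otherwise) or a $b$-vertex other than $b_0$ or a $d$-vertex are bad and contain $b_0$, making them the top cell of a pair of dimension $n - 1$, hence adjoined at an earlier stage. Moreover, distinct top cells $\tau, \tau'$ produce distinct missing faces because $\sigma = \tau \setminus \{b_0\}$ determines $\tau$, so the pairs of a given dimension can indeed be processed in parallel via a single pushout.

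The main obstacle will be the bookkeeping of this case analysis --- particularly the corner case $|A'| = 1$, where the $0$-th face of $\tau$ loses all $a$-vertices and so lies in $X$ rather than being a bad face from a previous stage, and checking that the horn position $r(\tau)$ is genuinely interior, which relies precisely on the hypotheses $j \geq 0$ (so that $b_0$ exists) and $k \geq 0$ (so that $\dim \tau$ is strictly greater than $r(\tau)$). Once these checks are in place, the filtration exhibits the inclusion as a transfinite composition of pushouts of inner horns, which is the definition of inner anodyne.
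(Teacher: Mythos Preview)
Your argument is correct: the pairing $\sigma \leftrightarrow \sigma \cup \{b_0\}$ and the filtration by $\dim\tau$ give a valid exhibition of the inclusion as a composite of pushouts of inner horns, and your bookkeeping (the position $r(\tau)=|A'|$ being interior via $j\geq 0$ and $k\geq 0$, the case $|A'|=1$, and the fact that distinct $\tau$'s yield distinct $\sigma$'s) is sound.

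The paper's proof, however, is a one-liner: it identifies the map as the pushout-join $f \star g \star \Delta^l$ with $f\colon \Delta^{j-1}\hookrightarrow \Delta^i\star\Delta^{j-1}$ a cofibration and $g\colon \Lambda^{k+1}_0\hookrightarrow \Delta^{k+1}$ a left horn inclusion, and then invokes \cite[2.1.2.3]{HTT}, which says that the pushout-join of a cofibration with a left anodyne map is inner anodyne. Your explicit filtration is essentially what one would do to prove that lemma by hand in this particular case (your pivot vertex $b_0$ plays the role of the cone point of the horn, though the paper's decomposition uses $b_j$ instead). The paper's approach is far shorter and situates the lemma within a general framework; yours is self-contained and makes the horn-filling mechanism completely explicit, which has some expository value but is not needed once HTT's join lemmas are available.
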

\begin{proof} Let $f: \Delta^{j-1} \tohook \Delta^i \star \Delta^{j-1}$ and $g: \Lambda^{k+1}_0 \tohook \Delta^{k+1}$. The map in question is $f \star g \star \Delta^l$, so is inner anodyne by \cite[Lemma~2.1.2.3]{HTT}.
\end{proof}

By \cite[Lemma~2.1.2.4]{HTT}, the join of a left anodyne map and an inclusion is left anodyne. We need a slight refinement of this result:

\begin{lem}
\label{markedAnodyneBasicJoinLemma}
Let $f: A_0 \tohook A$ be a cofibration of simplicial sets.

\begin{enumerate}
\item Let $g: B_0 \tohook B$ be a right marked anodyne map between marked simplicial sets. Then
\begin{equation*}
f^\flat \star g: A_0^\flat \star B \bigsqcup_{A_0^\flat \star B_0} A^\flat \star B_0 \tohook A^\flat \star B
\end{equation*}
is a right marked anodyne map. 

\item Let $g: B_0 \tohook B$ be a left marked anodyne map between marked simplicial sets. Then
\begin{equation*}
g \star f^\flat: B \star A_0^\flat \bigsqcup_{B_0 \star A_0^\flat} B_0 \star A^\flat \tohook B \star A^\flat
\end{equation*}
is a left marked anodyne map. 
\end{enumerate}
\end{lem}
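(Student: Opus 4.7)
My strategy is a standard double saturation reduction followed by case analysis on generators. The class of cofibrations $f$ for which the pushout product in (1) is right marked anodyne for every right marked anodyne $g$ is weakly saturated, and symmetrically the class of right marked anodyne $g$ for which it holds for every cofibration $f$ is weakly saturated. It therefore suffices to verify the claim when $f = \partial \Delta^m \hookrightarrow \Delta^m$ and $g$ is one of the four standard generators of right marked anodyne morphisms: an inner horn inclusion $(\Lambda^n_i)^\flat \to (\Delta^n)^\flat$ with $0 < i < n$, a marked right-horn inclusion $(\Lambda^n_n, \cE) \to (\Delta^n, \cE)$ with $\{n-1,n\} \in \cE$, the map $(\Lambda^2_1)^\sharp \cup (\Delta^2)^\flat \to (\Delta^2)^\sharp$, or $K^\flat \to K^\sharp$ for some Kan complex $K$.

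The inner-horn case reduces, after forgetting the flat marking, to the inclusion $(\partial \Delta^m \star \Delta^n) \cup (\Delta^m \star \Lambda^n_i) \hookrightarrow \Delta^{m+n+1}$ being inner anodyne, which is \cite[2.1.2.3]{HTT}. For the last two generators, I observe that the source and target of the pushout product have the same underlying simplicial set $\Delta^m \star B$, and when $m \geq 1$ the two markings coincide, since the $\partial \Delta^m \star B$ summand already supplies the full marking of $B$ to the source; the map is then an isomorphism. When $m = 0$ the pushout product reduces to $\Delta^0 \star B_0 \to \Delta^0 \star B$, which is a single application of the given generator to the Kan (resp.\ $(\Lambda^2_1)$-shaped) subcomplex sitting inside the join.

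The crux is the marked right-horn generator. Here the pushout product sits over the inclusion
\[ (\partial \Delta^m \star \Delta^n) \cup (\Delta^m \star \Lambda^n_n) \hookrightarrow \Delta^m \star \Delta^n \cong \Delta^{m+n+1}, \]
with the edge $\{m+n, m+n+1\}$ inheriting the marking of $\{n-1, n\}$. I would construct an explicit filtration by adjoining the non-degenerate simplices of the complement in a carefully chosen combinatorial order, so that each intermediate attachment matches the shape $\Delta^i \star \Delta^j \star \partial \Delta^k \star \Delta^l \cup \Delta^{j+k+l+2} \hookrightarrow \Delta^{i+j+k+l+3}$ appearing in Lemma \ref{lem1}. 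By that lemma each intermediate step is inner anodyne, with the single exception of the final attachment filling the top-dimensional simplex from a right horn $\Lambda^{m+n+1}_{m+n+1}$ whose terminal edge $\{m+n, m+n+1\}$ is marked; this last step is precisely an instance of generator (2). The main obstacle is organizing the filtration so that the gluing locus at every intermediate stage genuinely has the prescribed join-of-boundary form that Lemma \ref{lem1} requires.

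Part (2) of the present lemma then follows by formal duality: passing to opposites exchanges left and right marked anodyne morphisms and reverses join factors via the natural isomorphism $(A \star B)^\op \cong B^\op \star A^\op$, converting the statement for left marked anodyne $g$ on the left of the join into the already-established statement for right marked anodyne $g^\op$ on the right.
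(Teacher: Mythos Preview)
Your overall strategy---reduce by saturation to $f = \partial\Delta^m \hookrightarrow \Delta^m$ and to the four generating classes for $g$---is exactly the paper's, and your treatment of the inner-horn class via \cite[2.1.2.3]{HTT} and of part (2) by duality agree with it.

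You have, however, missed the key observation in the marked right-horn case and made it far harder than it is. The inclusion
\[
(\partial\Delta^m \star \Delta^n) \cup (\Delta^m \star \Lambda^n_n) \hookrightarrow \Delta^m \star \Delta^n \cong \Delta^{m+n+1}
\]
\emph{is already} the horn inclusion $\Lambda^{m+n+1}_{m+n+1} \hookrightarrow \Delta^{m+n+1}$: the first summand supplies the codimension-one faces $d_0,\dots,d_m$ and the second supplies $d_{m+1},\dots,d_{m+n}$, so only $d_{m+n+1}$ is missing. With the edge $\{m{+}n,\,m{+}n{+}1\}$ marked, the pushout-join is therefore literally the generator $\rightnat{\Lambda^{m+n+1}_{m+n+1}} \to \rightnat{\Delta^{m+n+1}}$. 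There is nothing to filter and Lemma~\ref{lem1} plays no role here; your ``final attachment'' is the entire map, and the intermediate inner-anodyne steps you worry about organizing do not exist.

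For the last two generator classes your conclusion is correct but your $m=0$ reasoning is off: since $\partial\Delta^0 = \varnothing$ and $\varnothing \star B = B$, the pushout is $B \cup_{B_0} (\Delta^0 \star B_0)$, not $\Delta^0 \star B_0$. As $B_0 \to B$ is an isomorphism on underlying simplicial sets and the $B$-summand already carries the target marking, the pushout equals $\Delta^0 \star B$ and the map is the identity---uniformly for all $m \geq 0$, as the paper observes in one line.
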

\begin{proof} We prove (1); the dual assertion (2) is proven by a similar argument. $f$ lies in the weakly saturated closure of the inclusions $i_m: \partial \Delta^m \tohook \Delta^m$, so it suffices to check that $i_m^\flat \star g$ is right marked anodyne for the four classes of morphisms enumerated in \cite[Definition~3.1.1.1]{HTT}. For $g: (\Lambda^n_i)^\flat \tohook (\Delta^n)^\flat$, $0<i<n$, $i_m^\flat \star g$ obtained from an inner anodyne map by marking common edges, so is marked right anodyne. For $g: \rightnat{\Lambda^n_n} \tohook \rightnat{\Delta^n}$, $i_m^\flat \star g$ is $\rightnat{\Lambda^{n+m+1}_{n+m+1}} \tohook \rightnat{\Delta^{n+m+1}}$, so $i_m^\flat \star g$ is marked right anodyne. For the remaining two classes, $i_m^\flat \star g$ is the identity because no markings are introduced when joining two marked simplicial sets.
\end{proof}

The following proposition reveals a basic asymmetry of the relative join, which is related to our choice of \emph{cocartesian} fibrations to model functors.

\begin{prp} Let $K$ be a marked simplicial set over $S$. 
\label{joinprp}
\begin{itemize}

\item [(1)] For every marked left horn inclusion $\leftnat{\Lambda^n_0} \tohook \leftnat{\Delta^n}$ over $S$, the induced map
\[ K \star_S (\leftnat{\Lambda^n_0} \times_S \rightnat{\mathscr{O}(S)}) \tohook K \star_S (\leftnat{\Delta^n} \times_S \rightnat{\mathscr{O}(S)}) \]
is left marked anodyne, where the pullbacks $\leftnat{\Lambda^n_0} \times_S \rightnat{\mathscr{O}(S)}$ and $\leftnat{\Delta^n} \times_S \rightnat{\mathscr{O}(S)}$ are formed with respect to the source map $e_0$ and are regarded as marked simplicial sets over $S$ via the target map $e_1$.

\item [(1\textquotesingle)] For every left horn inclusion $\Lambda^n_0 \tohook \Delta^n$ over $S$, the induced map
\[ \Delta^n \times_S \mathscr{O}(S) \bigsqcup_{\Lambda^n_0 \times_S \mathscr{O}(S)} K \star_S (\Lambda^n_0 \times_S \mathscr{O}(S)) \tohook K \star_S (\Delta^n \times_S \mathscr{O}(S)) \]
is an inner anodyne map.

\item [(2)] Let $e_0: C \to S$ be a cartesian fibration over $S$ and let $e_1: C \to S$ be any map of simplicial sets. For every inner horn inclusion $\Lambda^n_k \tohook \Delta^n$, $0<k<n$ over $S$, the induced map
\[ K \star_S (\Lambda^n_k \times_S C) \tohook K \star_S (\Delta^n \times_S C) \]
is inner anodyne, where the pullbacks $\Lambda^n_k \times_S C$ and $\Delta^n \times_S C$ are formed with respect to $e_0$ and are regarded as simplicial sets over $S$ via $e_1$.

\item [(3)] For every marked right horn inclusion $\rightnat{\Lambda^n_n} \tohook \rightnat{\Delta^n}$ over $S$, the induced map
\[ K \star_S \rightnat{\Lambda^n_n} \tohook K \star_S \rightnat{\Delta^n} \]
is right marked anodyne.
\end{itemize}
\end{prp}

\begin{proof}
Let $I$ be the set of simplices of $K$ endowed with a total order such that $\sigma < \sigma'$ if the dimension of $\sigma$ is less than that of $\sigma'$, where we view the empty set as a simplex of dimension $-1$. Let $J$ be the set of epimorphisms $\chi: \onto{\Delta^j}{\Delta^{n-1}}$ endowed with a total order such that $\chi < \chi'$ if the dimension of $\chi$ is less than that of $\chi'$. Order $I \times J$ by $(\sigma,\chi)<(\sigma',\chi')$ if $\sigma < \sigma'$ or $\sigma = \sigma'$ and $\chi < \chi'$. For any simplex $\tau: \Delta^j \to \Delta^n$, we let $r_k(\tau)$ be the pullback

\begin{equation*}
\begin{tikzpicture}[baseline]
\matrix(m)[matrix of math nodes,
row sep=4ex, column sep=4ex,
text height=1.5ex, text depth=0.25ex]
{ \Delta^{r_k(\tau)_0} & \Delta^{n-1} \\
\Delta^j & \Delta^n \\ };
\path[>=stealth,->,font=\scriptsize]
(m-1-1) edge node[above]{$r_k(\tau)$} (m-1-2)
edge (m-2-1)
(m-1-2) edge node[right]{$d_k$} (m-2-2)
(m-2-1) edge node[above]{$\tau$} (m-2-2);
\end{tikzpicture}
\end{equation*}

We will let $\iota$ denote the map under consideration. We first prove (1). Given $\sigma \in I$ and $\chi \in J$, let $X_{\sigma,\chi}$ be the sub-marked simplicial set of $K \star_S (\leftnat{\Delta^n} \times_S \rightnat{\mathscr{O}(S)})$ on $K \star_S (\leftnat{\Lambda^n_0} \times_S \rightnat{\mathscr{O}(S)})$ and simplices $(\sigma',\tau'): \Delta^i \star \Delta^j \to K \star_S (\Delta^n \times_S \mathscr{O}(S))$ not in $K \star_S (\Lambda^n_0 \times_S \mathscr{O}(S))$ with $(\sigma',r_0(e_0 \circ \tau')) \leq (\sigma,\chi)$. If $(\sigma,\chi)<(\sigma',\chi')$, then we have an obvious inclusion $\into{X_{\sigma,\chi}}{X_{\sigma',\chi'}}$, and we let
\begin{equation*}
X_{<(\sigma,\chi)} = (\leftnat{\Lambda^n_0} \times_S \rightnat{\mathscr{O}(S)}) \bigcup (\cup_{(\sigma',\chi')<(\sigma,\chi)} X_{\sigma',\chi'}).
\end{equation*}

Since $K \star_S (\leftnat{\Delta^n} \times_S \rightnat{\mathscr{O}(S)}) = \colim_{(\sigma,\chi) \in I \times J} X_{\sigma,\chi}$, in order to show that $\iota$ is left marked anodyne it suffices to show that $\into{X_{<(\sigma,\chi)}}{X_{\sigma,\chi}}$ is left marked anodyne for all $(\sigma,\chi) \in I \times J$. We will say that a simplex of $X_{\sigma,\chi}$ is \emph{new} if it does not belong to $X_{<(\sigma,\chi)}$.

Let $\sigma: \Delta^i \to K$ be an element of $I$ and $\chi: \onto{\Delta^j}{\Delta^{n-1}}$ an element of $J$. Let $\lambda = (\sigma,\tau): \Delta^i \star \Delta^j \to K \star_S (\Delta^n \times_S \mathscr{O}(S))$ be any nondegenerate new simplex of $X_{\sigma,\chi}$, so $r_0(e_0 \circ \tau) = \chi$. Let $\bar{\chi}: \onto{\Delta^{j+1}}{\Delta^n}$ be the unique epimorphism with $r_0(\bar{\chi}) = \chi$ and let $e: \Delta^1 \to \Delta^n \times_S \mathscr{O}(S)$ be a cartesian edge over $\{0, 1\}$ with $e(1) = \tau(0)$. The inclusion $\into{(\Delta^1)^\sharp \bigsqcup_{\Delta^0} \Delta^j}{\leftnat{\Delta^{j+1}}}$ is right marked anodyne, so we have a lift $\bar{\tau}$ in the following diagram

\begin{equation*}
\begin{tikzpicture}[baseline]
\matrix(m)[matrix of math nodes,
row sep=4ex, column sep=4ex,
text height=1.5ex, text depth=0.25ex]
 { \Delta^1 \bigsqcup_{\Delta^0} \Delta^j & \Delta^n \times_S \mathscr{O}(S) \\
 \Delta^{j+1} & \Delta^n. \\ };
\path[>=stealth,->,font=\scriptsize]
(m-1-1) edge node[above]{$e \cup \tau$} (m-1-2)
edge (m-2-1)
(m-1-2) edge (m-2-2)
(m-2-1) edge node[above]{$\bar{\chi}$} (m-2-2)
edge[dotted] node[above]{$\bar{\tau}$} (m-1-2);
\end{tikzpicture}
\end{equation*}

By Lemma~\ref{markedAnodyneBasicJoinLemma},
\begin{equation*}
\into{\Delta^i \star \Delta^j \bigsqcup_{\Delta^j} \leftnat{\Delta^{j+1}}}{\Delta^i \star \leftnat{\Delta^{j+1}}}
\end{equation*}
is right marked anodyne. Using that $(e_1 \circ \bar{\tau})(e)$ is an equivalence, we obtain a lift
\begin{equation*}
\begin{tikzpicture}[baseline]
\matrix(m)[matrix of math nodes,
row sep=4ex, column sep=6ex,
text height=1.5ex, text depth=0.25ex]
 { \Delta^i \star \Delta^j \bigsqcup_{\Delta^j} \leftnat{\Delta^{j+1}} & S^\sim \\
 \Delta^i \star \leftnat{\Delta^{j+1}} \\ };
\path[>=stealth,->,font=\scriptsize]
(m-1-1) edge node[above]{$\pi \lambda \cup e_1 \bar{\tau}$} (m-1-2)
edge (m-2-1)
(m-2-1) edge[dotted] (m-1-2);
\end{tikzpicture}
\end{equation*}
which allows us to define $\bar{\lambda}: \Delta^i \star \Delta^{j+1} \to K \star_S (\Delta^n \times_S \mathscr{O}(S))$ extending $\lambda$ and $\bar{\tau}$. Then $\bar{\lambda}$ is a nondegenerate new simplex of $X_{\sigma,\chi}$ and every face of $\bar{\lambda}$ except for $\lambda = d_{i+1}(\bar{\lambda})$ lies in $X_{<(\sigma,\chi)}$. We may thus form the pushout
\begin{equation*}
\begin{tikzpicture}[baseline]
\matrix(m)[matrix of math nodes,
row sep=4ex, column sep=4ex,
text height=1.5ex, text depth=0.25ex]
 { \bigsqcup_{\lambda} (\Lambda^{i+j+2}_{i+1}, \{i+1,i+2\}) & X_{<(\sigma,\chi)} \\
 \bigsqcup_{\lambda} (\Delta^{i+j+2}, \{i+1,i+2\})  & X_{<(\sigma,\chi),1} \\ };
\path[>=stealth,->,font=\scriptsize]
(m-1-1) edge (m-1-2)
edge (m-2-1)
(m-1-2) edge (m-2-2)
(m-2-1) edge (m-2-2);
\end{tikzpicture}
\end{equation*}
which factors the inclusion $\into{X_{<(\sigma,\chi)}}{X_{(\sigma,\chi)}}$ as the composition of a left marked anodyne map and an inclusion (there is one further complication involving markings: in the special case $n=1$, $\sigma = \emptyset$, $j=1$, we may have that $\lambda = \tau$ is a marked edge, i.e. an equivalence over $1$. Then the edges of $\bar{\tau}$ are all marked, so we should form the pushout via maps $\into{(\Lambda^2_0)^\sharp}{(\Delta^2)^\sharp}$, which are left marked anodyne by \cite[Corollary~3.1.1.7]{HTT}).

Now for the inductive step suppose that we have defined a sequence of left marked anodyne maps
\begin{equation*} X_{<(\sigma,\chi)} \tohook \ldots \tohook X_{<(\sigma,\chi),m} \subset X_{(\sigma,\chi)}
\end{equation*}
such that for all $0<l \leq m$ all new nondegenerate simplices in $X_{(\sigma,\chi)}$ of dimension $i+l+j$ lie in $X_{<(\sigma,\chi),l}$ and admit an extension to a $i+l+j+1$-simplex with the edge $\{i+l,i+l+1\}$ marked in $X_{<(\sigma,\chi),l}$, and no new nondegenerate simplices of dimension $>i+l+j+1$ lie in $X_{<(\sigma,\chi),l}$. Let $\lambda = (\sigma,\tau)$ be any new nondegenerate $i+m+j+1$-simplex not in $X_{<(\sigma,\chi),m}$. For $0 \leq l < m$ let $\lambda_l = (\sigma,\tau_l)$ be a nondegenerate $i+m+j+1$-simplex in $X_{<(\sigma,\chi),m}$ with $d_{i+m}(\lambda_l)=d_{i+l+1}(\lambda)$ and edge $\{ i+m,i+m+1\}$ marked. $\tau$ and $\tau_0,...,\tau_{m-1}$ together define a map
\begin{equation*}
\tau': \Lambda^{m+1}_{m+1} \star \Delta^{j-1} \to \Delta^n \times_S \mathscr{O}(S)
\end{equation*}
where the domain of $\tau$ is the subset $\{0,...,m-1,m+1,...,m+j+1\}$ and the domain of $\tau_l$ is the subset $\{0,...,\widehat{l},...,m+j+1\}$. Observe that the map $\into{\rightnat{\Lambda^{m+1}_{m+1}} \star \Delta^{j-1}}{\rightnat{\Delta^{m+1}} \star \Delta^{j-1}}$ is right marked anodyne, since it factors as
\begin{equation*} \rightnat{\Lambda^{m+1}_{m+1}} \star \Delta^{j-1} \tohook \rightnat{\Delta^{m+1}} \bigsqcup_{\rightnat{\Lambda^{m+1}_{m+1}}} \rightnat{\Lambda^{m+1}_{m+1}} \star \Delta^{j-1} \tohook \rightnat{\Delta^{m+1}} \star \Delta^{j-1}
\end{equation*}
where the first map is obtained as the pushout of the right marked anodyne map $\rightnat{\Lambda^{m+1}_{m+1}} \tohook \rightnat{\Delta^{m+1}}$ along the inclusion $\rightnat{\Lambda^{m+1}_{m+1}} \tohook \rightnat{\Lambda^{m+1}_{m+1}} \star \Delta^{j-1}$ and the second map is obtained by marking a common edge of an inner anodyne map. Let $\bar{\chi}: \Delta^{m+j+1} \toepi \Delta^n$ be the unique epimorphism with $r_0(\bar{\chi})=\chi$. Then we have a lift $\bar{\tau}$ in the following commutative diagram

\begin{equation*}
\begin{tikzpicture}[baseline]
\matrix(m)[matrix of math nodes,
row sep=4ex, column sep=4ex,
text height=1.5ex, text depth=0.25ex]
 { \Lambda^{m+1}_{m+1} \star \Delta^{j-1} & \Delta^n \times_S \mathscr{O}(S) \\
 \Delta^{m+1} \star \Delta^{j-1} & \Delta^n. \\ };
\path[>=stealth,->,font=\scriptsize]
(m-1-1) edge node[above]{$\tau'$} (m-1-2)
edge (m-2-1)
(m-1-2) edge (m-2-2)
(m-2-1) edge node[above]{$\bar{\chi}$} (m-2-2)
edge[dotted] node[above]{$\bar{\tau}$} (m-1-2);
\end{tikzpicture}
\end{equation*}

By Lemma~\ref{markedAnodyneBasicJoinLemma}, the map
\begin{equation*} \Delta^i \star \rightnat{\Lambda^{m+1}_{m+1}} \star \Delta^{j-1} \bigsqcup_{\rightnat{\Lambda^{m+1}_{m+1}} \star \Delta^{j-1}} \rightnat{\Delta^{m+1}} \star \Delta^{j-1} \tohook \Delta^i \star \rightnat{\Delta^{m+1}} \star \Delta^{j-1}
\end{equation*}
is right marked anodyne. Since $(e_1 \circ \bar{\tau})(\{m,m+1\})$ is an equivalence, we may extend $(\cup_l \pi \lambda_l) \cup \pi \lambda \cup e_1 \bar{\tau}$ to a map $\Delta^{i+m+j+2} \to S$, which defines a nondegenerate $(i+m+j+2)$-simplex $\bar{\lambda}$ with $\lambda$ as its $(i+m+1)$th face and which extends $\bar{\tau}$. By construction every other face of $\bar{\lambda}$ lies in $X_{<(\sigma,\chi),m}$. Thus we may form the pushout
\begin{equation*}
\begin{tikzpicture}[baseline]
\matrix(m)[matrix of math nodes,
row sep=4ex, column sep=4ex,
text height=1.5ex, text depth=0.25ex]
 { \bigsqcup_{\lambda} (\Lambda^{i+m+j+2}_{i+m+1}, \{i+m+1,i+m+2\}) & X_{<(\sigma,\chi),m} \\
 \bigsqcup_{\lambda} (\Delta^{i+m+j+2}, \{i+m+1,i+m+2\}) & X_{<(\sigma,\chi),m+1} \\ };
\path[>=stealth,->,font=\scriptsize]
(m-1-1) edge (m-1-2)
edge (m-2-1)
(m-1-2) edge (m-2-2)
(m-2-1) edge (m-2-2);
\end{tikzpicture}
\end{equation*}
and complete the inductive step (again, there is one further complication involving markings: in the special case $i=-1$, $n=1$, $j=0$, $m=1$, we may have that $\lambda$ is marked. Then every edge of $\bar{\lambda}$ is marked since $(\Lambda^2_2)^\sharp \tohook (\Delta^2)^\sharp$ is right marked anodyne, and we form the pushout along maps $(\Lambda^2_1)^\sharp \tohook (\Delta^2)^\sharp$). Passing to the colimit, we deduce that $X_{<(\sigma,\chi)} \tohook X_{\sigma,\chi}$ is marked left anodyne, which completes the proof.

For (1\textquotesingle), simply observe that if $i > -1$ we are attaching along inner horns.

We now modify the above proof to prove (2). Let $X_{\sigma,\chi}$ be the sub-simplicial set of $K \star_S (\Delta^n \times_S C)$ on $K \star_S (\Lambda^n_k \times_S C)$ and simplices $(\sigma',\tau'): \Delta^i \star \Delta^j \to K \star_S (\Delta^n \times_S C)$ not in $K \star_S (\Lambda^n_k \times_S C)$ with $(\sigma',r_k(e_0 \circ \tau')) \leq (\sigma,\chi)$. Let $X_{<(\sigma,\chi)} = (K \star (\Lambda^n_k \times_S C)) \bigcup (\cup_{(\sigma',\chi')<(\sigma,\chi)} X_{\sigma',\chi'})$. We will show that $X_{<(\sigma,\chi)} \tohook X_{\sigma,\chi}$ is inner anodyne for all $(\sigma,\chi) \in I \times J$.

Let $\sigma: \Delta^i \to K$ be an element of $I$, $\chi: \Delta^j \toepi \Delta^{n-1}$ an element of $J$, and let $k'$ be the first vertex of $\chi$ with $\chi(k') = k$. Let $\lambda = (\sigma,\tau): \Delta^i \star \Delta^j \to K \star_S (\Delta^n \times_S C)$ be any nondegenerate new simplex of $X_{\sigma,\chi}$, so $r_k(e_0 \circ \tau) = \chi$. Let $\bar{\chi}: \Delta^{j+1} \toepi \Delta^n$ be the unique epimorphism with $r_k(\bar{\chi}) = \chi$. Combining \cite[Lemma~2.1.2.3]{HTT} and Lemma~\ref{markedAnodyneBasicJoinLemma}, we see that the inclusion
\begin{equation*} d_{k'}: \Delta^j = \Delta^{k'-1} \star \Delta^{j-k'} \tohook \Delta^{k'-1} \star \leftnat{\Delta^{j-k'+1}}
\end{equation*}
is right marked anodyne, so we have a lift $\bar{\tau}$ in the following diagram
\begin{equation*}
\begin{tikzpicture}[baseline]
\matrix(m)[matrix of math nodes,
row sep=4ex, column sep=4ex,
text height=1.5ex, text depth=0.25ex]
 { \Delta^j & \Delta^n \times_S C \\
 \Delta^{j+1} & \Delta^n \\ };
\path[>=stealth,->,font=\scriptsize]
(m-1-1) edge node[above]{$\tau$} (m-1-2)
edge (m-2-1)
(m-1-2) edge (m-2-2)
(m-2-1) edge node[above]{$\bar{\chi}$} (m-2-2)
edge[dotted] node[above]{$\bar{\tau}$} (m-1-2) ;
\end{tikzpicture}
\end{equation*}
where $\bar{\tau}(\{k',k'+1\})$ is a cartesian edge. By Lemma~\ref{lem1}, $\Delta^i \star \Delta^j \bigsqcup_{\Delta^j} \Delta^{j+1} \tohook \Delta^i \star \Delta^{j+1}$ is inner anodyne. We thus obtain an extension
\begin{equation*}
\begin{tikzpicture}[baseline]
\matrix(m)[matrix of math nodes,
row sep=4ex, column sep=6ex,
text height=1.5ex, text depth=0.25ex]
 { \Delta^i \star \Delta^j \bigsqcup_{\Delta^j} \Delta^{j+1} & S \\
 \Delta^i \star \Delta^{j+1} \\ };
\path[>=stealth,->,font=\scriptsize]
(m-1-1) edge node[above]{$\pi \lambda \cup e_1 \bar{\tau}$} (m-1-2)
edge (m-2-1)
(m-2-1) edge[dotted] (m-1-2);
\end{tikzpicture}
\end{equation*}
which allows us to define $\bar{\lambda}: \Delta^i \star \Delta^{j+1} \to K \star_S (\Delta^n \times_S C)$ extending $\lambda$ and $\bar{\tau}$. Then $\bar{\lambda}$ is nondegenerate and every face of $\bar{\lambda}$ except for $\lambda = d_{i+k'+1}(\bar{\lambda})$ lies in $X_{<(\sigma,\chi)}$. We may thus form the pushout
\begin{equation*}
\begin{tikzpicture}[baseline]
\matrix(m)[matrix of math nodes,
row sep=4ex, column sep=4ex,
text height=1.5ex, text depth=0.25ex]
 { \bigsqcup_{\lambda} \Lambda^{i+j+2}_{i+k'+1} & X_{<(\sigma,\chi)} \\
 \bigsqcup_{\lambda} \Delta^{i+j+2}  & X_{<(\sigma,\chi),1} \\ };
\path[>=stealth,->,font=\scriptsize]
(m-1-1) edge (m-1-2)
edge (m-2-1)
(m-1-2) edge (m-2-2)
(m-2-1) edge (m-2-2);
\end{tikzpicture}
\end{equation*}
which factors the inclusion $X_{<(\sigma,\chi)} \tohook X_{(\sigma,\chi)}$ as the composition of an inner anodyne map and an inclusion.

Now for the inductive step suppose that we have defined a sequence of inner anodyne maps
\begin{equation*} X_{<(\sigma,\chi)} \tohook ... \tohook X_{<(\sigma,\chi),m} \subset X_{(\sigma,\chi)}
\end{equation*}
such that for all $0<l \leq m$ all new nondegenerate simplices in $X_{(\sigma,\chi)}$ of dimension $i+l+j$ lie in $X_{<(\sigma,\chi),l}$ and admit an extension to a $i+l+j+1$-simplex such that the edge $\{ i+k'+l,i+k'+l+1\}$ is sent to a cartesian edge of $\Delta^n \times_S C$, and no new nondegenerate simplices of dimension $>i+l+j+1$ lie in $X_{<(\sigma,\chi),l}$. Let $\lambda = (\sigma,\tau)$ be any new nondegenerate $i+m+j+1$-simplex not in $X_{<(\sigma,\chi),m}$. For $0 \leq l < m$ let $\lambda_l = (\sigma,\tau_l)$ be a nondegenerate $i+m+j+1$-simplex in $X_{<(\sigma,\chi),m}$ with $d_{i+m+k'}(\lambda_l)=d_{i+l+k'+1}(\lambda)$. $\tau$ and $\tau_0,...,\tau_{m-1}$ together define a map
\begin{equation*}
\tau': \Delta^{k'-1} \star \Lambda^{m+1}_{m+1} \star \Delta^{j-k'-1} \to \Delta^n \times_S C
\end{equation*}
where the domain of $\tau$ is the subset $\{0,...,k'+m-1,k'+m+1,...,m+j+1\}$ and the domain of $\tau_l$ is the subset $\{0,...,\widehat{k'+l},...,m+j+1\}$. The map
\begin{equation*}
\Delta^{k'-1} \star \rightnat{\Lambda^{m+1}_{m+1}} \star \Delta^{j-k'-1} \tohook \Delta^{k'-1} \star \rightnat{\Delta^{m+1}} \star \Delta^{j-k'-1}
\end{equation*}
is $\Delta^{k'-1}$ joined with a right marked anodyne map, so is right marked anodyne by Lemma~\ref{markedAnodyneBasicJoinLemma}. Let $\bar{\chi}: \Delta^{m+j+1} \toepi \Delta^n$ be the unique epimorphism with $r_k(\bar{\chi}) = \chi$. Then we have a lift $\bar{\tau}$ in the following commutative diagram
\begin{equation*}
\begin{tikzpicture}[baseline]
\matrix(m)[matrix of math nodes,
row sep=4ex, column sep=4ex,
text height=1.5ex, text depth=0.25ex]
 { \Delta^{k'-1} \star \Lambda^{m+1}_{m+1} \star \Delta^{j-k'-1} & \Delta^n \times_S C \\
 \Delta^{m+j+1} & \Delta^n \\ };
\path[>=stealth,->,font=\scriptsize]
(m-1-1) edge node[above]{$\tau'$} (m-1-2)
edge (m-2-1)
(m-1-2) edge (m-2-2)
(m-2-1) edge node[above]{$\bar{\chi}$} (m-2-2)
edge[dotted] node[above]{$\bar{\tau}$} (m-1-2);
\end{tikzpicture}
\end{equation*}
such that $\bar{\tau}(\{k'+m,k'+m+1\})$ is a cartesian edge. By Lemma \ref{lem1}, the map
\begin{equation*} \Delta^i \star \Delta^{k'-1} \star \partial \Delta^m \star \Delta^{j-k'} \bigsqcup_{\Delta^{k'-1} \star \partial \Delta^m \star \Delta^{j-k'}} \Delta^{m+j+1} \tohook \Delta^{i+m+j+2}
\end{equation*}
is inner anodyne. Therefore, we may extend $(\cup_l \pi \lambda_l) \cup \pi \lambda \cup e_1 \bar{\tau}$ to a map $\Delta^{i+m+j+2} \to S$, which defines a nondegenerate $(i+m+j+2)$-simplex $\bar{\lambda}$ with $\lambda$ as its $(i+k'+m+1)$th face and which extends $\bar{\tau}$. By construction every other face of $\bar{\lambda}$ lies in $X_{<(\sigma,\chi),m}$. Thus we may form the pushout
\begin{equation*}
\begin{tikzpicture}[baseline]
\matrix(m)[matrix of math nodes,
row sep=4ex, column sep=4ex,
text height=1.5ex, text depth=0.25ex]
 { \bigsqcup_{\lambda} \Lambda^{i+m+j+2}_{i+k'+m+1} & X_{<(\sigma,\chi),m} \\
 \bigsqcup_{\lambda} \Delta^{i+m+j+2} & X_{<(\sigma,\chi),m+1} \\ };
\path[>=stealth,->,font=\scriptsize]
(m-1-1) edge (m-1-2)
edge (m-2-1)
(m-1-2) edge (m-2-2)
(m-2-1) edge (m-2-2);
\end{tikzpicture}
\end{equation*}
and complete the inductive step. Passing to the colimit, we deduce that $X_{<(\sigma,\chi)} \tohook X_{\sigma,\chi}$ is inner anodyne, which completes the proof.

We finally modify the above proof to prove (3). Given $\sigma \in I$ and $\chi \in J$, let $X_{\sigma,\chi}$ be the sub-marked simplicial set of $K \star_S \rightnat{\Delta^n}$ on $K \star_S \rightnat{\Lambda^n_n}$ and simplices $(\sigma',\tau'): \Delta^i \star \Delta^j \to K \star_S \rightnat{\Delta^n}$ not in $K \star_S \rightnat{\Lambda^n_n}$ with $(\sigma',r_n(\tau')) \leq (\sigma,\chi)$. Let $X_{<(\sigma,\chi)} = (K \star_S \rightnat{\Lambda^n_n}) \bigcup (\cup_{(\sigma',\chi')<(\sigma,\chi)} X_{\sigma',\chi'})$. We will show that $X_{<(\sigma,\chi)} \tohook X_{\sigma,\chi}$ is right marked anodyne for all $(\sigma,\chi) \in I \times J$.

Let $\sigma: \Delta^i \to K$ be an element of $I$ and $\chi: \Delta^j \toepi \Delta^{n-1}$ an element of $J$. Let $\lambda = (\sigma,\tau): \Delta^i \star \Delta^j \to K \star_S \rightnat{\Delta^n}$ be any nondegenerate new simplex of $X_{\sigma,\chi}$, so $r_n(\tau) = \chi$. Let $\bar{\chi}: \Delta^{j+1} \toepi \Delta^n$ be the unique epimorphism with $r_n(\bar{\chi}) = \chi$. By Lemma~\ref{lem1}, the inclusion
\begin{equation*} \Delta^i \star \Delta^j \bigsqcup_{\Delta^j} \Delta^{j+1} \tohook \Delta^i \star \Delta^{j+1}
\end{equation*}
is inner anodyne, so we have an extension in the following diagram
\begin{equation*}
\begin{tikzpicture}[baseline]
\matrix(m)[matrix of math nodes,
row sep=4ex, column sep=6ex,
text height=1.5ex, text depth=0.25ex]
 { \Delta^i \star \Delta^j \bigsqcup_{\Delta^j} \Delta^{j+1} & S \\
 \Delta^i \star \Delta^{j+1} \\ };
\path[>=stealth,->,font=\scriptsize]
(m-1-1) edge node[above]{$\pi \lambda \cup \pi_2 \bar{\chi}$} (m-1-2)
edge (m-2-1)
(m-2-1) edge[dotted] (m-1-2);
\end{tikzpicture}
\end{equation*}
which allows us to define $\bar{\lambda}: \Delta^i \star \Delta^{j+1} \to K \star_S \rightnat{\Delta^n}$ extending $\lambda$ and $\bar{\chi}$. Then $\bar{\lambda}$ is nondegenerate and every face of $\bar{\lambda}$ except for $\lambda = d_{i+j+2}(\bar{\lambda})$ lies in $X_{<(\sigma,\chi)}$. We may thus form the pushout
\begin{equation*}
\begin{tikzpicture}[baseline]
\matrix(m)[matrix of math nodes,
row sep=4ex, column sep=4ex,
text height=1.5ex, text depth=0.25ex]
 { \bigsqcup_{\lambda} \rightnat{\Lambda^{i+j+2}_{i+j+2}} & X_{<(\sigma,\chi)} \\
 \bigsqcup_{\lambda} \rightnat{\Delta^{i+j+2}}  & X_{<(\sigma,\chi),1} \\ };
\path[>=stealth,->,font=\scriptsize]
(m-1-1) edge (m-1-2)
edge (m-2-1)
(m-1-2) edge (m-2-2)
(m-2-1) edge (m-2-2);
\end{tikzpicture}
\end{equation*}
which factors the inclusion $X_{<(\sigma,\chi)} \to X_{(\sigma,\chi)}$ as the composition of a right marked anodyne map and an inclusion.

Now for the inductive step suppose that we have defined a sequence of right marked anodyne maps
\begin{equation*} X_{<(\sigma,\chi)} \tohook ... \tohook X_{<(\sigma,\chi),m} \subset X_{(\sigma,\chi)}
\end{equation*}
such that for all $0<l \leq m$ all new nondegenerate simplices in $X_{(\sigma,\chi)}$ of dimension $i+l+j$ lie in $X_{<(\sigma,\chi),l}$ and admit an extension to a $i+l+j+1$-simplex, and no new nondegenerate simplices of dimension $>i+l+j+1$ lie in $X_{<(\sigma,\chi),l}$. Let $\lambda = (\sigma,\tau)$ be any new nondegenerate $i+m+j+1$-simplex not in $X_{<(\sigma,\chi),m}$. For $0 < l \leq m$ let $\lambda_l = (\sigma,\tau_l)$ be a nondegenerate $i+m+j+1$-simplex in $X_{<(\sigma,\chi),m}$ with $d_{i+m+j+1}(\lambda_l)=d_{i+j+l+1}(\lambda)$ (note that $\tau_l = \tau$). By Lemma~\ref{lem1}, the map

\begin{equation*} \Delta^i \star \Delta^j \star \partial \Delta^m \bigsqcup_{\Delta^j \star \partial \Delta^m} \Delta^j \star \Delta^m \tohook \Delta^i \star \Delta^j \star \Delta^m 
\end{equation*}
is inner anodyne. Therefore, we may extend $\pi \lambda \cup (\cup_l \pi \lambda_l)$ to a map $\Delta^{i+j+m+2} \to S$ and define a $(i+j+m+2)$-simplex $\bar{\lambda}$ of $K \star \rightnat{\Delta^n}$ with $d_{i+j+m+2} \bar{\lambda} = \lambda$ and $d_{i+j+l+1} \bar{\lambda} = \lambda+l$. By construction every face of $\bar{\lambda}$ except for $\lambda$ lies in $X_{<(\sigma,\chi),m}$. Thus we may form the pushout
\begin{equation*}
\begin{tikzpicture}[baseline]
\matrix(m)[matrix of math nodes,
row sep=4ex, column sep=4ex,
text height=1.5ex, text depth=0.25ex]
 { \bigsqcup_{\lambda} \rightnat{\Lambda^{i+j+m+2}_{i+j+m+2}} & X_{<(\sigma,\chi),m} \\
 \bigsqcup_{\lambda} \rightnat{\Delta^{i+j+m+2}} & X_{<(\sigma,\chi),m+1} \\ };
\path[>=stealth,->,font=\scriptsize]
(m-1-1) edge (m-1-2)
edge (m-2-1)
(m-1-2) edge (m-2-2)
(m-2-1) edge (m-2-2);
\end{tikzpicture}
\end{equation*}
and complete the inductive step. Passing to the colimit, we deduce that $X_{<(\sigma,\chi)} \tohook X_{\sigma,\chi}$ is right marked anodyne, which completes the proof.
\end{proof}

\begin{rem} \label{pullingBackAnodyneMaps} The proof of Proposition \ref{joinprp} can be adapted to show that for any cartesian fibration $C \to S$, $\leftnat{\Lambda^n_0} \times_S \rightnat{C} \tohook \leftnat{\Delta^n} \times_S \rightnat{C}$ is marked left anodyne (in the $\sigma = \emptyset$ case, we only use that $e_0: \mathscr{O}(S) \to S$ is a cartesian fibration). As well, letting $K = \emptyset$, part (2) of Proposition \ref{joinprp} shows that $\Lambda^n_k \times_S C \tohook \Delta^n \times_S C$ is inner anodyne. This refines the theorem that marked left anodyne maps resp. inner anodyne maps pullback to cocartesian equivalences resp. categorical equivalences along cartesian fibrations.
\end{rem}

For later use, we state a criterion for showing that a functor is left Quillen.

\begin{lem}\label{lm:showingFunctorLeftQuillen} Let $\mathscr{M}$ and $\mathscr{N}$ be model categories and let $F: \mathscr{M} \to \mathscr{N}$ be a functor which preserves cofibrations. Let $I$ be a weakly saturated \cite[Definition A.1.2.2]{HTT} subset of the trivial cofibrations in $\mathscr{M}$ such that for every object $A \in \mathscr{M}$, we have a map $f: A \to A'$ where $f \in I$ and $A'$ is fibrant. Then $F$ preserves trivial cofibrations if and only if

\begin{enumerate}
\item For every $f \in I$, $F(f)$ is a trivial cofibration.

\item $F$ preserves trivial cofibrations between fibrant objects.
\end{enumerate}
\end{lem}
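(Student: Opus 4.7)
The forward direction is immediate: (1) is a special case of $F$ preserving trivial cofibrations (since every element of $I$ is a trivial cofibration by hypothesis), and (2) is another such special case.

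For the reverse direction, suppose (1) and (2) hold, and let $g: A \to B$ be an arbitrary trivial cofibration in $\mathscr{M}$. Since $F$ preserves cofibrations, $F(g)$ is a cofibration, so it suffices to verify that it is a weak equivalence. The plan is to embed $g$ into a commuting square whose other three sides $F$ already takes to trivial cofibrations, and then conclude by two-out-of-three.

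First, use the hypothesis on $I$ to choose a map $f_A: A \to A'$ in $I$ with $A'$ fibrant. Form the pushout square
\[ \begin{tikzcd}[row sep=2em, column sep=2em]
A \ar{r}{f_A} \ar{d}[swap]{g} & A' \ar{d}{g'} \\
B \ar{r}{j} & P
\end{tikzcd} \]
Since $I$ is weakly saturated, the pushout $j: B \to P$ of $f_A$ lies in $I$. Moreover, since trivial cofibrations in any model category are closed under pushout, $g': A' \to P$ is itself a trivial cofibration (though not necessarily in $I$). Now apply the hypothesis on $I$ again to obtain a map $p: P \to B'$ in $I$ with $B'$ fibrant. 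The composite $p \circ j: B \to B'$ is a transfinite (in fact, binary) composition of maps in $I$, hence in $I$; and the composite $p \circ g': A' \to B'$ is a trivial cofibration between fibrant objects.

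Applying $F$ yields the commutative square
\[ \begin{tikzcd}[row sep=2em, column sep=2em]
F(A) \ar{r}{F(f_A)} \ar{d}[swap]{F(g)} & F(A') \ar{d}{F(p \circ g')} \\
F(B) \ar{r}{F(p \circ j)} & F(B')
\end{tikzcd} \]
By hypothesis (1), $F(f_A)$ and $F(p \circ j)$ are trivial cofibrations, and by hypothesis (2), $F(p \circ g')$ is a trivial cofibration. Two-out-of-three applied twice (first to the top-right composite $F(A) \to F(A') \to F(B')$, then to $F(A) \to F(B) \to F(B')$) then forces $F(g)$ to be a weak equivalence, as desired.

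This argument is essentially formal once the diagram is in place; the only subtle point is to notice that $I$-saturation gives $B \to P$ in $I$ (avoiding any left-properness assumption on $\mathscr{N}$), while ordinary closure of trivial cofibrations under pushout gives $A' \to P$ a trivial cofibration, so that after further landing in the fibrant $B'$ via $I$, both vertical maps in the above $F$-square are covered by (1) and the right-hand map by (2).
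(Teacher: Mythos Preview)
Your proof is correct and follows essentially the same argument as the paper: form the pushout $P = A' \sqcup_A B$, push $P$ into a fibrant $B'$ via a map in $I$, and then apply two-out-of-three to the resulting commutative square after applying $F$. The paper's proof is more terse but uses the identical diagram (with $P = A' \bigsqcup_A B$ and $B' = (A' \bigsqcup_A B)'$) and the same logic.
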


\begin{proof} The `only if' direction is obvious. For the other direction, let $A \to B$ be a trivial cofibration in $\mathscr{M}$. We may form the diagram

\begin{equation*}
\begin{tikzpicture}[baseline]
\matrix(m)[matrix of math nodes,
row sep=4ex, column sep=4ex,
text height=1.5ex, text depth=0.25ex]
 { A & B \\
 A' & A' \bigsqcup_A B & (A' \bigsqcup_A B)' \\ };
\path[>=stealth,->,font=\scriptsize]
(m-1-1) edge (m-1-2)
edge (m-2-1)
(m-1-2) edge (m-2-2)
edge (m-2-3)
(m-2-1) edge (m-2-2)
(m-2-2) edge (m-2-3);
\end{tikzpicture}
\end{equation*}
where the vertical and lower right horizontal arrows are in $I$. Then our two assumptions along with the two-out-of-three property of the weak equivalences shows that $F(A) \to F(B)$ is a trivial cofibration.
\end{proof}

\begin{lem} Let $K$ be a simplicial set over $S$. Then
\begin{equation*}
K \star_S -, - \star_S K : s\Set_{/S} \to s\Set_{K/ /S}
\end{equation*}
are left adjoints. Similarly, for $K$ a marked simplicial set over $S$,
\begin{equation*}
K \star_S -, - \star_S K : s\Set^+_{/S} \to s\Set^+_{K/ /S}
\end{equation*}
are left adjoints.
\end{lem}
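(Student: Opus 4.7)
The plan is to verify that $K \star_S -$ preserves small colimits and then invoke the adjoint functor theorem for presentable categories; the argument for $- \star_S K$ is symmetric, and the marked case requires only minor modifications.

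First, I would unwind the defining right Kan extension $\iota_\ast$ to obtain an explicit combinatorial description of $X \star_S Y$. Concretely, an $n$-simplex of $X \star_S Y$ over $\sigma: \Delta^n \to S$ is determined by the following data: a decomposition $\Delta^n = \Delta^{n_0} \star \Delta^{n_1}$ (with $n_0, n_1 \geq -1$ and $n_0+n_1+1 = n$) together with an $n_0$-simplex of $X$ over $\sigma|_{\Delta^{n_0}}$ and an $n_1$-simplex of $Y$ over $\sigma|_{\Delta^{n_1}}$, where the empty simplices are interpreted in the obvious way. This is essentially already unpacked in the notation following the definition of the $S$-join. In particular, the $(n_1 = -1)$-summand recovers $X_n$ over $\sigma$, which furnishes the canonical inclusion $K \to K \star_S Y$ (and shows $K \star_S \emptyset = K$) that makes $K \star_S -$ land in $s\Set_{K//S}$.

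Next, I would check that $K \star_S -$ preserves small colimits. Recall that colimits in $s\Set_{K//S}$ are computed as colimits in $s\Set_{/S}$ in the connected case, while coproducts are pushouts under $K$; in general one simply takes the colimit in $s\Set_{/S}$ after glueing along the $K$-structure. Given a diagram $\{Y_i\}_{i \in I}$ in $s\Set_{/S}$ with colimit $Y$, the formula above decomposes $(K \star_S Y_i)_n$ over each $\sigma$ as the disjoint union, indexed by the decompositions of $[n]$, of $K_{n_0}(\sigma|_{\Delta^{n_0}}) \times (Y_i)_{n_1}(\sigma|_{\Delta^{n_1}})$, with the $n_1 = -1$ summand equal to $K_n(\sigma)$ and independent of $i$. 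Since colimits in $s\Set$ are computed levelwise and commute with products by fixed sets and with disjoint unions, the natural map $\colim_i (K \star_S Y_i) \to K \star_S Y$ is an isomorphism; the shared $K$-part identifies this with the colimit in $s\Set_{K//S}$.

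Finally, both $s\Set_{/S}$ and $s\Set_{K//S}$ are presentable, being slice and coslice categories of the presheaf topos $s\Set$ on $\Delta/S$. Hence the adjoint functor theorem (\cite[5.5.2.9]{HTT}) produces a right adjoint to $K \star_S -$. For the marked version, I would observe that the markings on $X \star_S Y$ are exactly those inherited from $X$ and from $Y$ (no new edges are created by the join beyond those already living in the two fibers of $\iota$), so the same analysis shows that $K \star_S -$ preserves colimits in $s\Set^+_{/S}$; presentability of $s\Set^+_{/S}$ (as an accessible localization, or by direct inspection) then yields the right adjoint.

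The only delicate point is the bookkeeping involved in comparing colimits in $s\Set_{K//S}$ with colimits in $s\Set_{/S}$; once the summand-by-summand description of $(K \star_S Y)_n$ is in hand, colimit-preservation is routine, and no genuine obstacle remains.
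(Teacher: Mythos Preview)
Your approach is correct but differs from the paper's. The paper constructs the right adjoint $G$ explicitly by the formula $\Hom_{/S}(\Delta^n, G(K \to C)) = \Hom_{K//S}(K \star_S \Delta^n, C)$, then writes down a unit and counit and verifies the triangle identities directly. You instead establish colimit preservation from the simplex-level description and appeal to the adjoint functor theorem. Both work; the paper's route has the advantage of producing the explicit formula for $G$ immediately, which is exactly what is used to \emph{define} $C_{(p,S)/}$ a few lines later, whereas your route would require extracting that formula a posteriori from the representability characterization of right adjoints out of a presheaf category.

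A couple of minor points of care. First, your sentence ``the natural map $\colim_i (K \star_S Y_i) \to K \star_S Y$ is an isomorphism'' is only correct when the colimit is taken in $s\Set_{K//S}$; as a colimit in $s\Set_{/S}$ it fails for coproducts (the $n_1=-1$ summand $K_n$ is constant in $Y$, so $K \star_S -$ does not preserve coproducts there). Your follow-up clause about ``the shared $K$-part'' shows you are aware of this, but the exposition should make the ambient category explicit: one checks coequalizers via the forgetful functor (which creates connected colimits) and coproducts by identifying the wide pushout along $K$ with $K_n \sqcup \bigsqcup_i G_n(Y_i)$ levelwise. Second, the reference \cite[5.5.2.9]{HTT} is the $\infty$-categorical adjoint functor theorem; since everything here is $1$-categorical, the classical statement for locally presentable categories (colimit-preserving functors between such are left adjoints) is the more natural citation. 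Finally, $s\Set^+$ is not an accessible localization of $s\Set$; it is locally presentable for other reasons (e.g.\ as a category of models of a finite-limit sketch, or as a slice of a presheaf category), so your parenthetical should be adjusted.
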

\begin{proof}
We will prove that $K \star_S -$ is a left adjoint in the unmarked case and leave the other cases to the reader. Let $F$ denote $K \star_S -$ and define a functor $G: s\Set_{K/ /S} \to s\Set_{/S}$ by letting $G(K \to C)$ be the simplicial set over $S$ which satisfies 
\begin{equation*} \Hom_{/S}(\Delta^n,G(K \to C)) = \Hom_{K/ /S}(K \star_S \Delta^n, C);
\end{equation*}
this is evidently natural in $K \to C$. Define a unit map $\eta: \id \to G F$ on objects $X$ by sending $\sigma: \Delta^n \to X$ to $K \star_S \sigma: K \star_S \Delta^n \to K \star_S X$, which corresponds to $\Delta^n \to G(K \star_S X)$. Define a counit map $\eta: F G \to \id$ on objects $K \to C$ by sending $\lambda = (\sigma,\tau): \Delta^i \star \Delta^j \to K \star_S G(K \to C)$ to $\Delta^i \star \Delta^j \xrightarrow{(\sigma,\id)} K \star_S \Delta^j \xrightarrow{\tau'} C$, where $\tau'$ corresponds to $\tau: \Delta^j \to G(K \to C)$. Then it is straightforward to verify the triangle identities, so $F$ is adjoint to $G$.
\end{proof}

For the following pair of results, endow $s\Set^+_{/S}$ with the cocartesian model structure and $s\Set^+_{K/ /S} = (s\Set^+_{/S})_{K/}$ with the model structure created by the forgetful functor to $s\Set^+_{/S}$.

\begin{thm} \label{jointhm} Let $K$ be a marked simplicial set over $S$. The functor
\[ K \star_S (- \times_S \mathscr{O}(S)^\sharp) : s\Set^+_{/S} \to s\Set^+_{K/ /S} \]
is left Quillen.
\end{thm}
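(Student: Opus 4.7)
The plan is to apply Lemma~\ref{lm:showingFunctorLeftQuillen} to the composite $F := K \star_S (- \times_S \mathscr{O}(S)^\sharp)$, noting that $F$ lands in $s\Set^+_{K/ /S}$ via the canonical inclusion of the left endpoint $K \hookrightarrow K \star_S (X \times_S \mathscr{O}(S)^\sharp)$, and that cofibrations and weak equivalences in the slice $s\Set^+_{K/ /S}$ are created by the forgetful functor to $s\Set^+_{/S}$. Cofibration preservation is immediate: pullback along $\ev_0$ preserves monomorphisms and markings, while $K \star_S -$, expressed as the right Kan extension $\iota_\ast$ applied to the disjoint-union functor in the second variable, visibly preserves monomorphisms of the second variable at the level of simplices and markings by direct inspection of the description of simplices of a relative join.

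For condition~(1) of Lemma~\ref{lm:showingFunctorLeftQuillen}, take $I$ to be the weakly saturated class generated by the four standard families of generating trivial cofibrations for the cocartesian model structure on $s\Set^+_{/S}$, one for each lifting condition in the definition of $S^\sharp$-fibered: the inner-horn inclusions $(\Lambda^n_k)^\flat \to (\Delta^n)^\flat$, the cocartesian-horn inclusions $\leftnat{\Lambda^n_0} \to \leftnat{\Delta^n}$, and the two low-dimensional generators enforcing closure of marked edges under composition and under equivalence in fibers. The bulk of the verification is then handled directly by Proposition~\ref{joinprp}: part~(2), applied with $C = \mathscr{O}(S)$, $e_0 = \ev_0$ and $e_1 = \ev_1$, shows that $F$ sends inner-horn generators to inner anodyne maps; part~(1) handles the cocartesian-horn generators. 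Although part~(1) is phrased using $\rightnat{\mathscr{O}(S)}$ rather than $\mathscr{O}(S)^\sharp$, its inductive proof goes through \emph{a fortiori} under the enlarged marking, since the cartesian lifts selected at each inductive step remain marked when all edges are marked; one only needs to enlarge the markings on the attached simplices and form slightly larger pushouts along the pattern of the parenthetical special case already present in the proof of Proposition~\ref{joinprp}(1). The two remaining low-dimensional generators can be checked by direct combinatorial attachment arguments, invoking Lemma~\ref{markedAnodyneBasicJoinLemma} to reduce to known anodyne facts.

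For condition~(2), suppose $C \hookrightarrow C'$ is a trivial cofibration between fibrant objects in $s\Set^+_{/S}$. By Proposition~\ref{func}, $C \times_S \mathscr{O}(S)^\sharp \hookrightarrow C' \times_S \mathscr{O}(S)^\sharp$ is again a trivial cofibration between fibrant objects. Applying $K \star_S -$ yields a monomorphism of cocartesian fibrations over $S \times \Delta^1$ (Proposition~\ref{joinrt}), which I verify to be a cocartesian equivalence by the fiberwise criterion: over $(s,0)$ it restricts to the identity on $K_s$, while over $(s,1)$ it restricts to the map $C \times_S S^{/s} \to C' \times_S S^{/s}$, which is a categorical equivalence because the terminal object of $S^{/s}$ exhibits both sides as equivalent to the ordinary fibers $C_s \to C'_s$. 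Postcomposing with the projection $S \times \Delta^1 \to S$ retains this as a cocartesian equivalence over $S$. I expect the chief obstacle to lie in the careful bookkeeping of markings when adapting the combinatorial argument of Proposition~\ref{joinprp}(1) to the enlarged $\mathscr{O}(S)^\sharp$ marking; once that is secured, the remainder is formal.
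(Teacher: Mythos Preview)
Your overall architecture matches the paper's: both invoke Lemma~\ref{lm:showingFunctorLeftQuillen}, verify preservation of the four classes of generating left marked anodyne maps, and then treat trivial cofibrations between fibrant objects separately. However, your treatment of that last step has a genuine gap.

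You write that ``applying $K \star_S -$ yields a monomorphism of cocartesian fibrations over $S \times \Delta^1$ (Proposition~\ref{joinrt})'' and then invoke the fiberwise criterion for cocartesian equivalences. But Proposition~\ref{joinrt} requires \emph{both} inputs to be fibrant, and $K$ is an arbitrary marked simplicial set over $S$, not assumed to be a cocartesian fibration. Without this, $K \star_S (C \times_S \mathscr{O}(S)^\sharp)$ is not a cocartesian fibration over $S \times \Delta^1$, and the fiberwise criterion does not apply. (There is a second, smaller problem: $\leftnat{C} \times_S \mathscr{O}(S)^\sharp$ is itself not fibrant in $s\Set^+_{/S}$, since its marking is strictly larger than the cocartesian marking on the free cocartesian fibration; Proposition~\ref{func} only asserts that the functor is left Quillen, not that it preserves fibrant objects.) Attempting to fix this by replacing $K$ with a fibrant $K'$ would require knowing that $K \star_S -$ is invariant in the first variable, i.e.\ Theorem~\ref{thm:JoinFirstVariable}, whose proof in turn relies on the explicit homotopy argument you are trying to avoid. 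The paper circumvents all of this by constructing, for a cocartesian equivalence $f:\leftnat{C}\to\leftnat{D}$ with homotopy inverse $g$, an explicit map
\[
\phi: \bigl(K \star_S (\leftnat{C} \times_S \mathscr{O}(S)^\sharp)\bigr) \times (\Delta^1)^\sharp \longrightarrow K \star_S \bigl((\leftnat{C} \times_S \mathscr{O}(S)^\sharp) \times (\Delta^1)^\sharp\bigr)
\]
which, composed with $F$ applied to the given homotopy, exhibits $F(f)$ as a homotopy equivalence directly, with no fibrancy hypothesis on $K$.

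A secondary remark: for the cocartesian-horn class $\leftnat{\Lambda^n_0}\to\leftnat{\Delta^n}$, your ``a fortiori'' claim that the proof of Proposition~\ref{joinprp}(1) extends verbatim to $\mathscr{O}(S)^\sharp$ is more delicate than you indicate, since enlarging the marking forces additional marked edges in the attached simplices at each stage of the filtration, and one must verify that the relevant horn inclusions remain left marked anodyne with those extra markings. The paper sidesteps this by factoring the map as a pushout of the $\rightnat{\mathscr{O}(S)}$ case (handled by Proposition~\ref{joinprp}(1)) followed by a map that is the identity for $n>1$ and, for $n=1$, is left marked anodyne because every newly marked edge factors as a composite of two already-marked edges. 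This reduction is cleaner than redoing the combinatorics.
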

\begin{proof} 
We will denote the functor in question by $F$. First observe that $F$ is the composite of the three left adjoints $e_0^\ast$, ${e_1}_!$, and $K \star_S -$, so $F$ is a left adjoint. $F$ evidently preserve cofibrations, so it only remains to check that $F$ preserves the trivial cofibrations. We first verify that $F$ preserves the left marked anodyne maps. Since $F$ preserves colimits it suffices to check that $F$ preserves a collection of morphisms which generate the left marked anodyne maps as a weakly saturated class. We verify that $F$ preserves the four classes of maps enumerated in \cite[Definition~3.1.1.1]{HTT}.

(1): For $\iota: (\Lambda^n_k)^\flat \to (\Delta^n)^\flat$, $0<k<n$, the underlying map of simplicial sets of $F(\iota)$ is inner anodyne by Proposition \ref{joinprp}. $F(\iota)$ is obtained by marking common edges of an inner anodyne map, so is left marked anodyne.

(2): For $\iota: \leftnat{\Lambda^n_0} \to \leftnat{\Delta^n}$, we observe that the map

\begin{equation*} K \star_S (\leftnat{\Lambda^n_0} \times_S \mathscr{O}(S)^\sharp ) \bigsqcup_{K \star_S (\leftnat{\Lambda^n_0} \times_S \rightnat{\mathscr{O}(S)})} K \star_S (\leftnat{\Delta^n} \times_S \rightnat{\mathscr{O}(S)}) \to K \star_S (\leftnat{\Delta^n} \times_S \mathscr{O}(S)^\sharp )
\end{equation*}
in the case $n=1$ is marked left anodyne, since every marked edge in the codomain factors as a composite of two marked edges in the domain, and is the identity if $n>1$. It thus suffices to show that $K \star_S (\leftnat{\Lambda^n_0} \times_S \rightnat{\mathscr{O}(S)}) \to K \star_S (\leftnat{\Delta^n} \times_S \rightnat{\mathscr{O}(S)})$ is left marked anodyne, which is the content of part 1 of \ref{joinprp}.

(3) and (4): In both of these cases one has a map of marked simplicial sets $A \to B$ whose underlying map is an isomorphism of simplicial sets. Then

\begin{equation*}
\begin{tikzpicture}[baseline]
\matrix(m)[matrix of math nodes,
row sep=4ex, column sep=4ex,
text height=1.5ex, text depth=0.25ex]
 { A & F(A) \\
 B & F(B) \\ };
\path[>=stealth,->,font=\scriptsize]
(m-1-1) edge (m-1-2)
edge (m-2-1)
(m-1-2) edge (m-2-2)
(m-2-1) edge (m-2-2);
\end{tikzpicture}
\end{equation*}
is a pushout square, so $F(A) \to F(B)$ is left marked anodyne if $A \to B$ is.

Next, let $f: \leftnat{C} \to \leftnat{D}$ be a cocartesian equivalence between cocartesian fibrations over $S$. Let $g: \leftnat{D} \to \leftnat{C}$ be a homotopy inverse of $f$, so that there exists a homotopy $h: \leftnat{C} \times (\Delta^1)^\sharp \to \leftnat{C}$ over $S$ from $\id_C$ to $g \circ f$. Define a map
\begin{equation*}
\phi: (K \star_S (\leftnat{C} \times_S \mathscr{O}(S)^\sharp)) \times (\Delta^1)^\sharp \to K \star_S ((\leftnat{C} \times_S \mathscr{O}(S)^\sharp) \times (\Delta^1)^\sharp)
\end{equation*}
by sending a $(i+j+1)$-simplex $(\lambda,\alpha)$ given by the data $\sigma:\Delta^i \to K$, $\tau:\Delta^j \to \leftnat{C} \times_S \mathscr{O}(S)^\sharp$, $\pi \circ \lambda: \Delta^{i+j+1} \to \Delta^1$, $\alpha: \Delta^{i+j+1} \to \Delta^1$ to a $i+j+1$-simplex $\lambda'$ given by the data $\sigma$, $(\tau,\alpha \circ \iota)$, $\pi \circ \lambda$ where $\iota: \Delta^j \to \Delta^i \star \Delta^j$ is the inclusion. It is easy to see that $\phi$ restricts to an isomorphism on $(K \star_S (\leftnat{C} \times_S \mathscr{O}(S)^\sharp)) \times \partial \Delta^1$. We deduce that $F(h) \circ \phi$ is a homotopy from $F(g \circ f)$ to the identity. A similar argument concerning a chosen homotopy from $f \circ g$ to $\id_D$ shows that $F(f)$ is a cocartesian equivalence.

Finally, invoking Lemma~\ref{lm:showingFunctorLeftQuillen} completes the proof.
\end{proof}

\begin{thm} \label{thm:JoinFirstVariable} Let $K$ be a marked simplicial set over $S$. The functor
\[ - \star_S K: s\Set^+_{/S} \to s\Set^+_{K/ /S} \]
is left Quillen.
\end{thm}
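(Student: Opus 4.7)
The plan is to apply Lemma \ref{lm:showingFunctorLeftQuillen} to $F = -\star_S K$, taking $I$ to be the class of left marked anodyne maps. The functor $F$ is a left adjoint by the preceding lemma and obviously preserves monomorphisms; so it remains to verify (a) that $F$ carries each generator of left marked anodyne maps from \cite[3.1.1.1]{HTT} to a left marked anodyne map, and (b) that $F$ preserves cocartesian equivalences between fibrant objects.

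For (a), the marking-change generators of classes (3) and (4) become the identity on underlying simplicial sets with extra markings placed on specific edges of the image of $\Delta^2$ resp.\ $Q$ inside $A\star_S K$; these are again class (3) and (4) generators. For classes (1) and (2), I will induct on the dimension of nondegenerate simplices $\tau: \Delta^j \to K$. The only nondegenerate simplices of $\Delta^n \star_S K$ not lying in $\Lambda^n_k \star_S K$ and involving $\tau$ on the $K$-side are $(\id_{\Delta^n}, \tau)$ and $(d_k, \tau)$, of dimensions $n+j+1$ and $n+j$ respectively. The faces of $(\id,\tau)$ other than $d_k$ either lie in $\Lambda^n_k \star_S K$ (the faces $(d_i,\tau)$ for $i \neq k$) or come from a previous inductive stage (the faces $(\id, d_l\tau)$). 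Hence attaching both $(\id,\tau)$ and $(d_k,\tau)$ simultaneously via a horn $\Lambda^{n+j+1}_k \hookrightarrow \Delta^{n+j+1}$ realizes $F(A\to B)$ as a transfinite composition of inner anodyne maps for class (1), or of class (2) generators for class (2) (the marked edge $\{0,1\}$ of $\Delta^n$ remaining the marked edge $\{0,1\}$ of the filled $(n+j+1)$-simplex).

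For (b), a cocartesian equivalence $f\colon C \to D$ between fibrant objects is a marked homotopy equivalence, with inverse $g$ and homotopy $h\colon C \times (\Delta^1)^\sharp \to C$ over $S$ from $\id_C$ to $g\circ f$. In the spirit of the final paragraph of the proof of Theorem \ref{jointhm}, I will construct a map
\[\phi\colon (C \star_S K) \times (\Delta^1)^\sharp \to (C \times (\Delta^1)^\sharp) \star_S K\]
sending an $(i+j+1)$-simplex $((\sigma,\tau), \alpha)$, with $\sigma\colon \Delta^i \to C$, $\tau\colon \Delta^j \to K$, and $\alpha\colon \Delta^{i+j+1} \to \Delta^1$, to $((\sigma, \alpha|_{\Delta^i}), \tau)$; this distributes the $\Delta^1$-factor only over the $C$-side of the join and is easily checked to be compatible with the markings and with the structure map to $S$. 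Then $F(h)\circ \phi$ is a homotopy $\id_{F(C)} \simeq F(g)\circ F(f)$; analogous treatment of a homotopy from $\id_D$ to $f\circ g$ yields the conclusion that $F(f)$ is a marked homotopy equivalence, hence a cocartesian equivalence.

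I expect the main obstacle to be the careful bookkeeping in (a) for class (2), especially in the low-dimensional case $n=1$, where $\Lambda^1_0 = \{0\}$ and so the marked edge $\{0,1\}$ of $\Delta^1$ first appears as part of the new cell $(\id_{\Delta^1},\tau)$ of dimension $j+2$. One must verify that the marked edge $\{0,1\}\subset (\id,\tau)$ truly coincides with the marked edge of the horn $\leftnat{\Lambda^{j+2}_0}\hookrightarrow \leftnat{\Delta^{j+2}}$, so that the filling is indeed a class (2) generator; this is a routine but fiddly check.
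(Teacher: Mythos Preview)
Your overall strategy (apply Lemma~\ref{lm:showingFunctorLeftQuillen}, check the four generating classes, then treat fibrant objects by an explicit homotopy) matches the paper, and your arguments for classes (3), (4) and for part (b) are correct and essentially identical to those in the proof of Theorem~\ref{jointhm}.

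The gap is in classes (1) and (2). You assert that for each nondegenerate $\tau\colon\Delta^j\to K$ the only new nondegenerate simplices of $\Delta^n\star_S K$ with $K$-part $\tau$ are ``$(\id_{\Delta^n},\tau)$'' and ``$(d_k,\tau)$'', and you pair them by a single horn $\Lambda^{n+j+1}_k\hookrightarrow\Delta^{n+j+1}$. But a simplex of the \emph{relative} join is a triple $(\sigma,\tau,\pi\lambda)$, where $\pi\lambda\colon\Delta^{i+j+1}\to S$ is a structure map extending $\pi_1\sigma$ and $\pi_2\tau$; for fixed $\sigma=\id_{\Delta^n}$ and $\tau$ there are in general many such $\pi\lambda$, so ``$(\id_{\Delta^n},\tau)$'' names a family of simplices, not one. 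The assignment $\lambda\mapsto d_k\lambda$ from $(\id,\tau)$-simplices to $(d_k,\tau)$-simplices is in general neither injective nor surjective, so your pairing does not exhaust the new cells and the horn-filling scheme breaks down. What you have written is the (correct) argument for the absolute join $\Delta^n\star K$; the passage to $\star_S$ is where the real work lies, and it is not the marking bookkeeping you flag for $n=1$ but this structure-map multiplicity that is the actual obstruction.

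The paper handles this via the detailed filtration in Proposition~\ref{joinprp}: class~(1) is the dual of part~(2) there, and class~(2) is the dual of part~(3). In those proofs one repeatedly \emph{chooses} extensions of partial structure maps to $S$ using Lemma~\ref{lem1} and the fact that $S$ is an $\infty$-category, building up through a refined filtration $X_{<(\sigma,\chi)}\to X_{<(\sigma,\chi),1}\to\cdots$ that tracks which structure maps have been realized so far. You would need to reproduce an argument of that shape to close the gap.
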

\begin{proof} As with the proof of Theorem~\ref{jointhm}, the proof will be an application of Lemma~\ref{lm:showingFunctorLeftQuillen}. We first verify that $- \star_S K$ preserves the four classes of left marked anodyne maps enumerated in \cite[Definition~3.1.1.1]{HTT}. (1) is handled by the dual of part (2) of Proposition~\ref{joinprp}. (2) is handled by the dual of part (3) of Proposition~\ref{joinprp}. (3) and (4) are handled as in the proof of Theorem~\ref{jointhm}. Finally, the case of $A \to B$ a cocartesian equivalence between fibrant objects is also handled as in the proof of Theorem~\ref{jointhm}.
\end{proof}

\begin{dfn} \label{dfn:lowerSlice} Let $K, C \to S$ be marked simplicial sets over $S$ and let $p: K \to C$ be a map over $S$. Define the marked simplicial set $C_{(p,S)/} \to S$ as the value of the right adjoint to $K \star_S (- \times_S \sO(S)^\sharp)$ on $K \to C \to S$ in $s\Set^+_{K/ /S}$. By Theorem~\ref{jointhm}, if $C \to S$ is a $S$-category, then $C_{(p,S)/} \to S$ is a $S$-category. We will refer to $C_{(p,S)/}$ as a $S$-\emph{undercategory} of $C$.

Dually, define the marked simplicial set $C_{/(p,S)} \to S$ as the value of the right adjoint to $- \star_S (K \times_S \sO(S)^\sharp)$ on $K \to C \to S$ in $s\Set^+_{K/ /S}$. By Theorem~\ref{thm:JoinFirstVariable} applied to $K \times_S \sO(S)^\sharp$, if $C \to S$ is a $S$-category, then $C_{/(p,S)} \to S$ is a $S$-category. We will refer to $C_{/(p,S)}$ as a $S$-\emph{overcategory} of $C$.
\end{dfn}

\nomenclature[sliceParameterizedUnder]{$C_{(p,S)/}$}{$S$-undercategory of $S$-category $C$ with respect to $p: K \to C$}
\nomenclature[sliceParameterizedOver]{$C_{/(p,S)}$}{$S$-overcategory of $S$-category $C$ with respect to $p: K \to C$}

In the sequel, we will focus our attention on the $S$-undercategory and leave proofs of the evident dual assertions to the reader.

\subsection*{Functoriality in the diagram} We now study the functoriality of the $S$-undercategory with respect to the diagram category. Given maps $f: K \to L$ and $p: L \to X$ of marked simplicial sets over $S$, we have an induced map $X_{(p,S)/} \to X_{(p f,S)/}$, which in terms of the functors that $X_{(p,S)/}$ and $X_{(p f,S)/}$ represent is given by precomposing $L \star_S (A \times_S \mathscr{O}(S)^\sharp) \to X$ by $f \star_S \id$.

Recall that for a category $\mathscr{M}$ admitting pushouts and a map $f: K \to L$, we have an adjunction
\[ \adjunct{f_!}{\mathscr{M}_{K/}}{\mathscr{M}_{L/}}{f^\ast} \]
where $f_!(K \to X) = X \bigsqcup_K L$ and $f^\ast(L \xrightarrow{p} X) = p \circ f$. If $\mathscr{M}$ is a model category and $\mathscr{M}_{K/}$, $\mathscr{M}_{L/}$ are provided with the model structures induced from $\mathscr{M}$, then $(f_!,f^\ast)$ is a Quillen adjunction. Moreover, if $\mathscr{M}$ is a left proper model category and $f$ is a weak equivalence, then $(f_!,f^\ast)$ is a Quillen equivalence.

\begin{prp} \label{slicevar2} Let $f: K \to L$ be a cocartesian equivalence in $s\Set^+_{/S}$. Let $C$ be a $S$-category and let $p: L \to \leftnat{C}$ be a map. Then $\leftnat{C_{(p,S)/}} \to \leftnat{C_{(p f,S)/}}$ is a cocartesian equivalence in $s\Set^+_{/S}$.
\end{prp}
\begin{proof} Let $F = f_! \circ (K \star_S (- \times_S \mathscr{O}(S)^\sharp))$ and let $F' = L \star_S (- \times_S \mathscr{O}(S)^\sharp)$. Let $G$ and $G'$ be the right adjoints to $F$ and $F'$, respectively. Let $\alpha: F \to F'$ be the evident natural transformation and let $\beta: G' \to G$ be the dual natural transformation, defined by $G' \xrightarrow{\eta_{G'}} G F G' \xrightarrow{G \alpha G'} G F' G' \xrightarrow{G \epsilon'} G$. Then $\beta_C: \leftnat{C_{(p,S)/}} \to \leftnat{C_{(p f,S)/}}$ is the map under consideration. By Theorem~\ref{thm:JoinFirstVariable}, $\alpha_X$ is a cocartesian equivalence for all $X \in s\Set^+_{/S}$. Therefore, by \cite[Corollary~1.4.4(b)]{Hovey}, $\beta_C$ is a cocartesian equivalence.
\end{proof} 

\begin{prp} \label{prp:SliceFunctorialityInDiagram} Consider a commutative diagram of marked simplicial sets
\[ \begin{tikzpicture}[baseline]
\matrix(m)[matrix of math nodes,
row sep=4ex, column sep=4ex,
text height=1.5ex, text depth=0.25ex]
 { K & C \\
 L & D \\ };
\path[>=stealth,->,font=\scriptsize]
(m-1-1) edge (m-1-2)
edge node[left]{$i$} (m-2-1)
(m-1-2) edge node[right]{$q$} (m-2-2)
(m-2-1) edge (m-2-2)
edge node[above]{$p$} (m-1-2);
\end{tikzpicture} \]
where $i$ is a cofibration and $q$ is a fibration.

\begin{enumerate} \item The map
\[ C_{(p,S)/} \to C_{(p i,S)/} \times_{D_{(q p i,S)/}} D_{(q p,S)/} \]
is a fibration.

\item Let $K = \emptyset$ and $D = S^\sharp$. Then the map
\[ C_{(p,S)/} \to C_{(p i,S)/} \cong \underline{\Fun}_S(S^\sharp,C) \]
is a left fibration (of the underlying simplicial sets).
\end{enumerate}
\end{prp}
\begin{proof} 
\begin{enumerate}[leftmargin=*]
\item Given a trivial cofibration $A \to B$, we need to solve lifting problems of the form
\[ \begin{tikzpicture}[baseline]
\matrix(m)[matrix of math nodes,
row sep=4ex, column sep=4ex,
text height=1.5ex, text depth=0.25ex]
 { L \star_S (A \times_S \sO(S)^\sharp) \bigsqcup_{K \star_S (A \times_S \sO(S)^\sharp)} K \star_S (B \times_S \sO(S)^\sharp) & C \\
 L \star_S (B \times_S \sO(S)^\sharp) & D. \\ };
\path[>=stealth,->,font=\scriptsize]
(m-1-1) edge (m-1-2)
edge (m-2-1)
(m-1-2) edge (m-2-2)
(m-2-1) edge (m-2-2)
edge[dotted] (m-1-2);
\end{tikzpicture} \]
But the lefthand map is a trivial cofibration by Theorem~\ref{jointhm}.

\item We need to solve lifting problems of the form 
\[ \begin{tikzpicture}[baseline]
\matrix(m)[matrix of math nodes,
row sep=4ex, column sep=4ex,
text height=1.5ex, text depth=0.25ex]
 { (\Delta^n)^\flat \times_S \sO(S)^\sharp \bigsqcup_{(\Lambda^n_i)^\flat} K \star_S ((\Lambda^n_i)^\flat \times_S \sO(S)^\sharp) & C \\
 K \star_S ( (\Delta^n)^\flat \times_S \sO(S)^\sharp) & S \\ };
\path[>=stealth,->,font=\scriptsize]
(m-1-1) edge (m-1-2)
edge (m-2-1)
(m-1-2) edge (m-2-2)
(m-2-1) edge (m-2-2)
edge[dotted] (m-1-2);
\end{tikzpicture} \]
where $0 \leq i < n$. But the lefthand map is a trivial cofibration by Proposition~\ref{joinprp} (1\textquotesingle) and (2).
\end{enumerate}
\end{proof}

Combining (2) of the above proposition with Lemma~\ref{funclem2} (2) (which supplies a trivial marked fibration $\underline{\Fun}_S(S^\sharp, C) \to C$), we obtain a map $C_{(p,S)/} \to C$ which is a marked fibration and a left fibration, and such that for any $f: K \to L$, the triangle
\[ \begin{tikzpicture}[baseline]
\matrix(m)[matrix of math nodes,
row sep=4ex, column sep=4ex,
text height=1.5ex, text depth=0.25ex]
 { C_{(p,S)/} & & C_{(p f,S)/}   \\
 & C & \\ };
\path[>=stealth,->,font=\scriptsize]
(m-1-1) edge (m-1-3)
edge (m-2-2)
(m-1-3) edge (m-2-2);
\end{tikzpicture} \]
commutes.

\subsection*{The universal mapping property of the \texorpdfstring{$S$}{S}-slice}
Because the $S$-join and slice Quillen adjunction is not simplicial, we do not immediately obtain a universal mapping property characterizing the $S$-slice. Our goal in this subsection is to supply such a universal mapping property (Proposition~\ref{prp:SliceCompare}). We first recall how to slice Quillen bifunctors. Suppose $\sV$ is a closed symmetric monoidal category and $\sM$ is enriched, tensored, and cotensored over $\sV$. Denote the internal hom by
\[ \underline{\Hom}(-,-): \mathscr{M}^\op \times \mathscr{M} \to \mathscr{V}. \]
Define bifunctors
\begin{align*}  \underline{\Hom}_{x/}(-,-) &: \sM_{x/}^\op \times \sM_{x/} \to \sV \\
\underline{\Hom}_{/x}(-,-) &: \sM_{/x}^\op \times \sM_{/x} \to \sV 
\end{align*}
on objects $f: x \to a, g: x \to b$ and $f': a \to x, g': b \to x$ to be pullbacks
\begin{equation*}
\begin{tikzpicture}[baseline]
\matrix(m)[matrix of math nodes,
row sep=4ex, column sep=4ex,
text height=1.5ex, text depth=0.25ex]
 { \underline{\Hom}_{x/}(f,g) & \underline{\Hom}(a,b) & \underline{\Hom}_{/x}(f',g') & \underline{\Hom}(a,b) \\
 1 & \underline{\Hom}(x,b) & 1 & \underline{\Hom}(a,x) \\ };
\path[>=stealth,->,font=\scriptsize]
(m-1-1) edge (m-1-2)
edge (m-2-1)
(m-1-2) edge node[right]{$f^\ast$} (m-2-2)
(m-2-1) edge node[above]{$g$} (m-2-2)
(m-1-3) edge (m-1-4)
edge (m-2-3)
(m-1-4) edge node[right]{$g'_\ast$} (m-2-4)
(m-2-3) edge node[above]{$f'$} (m-2-4);
\end{tikzpicture}
\end{equation*}
and on morphisms in the obvious way (we abusively denote by $g: 1 \to \underline{\Hom}(x,b)$ the map corresponding to $g$ under the natural isomorphisms $\underline{\Hom}(1,\underline{\Hom}(x,b)) \cong \underline{\Hom}(1 \otimes x,b) \cong \underline{\Hom}(x,b),$ and likewise for $f'$). It is easy to see that $\underline{\Hom}_{x/}$ and $\underline{\Hom}_{/x}$ preserve limits separately in each variable.

\begin{lem} \label{sliceQB} In the above situation let $\mathscr{M}$ be a model category and $\mathscr{P}$ be a monoidal model category. If $\underline{\Hom}(-,-)$ is a right Quillen bifunctor, then $\underline{\Hom}_{x/}(-,-)$ and $\underline{\Hom}_{/x}(-,-)$ are right Quillen bifunctors, where we endow $\mathscr{M}_{x/}$ and $\mathscr{M}_{/x}$ with the model structures created by the forgetful functor to $\mathscr{M}$.
\end{lem}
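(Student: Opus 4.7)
The plan is to reduce the right Quillen bifunctor property for $\underline{\Hom}_{x/}$ (and dually $\underline{\Hom}_{/x}$) to the hypothesis on $\underline{\Hom}$ by exhibiting the slice pullback-power map as a pullback of the pullback-power map in $\mathscr{M}$. First I would record that the slice model structure on $\mathscr{M}_{x/}$ is created by the forgetful functor $U \colon \mathscr{M}_{x/} \to \mathscr{M}$, so a cofibration $i \colon (x \to A) \to (x \to B)$ and a fibration $p \colon (x \to C) \to (x \to D)$ in $\mathscr{M}_{x/}$ give rise to a cofibration $A \to B$ and fibration $C \to D$ in $\mathscr{M}$, with the trivial cases preserved. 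The hypothesis then tells us the pullback-power map
\[ \phi \colon \underline{\Hom}(B, C) \longrightarrow \underline{\Hom}(A, C) \times_{\underline{\Hom}(A, D)} \underline{\Hom}(B, D) \]
is a fibration in $\mathscr{V}$, trivial if either $i$ or $p$ is.

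Next I would form the commutative square
\[
\begin{tikzcd}
\underline{\Hom}_{x/}(B, C) \ar{r} \ar{d}[swap]{\Phi} & \underline{\Hom}(B, C) \ar{d}{\phi} \\
\underline{\Hom}_{x/}(A, C) \times_{\underline{\Hom}_{x/}(A, D)} \underline{\Hom}_{x/}(B, D) \ar{r} & \underline{\Hom}(A, C) \times_{\underline{\Hom}(A, D)} \underline{\Hom}(B, D)
\end{tikzcd}
\]
where $\Phi$ is the analogous slice pullback-power, and argue that it is Cartesian. By Yoneda, this reduces to showing that a map $f \colon V \otimes B \to C$ has its image in the lower-right corner lifting to the lower-left corner if and only if the composite $V \otimes x \to V \otimes B \xrightarrow{f} C$ agrees with $V \otimes x \to x \to C$, which is precisely the condition for $f$ to arise from a map $V \to \underline{\Hom}_{x/}(B, C)$. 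The two ostensibly separate $x$-compatibility conditions (one per factor of the lower-left pullback) collapse to this single one because $i$ and $p$ are morphisms under $x$, so that $x \to A \to B$ equals $x \to B$ and $x \to C \to D$ equals $x \to D$.

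Once the square is Cartesian, $\Phi$ is a pullback of $\phi$ in $\mathscr{V}$, hence a fibration, trivial if $i$ or $p$ is, by stability of (trivial) fibrations under pullback. The dual argument for $\underline{\Hom}_{/x}$ uses the defining pullback $\underline{\Hom}_{/x}(f', g') = 1 \times_{\underline{\Hom}(a, x)} \underline{\Hom}(a, b)$ and the structure maps \emph{to} $x$ in place of \emph{from} $x$. The main obstacle is the Cartesianness check, but it is a clean diagram chase once one isolates the single binding $x$-compatibility condition; everything else is formal.
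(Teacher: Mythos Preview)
Your proof is correct and follows essentially the same approach as the paper: both show that the slice pullback-power map $\Phi$ is obtained as a pullback of the ordinary pullback-power map $\phi$, then conclude by stability of (trivial) fibrations under pullback. The only cosmetic difference is in how Cartesianness is verified: the paper stacks the square $\Phi/\phi$ on top of the square with bottom row $1 \to \underline{\Hom}(x,c)$ and invokes the pasting lemma for pullbacks (the outer rectangle and lower square are pullbacks essentially by definition of $\underline{\Hom}_{x/}$), whereas you do a direct Yoneda check that the two $x$-compatibility conditions collapse to one via $i \circ \pi_A = \pi_B$ and $p \circ \pi_C = \pi_D$.
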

\begin{proof} We prove the assertion for $\underline{\Hom}_{x/}(-,-)$, the proof for $\underline{\Hom}_{/x}(-,-)$ being identical. Let $i: a \to b$ and $f: c \to d$ be morphisms in $\sM_{x/}$ (so they are compatible with the structure maps $\pi_a,...,\pi_d$). In the commutative diagram
\begin{equation*}
\begin{tikzpicture}[baseline]
\matrix(m)[matrix of math nodes,
row sep=4ex, column sep=4ex,
text height=1.5ex, text depth=0.25ex]
 { \underline{\Hom}_{x/}(\pi_b,\pi_c) & \underline{\Hom}(b,c) \\
 \underline{\Hom}_{x/}(\pi_a,\pi_c) \times_{\underline{\Hom}_{x/}(\pi_a,\pi_d)} \underline{\Hom}_{x/}(\pi_b,\pi_d) & \underline{\Hom}(a,c) \times_{\underline{\Hom}(a,d)} \underline{\Hom}(b,d) \\
1 & \underline{\Hom}(x,c) \\};
\path[>=stealth,->,font=\scriptsize]
(m-1-1) edge (m-1-2)
edge (m-2-1)
(m-1-2) edge (m-2-2)
(m-2-1) edge (m-2-2)
edge (m-3-1)
(m-2-2) edge (m-3-2)
(m-3-1) edge (m-3-2);
\end{tikzpicture}
\end{equation*}
it is easy to see that the lower square and the rectangle are pullback squares, so the upper square is a pullback square. It is now clear that if $\underline{\Hom}(-,-)$ is a right Quillen bifunctor, then $\underline{\Hom}_{x/}(-,-)$ is as well.
\end{proof}

We apply Lemma~\ref{sliceQB} to the bifunctors
\begin{align*} \Map_{K/ /S}(-,-): & {s\Set^+_{K//S}}^\op \times s\Set^+_{K//S} \to s\Set_{\text{Quillen}}  \\
\Fun_{K/ /S}(-,-): & {s\Set^+_{K//S}}^\op \times s\Set^+_{K//S} \to s\Set_{\text{Joyal}}
\end{align*} 
induced by $\Map_S(-,-)$ and $\Fun_S(-,-)$.

\begin{lem} \label{slicecomparelem} Let $K$, $A$, and $B$ be simplicial
 sets and define a map
\[ A \times (K \star B) \to K \star (A \times B) \]
by sending the data $(\Delta^n \to A, \Delta^k \to K, \Delta^{n-k-1} \to B)$ of a $n$-simplex of $A \times (K \star B)$ to the data $(\Delta^k \to K, \Delta^{n-k-1} \to A \times B)$ of a $n$-simplex of $K \star (A \times B)$. Then
\[ \phi: A \times (K \star B) \bigsqcup_{A \times K} K \to K \star (A \times B) \]
is a categorical equivalence.
\end{lem}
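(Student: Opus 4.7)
The plan is to reduce the claim, by cellular induction on $A$, to the base case $A = \Delta^n$, and then handle that case by producing an explicit section of $\phi$ together with a Joyal homotopy.

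For the reduction, observe that the assignments $F \colon A \mapsto A \times (K \star B) \sqcup_{A \times K} K$ and $G \colon A \mapsto K \star (A \times B)$ define functors $s\Set \to (s\Set)_{K/}$. Each is built from cartesian products, joins, and pushouts, and each of these operations preserves all small colimits and monomorphisms in every variable; hence $F$ and $G$ do as well. Because the Joyal model structure is left proper, cellular induction on $A$---first on dimension, then on the number of top-dimensional cells---reduces the claim to the base cases $A = \emptyset$ (where $F(\emptyset) = G(\emptyset) = K$ and $\phi_\emptyset$ is the identity) and $A = \Delta^n$ for $n \geq 0$. The inductive step amounts to the standard gluing lemma for pushouts along cofibrations of categorical equivalences in a left proper model category.

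For $A = \Delta^n$, I would construct a simplicial section $\psi \colon G(\Delta^n) \to F(\Delta^n)$ of $\phi_{\Delta^n}$ as follows. Given an $m$-simplex of $K \star (\Delta^n \times B)$ with decomposition $\Delta^m = \Delta^k \star \Delta^{m-k-1}$ and data $(\sigma_K \colon \Delta^k \to K,\ \sigma_\Delta \colon \Delta^{m-k-1} \to \Delta^n,\ \sigma_B \colon \Delta^{m-k-1} \to B)$, I send it to the simplex of $\Delta^n \times (K \star B)$ whose $\Delta^n$-component restricts to $\sigma_\Delta$ on $\Delta^{m-k-1}$ and collapses $\Delta^k$ to the vertex $\sigma_\Delta(0) \in \Delta^n$, then project to $F(\Delta^n)$. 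The relation $\phi \circ \psi = \id_{G(\Delta^n)}$ is immediate from the definition of $\phi$. It then remains to exhibit a Joyal homotopy $\psi \circ \phi_{\Delta^n} \simeq \id_{F(\Delta^n)}$; I would build this as a $\Delta^1$-parameter family interpolating monotonically, using the total order on $\Delta^n$, between the identity at time $0$ and the collapse operation at time $1$.

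The main obstacle is step 2: verifying that the interpolating homotopy descends compatibly through the pushout that collapses $\Delta^n \times K$ to $K$ in the definition of $F(\Delta^n)$, and that the resulting map is well-defined on simplices. An alternative to the explicit homotopy---which I would use as a sanity check---is to compare representable mapping spaces: for any $\infty$-category $X$, the join--slice adjunction and the product--function-category adjunction identify both $\Fun(G(\Delta^n), X)$ and $\Fun(F(\Delta^n), X)$ with (sliced) functor $\infty$-categories, and one checks directly that the comparison induced by $\phi_{\Delta^n}$ is a Joyal equivalence, thereby giving the equivalence of $\phi_{\Delta^n}$ by Yoneda.
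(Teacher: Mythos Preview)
Your reduction step is fine, but the proof is incomplete at exactly the point you flag: you never actually construct the Joyal homotopy $\psi \circ \phi_{\Delta^n} \simeq \id$, nor do you carry out the Yoneda alternative. Sketching that one ``would build this as a $\Delta^1$-parameter family interpolating monotonically'' and then listing the compatibility with the pushout as an unresolved obstacle is not a proof. The Yoneda route is also only asserted, not executed; unwinding the join--slice and product--exponential adjunctions to identify the two mapping $\infty$-categories and checking that $\phi_{\Delta^n}^\ast$ is the resulting equivalence is real work that you have not done.

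The paper's argument sidesteps all of this, and in particular requires no cellular induction on $A$. It uses Lurie's comparison map $\eta_{X,Y}\colon X \diamond Y \to X \star Y$ from \cite[4.2.1.2]{HTT}, which is always a categorical equivalence. Since $A \times (-)$ preserves categorical equivalences and pushouts, the map
\[
f = (A \times \eta_{K,B}) \sqcup_{A \times K} \id_K \colon A \times (K \diamond B) \bigsqcup_{A \times K} K \longrightarrow A \times (K \star B) \bigsqcup_{A \times K} K
\]
is a categorical equivalence. The point is that the domain of $f$ is \emph{isomorphic} to $K \diamond (A \times B)$: distributing $A \times (-)$ over the defining pushout of $K \diamond B$ and then collapsing $A \times K$ to $K$ gives exactly $K \sqcup_{K \times (A \times B) \times \{0\}} K \times (A \times B) \times \Delta^1 \sqcup_{K \times (A \times B) \times \{1\}} (A \times B)$. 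Under this identification one checks that $\phi \circ f = \eta_{K, A \times B}$, and two-out-of-three finishes the argument. This is both shorter and uniform in $A$; your approach, even if completed, would be strictly more laborious.
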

\begin{proof} Recall \cite[Proposition~4.2.1.2]{HTT} that there is a map
\[ \eta_{X,Y}: X \diamond Y = X \bigsqcup_{X \times Y \times \{0 \} } X \times Y \times \Delta^1 \bigsqcup_{X \times Y \times \{1 \} } Y \to X \star Y \]
natural in $X$ and $Y$ which is always a categorical equivalence. Thus
\[ f = (A \times \eta_{K,B}) \sqcup \id_K: A \times (K \diamond B) \bigsqcup_{A \times K} K \to A \times (K \star B) \bigsqcup_{A \times K} K \]
is a categorical equivalence. The domain is isomorphic to $K \diamond (A \times B)$, and it is easy to check that the map $\eta_{K,A \times B}$ is the composite
\[ K \diamond (A \times B) \xrightarrow{f} A \times (K \star B) \bigsqcup_{A \times K} K \xrightarrow{\phi} K \star (A \times B). \]
Using the 2 out of 3 property of the categorical equivalences, we deduce that $\phi$ is a categorical equivalence.
\end{proof}

\begin{lem} \label{lm:JoinSliceAlmostSimplicialAdjunction} For all $L \in s\Set^+_{/S}$, we have a natural equivalence
\[ \phi: \Fun_S(L,\leftnat{C_{(p,S)/}}) \xrightarrow{\simeq} \Fun_{K/ /S}(K \star_S (L \times_S \sO(S)^\sharp),\leftnat{C}). \]
\end{lem}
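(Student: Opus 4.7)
The plan is to construct $\phi$ by unwinding $n$-simplices of each side via the relevant adjunctions, identify $\phi$ with precomposition by the comparison map of Lemma~\ref{slicecomparelem}, and then deduce the categorical equivalence from a marked upgrade of that lemma.

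First I would translate each side via adjunction. By the defining adjunction of $C_{(p,S)/}$ as the value at $p$ of the right adjoint to $K \star_S (- \times_S \sO(S)^\sharp)$, an $n$-simplex of $\Fun_S(L, \leftnat{C_{(p,S)/}})$---a map $(\Delta^n)^\flat \times L \to \leftnat{C_{(p,S)/}}$ over $S$---corresponds naturally to a map $K \star_S ((\Delta^n)^\flat \times Y) \to \leftnat{C}$ in $s\Set^+_{K/ /S}$, where $Y := L \times_S \sO(S)^\sharp$ and we use the identification $(\Delta^n)^\flat \times L \times_S \sO(S)^\sharp \cong (\Delta^n)^\flat \times Y$. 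On the other side, by Lemma~\ref{sliceQB} applied to the right Quillen bifunctor $\Fun_S(-,-)$ of Prp.~\ref{prp:SimplicialEnrichmentOfCocartesianModelStructure}, an $n$-simplex of $\Fun_{K/ /S}(K \star_S Y, \leftnat{C})$ is a map $(\Delta^n)^\flat \times (K \star_S Y) \to \leftnat{C}$ over $S$ whose restriction to $(\Delta^n)^\flat \times K$ factors through $p$, or equivalently a map $P_n \to \leftnat{C}$ in $s\Set^+_{K/ /S}$, where
\[ P_n := (\Delta^n)^\flat \times (K \star_S Y) \bigsqcup_{(\Delta^n)^\flat \times K} K. \]

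Second, Lemma~\ref{slicecomparelem} provides a natural map $\psi_n \colon P_n \to K \star_S ((\Delta^n)^\flat \times Y)$ which is a categorical equivalence of underlying simplicial sets. A direct inspection shows $\psi_n$ preserves markings (marked edges in $P_n$ come either from $K$ or from $Y$, and map to edges marked on the corresponding side of the join), so it defines a map in $s\Set^+_{K/ /S}$. Define $\phi_n$ by precomposition with $\psi_n$; by naturality in $[n] \in \Delta^\op$, the collection $\{\phi_n\}$ assembles into the simplicial map $\phi$.

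Finally, to show $\phi$ is a Joyal equivalence I would upgrade each $\psi_n$ to a cocartesian equivalence in $s\Set^+_{K/ /S}$ and invoke that $\Fun_{K/ /S}(-,\leftnat{C})$ is right Quillen into $s\Set_{\text{Joyal}}$ (Lemma~\ref{sliceQB} together with Prp.~\ref{prp:SimplicialEnrichmentOfCocartesianModelStructure}); once this upgrade is in place, applying the bifunctor to the cosimplicial morphism $\psi_\bullet$ and using that both cosimplicial objects are cofibrant yields the Joyal equivalence $\phi$. Equivalently, unwinding the lifting problem for $\phi$ against $\partial \Delta^n \hookrightarrow \Delta^n$ reduces us to showing that each natural map $K \star_S ((\partial\Delta^n)^\flat \times Y) \bigsqcup_{P_{\partial n}} P_n \to K \star_S ((\Delta^n)^\flat \times Y)$ is a trivial cofibration in $s\Set^+_{K/ /S}$. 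The main obstacle is precisely this cocartesian upgrade of Lemma~\ref{slicecomparelem}: the unmarked statement gives only a categorical equivalence of underlying simplicial sets, and one needs to promote it to a cocartesian equivalence in the sliced marked category. The cleanest route is to produce an explicit marked homotopy inverse to $\psi_n$ along the lines of Lemma~\ref{lm:DiamondJoinComparison}, exploiting the retraction of $(\Delta^n)^\flat \times Y$ onto $Y$; alternatively one can proceed by a fibrant-replacement argument leveraging the principle (Prp.~\ref{prp:FibrationBetweenFibrantObjects}) that cocartesian equivalences between fibrant objects are detected by categorical equivalence of underlying $\infty$-categories.
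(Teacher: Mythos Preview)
Your identification of $\phi$ as precomposition by the comparison map $\psi_n$ of Lemma~\ref{slicecomparelem} is correct, and you are right that the heart of the matter is promoting that map to a cocartesian equivalence in $s\Set^+_{/S}$. However, there is a genuine gap in the passage from ``each $\psi_n$ is a cocartesian equivalence'' to ``$\phi$ is a Joyal equivalence.'' Your sentence about ``applying the bifunctor to the cosimplicial morphism $\psi_\bullet$'' does not parse: the $n$-simplices of each side of $\phi$ are \emph{sets}, not $\infty$-categories, so knowing that $\Fun_{K//S}(\psi_n,\leftnat{C})$ is a categorical equivalence for each $n$ does not by itself assemble into the statement that $\phi$ is one. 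Your alternative, reducing to the pushout-product against $\partial\Delta^n \hookrightarrow \Delta^n$ being a trivial cofibration, would prove that $\phi$ is a trivial fibration, which is strictly stronger than what is claimed and not obviously true; it also demands more than $\psi_n$ being a weak equivalence. The paper bridges this gap by thickening the two sides to bisimplicial sets $X_\bullet$ and $Y_\bullet$ with $X_n = \Map_{K//S}(K \star_S((\Delta^n)^\flat \times L \times_S \sO(S)^\sharp),\leftnat{C})$ and $Y_n$ the evident Kan complex of $n$-simplices of the target, verifying that both are complete Segal spaces, and then invoking the Joyal--Tierney comparison \cite[4.11]{JT} to conclude that the levelwise equivalence $\Phi$ induces a categorical equivalence on zeroth rows.

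For the upgrade of $\psi_n$ itself, your two suggested routes (explicit marked homotopy inverse, or detection via fibrant objects) are plausible but underspecified; the paper's actual argument is different from both. It first replaces $K$ by a fibrant object using Theorem~\ref{thm:JoinFirstVariable}, then reduces to $L$ a simplex with minimal marking by a colimit argument, so that all terms in the relevant square are fibrant in $s\Set^+_{/S}$; one can then check the square is a homotopy pushout \emph{fiberwise} over $S$ via \cite[4.2.4.1]{HTT}, and on each fiber this is exactly the unmarked statement of Lemma~\ref{slicecomparelem}. Finally one pushes out along the cofibration $(\Delta^m)^\flat \times_S \sO(S)^\sharp \to L \times_S \sO(S)^\sharp$ to recover the original marking on $L$.
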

\begin{proof} Define bisimplicial sets $X, Y: \Delta^\op \to s\Set$ by
\begin{align*} X_n & = \Map_{K/ /S}(K \star_S ( (\Delta^n)^\flat \times L \times_S \sO(S)^\sharp), \leftnat{C}) \\
Y_n & = \Map(\Delta^n,\Fun_{K/ /S}(K \star_S (L \times_S \sO(S)^\sharp),\leftnat{C})) \\
 & \cong \Map_{K/ /S}( (\Delta^n)^\flat \times (K \star_S (L \times_S \sO(S)^\sharp) \bigsqcup_{(\Delta^n)^\flat \times K} K,\leftnat{C}).
\end{align*}
and define a map of bisimplicial sets $\Phi: X \to Y$ by precomposing levelwise by the map
\[ g_{L,n}: (\Delta^n)^\flat \times (K \star_S (L \times_S \sO(S)^\sharp)) \bigsqcup_{(\Delta^n)^\flat \times K} K \to K \star_S ((\Delta^n)^\flat \times L \times_S \sO(S)^\sharp) \]
adjoint as a map over $S \times \Delta^1$ to the identity over $S \times \partial \Delta^1$. Taking levelwise zero simplices then defines the map $\phi$, which is clearly natural in $L$, $K$, and $C$. By Theorem~\ref{thm:JoinFirstVariable}, taking a fibrant replacement of $K$ we may suppose that $K$ is fibrant. We first check that $X$ and $Y$ are complete Segal spaces. By \cite[Theorem~4.12]{JT}, $Y$ is a complete Segal space as it arises from a $\infty$-category. For $X$, since $\Map_{K/ /S}(-,-)$ is a right Quillen bifunctor, we only have to observe that:
\begin{itemize}
\item Every monomorphism $A \to B$ of simplicial sets induces a cofibration
\[ K \star_S ( A^\flat \times L \times_S \sO(S)^\sharp) \to K \star_S ( B^\flat \times L \times_S \sO(S)^\sharp ) \]
so $X$ is Reedy fibrant.

\item The spine inclusion $\iota_n: \text{Sp}(n) \to \Delta^n$ induces a trivial cofibration
\[ K \star_S ( \text{Sp}(n)^\flat \times L \times_S \sO(S)^\sharp) \to K \star_S ( (\Delta^n)^\flat \times L \times_S \sO(S)^\sharp); \]
$\iota_n$ is inner anodyne, so this follows from Theorem~\ref{jointhm} and \cite[Proposition~3.1.4.2]{HTT}.

\item The map $\pi: E \to \Delta^0$ where $E$ is the nerve of the contractible groupoid with two elements induces a cocartesian equivalence
\[ K \star_S ( E^\flat \times L \times_S \sO(S)^\sharp) \to K \star_S (L \times_S \sO(S)^\sharp); \]
$\pi^\flat$ is a cocartesian equivalence (as the composite of $E^\flat \to E^\sharp$ and $E^\sharp \to \Delta^0$), so this also follows from Theorem~\ref{jointhm} and \cite[Proposition~3.1.4.2]{HTT}.
\end{itemize}
We next prove that $\Phi$ is an equivalence in the complete Segal model structure. For this, we will prove that each map $g_{L,n}$ is a cocartesian equivalence in $s\Set^+_{/S}$. Both sides preserves colimits as a functor of $L$ (valued in $s\Set^+_{K//S}$), so by left properness and the stability of cocartesian equivalences under filtered colimits we reduce to the case $L$ is an $m$-simplex with some marking. In particular, $(\Delta^m)^\flat \times_S \sO(S)^\sharp \to S$ is fibrant in $s\Set^+_{/S}$. By \cite[Theorem~4.2.4.1]{HTT} we may check that the square of fibrant objects
\[ \begin{tikzcd}[row sep=2em, column sep=2em]
(\Delta^n)^\flat \times K \ar{r} \ar{d} & K \ar{d} \\
(\Delta^n)^\flat \times (K \star_S ((\Delta^m)^\flat \star_S \sO(S)^\sharp)) \ar{r} & K \star_S ((\Delta^n)^\flat \times (\Delta^m)^\flat \times_S \sO(S)^\sharp)
\end{tikzcd} \]
is a homotopy pushout square in the underlying $\infty$-category $\Cat_{\infty,S}^\cocart \simeq \Fun(S, \Cat_{\infty})$, where colimits are computed objectwise. In other words, we may check that for every $s \in S$, the fiber of the square over $s$ is a homotopy pushout square in $s\Set$, which holds by Lemma~\ref{slicecomparelem}. Pushing out along the cofibration $(\Delta^m)^\flat \times_S \sO(S)^\sharp \to L \times_S \sO(S)^\sharp$ and using left properness, we deduce that $g_{L,m}$ is a cocartesian equivalence. Finally, we invoke \cite[Theorem~4.11]{JT} to deduce that $\phi$ is a categorical equivalence.
\end{proof}

\begin{lem} \label{lm:JoinWithFibrantVariable} Let $L \to S$ be a cocartesian fibration. Then $\id_K \star \iota_L: K \star_S \leftnat{L} \to K \star_S (\leftnat{L} \times_S \sO(S)^\sharp)$ is a cocartesian equivalence in $s\Set^+_{/S}$.
\end{lem}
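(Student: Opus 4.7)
The plan is to show that $\id_K \star \iota_L$ is a marked homotopy equivalence in $s\Set^+_{/S}$, from which it follows by the standard Hovey criterion (cf.\ \cite[1.4.4(b)]{Hovey}, as used in Lm.\ \ref{lm:SpanHomotopyInvariance}) that it is a cocartesian equivalence. The approach mirrors the final portion of the proof of Thm.\ \ref{jointhm}, which handled cocartesian equivalences between cocartesian fibrations, but now applied directly to the specific comparison map $\iota_L$ rather than to the composite functor $F = K \star_S (- \times_S \sO(S)^\sharp)$.

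First I would invoke Lm.\ \ref{funclem}(2), applied with $C = L$ and $X = S^\sharp$, to obtain a marked homotopy inverse $g \colon \leftnat{L} \times_S \sO(S)^\sharp \to \leftnat{L}$ of $\iota_L$ satisfying $g \circ \iota_L = \id_{\leftnat{L}}$ strictly, together with a homotopy $h \colon (\leftnat{L} \times_S \sO(S)^\sharp) \times (\Delta^1)^\sharp \to \leftnat{L} \times_S \sO(S)^\sharp$ over $S$ from $\id$ to $\iota_L \circ g$. Since $K \star_S (-)$ is a functor, one immediately has $(K \star_S g) \circ (K \star_S \iota_L) = K \star_S (g \circ \iota_L) = \id_{K \star_S \leftnat{L}}$, so the inverse relation holds on the nose on one side.

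The main step is transferring $h$ through the $S$-join to produce a homotopy on $K \star_S Y$ where $Y := \leftnat{L} \times_S \sO(S)^\sharp$. To that end, I would construct a natural comparison map
\[
\phi \colon (K \star_S Y) \times (\Delta^1)^\sharp \to K \star_S (Y \times (\Delta^1)^\sharp)
\]
sending an $(i+j+1)$-simplex encoded as $(\sigma \colon \Delta^i \to K,\; \tau \colon \Delta^j \to Y,\; \pi\lambda \colon \Delta^{i+j+1} \to S,\; \alpha \colon \Delta^{i+j+1} \to \Delta^1)$ to the simplex $(\sigma, (\tau, \alpha|_{\Delta^j}), \pi\lambda)$ of $K \star_S (Y \times (\Delta^1)^\sharp)$, where $\alpha|_{\Delta^j}$ is the restriction along $\Delta^j \subset \Delta^i \star \Delta^j$. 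Checking that $\phi$ respects the structure map to $S$ is immediate, and checking that it preserves markings reduces to the observation that the marked edges in a join lie either entirely in $K$ or entirely in $Y$ (in which latter case $\alpha|_{\Delta^j}$ is an edge of $(\Delta^1)^\sharp$, hence marked).

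I would then form the composite $H := (K \star_S h) \circ \phi \colon (K \star_S Y) \times (\Delta^1)^\sharp \to K \star_S Y$. Restricting to $\Delta^{\{0\}}$ makes $\alpha$ constant at $0$, so $h$ becomes the identity and $H|_{\Delta^{\{0\}}} = \id_{K \star_S Y}$; restricting to $\Delta^{\{1\}}$, $h$ becomes $\iota_L \circ g$, so $H|_{\Delta^{\{1\}}} = (K \star_S \iota_L) \circ (K \star_S g)$. Combined with the previous paragraph, this exhibits $K \star_S \iota_L$ as a marked homotopy equivalence over $S$, completing the argument. The only obstacle is bookkeeping: one must carefully check compatibility of $\phi$ with the various structure maps and markings, but this is essentially a rerun of the analogous verification already carried out inside the proof of Thm.\ \ref{jointhm}.
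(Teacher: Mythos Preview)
Your argument is correct but follows a genuinely different route from the paper. The paper first fibrant-replaces $K$ via Thm.~\ref{thm:JoinFirstVariable}, then invokes the fiberwise recognition principle Prp.~\ref{prp:fiberwiseFibrantReplacement} to reduce to showing that for each $s \in S$ the map $K_s^\sim \star L_s^\sim \to K_s^\sim \star (\leftnat{L} \times_S (S^{/s})^\sharp)$ is a marked equivalence in $s\Set^+$; this last follows because $\{s\} \to (S^{/s})^\sharp$ is a cartesian equivalence which pulls back along the cocartesian fibration $L \to S$ to a marked equivalence, and $K_s^\sim \star -$ preserves marked equivalences by Thm.~\ref{jointhm} for $S=\Delta^0$. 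Your approach instead lifts the explicit homotopy inverse of $\iota_L$ furnished by Lm.~\ref{funclem}(2) through $K \star_S(-)$ via the $\phi$ construction already present in the proof of Thm.~\ref{jointhm}. Your route is more direct---it avoids both the fibrant replacement of $K$ and the appendix machinery of Prp.~\ref{prp:fiberwiseFibrantReplacement}---whereas the paper's route is more systematic in that it reduces to the absolute case and so reuses the left Quillen property of $K_s \star -$ rather than re-running the homotopy construction.
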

\begin{proof} By Theorem~\ref{thm:JoinFirstVariable}, taking a fibrant replacement of $K$ we may suppose that $K$ is fibrant. By Proposition~\ref{prp:fiberwiseFibrantReplacement}, it suffices to show that for every $s \in S$, $K_s^\sim \star L_s^\sim \to K_s^\sim \star (\leftnat{L} \times_S (S^{/s})^\sharp)$ is a marked equivalence in $s\Set^+$. Observe that the \emph{cartesian} equivalence $\{s \} \to (S^{/s})^\sharp$ pulls back by the cocartesian fibration $\leftnat{L} \to S^\sharp$ to a marked equivalence $L_s^\sim \to \leftnat{L} \times_S (S^{/s})^\sharp$. Then by Theorem~\ref{jointhm} for $S = \Delta^0$, $K_s^\sim \star -$ preserves marked equivalences, which concludes the proof.
\end{proof}

\begin{ntn} \label{ntn:FunctorSliceUnderOver} Suppose we have a commutative square of $S$-categories and $S$-functors
\[ \begin{tikzcd}[row sep=4ex, column sep=4ex, text height=1.5ex, text depth=0.25ex]
K \ar{r}{G} \ar{d}{F} & D \ar{d}{\pi} \\
C \ar{r}{\rho} & M. 
\end{tikzcd} \]
Define $\underline{\Fun}_{K//M,S}(C,D)$ to be the pullback
\[ \begin{tikzcd}[row sep=4ex, column sep=4ex, text height=1.5ex, text depth=0.25ex]
\underline{\Fun}_{K//M,S}(C,D) \ar{r} \ar{d} & \underline{\Fun}_S(C,D) \ar{d}{(F^\ast,\pi_{\ast})} \\
S \ar{r}{\sigma_{\pi G}} & \underline{\Fun}_S(K,M).
\end{tikzcd} \]
If $K=\emptyset$, we will also denote $\underline{\Fun}_{K//M,S}(C,D)$ as $\underline{\Fun}_{/M,S}(C,D)$. If $M = S$, we will write $\underline{\Fun}_{K//S}(C,D)$ in place of $\underline{\Fun}_{K//S,S}(C,D)$.
\end{ntn}
\nomenclature[functorParameterizedUnderOver]{$\underline{\Fun}_{K//M,S}(C,D)$}{$S$-category of $S$-functors, relative variant}

Note that by Proposition~\ref{prp:FunctorFirstVariable} and Proposition~\ref{prp:FibrationBetweenFibrantObjects}, the defining pullback square is a homotopy pullback square if $F$ is a monomorphism and $\pi$ is a categorical fibration.

\begin{prp} \label{prp:SliceCompare} Let $K, L, C$ be $S$-categories and let $p: K \to C$, $q: L \to C$ be $S$-functors.
\begin{enumerate}
\item We have an equivalence
\[ \psi: \underline{\Fun}_S(L,C_{(p,S)/}) \xrightarrow{\simeq} \underline{\Fun}_{K/ /S}(K \star_S L, C). \]

\item We have an equivalence
\[ \psi': \underline{\Fun}_S(L,C_{/(q,S)}) \xrightarrow{\simeq} \underline{\Fun}_{L/ /S}(K \star_S L, C) \]

\item We have equivalences
\[ \begin{tikzcd}[row sep=1em, column sep=2em]
\underline{\Fun}_{/C,S}(L,C_{(p,S)/}) \ar{r}{\psi_q}[swap]{\simeq} & \underline{\Fun}_{K \sqcup L/ / S}(K \star_S L, C)  & \ar{l}[swap]{\psi'_p}{\simeq} \underline{\Fun}_{/C,S}(K,C_{/(q,S)}).
\end{tikzcd} \]
\end{enumerate}
\end{prp}
\begin{proof}
\begin{enumerate}[leftmargin=*]
\item Define the $S$-functor $\psi$ as follows: suppose given a marked simplicial set $A$ and a map $A \to \underline{\Fun}_S(L,C_{(p,S)/})$ over $S$. This is equivalently given by the datum of a map
\[ f_A: \leftnat{K} \star_S ((A \times_S \sO(S)^\sharp \times_S \leftnat{L}) \times_S \sO(S)^\sharp) \to \leftnat{C} \]
under $K$ and over $S$. Let
\[ \leftnat{K} \bigsqcup\limits_{A \times_S \sO(S)^\sharp \times_S \leftnat{K}} (A \times_S \sO(S)^\sharp) \times_S (\leftnat{K} \star_S (\leftnat{L} \times_S \sO(S)^\sharp)) \to K \star_S (A \times_S \sO(S)^\sharp \times_S \leftnat{L} \times_S \sO(S)^\sharp) \]
be the map over $S \times \Delta^1$ adjoint to the identity over $S \times \partial \Delta^1$. Precomposing $f_A$ by this and $\iota_L: \leftnat{L} \to \leftnat{L} \times_S \sO(S)^\sharp$ on that factor defines the desired map $A \to \underline{\Fun}_{K/ /S}(K \star_S L, C)$.

Now to check that $\psi$ is an equivalence, we may work fiberwise and combine Lemma~\ref{lm:JoinSliceAlmostSimplicialAdjunction} and Lemma~\ref{lm:JoinWithFibrantVariable}.

\item This follows by a parallel argument to the proof of (1).

\item We prove that $\psi_q$ is an equivalence; a parallel argument will work for $\psi'_p$. $\underline{\Fun}_{K \sqcup L/ / S}(K \star_S L, C)$ fits into a diagram
\[ \begin{tikzcd}[row sep=2em, column sep=2em]
\underline{\Fun}_{K \sqcup L/ / S}(K \star_S L, C) \ar{r} \ar{d} & \underline{\Fun}_{K/ / S}(K \star_S L, C) \ar{r} \ar{d} & \underline{\Fun}_{S}(K \star_S L, C) \ar{d} \\
S \ar{r} \ar[bend left=10]{rr}{\sigma_{p \sqcup q}} & \underline{\Fun}_{K/ / S}(K \bigsqcup L, C) \ar{r} \ar{d} & \underline{\Fun}_{S}(K \bigsqcup L, C) \ar{d} \\
& S \ar{r}{\sigma_p} & \underline{\Fun}_{S}(K, C)
\end{tikzcd} \]
in which every square is a pullback square. The map $\psi_q$ is then defined to be the pullback of the map of spans
\[ \begin{tikzcd}[row sep=2em, column sep=2em]
\underline{\Fun}_S(L, C_{(p,S)/}) \ar{r} \ar{d}{\psi} & \underline{\Fun}_S(L, C) \ar{d}{p \sqcup -} & S \ar{l}[swap]{\sigma_q} \ar{d}{=} \\
\underline{\Fun}_{K/ / S}(K \star_S L, C) \ar{r} & \underline{\Fun}_{K/ / S}(K \bigsqcup L, C) & \ar{l} S
\end{tikzcd} \]
in which the vertical arrows are equivalences. By Proposition~\ref{prp:SliceFunctorialityInDiagram} and $\underline{\Fun}_S(L,-)$ being right Quillen, the top left horizontal arrow is a $S$-fibration, and by Proposition~\ref{prp:FunctorFirstVariable}, the bottom left horizontal arrow is a $S$-fibration. It follows that $\psi_q$ is an equivalence.
\end{enumerate}
\end{proof}

In light of Proposition~\ref{prp:SliceCompare}, we have evident `alternative' $S$-slice $S$-categories, whose definition more closely adheres to the intuition that a slice category is a category of extensions.

\begin{dfn} \label{dfn:AltSlice} Let $p: K \to C$ be a $S$-functor. We define the \emph{alternative} $S$-\emph{undercategory} 
\[ C^{(p,S)/} \coloneq \underline{\Fun}_{K/ / S}(K \star_S S,C). \]
Similarly, we define the \emph{alternative} $S$-\emph{overcategory}
\[ C^{/(p,S)} \coloneq \underline{\Fun}_{K/ / S}(S \star_S K,C). \]
\end{dfn}

\nomenclature[sliceParameterizedUnderAlternative]{$C^{(p,S)/}$}{$S$-undercategory of $S$-category $C$ with respect to $p: K \to C$, alternative version}
\nomenclature[sliceParameterizedOverAlternative]{$C^{/(p,S)}$}{$S$-overcategory of $S$-category $C$ with respect to $p: K \to C$, alternative version}

\begin{cor} \label{cor:SliceCompare} Let $p: K \to C$ and $q: L \to C$ be $S$-functors.
\begin{enumerate}
\item We have equivalences $C_{(p,S)/} \xrightarrow{\simeq} C^{(p,S)/}$ and $C_{/(q,S)} \xrightarrow{\simeq} C^{/(q,S)}$.
\item We have an equivalence $\underline{\Fun}_{/C,S}(L,C^{(p,S)/}) \simeq \underline{\Fun}_{/C,S}(K, C^{/(q,S)})$ through a natural zig-zag.
\end{enumerate}
\end{cor}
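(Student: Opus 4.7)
The plan is to deduce both parts of the corollary from Prp. \ref{prp:SliceCompare} by specialization.

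For part (1), I would set $L = S$ (the $S$-terminal $S$-category) in Prp. \ref{prp:SliceCompare}(1), obtaining a natural equivalence
\[ \underline{\Fun}_S(S, C_{(p,S)/}) \xrightarrow{\simeq} \underline{\Fun}_{K//S}(K \star_S S, C) = C^{(p,S)/}. \]
By Lm. \ref{funclem2}(2) applied with $D = S$, the evaluation map $\underline{\Fun}_S(S, C_{(p,S)/}) \to C_{(p,S)/}$ is a trivial fibration in $s\Set^+_{/S}$; combining these yields the desired equivalence $C_{(p,S)/} \simeq C^{(p,S)/}$. The overcategory version $C_{/(q,S)} \simeq C^{/(q,S)}$ follows by the same argument using Prp. \ref{prp:SliceCompare}(2) in place of (1).

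For part (2), I would combine (1) with the zig-zag supplied by Prp. \ref{prp:SliceCompare}(3):
\[ \underline{\Fun}_{/C}(L, C_{(p,S)/}) \xrightarrow{\simeq} \underline{\Fun}_{K \sqcup L // S}(K \star_S L, C) \xleftarrow{\simeq} \underline{\Fun}_{/C}(K, C_{/(q,S)}). \]
Provided the equivalences of (1) are compatible with the canonical projections to $C$, applying $\underline{\Fun}_{/C}(L, -)$ and $\underline{\Fun}_{/C}(K, -)$ (which preserve equivalences between fibrant objects of $s\Set^+_{/C}$) gives
\[ \underline{\Fun}_{/C}(L, C^{(p,S)/}) \simeq \underline{\Fun}_{/C}(L, C_{(p,S)/}) \quad\text{and}\quad \underline{\Fun}_{/C}(K, C^{/(q,S)}) \simeq \underline{\Fun}_{/C}(K, C_{/(q,S)}), \]
which splice with the zig-zag above to complete the proof.

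The main thing to check is that the equivalence $C_{(p,S)/} \simeq C^{(p,S)/}$ of (1) lives over $C$. The projection on the left is the one furnished by Prp. \ref{prp:SliceFunctorialityInDiagram}, while the projection on the right factors as $C^{(p,S)/} \to \underline{\Fun}_{K//S}(S, C) \to \underline{\Fun}_S(S, C) \simeq C$ via restriction along the `cone point' inclusion $S \hookrightarrow K \star_S S$. Under the $L = S$ specialization of Prp. \ref{prp:SliceCompare}(1), these two projections correspond to one another, so the induced map is naturally over $C$; I expect this routine but necessary verification to be the only mild obstacle.
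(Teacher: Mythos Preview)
Your proposal is correct and matches the paper's approach exactly: the paper's proof reads ``For (1), let $L=S$ and $K=S$ in Prp.~\ref{prp:SliceCompare} (1) and (2), respectively. For (2), combine the preceding (1) and Prp.~\ref{prp:SliceCompare} (3).'' Your additional remarks on invoking Lm.~\ref{funclem2}(2) to identify $\underline{\Fun}_S(S,C_{(p,S)/})$ with $C_{(p,S)/}$ and on checking compatibility over $C$ are the natural details the paper leaves implicit.
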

\begin{proof} For (1), let $L=S$ and $K=S$ in Proposition~\ref{prp:SliceCompare}(1) and (2), respectively. For (2), combine the preceding (1) and Proposition~\ref{prp:SliceCompare}(3).
\end{proof}

\begin{wrn} When $S = \Delta^0$, the alternative $S$-undercategory $C^{(p,S)/} \cong \{p\} \times_{\Fun(K,C)} \Fun(K^\rhd,C)$ differs from Lurie's alternative undercategory $C^{p/}$. However, we have a comparison functor
\[ \{p\} \times_{\Fun(K,C)} \Fun(K^\rhd,C) \to C^{p/} \]
 which is a categorical equivalence and which factors through the categorical equivalence $C_{p/} \to C^{p/}$ of \cite[Proposition~4.2.1.5]{HTT}.
\end{wrn}

\subsection*{Slicing over and under \texorpdfstring{$S$}{S}-points} We give a smaller model for slicing over and under $S$-points in an $S$-category $C$.

\begin{ntn} \label{ntn:fiberwiseArrowCategory} Suppose $C$ an $S$-category. Let
$$\sO_S(C) \coloneq \widetilde{\Fun}_S(S \times \Delta^1,C) \cong S \times_{\sO(S)} \sO(C)$$
denote the fiberwise arrow $S$-category of $C$. Given an object $x \in C$, let
\[ C^{/\underline{x}} \coloneq \sO_S(C) \times_C \underline{x} \:, \quad C^{\underline{x}/} \coloneq \underline{x} \times_C \sO_S(C).\]
\end{ntn}

\nomenclature[ArrowCategoryRelative]{$\sO_S(C)$}{Fiberwise arrow $S$-category of $C$}
\nomenclature[sliceParameterizedUnderObject]{$C^{\underline{x}/}$}{Slice $S$-category under a point $x \in C$}
\nomenclature[sliceParameterizedOverObject]{$C^{/\underline{x}}$}{Slice $S$-category over a point $x \in C$}

\begin{prp} Let $x \in C$ be an object and denote by $i_x: \underline{x} \to C_{\underline{x}}$ the $\underline{x}$-functor defined by $x$. We have natural equivalences of $\underline{x}$-categories
\begin{align*}
{C_{\underline{x}}}^{/ (\underline{x}, i_x)} &\simeq C^{/\underline{x}} \\
{C_{\underline{x}}}^{/ (i_x, \underline{x})} &\simeq C^{\underline{x}/}.
\end{align*}
\end{prp}
\begin{proof} For any functor $S' \to S$ and $S$-category $C$, $\sO_S(C) \times_S S' \cong \sO_{S'}(C \times_S S')$. Therefore, $\sO_S(C) \times_C \underline{x} \cong \sO_{\underline{x}}(C_{\underline{x}}) \times_{C_{\underline{x}}} \underline{x}$ and likewise for $\underline{x} \times_C \sO_S(C)$. Changing base to $\underline{x}$, we may suppose $S = \underline{x}$ and $i_x = i: S \to C$ is any $S$-functor. The identity section $S \to \sO(S)$ induces a morphism of spans
\[ \begin{tikzcd}[row sep=2em,column sep=3em]
S \ar{r}{\sigma_i} \ar{d}{=} & \underline{\Fun}_S(S,C) \ar{d} & \underline{\Fun}_S(S \times \Delta^1,C) \ar{l} \ar{d} \\
S \ar{r}{i} & C & \widetilde{\Fun}_S(S \times \Delta^1,C) \ar{l}
\end{tikzcd} \]
with the vertical maps equivalences. Taking pullbacks now yields the claim (where we use the isomorphism $S \star_S S \cong S \times \Delta^1$ to identify the upper pullback with the $S$-slice category in question).
\end{proof}

\begin{prp} \label{SlicingUnderObject} We have a natural equivalence $C^{\underline{x}/} \simeq C^{x/}$ of left fibrations over $C$.
\end{prp}
\begin{proof} Using the marked left anodyne map $\leftnat{\Lambda^2_1} \to \leftnat{\Delta^2}$ and the map of Lemma~\ref{lm:generalizedCocartesianPushforward} for $n=2$, we obtain a span
\[ \begin{tikzcd}[row sep=1em, column sep=1em]
& \Fun(\leftnat{\Delta^2},\leftnat{C}) \ar[->>]{ld}[swap]{\simeq} \ar[->>]{rd}{\simeq} & \\
\Fun((\Delta^{\{0,1\}})^\sharp,\leftnat{C}) \times_{C^{\{1\}}} \Fun(\Delta^{\{1,2\}},C) & & \Fun(\Delta^{\{0,2\}},C) \times_{S^{\{0,2\}}} \Fun(\Delta^2,S).
\end{tikzcd} \]
Pulling back via $\{x\} \times_{C^{\{0\}}} -$ on the left and $- \times_{S^{\{1,2\}}} S$ on the right, and using that the inclusion $\Delta^{\{0,2\}} \to \Delta^2 \cup_{\Delta^{\{1,2\}}} \Delta^0$ is a categorical equivalence, we get
\[ \begin{tikzcd}[row sep=1em, column sep=1em]
& \{x\} \times_{C^{\{0\}}} \Fun(\leftnat{\Delta^2},\leftnat{C}) \times_{S^{\{1,2\}}} S \ar[->>]{ld}[swap]{\simeq} \ar[->>]{rd}{\simeq} & \\
C^{\underline{x}/} & & C^{x/}.
\end{tikzcd} \]
\end{proof}


\section{Limits and colimits}

In this section, we introduce $S$-colimits and study their basic properties. We then study the correspondence between $S$-colimits and $S$-limits through the vertical opposite construction of \cite{BGN}.

\begin{dfn} Let $C$ be a $S$-category and $\sigma: S \to C$ be a cocartesian section. We say that $\sigma$ is a \emph{$S$-initial object} if $\sigma(s)$ is an initial object for all objects $s \in S$. Dually, $\sigma$ is a \emph{$S$-final object} if $\sigma(s)$ is a final object for all $s \in S$.
\end{dfn}

\begin{dfn} \label{dfn:colimit} Let $K$ and $C$ be $S$-categories. Let $\overline{p}: K \star_S S \to C$ be an extension of a $S$-functor $p: K \to C$. From the commutativity of the diagram
\[ \begin{tikzcd}[row sep=2em, column sep=2em]
S \ar{r}{\sigma_{\overline{p}}} \ar{d}{=} & \underline{\Fun}_S(K \star_S S, C) \ar{d} \\
S \ar{r}{\sigma_p} & \underline{\Fun}_S(K,C)
\end{tikzcd} \]
(recall Notation~\ref{ntn:cocartesianSection} for $\sigma_{(-)}$) we see that $\sigma_{\overline{p}}$ defines a cocartesian section of $C^{(p,S)/}$ (Definition~\ref{dfn:AltSlice}), which we also denote by $\sigma_{\overline{p}}$. We say that $\overline{p}$ is a \emph{$S$-colimit diagram} if $\sigma_{\overline{p}}$ is a $S$-initial object. If $\overline{p}$ is a $S$-colimit diagram, then $\overline{p}|_S: S \to C$ is said to be a \emph{$S$-colimit} of $p$. If $S$ admits an initial object $s$, we will also identify the $S$-colimit with its value on $s$.

Dually, substituting $S \star_S K$ for $K \star_S S$ leads in a parallel way to the definition of an \emph{$S$-limit diagram} and an \emph{$S$-limit}.
\end{dfn}

\begin{rem} In view of the comparison result Corollary~\ref{cor:SliceCompare}, we could also use the $S$-slice category $C_{(p,S)/}$ to make the definition of a $S$-colimit diagram. This would yield some additional generality, in that $C_{(p,S)/}$ is defined for an arbitrary marked simplicial set $K$. However, the construction $C^{(p,S)/}$ is easier to relate to functor categories, which we need to do to show that the left adjoint to the restriction along $K \subset K \star_S S$ computes colimits (a special case of Corollary~\ref{cor:leftAdjointToRestrictionIsColimitFunctor}).
\end{rem}

\begin{rem}
Suppose $K$ and $C$ are $\infty$-categories, and write $\pi: K \to \ast$ for the map to a point. One may define the $K$-indexed colimit `globally' as the (partially defined) left adjoint $\pi_!$ to the restriction functor $\pi^*: C \to \Fun(K,C)$. Given a diagram $p: K \to C$ that admits an extension to a colimit diagram $\overline{p}: K^{\rhd} \to C$ with cone point $\{v\}$, one then has $\overline{p}|_{\{v\}} \simeq \pi_!(p)$.

To establish a parallel picture for $S$-colimits, we will first need to introduce the concept of $S$-adjunctions (Definition~\ref{dfn:sAdjunction}). If we now let $K$ and $C$ be $S$-categories and $\pi: K \to S$ denote the structure map, we will show that if for all $s \in S$, $C_{\underline{s}}$ admits $K_{\underline{s}}$-indexed $S^{s/}$-colimits, then the restriction $S$-functor $\pi^*: C \to \underline{\Fun}_S(K,C)$ admits a left $S$-adjoint $\pi_!$ such that
$$(\pi_!)_s: \Fun_{S^{s/}}(K_{\underline{s}}, C_{\underline{s}}) \to C_{s}$$ computes the $S^{s/}$-colimit (Theorem~\ref{thm:leftKanExtensionIsLeftAdjointToRestriction} in the special case $\phi = \pi$). Furthermore, taking cocartesian sections of this $S$-adjunction then yields an adjunction, which we may abusively denote as
\[ \adjunct{\pi_!}{\Fun_S(K,C)}{\Fun_S(S,C)}{\pi_*}, \]
in which $\pi_!$ computes the $S$-colimit.

In proving some of the assertions in this subsection (Corollary~\ref{cor:ConstantCocompleteness}, Proposition~\ref{colimitOverCorepresentableDiagramIsLeftAdjointToRestriction}, and Proposition~\ref{prp:BeckChevalley}), it will be convenient to have this relationship between $S$-colimits and $S$-adjunctions established. We note that there is no danger of circularity here since the proof of Theorem~\ref{thm:leftKanExtensionIsLeftAdjointToRestriction} (or its simpler predecessor Theorem~\ref{thm:ExistenceAndUqnessOfParamColimit}) doesn't use any of the remainder of this subsection (which, apart from $S$-(co)limits in an $S$-category of $S$-objects, is only devoted to working out special classes of diagrams in the theory).
\end{rem}

There are a couple instances where the notion of $S$-colimit specializes to a notion of ordinary category theory. For example, we have the following pair of propositions computing $S$-colimits and $S$-limits in an $S$-category of objects $\underline{C}_S$ as left or right Kan extensions in $C$; the asymmetry in their formulations arises due to working with \emph{cocartesian} fibrations instead of cartesian fibrations to model $S$-categories. In the statements, recall Notation~\ref{ntn:CatOfObjectsCorresp} for the meaning of $(-)^{\dagger}$.

\begin{prp} \label{prp:identifyingColimitsInCatOfObjects} Let $\overline{p}: K \star_S S \to \underline{C}_S$ be a $S$-functor extending $p: K \to \underline{C}_S$. Suppose further that a left Kan extension of $p^\dagger: K \to C$ to a functor $K \star_S S \to C$ exists. Then the following are equivalent:
\begin{enumerate}
	\item $\overline{p}$ is a $S$-colimit diagram.
	\item $\overline{p}^\dagger$ is a left Kan extension of $p^\dagger$.
	\item $\overline{p}^\dagger|_{K_s^\rhd}$ is a colimit diagram for all $s \in S$.
\end{enumerate}
\end{prp}
\begin{proof} (2) and (3) are equivalent because left Kan extensions along cocartesian fibrations are computed fiberwise. Suppose (3). To prove (1), we want to show that for every $s \in S$, $\overline{p}_{\underline{s}}$ is an initial object in $((\underline{C}_S)^{(p,S)/})_s$. But $((\underline{C}_S)^{(p,S)/})_s$ is equivalent to the fiber of $\Fun(K_{\underline{s}} \star_{\underline{s}} \underline{s},C) \to \Fun(K_{\underline{s}},C)$ over $p^\dagger|_{K_{\underline{s}}}$, so to prove the claim it suffices to show that the functor $\overline{p}^\dagger|_{K_{\underline{s}}}$ is a left Kan extension of $p|_{K_{\underline{s}}}$. This holds by the equivalence of (2) and (3) for $S^{s/}$. 

Conversely, suppose (1). Since we supposed that a left Kan extension of $p^\dagger$ exists, left Kan extensions of $p^\dagger|_{K_s}$ all exist and \emph{any} initial object in the fiber of $\Fun(K_{\underline{s}} \star_{\underline{s}} \underline{s},C) \to \Fun(K_{\underline{s}},C)$ over $p^\dagger|_{K_{\underline{s}}}$ is a left Kan extension of $p^\dagger|_{K_{\underline{s}}}$, necessarily a fiberwise colimit diagram (we need this hypothesis because Kan extensions as defined in \cite[\S 4.3.2]{HTT} are always \emph{pointwise} Kan extensions). This implies (3).
\end{proof}

\begin{prp} \label{prp:LimitsInCatOfObjects} Let $\overline{p}: S \star_S K \to \underline{C}_S$ be a $S$-functor extending $p: K \to \underline{C}_S$. Suppose further that a right Kan extension of $p^\dagger: K \to C$ to a functor $S \star_S K \to C$ exists. Then the following are equivalent:
\begin{enumerate}
	\item $\overline{p}$ is a $S$-limit diagram.
	\item $\overline{p}^\dagger$ is a right Kan extension of $p^\dagger$.
	\item[(2\textquotesingle)] $\overline{p}^\dagger|_{\underline{s} \star_{\underline{s}} K_{\underline{s}}}$ is a right Kan extension of $p^\dagger|_{K_{\underline{s}}}$ for all $s \in S$.
	\item $\overline{p}^\dagger|_{ K_{\underline{s}}^\lhd}$ is a limit diagram for all $s \in S$.
\end{enumerate}
\end{prp}
\begin{proof} We first observe that because the inclusion $S \to S \star_S K$ is left adjoint to the structure map $S \star_S K \to S$ of the cocartesian fibration, 
\[ (S \star_S K)^{s/} \simeq S^{s/} \times_S (S \star_S K) \cong \underline{s} \star_{\underline{s}} K_{\underline{s}}. \]
The equivalence of (2) and (2\textquotesingle) now follows from the formula for a right Kan extension. Also, if we view $K_{\underline{s}}^\lhd$ as mapping to $S \star_S K$ via $\{ s\} \star K_{\underline{s}} \to \underline{s} \star_{\underline{s}} K_{\underline{s}} \to S \star_S K$ where the first map is adjoint to $(\{s\} \to \underline{s}, \id)$, then (2) and (3) are also equivalent by the same argument. Finally, (2\textquotesingle) implies (1) by definition, and (1) implies (2\textquotesingle) under our additional assumption that a right Kan extension of $p^\dagger$ exists (for the same reason as given in the proof of Proposition~\ref{prp:identifyingColimitsInCatOfObjects}).
\end{proof}

If $S$ is a Kan complex, then the notion of $S$-colimit reduces to the usual notion of colimit. 

\begin{prp} Let $S$ be a Kan complex. Then a $S$-functor $\overline{p}: K \star_S S \to C$ is a $S$-colimit diagram if and only if for every object $s \in S$, $\overline{p}|_s: (K_s)^\rhd \to C_s$ is a colimit diagram.
\end{prp}
\begin{proof} If $S$ is a Kan complex, then for every $s \in S$, $S^{s/}$ is a contractible Kan complex. Therefore,
for all $s \in S$ we have $(C^{(p,S)/})_s \simeq \{ p_s\} \times_{\Fun(K_s,C_s)} \Fun(K_s^\rhd, C_s)$, which proves the claim.
\end{proof}

We say that $K$ is a \emph{constant} $S$-category if it is equivalent to $S \times L$ for $L$ an $\infty$-category. We have an isomorphism $L^\rhd \times S \to (L \times S) \star_S S$ (defined as a map over $S \times \Delta^1$ to be the adjoint to the identity on $(L \times S, S)$).

\begin{prp} \label{prp:ColimitsOfConstantDiagrams} A $S$-functor $\overline{p}: L^\rhd \times S \to C$ is a $S$-colimit diagram if and only if for every object $s \in S$, $\overline{p}_s: L^\rhd \to C_s$ is a colimit diagram.
\end{prp}
\begin{proof} Observe that
\[ (C^{(p,S)/})_s = \{ p_{\underline{s}} \} \times_{\Fun_{S^{s/}}(L \times S^{s/}, C_{\underline{s}})} \Fun_{S^{s/}}(L^\rhd \times S^{s/}, C_{\underline{s}}) \simeq \{ p_s \} \times_{\Fun(L,C_s)} \Fun(L^\rhd,C_s). \]
Therefore, $\sigma_{\overline{p}}: S \to C^{(p,S)/}$ is $S$-initial if and only if for all $s \in S$, $\{ \overline{p}_s \} \in \{ p_s \} \times_{\Fun(L,C_s)} \Fun(L^\rhd,C_s)$ is an initial object, which is the claim.
\end{proof}

\begin{cor} \label{cor:ConstantCocompleteness} Suppose $C$ is a $S$-category such that $C_s$ admits all colimits for every object $s \in S$ and the pushforward functors $\alpha_!: C_s \to C_t$ preserve all colimits for every morphism $\alpha: s \rightarrow t$ in $S$. Then $C$ admits all $S$-colimits indexed by constant diagrams.
\end{cor}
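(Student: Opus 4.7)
By Prp.~\ref{prp:ColimitsOfConstantDiagrams}, the task reduces to the following: for any $S$-functor $p \colon L \times S \to C$ with $L$ an $\infty$-category, construct an extension $\overline{p} \colon L^\rhd \times S \to C$, still an $S$-functor, whose fiber $\overline{p}_s \colon L^\rhd \to C_s$ is a colimit diagram for every $s \in S$. My strategy is to transport the problem from $C$ to the $\infty$-category $\Gamma \coloneqq \Fun^{\cocart}_S(S, C)$ of cocartesian sections of $C \to S$ (equivalently, $\lim_{s \in S} C_s$ under straightening).

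First, by the universal property of the functor $\infty$-category together with the characterization of cocartesian sections of $\underline{\Fun}_S(S^\sharp \times L^\flat, C)$ (cf.\ Prp.~\ref{funsect} and the paragraph following), $S$-functors $L \times S \to C$ are the same data as functors $L \to \Gamma$, and similarly with $L^\rhd$ in place of $L$. So the problem becomes: extend the functor $q \colon L \to \Gamma$ corresponding to $p$ to a functor $\overline{q} \colon L^\rhd \to \Gamma$ such that, for each $s \in S$, the composite $\ev_s \circ \overline{q} \colon L^\rhd \to C_s$ is a colimit diagram.

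Second (the main content), I claim that $\Gamma$ admits $L$-indexed colimits, and that for every $s \in S$ the evaluation functor $\ev_s \colon \Gamma \to C_s$ preserves them. Granting this, I simply let $\overline{q}$ be a colimit extension of $q$ in $\Gamma$, unstraighten to get the desired $S$-functor $\overline{p} \colon L^\rhd \times S \to C$, and invoke Prp.~\ref{prp:ColimitsOfConstantDiagrams}. The claim itself is a standard construction of colimits in limits of $\infty$-categories: given $q \colon L \to \Gamma$ corresponding to $p \colon L \times S \to C$, choose for each $s \in S$ a colimit $c_s \in C_s$ of $p_s = q(-)(s) \colon L \to C_s$, which exists by hypothesis. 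For each edge $\alpha \colon s \to t$ in $S$, the hypothesis that $\alpha_! \colon C_s \to C_t$ preserves $L$-indexed colimits implies that $\alpha_!(c_s)$ is a colimit of $\alpha_! \circ p_s \simeq p_t$, hence is canonically equivalent to $c_t$. These coherent equivalences assemble the family $(c_s)_{s \in S}$ into a cocartesian section $\overline{q}(\infty) \in \Gamma$, and an analogous (but now fully coherent) analysis of the structural maps out of $L$ produces the desired colimit cocone $\overline{q} \colon L^\rhd \to \Gamma$.

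The principal obstacle is the coherence in this last step: what I just described hand-picks fibers and edges, while strictly one needs a genuine functor $L^\rhd \to \Gamma$. The cleanest way to handle this is to work one level up, straightening $C$ to a functor $F_C \colon S \to \Cat_\infty$ that factors through the (non-full) subcategory $\Cat_\infty^L \subseteq \Cat_\infty$ of $\infty$-categories admitting $L$-colimits and functors preserving them; then $\Gamma \simeq \lim_S F_C$ inherits $L$-colimits computed pointwise by the general theory of limits in $\Cat_\infty$ (cf.\ \cite[\S 5.1.2]{HTT}), and $\ev_s$ preserves them by construction. Everything else is then formal from Prp.~\ref{prp:ColimitsOfConstantDiagrams}.
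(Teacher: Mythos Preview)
Your argument is correct and takes a genuinely different route from the paper. The paper proceeds in two steps: first it treats the special case where $S$ has an initial object $s$, in which case $\Gamma = \Fun_S(S,C) \simeq C_s$ and the problem is immediate; then it invokes the (forward-referenced) existence theorem for $D$-parametrized $S$-colimits, Thm.~\ref{thm:ExistenceAndUqnessOfParamColimit}, with $D = S$ and $\phi$ the projection $L \times S \to S$, to reduce the general case to the special one (since each $S^{s/}$ has an initial object). Your approach instead works directly with $\Gamma \simeq \lim_S C_\bullet$ for arbitrary $S$, using that under the hypotheses colimits in $\Gamma$ exist and are computed pointwise; this avoids the forward reference to \S9 at the cost of importing a standard fact about limits of $\infty$-categories. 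Two small remarks: (i) your citation to \cite[\S5.1.2]{HTT} is imprecise---the cleanest justification is that $\lim_S \Fun'(L^\rhd, C_\bullet) \to \lim_S \Fun(L, C_\bullet) = \Fun(L,\Gamma)$ is an equivalence, where $\Fun'$ denotes colimit diagrams (a subfunctor since the $\alpha_!$ preserve colimits), and each $\Fun'(L^\rhd, C_s) \to \Fun(L,C_s)$ is a trivial fibration; (ii) you do not actually need that $\overline{q}$ is a colimit in $\Gamma$, only that each $\ev_s \circ \overline{q}$ is a colimit diagram, which is all Prp.~\ref{prp:ColimitsOfConstantDiagrams} requires.
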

\begin{proof} First suppose that $S$ has an initial object $s$. Suppose that $p: L \times S \to C$ is a $S$-functor. Let $\overline{p_s}: L^\rhd \to C_s$ be a colimit diagram extending $p_s$. Let $\overline{p}: L^\rhd \times S \to C$ be a $S$-functor corresponding to $\overline{p_s}$ under the equivalence $\Fun_S(L^\rhd \times S, C) \simeq \Fun(L^\rhd, C_s)$, which we may suppose extends $p$. By Proposition~\ref{prp:ColimitsOfConstantDiagrams}, $\overline{p}$ is a $S$-colimit diagram.

The general case now follows from Theorem~\ref{thm:ExistenceAndUqnessOfParamColimit}, taking $\phi: C \to D$ to be $L \times S \to S$.
\end{proof}

We now turn to the example of corepresentable fibrations.

\begin{dfn} Let $s \in S$ be an object and let $K$ be an $S^{s/}$-category which is equivalent to a coproduct of corepresentable fibrations $$\coprod_{i \in I} S^{\alpha_i /} \simeq \coprod_{i \in I} S^{t_i /} \xrightarrow{\coprod \alpha_i^\ast} S^{s/}$$ for $\{ \alpha_i: s \rightarrow t_i \}_{i \in I}$ a collection of morphisms in $S$. Let $p: K \to C \times_S S^{s/}$ be a $S^{s/}$-functor, so $p$ is precisely the data of objects $\{ x_i \in C_{t_i} \}_{i \in I}$. Let $\overline{p}: K \star_{S^{s/}} S^{s/} \to C \times_S S^{s/}$ be a $S^{s/}$-colimit diagram extending $p$, and let $y = \overline{p}(v) \in C_s$ for $v = \id_s$ the cone point. Then we say that $y$ is the \emph{$S$-coproduct} of $\{ x_i\}_{i \in I}$ along $\{ \alpha_i \}_{i \in I}$, and we adopt the notation $y = \coprod_{\alpha_i} x_i$.
\end{dfn}

\nomenclature[coproduct]{$\coprod_{\alpha_i} x_i$}{Indexed coproduct}

Our choice of terminology is guided by the following result, which shows that a $S^{s/}$-colimit of a $S^{s/}$-functor $p: S^{\alpha/} \simeq S^{t/} \to C$ obtains the value of a left adjoint to the pushforward functor $\alpha_!$ on $p(t)$. In the case of $S = \OO_G^\op$, $C = \underline{\Top}_G$ or ${\underline{\Sp}}^G$, and $K= \OO_H^\op$, this is the induction or indexed coproduct functor from $H$ to $G$.

\begin{prp} \label{colimitOverCorepresentableDiagramIsLeftAdjointToRestriction} Let $C$ be a $S$-category, let $\alpha: s \rightarrow t$ be a morphism in $C$, and let $\pi: M \to \Delta^1$ be a \textbf{cartesian} fibration classified by the pushforward functor $\alpha_!: C_s \to C_t$. Let $p: S^{t/} \to C \times_S S^{s/}$ be a $S^{s/}$-functor and let $x = p(\id_t) \in C_t$. Then the data of a $S^{s/}$-colimit diagram extending $p$ yields a $\pi$-cocartesian edge $e$ in $M$ with $d_0(e) =x$ and lifting $0 \rightarrow 1$.
\end{prp}
\begin{proof} Let $\overline{p}: S^{t/} \star_{S^{s/}} S^{s/} \to C \times_S S^{s/}$ be a $S^{s/}$-colimit diagram extending $p$. Let $y = \overline{p}(\id_s)$ and let $f': \Delta^1 \to S^{t/} \star_{S^{s/}} S^{s/}$ be the edge connecting $\id_t$ to $\alpha$. We may suppose that $M$ is given by the relative nerve of $\alpha_!$, so that edges in $M$ over $\Delta^1$ are given by commutative squares
\[ \begin{tikzcd}[row sep=2em, column sep=2em]
\{ 1\} \ar{r} \ar{d} & C_s \ar{d}{\alpha_!} \\
\Delta^1 \ar{r} & C_t.
\end{tikzcd} \]
Then let $e$ be the edge in $M$ determined by $y$ and $f = \overline{p} \circ f': x \rightarrow \alpha_! y$. By definition, $d_0(e) = x$.

We claim that $e$ is $\pi$-cocartesian. This holds if and only if for every $y' \in C_s$ the map
\[ \Map_{C_s}(y,y') \to \Map_{C_t}(x, \alpha_! y') \]
induced by $f$ is an equivalence. But the local variant of the adjunction of Theorem~\ref{thm:leftKanExtensionIsLeftAdjointToRestriction} implies this (passing to global sections).
\end{proof}

$S$-coproducts also satisfy a base-change condition. This is awkward to articulate in general, because the pullback of a corepresentable fibration along another need not be corepresentable. However, if we impose the additional hypothesis that $T = S^{\op}$ admits multipullbacks, then a pullback of a corepresentable fibration decomposes as a finite coproduct of corepresentable fibrations. In this case, we have the following useful reformulation of the base-change condition. Recall from the introduction that we let $\FF_T$ denote the finite coproduct completion of $T$. Let $X \subset \sO(\FF_T)$ be the full subcategory on those arrows whose source lies in $T$ and consider the span
\[ (\FF_T)^\sharp \xleftarrow{\ev_1} \leftnat{X} \xrightarrow{\ev_0} T^\sharp. \]
This satisfies the dual of the hypotheses of Theorem~\ref{thm:FunctorialityOfCocartesianModelStructure}, so
$$C^{\times} \coloneq (\ev_0)_\ast (\ev_1)^\ast (\rightnat{(C^\vee)})$$
is a cartesian fibration over $\FF_T$ (with the cartesian edges marked), where $C^\vee \to T$ is the dual cartesian fibration of \cite{BGN}. Unwinding the definitions, given a finite $T$-set $U = \coprod_i s_i$, we have that the fiber
$$(C^{\times})_{U} \simeq \Fun_{T}(\coprod_i T^{/s_i},C^\vee) \simeq \prod_i C_{s_i}$$
(where $\Fun_T(-,-)$ denotes those functors over $T$ that preserve cartesian edges), and given a morphism of $T$-sets $\alpha: U \to V$, the pullback functor $\alpha^\ast: (C^{\times})_U \to (C^{\times})_V$ is induced by restriction.

\begin{prp} \label{prp:BeckChevalley} $C$ admits finite $S$-coproducts if and only if $\pi: C^{\times} \to \FF_{T}$ is a \textbf{Beck-Chevalley fibration}, i.e. $\pi$ is both cocartesian and cartesian, and for every pullback square
\[ \begin{tikzcd}[row sep=2em, column sep=2em]
W \ar{r}{\alpha'} \ar{d}{\beta'} & V' \ar{d}{\beta} \\
U \ar{r}{\alpha} & V
\end{tikzcd} \]
in $\FF_T$, the natural transformation 
\begin{equation} \label{eqn:basechange} (\alpha')_! (\beta')^\ast \to \beta^\ast \alpha_! \tag{$\ast$} \end{equation}
adjoint to the equivalence $(\beta')^\ast \alpha^\ast \simeq (\alpha')^\ast \beta^\ast$ is itself an equivalence.
\end{prp}
\begin{proof} By Theorem~\ref{thm:leftKanExtensionIsLeftAdjointToRestriction}, $C$ admits finite $S$-coproducts if and only if for every finite collection of morphisms $ \{ \alpha_i: s \rightarrow t_i \}$, the restriction functor
\[ (\coprod \alpha_i)^\ast: \underline{\Fun}_S(S^{s/},C) \to \underline{\Fun}_S(\coprod_i S^{t_i/},C)  \]
admits a left $S$-adjoint, in which case that left $S$-adjoint is computed by the $S$-coproduct along the $\alpha_i$. This in turn is immediately equivalent to $\pi$ being additionally cocartesian and (\ref{eqn:basechange}) being an equivalence for $\alpha = \coprod \alpha_i: \coprod t_i \rightarrow s$ and all morphisms $\beta: s' \rightarrow s$ in $T$. Finally, note that the apparently more general case of (\ref{eqn:basechange}) being an equivalence for any pullback square is actually determined by this, because any map $\alpha: U = \coprod t_i \rightarrow V = \coprod s_j$ is the data of $f: I \rightarrow J$ and $\{ \alpha_{i j}: s_j \rightarrow t_i \}_{i \in f^{-1}(j)}$, whence $\alpha^\ast = (\alpha_{i j})^\ast: \prod_j C_{s_j} \to \prod_i C_{t_i}$, etc. yields a decomposition of the map (\ref{eqn:basechange}) in terms of the `basic' squares that we already handled.
\end{proof}

We conclude this subsection by introducing a bit of useful terminology.

\begin{dfn} \label{dfn:cocomplete} Let $C$ be a $S$-category. We say that $C$ is \emph{$S$-cocomplete} if, for every object $s \in S$ and $S^{s/}$-diagram $p: K \to C_{\underline{s}}$ (with $K$ fiberwise small), $p$ admits a $S^{s/}$-colimit.
\end{dfn}

\begin{rem} Suppose that $E$ is $S$-cocomplete. Then taking $D=S$ in Theorem~\ref{thm:ExistenceAndUqnessOfParamColimit}, $E$ admits all (small) $S$-colimits. However, the converse may fail: if we suppose that $E$ admits all $S$-colimits, then any $S^{s/}$-diagram $K_{\underline{s}} \to E_{\underline{s}}$ pulled back from a $S$-diagram $K \to E$ admits a $S^{s/}$-colimit; however, not every $S^{s/}$-diagram need be of this form.
\end{rem}

\subsection*{Vertical opposites} In this subsection we study the vertical opposite construction of \cite{BGN}, with the goal of justifying our intuition that the theory of $S$-limits can be recovered from that of $S$-colimits, and vice-versa (Corollary~\ref{cor:LimitToColimit}). We first recall the definition of the twisted arrow $\infty$-category from \cite[\S 2]{M1}.

\begin{dfn} Given a simplicial set $X$, we define $\widetilde{\sO}(X)$ to be the simplicial set whose $n$-simplices are given by the formula
\[ \widetilde{\sO}(X)_n \coloneq \Hom((\Delta^n)^{\op} \star \Delta^n, X). \]
If $X$ is an $\infty$-category, then $\widetilde{\sO}(X)$ is the twisted arrow $\infty$-category of $X$.
\end{dfn}

\begin{wrn}
By definition, $\twa(X)$ comes equipped with a source functor $\ev_0: \twa(X) \to X^\op$ and a target functor $\ev_1: \twa(X) \to X$. In other words, twisted arrows are \emph{contravariant} in the source and \emph{covariant} in the target. This convention is opposite to that in \cite{HA}, but agrees with \cite{BGN}.
\end{wrn}

\nomenclature[twistedArrows]{$\widetilde{\sO}(S)$}{Twisted arrow $\infty$-category}

\begin{rec} Suppose $X \to T$ a cocartesian fibration. Then the simplicial set $X^\vop$ is defined to have $n$-simplices
\[ \begin{tikzcd}[row sep=2em, column sep=2em]
\leftnat{\widetilde{\sO}(\Delta^n)} \ar{r} \ar{d}[swap]{\ev_1} & \leftnat{X} \ar{d} \\
(\Delta^n)^\sharp \ar{r} & T^\sharp.
\end{tikzcd} \]
The forgetful map $X^\vop \to T$ is a cocartesian fibration with cocartesian edges given by $\widetilde{\sO}(\Delta^1)^\sharp \to \leftnat{X}$. For every $t \in T$, we have an equivalence $(X_t)^\op \xrightarrow{\simeq} (X^\vop)_t$ implemented by the map which precomposes by $\ev_0: \leftnat{\widetilde{\sO}(\Delta^n)} \to ((\Delta^n)^\op)^\flat$, which is an equivalence in $s\Set^+$.

Dually, suppose $Y \to T$ a cartesian fibration. Then the simplicial set $Y^\vop$ is defined to have $n$-simplices
\[ \begin{tikzcd}[row sep=2em, column sep=2em]
\rightnat{(\widetilde{\sO}(\Delta^n)^\op)} \ar{r} \ar{d}[swap]{\ev_0^\op} & \rightnat{Y} \ar{d} \\
(\Delta^n)^\sharp \ar{r} & T^\sharp.
\end{tikzcd} \]
and similarly the forgetful map $Y^\vop \to T$ is a cartesian fibration with fibers $(Y^\vop)_t \xleftarrow{\simeq} (Y_t)^\op$. As a warning, note that the definition of the underlying simplicial set of $(-)^\vop$ changes depending on whether the input is a cocartesian or cartesian fibration; in particular, the notation is potentially ambiguous for a bicartesian fibration. We will not apply $(-)^{\vop}$ to bicartesian fibrations in this paper.
\end{rec}

\nomenclature[vop]{$X^{\vop}$}{Vertical opposite}

Define a functor $\widetilde{\sO}'(-): s\Set^+_{/S} \to s\Set^+_{/S}$ by
\[ \widetilde{\sO}'(A \xrightarrow{\pi} S) = (\widetilde{\sO}(A),\sE_A) \xrightarrow{\pi \circ \ev_1} S \]
where an edge $e$ is in $\sE_A$ just in case $\ev_0(e)$ is marked in $A^\op$. Note that $\widetilde{\sO}(-)$ preserves colimits since it is defined as precomposition by $\Delta^\op \xrightarrow{(\rev \star \id)^\op} \Delta^\op$, and from this it easily follows that $\widetilde{\sO}'(-)$ also preserves colimits. By the adjoint functor theorem, $\widetilde{\sO}'(-)$ admits a right adjoint, which we label $(-)^\vop$; this agrees with the previously defined $(-)^\vop$ for cocartesian fibrations $\leftnat{X} \to S^\sharp$.

\begin{prp} \label{prp:VOPisQuillenRightAdjoint} The adjunction
\[ \adjunct{\widetilde{\sO}'(-)}{s\Set^+_{/S}}{s\Set^+_{/S}}{(-)^\vop} \]
is a Quillen equivalence with respect to the cocartesian model structure on $s\Set^+_{/S}$.
\end{prp}
\begin{proof} We first prove the adjunction is Quillen by employing the criterion of Lemma~\ref{lm:showingFunctorLeftQuillen}. Consider the four classes of maps which generate the left marked anodyne maps:
\begin{enumerate} \item $i: \Lambda^n_k \tohook \Delta^n$, $0<k<n$: By \cite[Lemma~12.15]{M1}, $\widetilde{\sO}(\Lambda^n_k) \tohook \widetilde{\sO}(\Delta^n)$ is inner anodyne, so $\widetilde{\sO}'(i)$ is left marked anodyne.
\item $i: \leftnat{\Lambda^n_0} \tohook \leftnat{\Delta^n}$: We can adapt the proof of \cite[Lemma~12.16]{M1} to show that $\widetilde{\sO}'(i)$ is a cocartesian equivalence in $s\Set^+_{/S}$ (even though it fails to be left marked anodyne). The basic fact underlying this is that a \emph{right} marked anodyne map is an equivalence in $s\Set^+$, so in $s\Set^+_{/S}$ if it lies entirely over an object; details are left to the reader.
\item $i: K^\flat \tohook K^\sharp$ for $K$ a Kan complex: Because $\widetilde{\sO}(K) \to K^\op \times K$ is a left fibration, $\widetilde{\sO}(K)$ is then again a Kan complex. It follows that $\widetilde{\sO}'(i)$ is left marked anodyne.
\item $(\Lambda^2_1)^\sharp \cup_{\Lambda^2_1} (\Delta^2)^\flat \tohook (\Delta^2)^\sharp$: Obvious from the definitions.
\end{enumerate}
It remains to show that for a trivial cofibration $f: \leftnat{X} \tohook \leftnat{Y}$ between fibrant objects, $\widetilde{\sO}'(f)$ is again a trivial cofibration. Since $\widetilde{\sO}(X) \to \widetilde{\sO}(Y)$ is a map of cocartesian fibrations over $S$ and the marking on $\widetilde{\sO}'(-)$ contains these cocartesian edges, by Proposition~\ref{prp:fiberwiseFibrantReplacement} it suffices to show that for every object $s \in S$, $\widetilde{\sO}'(X)_s \to \widetilde{\sO}'(Y)_s$ is an equivalence in $s\Set^+$. We have a commutative square
\[ \begin{tikzcd}[row sep=2em, column sep=2em]
\widetilde{\sO}'(X)_s \ar{r} \ar{d} & \widetilde{\sO}'(Y)_s \ar{d} \\
X_s^\sharp \ar{r}{f_s} & Y_s^\sharp
\end{tikzcd} \]
where the vertical maps are left fibrations and the bottom map is an equivalence in $s\Set^+$. Therefore, the map $X_s^\sharp \times_{Y_s^\sharp} \widetilde{\sO}'(Y)_s \to \widetilde{\sO}'(Y)_s$ is an equivalence in $s\Set^+$. Applying Proposition~\ref{prp:fiberwiseFibrantReplacement} once more, we reduce to showing that for every object $x_1 \in X$, $\widetilde{\sO}'(X)_{x_1} \to \widetilde{\sO}'(Y)_{f(x_1)}$ is an equivalence in $s\Set^+$.

Now employing the source maps, we have a commutative square
\[ \begin{tikzcd}[row sep=2em, column sep=2em]
\widetilde{\sO}'(X)_{x_1} \ar{r} \ar{d} & \widetilde{\sO}'(Y)_{f(x_1)} \ar{d} \\
\rightnat{X^\op} \ar{r}{f^\op} & \rightnat{Y^\op}
\end{tikzcd} \]
where the vertical maps are left fibrations and the bottom horizontal map is a \emph{cartesian} equivalence in $s\Set^+_{/S^\op}$. Therefore, the map $X^\op \times_{Y^\op} \widetilde{\sO}'(Y)_s \to \widetilde{\sO}'(Y)_s$ is a cartesian equivalence. By a third application of Proposition~\ref{prp:fiberwiseFibrantReplacement}, we reduce to showing that for every object $x_0 \in X$, $\widetilde{\sO}'(X)_{(x_0,x_1)} \to \widetilde{\sO}'(Y)_{(f(x_0),f(x_1))}$ is an equivalence. But now both sides are endowed with the maximal marking and the map is equivalent to $\Map_X(x_0,x_1) \xrightarrow{f_\ast} \Map_Y(f(x_0),f(x_1))$, which is an equivalence by assumption.

The fact that this Quillen adjunction is an equivalence follows immediately from \cite[Theorem~1.4]{BGN}.	
\end{proof}

\begin{lem} \label{lm:functorialityofVOP} Let $C \to S$ be a cocartesian fibration.
\begin{enumerate}
	\item Let $f: S' \to S$ be a functor. Then we have an isomorphism $f^\ast(C^\vop) \cong f^\ast(C)^\vop$.
	\item Let $g: S \to T$ be a cartesian fibration and let $C$ be a $S$-category. Then there is a $T$-functor $\chi: g_\ast(C)^\vop \to g_\ast(C^\vop)$ natural in $C$ which is an equivalence. 
\end{enumerate}
\end{lem}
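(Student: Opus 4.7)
For part (1), the claim is essentially formal. I verify it directly on $n$-simplices: an $n$-simplex of $f^\ast(C^\vop)$ consists of $\sigma: \Delta^n \to S'$ together with a map $\widetilde{\sO}(\Delta^n) \to C$ (with marking inherited from $C^\op$) whose structure composition $\widetilde{\sO}(\Delta^n) \xrightarrow{\ev_1} \Delta^n \to S$ factors through $f \circ \sigma$. By the universal property of pullback, this is precisely the data of a map $\widetilde{\sO}(\Delta^n) \to f^\ast C$ over $\Delta^n \xrightarrow{\sigma} S'$ with the same marking condition, namely an $n$-simplex of $(f^\ast C)^\vop$. More conceptually, one verifies the natural isomorphism $f_! \circ \widetilde{\sO}'_{S'} \cong \widetilde{\sO}'_S \circ f_!$ on underlying marked simplicial sets (both send $B \to S'$ to $\widetilde{\sO}(B) \to S$ via $\ev_1$ composed with $f$ and the structure map), then passes to right adjoints.

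For part (2), I first construct the $T$-functor $\chi$ by adjunction: the counit $\epsilon: g^\ast g_\ast C \to C$ yields $\epsilon^\vop: (g^\ast g_\ast C)^\vop \to C^\vop$; composing with the base-change isomorphism $(g^\ast g_\ast C)^\vop \cong g^\ast((g_\ast C)^\vop)$ from part (1) produces a morphism $g^\ast((g_\ast C)^\vop) \to C^\vop$ in $s\Set^+_{/S}$, whose adjoint under $g^\ast \dashv g_\ast$ is the desired $\chi: (g_\ast C)^\vop \to g_\ast(C^\vop)$. Naturality in $C$ follows from that of $\epsilon$ and part (1), and that $\chi$ is a map of cocartesian fibrations over $T$ is immediate from the construction.

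To verify that $\chi$ is an equivalence, I argue fiberwise: since both source and target are fibrant cocartesian fibrations over $T$, it suffices to show $\chi_t$ is a categorical equivalence for each $t \in T$ (by Prp.~\ref{prp:fiberwiseFibrantReplacement}). I have $((g_\ast C)^\vop)_t \simeq ((g_\ast C)_t)^\op$ since $(-)^\vop$ restricts on fibers to ordinary opposite. Because $g_\ast$ corresponds under straightening to right Kan extension along $g$, the pointwise formula yields
\[
(g_\ast C)_t \simeq \lim_{(s, \phi: t \to g(s)) \in S \times_T T^{t/}} F_C(s),
\]
and analogously for $g_\ast(C^\vop)$ with $F_{C^\vop}(s) \simeq C_s^\op$. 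As $g$ is a cartesian fibration, so is the projection $S \times_T T^{t/} \to T^{t/}$; since $\id_t$ is an initial object of $T^{t/}$, the fiber inclusion $S_t \hookrightarrow S \times_T T^{t/}$ is coinitial by Quillen's Theorem A --- the relevant slice at $(s,\phi)$ parametrizes cartesian lifts of $\phi$ with target $s$, which form a contractible space. Both limits therefore reduce to limits over $S_t$, and since $(-)^\op: \Cat_\infty \to \Cat_\infty$ is an autoequivalence (hence commutes with all limits), I conclude
\[
g_\ast(C^\vop)_t \simeq \lim_{s \in S_t} C_s^\op \simeq \Bigl(\lim_{s \in S_t} C_s\Bigr)^\op \simeq ((g_\ast C)_t)^\op \simeq ((g_\ast C)^\vop)_t.
\]

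The main obstacle is establishing the coinitiality of $S_t \hookrightarrow S \times_T T^{t/}$, which is precisely where the cartesian fibration hypothesis on $g$ becomes essential; and separately, one must verify that this abstract fiberwise equivalence is the one induced by the concretely-defined $\chi$, which requires unwinding the adjoint isomorphisms but should follow from naturality of the counit and of part (1).
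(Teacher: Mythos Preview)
For part (1), your verification is fine and matches the paper's ``obvious from the definitions.''

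For part (2), your approach diverges substantially from the paper's. The paper constructs $\chi$ \emph{explicitly} at the level of $n$-simplices: an $n$-simplex of $g_\ast(C)^\vop$ over $\sigma \in T_n$ is a diagram $\leftnat{\widetilde{\sO}(\Delta^n)} \times_{T} S \to \leftnat{C}$ over $S$, and precomposition by the natural map $\widetilde{\sO}(\Delta^n \times_T S) \to \widetilde{\sO}(\Delta^n) \times_T S$ produces an $n$-simplex of $g_\ast(C^\vop)$. The fiberwise check then reduces, via a complete Segal space argument, to showing that $\widetilde{\sO}(S_t)^\sharp \to S_t^\sharp$ is a cocartesian equivalence in $s\Set^+_{/S_t}$, which is a special case of Prp.~\ref{prp:endFormula}. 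This stays entirely within the marked-simplicial-set model and ties into the twisted-arrow machinery developed earlier.

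Your construction of $\chi$ via the counit is legitimate, and the strategy of identifying fibers using the pointwise right-Kan-extension formula together with limit-preservation of $(-)^\op$ is sound in principle and more conceptual. However, the gap you flag at the end is genuine and not merely cosmetic: you have shown that the source and target of $\chi_t$ are abstractly equivalent, but \emph{not} that $\chi_t$ itself is the witnessing equivalence. Closing it would require identifying your adjunction-defined $\chi_t$ with the canonical comparison map $(\lim F)^\op \to \lim F^\op$, which means unwinding what the counit does on fibers through the straightening equivalence; this is not automatic and is precisely the sort of identification the paper's explicit construction sidesteps.

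A minor correction on the coinitiality step: the slice $S_t \times_{S \times_T T^{t/}} (S \times_T T^{t/})_{/(s,\phi)}$ does not parametrize \emph{cartesian} lifts of $\phi$ with target $s$, but rather \emph{all} lifts $\alpha: s' \to s$ with $g(\alpha)=\phi$. It is nonetheless weakly contractible because any cartesian lift furnishes a terminal object therein (by the mapping-space characterization of cartesian edges), so your conclusion stands.
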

\begin{proof} (1) is obvious from the definitions. For (2), the map $\chi$ is defined as follows: an $n$-simplex of $g_\ast(C)^\vop$ over $\sigma \in T_n$ is given by the data of a commutative diagram
\[ \begin{tikzcd}[row sep=2em, column sep=2em]
\leftnat{\widetilde{\sO}(\Delta^n)} \times_{T^\sharp} S^\sharp \ar{r} \ar{d} & \leftnat{C} \ar{d} \\
(\Delta^n \times_T S)^\sharp \ar{r}{g^\ast \sigma} & S^\sharp
\end{tikzcd} \]
and precomposition by the obvious map $\widetilde{\sO}(\Delta^n \times_T S) \to \widetilde{\sO}(\Delta^n) \times_T S$ yields an $n$-simplex of $g_\ast(C^\vop)$.

We now show that for all $t \in T$, $\chi_t$ is a categorical equivalence. Because $\chi_t$ is obtained by taking levelwise $0$-simplices of the map of complete Segal spaces
\[ \Map_S(\leftnat{\widetilde{\sO}(\Delta^\bullet}) \times S_t^\sharp, \leftnat{C}) \to \Map_S(\leftnat{\widetilde{\sO}}(\Delta^\bullet) \times \widetilde{\sO}(S_t)^\sharp, \leftnat{C}), \]
it suffices to show that for all $n$, $\leftnat{\widetilde{\sO}(\Delta^n)} \times \widetilde{\sO}(S_t)^\sharp \to \leftnat{\widetilde{\sO}(\Delta^n)} \times S_t^\sharp$ is a cocartesian equivalence in $s\Set^+_{/S}$. As a special case of Proposition~\ref{prp:endFormula}, $\widetilde{\sO}(S_t)^\sharp \to S_t^\sharp$ is a cocartesian equivalence in $s\Set^+_{/ S_t}$, so the claim follows.
\end{proof}

\begin{lem} \label{lm:StructureMapofTwistedArrowCatofSimplexIsLMA} The map $\ev^\op: \rightnat{(\widetilde{\sO}(\Delta^n)^\op)} \to (\Delta^n)^\sharp \times ((\Delta^n)^\op)^\flat$ is left marked anodyne.
\end{lem}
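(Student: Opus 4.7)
The plan is to proceed by induction on $n$, with the base case $n=0$ being an isomorphism. For the inductive step, I would filter the target $(\Delta^n)^\sharp \times ((\Delta^n)^\op)^\flat$ by marked sub-simplicial sets $X_0 \subset X_1 \subset \cdots \subset X_n$, where $X_k$ is spanned by the pairs $(i, j)$ with $i - j \leq k$. Then $X_0 = \rightnat{(\widetilde{\sO}(\Delta^n)^\op)}$ (since the twisted arrow category identifies with the ``upper triangle'' $i \leq j$) and $X_n$ is the full target, and the task is to show each inclusion $X_{k-1} \hookrightarrow X_k$ is left marked anodyne.

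At stage $k$, the newly added vertices are $v_a = (a, a-k)$ for $k \leq a \leq n$. Since any two such $v_a, v_{a'}$ are pairwise incomparable in $[n] \times [n]^{\op}$, no non-degenerate simplex contains both of them, so the attachment decomposes as independent cone attachments, one per $v_a$. For a fixed new vertex $v = (a, a-k)$, the task reduces to showing that $N(L_v) \hookrightarrow N(L_v)^\rhd$ is left marked anodyne with respect to the markings inherited from the target, where $L_v$ is the downward-closure of $v$ in $X_{k-1}$.

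The key structural observation is that within $L_v$ sits a horizontal chain $H_v = \{(i, a-k) : 0 \leq i \leq a-1\} \cong \Delta^{a-1}$, whose elements are precisely those from which the cone-edge to $v$ is marked in the target (being constant in the $(\Delta^n)^\op$ coordinate). I would therefore factor the cone attachment into two sub-steps: first attach the ``marked cone'' $H_v^\rhd$ along $H_v$, which amounts to a left marked anodyne inclusion $\Delta^{a-1} \hookrightarrow \Delta^a$ with every edge to the top vertex marked (built inductively from the generator $\leftnat{\Lambda^m_0} \to \leftnat{\Delta^m}$ together with an argument in the spirit of Lm. \ref{lm:generalizedCocartesianPushforward}); second, extend to all of $N(L_v)^\rhd$ via a sequence of inner horn inclusions $(\Lambda^m_i)^\flat \to (\Delta^m)^\flat$ for $0 < i < m$, filling in the remaining simplices (those containing at least one unmarked cone-edge from $L_v \setminus H_v$ to $v$) dimension by dimension.

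The main obstacle is the combinatorial bookkeeping for the inner horn fillings in the second sub-step: one must order the attachments so that at each horn-filling the requisite lower-dimensional faces are already present in the partial filtration, and one must verify that each horn's edges match the available marked/unmarked edges, so as not to inadvertently promote an unmarked cone-edge to a marked one — a fatal move, since left marked anodyne maps cannot un-mark edges. This constraint is what rules out using $\leftnat{\Lambda^m_0} \to \leftnat{\Delta^m}$ in the second sub-step and forces exclusive reliance on inner horns there. The combinatorics closely parallels the delicate filtration arguments in the proof of Prp. \ref{joinprp}, and the main technical burden is producing an explicit well-ordering of the attachments that makes this go through.
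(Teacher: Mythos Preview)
Your approach is genuinely different from the paper's. The paper does not filter the target by diagonal strips; instead it relabels $\ev^\op$ as $\rho_n: I_n \to (\Delta^n)^\sharp \times (\Delta^n)^\flat$ and factors it through $f_n: (\Delta^n)^\flat \to I_n$, $i \mapsto 0i$. Since $\rho_n \circ f_n$ is the inclusion $\{0\} \times (\Delta^n)^\flat \hookrightarrow (\Delta^n)^\sharp \times (\Delta^n)^\flat$, which is left marked anodyne, \emph{right cancellativity} reduces the problem to showing $f_n$ is left marked anodyne. This is then done by a filtration $I_{n,-1} \subset \cdots \subset I_{n,n} = I_n$ in which each step is a pushout of a map of the form $(\{0\} \to (\Delta^{m})^\sharp) \,\square\, ((\Delta^k)^\flat \to (\Delta^{k+1})^\flat)$, left marked anodyne by \cite[3.1.2.3]{HTT}. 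No horn-by-horn bookkeeping is required.

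Your plan has a genuine gap in the second sub-step: the claim that it can be completed using \emph{only} inner horns is false. Take $n=1$, $k=1$, $a=1$, so $v=(1,0)$, $L_v=\{(0,1),(0,0),(1,1)\}$ (with $(0,1)$ minimal and $(0,0),(1,1)$ incomparable), and $H_v=\{(0,0)\}$. After your first sub-step you have $N(L_v)$ together with the marked edge $(0,0)\to(1,0)$. The $2$-simplex $[(0,1),(0,0),(1,0)]$ fills via $\Lambda^2_1$, producing the diagonal $(0,1)\to(1,0)$. But the remaining $2$-simplex $[(0,1),(1,1),(1,0)]$ then has only its $d_1$ and $d_2$ faces present: this is a $\Lambda^2_0$ horn, not an inner horn, and there is no higher simplex available to circumvent it. The filling \emph{does} go through as $\leftnat{\Lambda^2_0} \to \leftnat{\Delta^2}$, precisely because the $\{0,1\}$-edge $(0,1)\to(1,1)$ is a marked horizontal edge lying entirely inside $L_v$. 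Your reasoning that ``rules out using $\leftnat{\Lambda^m_0} \to \leftnat{\Delta^m}$'' overlooks these internal marked edges: they are not cone-edges, so there is no danger of promoting an unmarked cone-edge, yet they are essential to the filling. A corrected version of your filtration would have to systematically exploit both inner horns and $\leftnat{\Lambda^m_0}$-horns anchored on horizontal edges inside $L_v$; the combinatorics of ordering these attachments is the real content, and the paper's cancellativity-plus-pushout-product argument sidesteps it entirely.
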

\begin{proof} For convenience, we will relabel $\widetilde{\sO}(\Delta^n)^\op$ as the nerve of the poset $I_n$ with objects $ij$, $0 \leq i \leq j \leq n$ and maps $ij \to kl$ for $i \leq k$ and $j \leq l$. Then an edge $ij \rightarrow kl$ is marked in $I_n$ just in case $j=l$, and the map $\ev^\op$ becomes the projection $\rho_n: I_n \to (\Delta^n)^\sharp \times (\Delta^n)^\flat$, $ij \mapsto (i,j)$. Let $f_n: (\Delta^n)^\flat \to I_n$ be the map which sends $i$ to $0i$. Then $\rho_n \circ f_n: \{0\} \times (\Delta^n)^\flat \to (\Delta^n)^\sharp \times (\Delta^n)^\flat$ is left marked anodyne, so by the right cancellativity of left marked anodyne maps it suffices to show that $i_n$ is left marked anodyne. For this, we factor $f_n$ as the composition
\[ (\Delta^n)^\flat = I_{n,-1} \to I_{n,0} \to \dots \to I_{n,n} = I_n \]
where $I_{n,k} \subset I_n$ is the subcategory on objects $ij$, $i=0$ or $j \leq k$ (and inherits the marking from $I_n$), and argue that each inclusion $g_k: I_{n,k} \subset I_{n,k+1}$ is left marked anodyne. For this, note that $g_k$ fits into a pushout square
\[ \begin{tikzcd}[row sep=2em, column sep=2em]
\{0\} \times (\Delta^{k+1})^\flat \bigcup_{\{0\} \times (\Delta^k)^\flat} (\Delta^{n-k-1})^\sharp \times (\Delta^k)^\flat  \ar{r} \ar{d} &  (\Delta^{n-k-1})^\sharp \times (\Delta^{k+1})^\flat \ar{d} \\
I_{n,k} \ar{r}{g_k} & I_{n,k+1}
\end{tikzcd} \]
with the upper horizontal map marked left anodyne.
\end{proof}

\begin{cnstr} \label{DualizingBifunctor}
Suppose $T$ an $\infty$-category, $X, Z \to T$ cocartesian fibrations, $Y \to T$ a cartesian fibration, and a map $\mu: \leftnat{X} \times_T \rightnat{Y} \to \leftnat{Z}$ of marked simplicial sets over $T$. We define a map
$$\mu^\vop: \leftnat{X^\vop} \times_T \rightnat{Y^\vop} \to \leftnat{Z^\vop}$$
by the following process:

Let $J_n$ be the nerve of the poset with objects $ij$, $0 \leq i \leq n$, $-n \leq j \leq n$ and $-j \leq i$ and maps $ij \rightarrow kl$ if $i \leq k$, $j \leq l$.  Mark edges $ij \to kl$ if $j=l$. Let $I_n \subset J_n$ be the subcategory on $ij$ with $j \geq 0$ and $I'_n \subset J_n$ be the subcategory on $ij$ with $j \leq 0$; also give $I_n$, $I'_n$ the induced markings. We have an inclusion $(\Delta^n)^\sharp \to J_n$ given by $i \mapsto i0$ which restricts to inclusions $(\Delta^n)^\sharp \to I_n$, $(\Delta^n)^\sharp \to I'_n$ and induces a map $\gamma_n: I_n \cup_{(\Delta^n)^\sharp} I'_n \subset J_n$.

Define auxiliary (unmarked) simplicial sets $Z' \to T$ by $\Hom_{/T}(\Delta^n,Z') = \Hom_{/T}(J_n,\leftnat{Z})$ and $Z'' \to T$ by $\Hom_{/T}(\Delta^n,Z'') = \Hom_{/T}(I_n \cup_{(\Delta^n)^\sharp} I'_n, \leftnat{Z})$, where $J_n \to \Delta^n$ via $ij \mapsto i$. We have a map $r: Z' \to Z''$ given by restriction along the $\gamma_n$, which we claim is a trivial fibration. By a standard reduction, for this it suffices to show that $\gamma_n$ is left marked anodyne. Indeed, this follows from Lemma~\ref{lm:StructureMapofTwistedArrowCatofSimplexIsLMA} applied to $I_n \to (\Delta^n)^\sharp \times \Delta^n$ and the observation that the map $\Delta^n \times \Delta^n \cup_{\Delta^n} I'_n \to J_n$ is inner anodyne, whose proof we leave to the reader.

Define also a map $Z' \to Z^{\vop}$ over $T$ by restriction along the map $\leftnat{\widetilde{\sO}(\Delta^n)} \to J_n$ which sends $ij$ to $j n$ if $i=0$ and $j (-i)$ otherwise. Finally, define a map $X^\vop \times_T Y^\vop \to Z''$ over $T$ as follows: a map $\Delta^n \to X^\vop \times_T Y^\vop$ is given by the data
\[ \begin{tikzcd}[row sep=2em, column sep=2em]
\leftnat{\widetilde{\sO}(\Delta^n)} \ar{r} \ar{d} & \leftnat{X} \ar{d} \\
(\Delta^n)^\sharp \ar{r} & T^\sharp
\end{tikzcd},
\begin{tikzcd}[row sep=2em, column sep=2em]
\rightnat{(\widetilde{\sO}(\Delta^n)^\op)} \ar{r} \ar{d} & \rightnat{Y} \ar{d} \\
(\Delta^n)^\sharp \ar{r} & T^\sharp.
\end{tikzcd} \]
We have isomorphisms $\leftnat{\widetilde{\sO}(\Delta^n)} \cong I'_n$ and $\rightnat{(\widetilde{\sO}(\Delta^n)^\op)} \cong I_n$, and obvious retractions $I_n \cup_{(\Delta^n)^\sharp} I'_n \to I_n, I_n'$ given by collapsing the complementary part onto $\Delta^n$. Using this, we may define
\[  I_n \cup_{(\Delta^n)^\sharp} I'_n \to \leftnat{X} \times_T \rightnat{Y} \to \leftnat{Z} \]
which is an $n$-simplex of $Z''$.

Choosing a section of $r$, we may compose these maps to define $\mu^\vop$, which is then easily checked to also preserve the indicated markings. For example, $\mu^\vop$ on edges is given by
\[ \left( \begin{tikzcd}[row sep=1em, column sep=2em]
 & x_{11} \ar{d} \\
x_{00} \ar{r} & x_{01}, \\
y_{01} \ar{r} \ar{d} & y_{11} \\
y_{00} &
\end{tikzcd} \right) \mapsto \left(
\begin{tikzcd}[row sep=2em, column sep=2em]
 & \mu(x_{11},y_{11}) \ar{d} \\
\mu(x_{00},y_{01}) \ar{r} \ar{d} & \mu(x_{01},y_{11}) \ar{d}  \\
\mu(x_{00},y_{00}) \ar{r} & \alpha_! \mu(x_{00},y_{00})
\end{tikzcd} \right) \mapsto \left(
\begin{tikzcd}[row sep=2em, column sep=2em]
& \mu(x_{11},y_{11}) \ar{d} \\
\mu(x_{00},y_{00}) \ar{r} & \alpha_! \mu(x_{00},y_{00})
\end{tikzcd} \right) \]
where $\alpha_! \mu(x_{00},y_{00})$ is a choice of pushforward for the edge $\alpha$ in $T$ that the diagrams are vertically over.
\end{cnstr}

\begin{lem} \label{lm:VOPcommuteswithPairing} Let $C \to T$ be a cartesian fibration and let $D \to T$ be a cocartesian fibration. There exists a $T$-equivalence $\psi: \widetilde{\Fun}_T(C,D)^\vop \to \widetilde{\Fun}_T(C^\vop,D^\vop)$.
\end{lem}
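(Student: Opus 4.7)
The plan is to construct $\psi$ by applying Cnstr.~\ref{DualizingBifunctor} to the evaluation map associated to the pairing, and then to verify fiberwise that the result is an equivalence.

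First, the identity on $\widetilde{\Fun}_T(C,D)$ corresponds, under the defining adjunction of the pairing, to an evaluation map
\[ \epsilon: \leftnat{\widetilde{\Fun}_T(C,D)} \times_T \rightnat{C} \to \leftnat{D} \]
over $T^\sharp$. Because $\widetilde{\Fun}_T(C,D)$ and $D$ are cocartesian over $T$ while $C$ is cartesian, $\epsilon$ fits the template of Cnstr.~\ref{DualizingBifunctor}, producing a map
\[ \epsilon^\vop: \leftnat{\widetilde{\Fun}_T(C,D)^\vop} \times_T \rightnat{C^\vop} \to \leftnat{D^\vop}. \]
Since $C^\vop \to T$ is cartesian and $D^\vop \to T$ is cocartesian, the pairing $\widetilde{\Fun}_T(C^\vop, D^\vop)$ is defined, and adjointing $\epsilon^\vop$ across this pairing yields the desired $T$-functor $\psi$.

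To show $\psi$ is a $T$-equivalence, both source and target being fibrant $T$-categories, it suffices to check that $\psi_t$ is a categorical equivalence for each $t \in T$. Under the canonical identifications $X^\vop_t \simeq X_t^\op$ for either a cartesian or cocartesian fibration $X$, we have
\[ \widetilde{\Fun}_T(C,D)^\vop_t \simeq \Fun(C_t, D_t)^\op, \qquad \widetilde{\Fun}_T(C^\vop, D^\vop)_t \simeq \Fun(C_t^\op, D_t^\op), \]
and the goal is to identify $\psi_t$ with the standard equivalence sending $F$ to $F^\op$.

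The main obstacle will be this last identification. The point is that Cnstr.~\ref{DualizingBifunctor} constructs $\epsilon^\vop$ by gluing twisted-arrow data across the cartesian and cocartesian sides using the auxiliary posets $I_n$, $I'_n$, and $J_n$, and one must trace this carefully. However, when we restrict everything to lie over a fixed $t \in T$, the structure maps to $T$ become constant, so the gluing collapses and $\epsilon^\vop$ on fibers reduces to the $\vop$-construction applied to the ordinary evaluation $\Fun(C_t,D_t) \times C_t \to D_t$. Since for $T = \Delta^0$ the construction $(-)^\vop$ recovers the usual opposite of an $\infty$-category (and of a functor), this shows $\psi_t$ agrees with the canonical equivalence $\Fun(C_t,D_t)^\op \xrightarrow{\simeq} \Fun(C_t^\op, D_t^\op)$, as desired. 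One minor subtlety is that $\epsilon^\vop$ depends on a choice of section of the trivial fibration $Z' \to Z''$ from Cnstr.~\ref{DualizingBifunctor}, but the resulting $\psi$ is well-defined up to homotopy by uniqueness of such sections, and any such choice yields a $T$-equivalence by the fiberwise verification above.
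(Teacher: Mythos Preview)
Your proposal is correct and follows essentially the same approach as the paper: apply Cnstr.~\ref{DualizingBifunctor} to the evaluation map adjoint to the identity, adjoint the result to obtain $\psi$, and then verify fiberwise that $\psi_t$ is the standard equivalence $\Fun(C_t,D_t)^\op \simeq \Fun(C_t^\op,D_t^\op)$. The paper's proof is terser (it simply asserts the fiberwise identification follows from a chase of definitions), but you have filled in exactly the details it leaves implicit.
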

\begin{proof} We have a map $\mu: \widetilde{\Fun}_T(C,D) \times_T C \to D$ adjoint to the identity. Employing Construction~\ref{DualizingBifunctor} on $\mu$ and then adjointing, we obtain our desired $T$-functor $\psi$. A chase of the definitions then shows that for all objects $t \in T$, $\psi_t$ is homotopic to the known equivalence $\Fun(C_t,D_t)^\op \simeq \Fun(C_t^\op, D_t^\op)$.
\end{proof}

\begin{lem} \label{lm:VOPcommuteswithJoin} Let $K$ and $L$ be $S$-categories. Then there exists a $S$-equivalence
\[ \psi: (K \star_S L)^\vop \xrightarrow{\simeq} L^\vop \star_S K^\vop \]
over $S \times \Delta^1$.
\end{lem}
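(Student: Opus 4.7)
The plan is to construct $\psi$ via the universal property of the $S$-join as a right Kan extension, and then verify it is an $S$-equivalence by reducing fiberwise to the classical isomorphism $(K_s \star L_s)^\op \cong L_s^\op \star K_s^\op$.

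First, the structure map $p : K \star_S L \to S \times \Delta^1$ sends $S$-cocartesian edges of $K \star_S L$ (which lie entirely in $K$ or $L$ by Proposition~\ref{joinrt}) to degenerate edges in the $\Delta^1$-factor. Hence $p$ is a morphism in $s\Set^+_{/S}$, where $S \times \Delta^1$ is marked by its $S$-edges, and applying the vertical opposite functor (Proposition~\ref{prp:VOPisQuillenRightAdjoint}) yields a map $p^\vop : (K \star_S L)^\vop \to (S \times \Delta^1)^\vop \cong S \times \Delta^1$, where the last isomorphism is the identity on the $S$-factor and the canonical flip $(\Delta^1)^\op \cong \Delta^1$ on the $\Delta^1$-factor. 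By Lemma~\ref{lm:functorialityofVOP}(1) applied to the inclusions of the two endpoints, the fibers of $p^\vop$ over $S \times \{0\}$ and $S \times \{1\}$ are $L^\vop$ and $K^\vop$ respectively.

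Next, by the defining adjunction $\iota^\ast \dashv \iota_\ast$ for the $S$-join, maps $(K \star_S L)^\vop \to L^\vop \star_S K^\vop = \iota_\ast(L^\vop \sqcup K^\vop)$ over $S \times \Delta^1$ correspond to maps $\iota^\ast(K \star_S L)^\vop \to L^\vop \sqcup K^\vop$ over $S \times \partial \Delta^1$. The pullback $\iota^\ast(K \star_S L)^\vop$ is precisely the disjoint union of the two endpoint fibers just computed, namely $L^\vop \sqcup K^\vop$, and we define $\psi$ as the map adjoint to the identity.

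To check that $\psi$ is an $S$-equivalence, note that both source and target are fibrant in $s\Set^+_{/S}$ (by Proposition~\ref{joinrt} and Proposition~\ref{prp:VOPisQuillenRightAdjoint}). By the fiberwise characterization of equivalences between fibrant objects used throughout Section~2, it suffices to verify that $\psi_s$ is a categorical equivalence for every object $s \in S$. Using Lemma~\ref{lm:functorialityofVOP}(1) and the compatibility of the relative join with base change, the fibers are computed as
\[ ((K \star_S L)^\vop)_s \cong (K_s \star L_s)^\op, \qquad (L^\vop \star_S K^\vop)_s \cong L_s^\op \star K_s^\op, \]
and $\psi_s$ coincides with the classical isomorphism between these. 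The main subtlety is tracking the $\Delta^1$-factor through the vertical opposite: because $(-)^\vop$ reverses the order in each fiber, the $K$-summand of $K \star_S L$ (originally over $0 \in \Delta^1$) becomes $K^\vop$ sitting over $1 \in \Delta^1$ in $(K \star_S L)^\vop$, matching the position of $K^\vop$ in $L^\vop \star_S K^\vop$ and thereby forcing the flip on the $\Delta^1$-factor. Once this bookkeeping is correctly set up, the remaining verifications are formal consequences of the universal property of the $S$-join and Lemma~\ref{lm:functorialityofVOP}.
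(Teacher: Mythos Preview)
Your proposal is correct and follows essentially the same approach as the paper's proof: both construct $\psi$ as the map adjoint to the identity over $S \times \partial\Delta^1$ after identifying the endpoint fibers of $(K \star_S L)^\vop \to S \times \Delta^1$ as $L^\vop$ and $K^\vop$, and then verify the $S$-equivalence fiberwise via the classical identification $(K_s \star L_s)^\op \cong L_s^\op \star K_s^\op$. One minor caution: the fiber $(X^\vop)_s$ is only naturally \emph{equivalent} (not isomorphic) to $X_s^\op$, so the paper more carefully says $\psi_s$ is \emph{homotopic} to the known isomorphism rather than that it coincides with it; this does not affect the conclusion.
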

\begin{proof} Note that $(S \times \Delta^1)^\vop \cong S \times (\Delta^1)^\op$. View $(K \star_S L)^\vop$ as lying over $S \times \Delta^1$ via the isomorphism $(\Delta^1)^\op \cong \Delta^1$. Since $(K \star_S L)^\vop_0 \cong L^\vop$ and $(K \star_S L)^\vop_1 \cong K^\vop$, we have our $S$-functor $\psi$ as adjoint to the identity over $S \times \partial \Delta^1$. Fiberwise, $\psi_s$ is homotopic to the known isomorphism $(K_s \star L_s)^\op \cong L_s^\op \star K_s^\op$, so $\psi$ is an equivalence.
\end{proof}

\begin{prp} Suppose $S$-categories $K$ and $C$.
\begin{enumerate}
	\item The adjoint of the vertical opposite of the evaluation map induces a equivalence
\[ \underline{\Fun}_S(K,C)^\vop \xrightarrow{\simeq} \underline{\Fun}_S(K^\vop, C^\vop). \]
	\item Suppose a $S$-functor $p:K \to C$. We have equivalences
\[ (C^{(p,S)/})^{\vop} \simeq (C^\vop)^{/(p^\vop,S)}, \quad (C^{/(p,S)})^{\vop} \simeq (C^\vop)^{(p^\vop,S)/}. \]
\end{enumerate}
\end{prp}
\begin{proof}
\begin{enumerate}[leftmargin=*]
\item Recall from \ref{eqn:FunctorAsEndComparisonMap} the equivalence $\underline{\Fun}_S(K,C) \simeq \pi_\ast \pi'^\ast \{K,C\}_S$. By Lemma~\ref{lm:VOPcommuteswithPairing} and Lemma~\ref{lm:functorialityofVOP}(1), $$\{K,C\}_S^\vop \simeq \{K^\vop,C^\vop\}_S.$$ By Lemma~\ref{lm:functorialityofVOP}(1) and (2), $$\pi_\ast \pi'^\ast \{K,C\}_S^\vop \simeq (\pi_\ast \pi'^\ast \{K,C\}_S)^\vop.$$ Combining these equivalences supplies an equivalence $\underline{\Fun}_S(K,C)^\vop \simeq \underline{\Fun}_S(K^\vop, C^\vop)$. It is straightforward but tedious to verify that the adjoint of the vertical opposite of the evaluation map $\underline{\Fun}_S(K,C)^\vop \times_S K^\vop \to C^\vop$ is homotopic to this equivalence.

\item Combine (1), Lemma~\ref{lm:VOPcommuteswithJoin}, Proposition~\ref{prp:VOPisQuillenRightAdjoint} (which shows in particular that $(-)^\vop$ is right Quillen), and the definition of the $S$-slice category.
\end{enumerate}
\end{proof}

\begin{cor} \label{cor:LimitToColimit} Let $\overline{p}: S \star_S K \to C$ be a $S$-functor. Then $\overline{p}$ is a $S$-limit diagram if and only if $\overline{p}^\vop: K^\vop \star_S S \to C^\vop$ is a $S$-colimit diagram.
\end{cor}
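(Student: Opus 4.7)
The plan is to reduce the corollary to a direct consequence of the preceding proposition by chasing the section $\sigma_{\overline{p}}$ through the vertical opposite construction. Recall that $\overline{p}: S \star_S K \to C$ is an $S$-limit diagram if and only if the associated cocartesian section $\sigma_{\overline{p}}: S \to C^{/(p,S)}$ is an $S$-final object, i.e., $\sigma_{\overline{p}}(s)$ is a final object of the fiber $(C^{/(p,S)})_s$ for every $s \in S$.

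First I would apply Lemma \ref{lm:VOPcommuteswithJoin} (together with the fact that $S^\vop \simeq S$, since $\mathrm{id}: S \to S$ has contractible fibers) to identify $(S \star_S K)^\vop \simeq K^\vop \star_S S$ over $S \times \Delta^1$, so that $\overline{p}^\vop$ is canonically an $S$-functor $K^\vop \star_S S \to C^\vop$ extending $p^\vop: K^\vop \to C^\vop$. Next, by part (2) of the preceding proposition, there is an equivalence $(C^{/(p,S)})^{\vop} \simeq (C^\vop)^{(p^\vop,S)/}$ of $S$-categories.

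Since the vertical opposite construction restricts fiberwise to the ordinary opposite (recall $X^\vop_t \simeq X_t^\op$), the condition that $\sigma_{\overline{p}}(s)$ is a final object of $(C^{/(p,S)})_s$ translates under the equivalence above to the condition that the corresponding object in $((C^\vop)^{(p^\vop,S)/})_s$ is an initial object. In other words, $\overline{p}$ is an $S$-limit diagram precisely when the image of $\sigma_{\overline{p}}$ under the equivalence is an $S$-initial object of $(C^\vop)^{(p^\vop,S)/}$, which is exactly the condition that $\overline{p}^\vop$ is an $S$-colimit diagram.

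The only nontrivial verification is that this chain of equivalences sends $\sigma_{\overline{p}}$ to $\sigma_{\overline{p}^\vop}$; this is the step I expect to require the most care. It follows by tracing through the construction of the equivalence in part (2) of the preceding proposition, which is built from the equivalences $\underline{\Fun}_S(K,C)^\vop \simeq \underline{\Fun}_S(K^\vop,C^\vop)$ of part (1) and $(K \star_S L)^\vop \simeq L^\vop \star_S K^\vop$ of Lemma \ref{lm:VOPcommuteswithJoin}. Under these, the cocartesian section classifying an $S$-functor is naturally identified with the cocartesian section classifying its vertical opposite; the relevant diagrams commute by naturality of $(-)^\vop$ together with the compatibility of the pairing construction with vertical opposites established in Lemma \ref{lm:VOPcommuteswithPairing}.
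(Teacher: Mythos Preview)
Your proposal is correct and follows precisely the approach the paper intends: the paper states this corollary without proof, treating it as an immediate consequence of part (2) of the preceding proposition together with the fiberwise identification $X^\vop_t \simeq X_t^\op$, which is exactly the argument you spell out. Your extra care in tracking that $\sigma_{\overline{p}}$ is carried to $\sigma_{\overline{p}^\vop}$ under the chain of equivalences is the right thing to check, and your justification via naturality is adequate.
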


This allows us to deduce statements about $S$-limits from statements about $S$-colimits, and vice-versa. For this reason, we will primarily concentrate our attention on proving statements concerning $S$-colimits (and eventually, $S$-left Kan extensions), leaving the formulation of the dual results to the reader.

\begin{wrn}
Even with Corollary~\ref{cor:LimitToColimit}, it seems difficult to deduce Proposition~\ref{prp:LimitsInCatOfObjects} concerning $S$-limits in an $S$-category of objects $\underline{C}_S$ directly from Proposition~\ref{prp:identifyingColimitsInCatOfObjects} on $S$-colimits in $\underline{C}_S$. This is because the formation of vertical opposites $\underline{C}_S \mapsto (\underline{C}_S)^{\vop}$ doesn't intertwine with any operation at the level of the $\infty$-category $C$.
\end{wrn}

\section{Assembling \texorpdfstring{$S$}{S}-slice categories from ordinary slice categories}

Suppose a $S$-functor $p:K \to C$. For every morphism $\alpha: s \rightarrow t$ in $S$, we have a functor $p_\alpha: K_s \to C_t$, and we may consider the collection of `absolute' slice categories $C_{p_{\alpha}/}$ and examine the functoriality that they satisfy. For this, we have the following basic observation: given a morphism $f: t \rightarrow t'$, \emph{covariant} functoriality of slice categories in the target yields a functor $C_{p_{\alpha}/} \to C_{p_{f \alpha}/}$, and given a morphism $g: s' \rightarrow s$, \emph{contravariant} functoriality in the source yields a functor $C_{p_{\alpha}/} \to C_{p_{\alpha g}/}$. Elaborating, we will show in this section that there exists a functor
$$F \coloneq F(p:K \to C): \twa(S) \to \Cat_\infty$$
out of the twisted arrow category $\twa(S)$ such that $F(\alpha) \simeq C_{p_{\alpha}/}$, which encodes all of this functoriality (Definition~\ref{dfn:twistedSlice}). Moreover, the right Kan extension of $F$ along the target functor $\twa(S) \to S$ is $C_{(p,S)/}$ (Theorem~\ref{thm:ordinarySliceToParamSlice}). We will end with some applications of this result to the theory of cofinality and presentability (Theorem~\ref{thm:cofinality} and Remark~\ref{rem:presentability}).

We first record a cofinality result which implies that the values of a right Kan extension along $\ev_1: \twa(S) \to S$ are computed as ends.

\begin{lem} The functor $\twa(S^{s/}) \to \twa(S) \times_S S^{s/}$ is initial.
\end{lem}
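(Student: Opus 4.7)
The plan is to apply Joyal's Theorem A (HTT~4.1.3.1) and reduce to showing that, for every object $\xi$ of $\widetilde{\sO}(S) \times_S S^{s/}$, the homotopy fiber product
\[ P_\xi := \widetilde{\sO}(S^{s/}) \times_{\widetilde{\sO}(S) \times_S S^{s/}} (\widetilde{\sO}(S) \times_S S^{s/})_{/\xi} \]
is weakly contractible. Such a $\xi$ amounts to a pair of morphisms $(f: a \to b, g: s \to b)$ in $S$, and unpacking the definitions, an object of $P_\xi$ is a commutative diagram $a \to a' \to b' \to b$ in $S$ equipped with a compatible map $s \to a'$, such that the composites $a \to a' \to b' \to b$ and $s \to a' \to b' \to b$ recover $f$ and $g$ respectively.

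My first step is to recognize this data as an extension problem for a diagram on the poset $J := \{a, s\} \star \Delta^2$ (with $\Delta^2 = \{a' \to b' \to b\}$), with $\xi$ fixing the restriction to the subdiagram $J_0 := \{a, s\} \star \{2\}$. Two successive applications of the join-slice adjunction (HTT~\S4.2.1) will then identify $P_\xi$ with an arrow category. Setting $E := S^{(a, s)/}$ and letting $p_\xi \in E$ denote the point $(b, f, g)$, the first application yields
\[ P_\xi \simeq \Fun(\Delta^2, E) \times_E \{p_\xi\} \]
via evaluation at vertex $2$; the second application, using the decomposition $\Delta^2 = \Delta^1 \star \{2\}$, further identifies this with $\sO(E^{/p_\xi})$.

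To conclude, I will observe that $E^{/p_\xi}$ admits a terminal object $\id_{p_\xi}$, so $\sO(E^{/p_\xi}) = \Fun(\Delta^1, E^{/p_\xi})$ also admits a terminal object (the constant functor at $\id_{p_\xi}$), and is therefore weakly contractible. This establishes weak contractibility of $P_\xi$ and completes the proof.

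The main subtlety I anticipate is being careful about strict versus homotopy fiber products and about the precise form of the join-slice adjunction in the $\infty$-categorical setting. This should be manageable: the forgetful map $(\widetilde{\sO}(S) \times_S S^{s/})_{/\xi} \to \widetilde{\sO}(S) \times_S S^{s/}$ is a right fibration (as a slice), so the strict pullback defining $P_\xi$ computes the homotopy pullback, and the requisite join-slice identifications are standard.
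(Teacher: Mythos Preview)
Your object-level description of $P_\xi$ is correct, but the identification $P_\xi \simeq \sO(E^{/p_\xi})$ fails at the level of morphisms. The issue is the twist in $\widetilde{\sO}$: a morphism in $\widetilde{\sO}(S^{s/})$ from the arrow $(s\to a'_0)\to(s\to b'_0)$ to the arrow $(s\to a'_1)\to(s\to b'_1)$ consists of maps $a'_1 \to a'_0$ and $b'_0 \to b'_1$ in $S^{s/}$ --- contravariant in the source, covariant in the target. Tracing through the fiber product, a morphism in $P_\xi$ between two such extensions therefore involves a map $a'_1 \to a'_0$ going the ``wrong'' direction. By contrast, your proposed model $\Fun(\Delta^2,E)\times_E\{p_\xi\} \simeq \sO(E^{/p_\xi})$ has ordinary natural transformations as morphisms, covariant in both $a'$ and $b'$. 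So your first step --- passing from $P_\xi$ to the $\infty$-category of $J$-diagrams extending $\xi$ --- is only a bijection on objects, not an equivalence of $\infty$-categories; the join--slice adjunction you then invoke produces an $\infty$-category with the wrong morphisms.

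The strategy is not irreparable: one could instead try to identify $P_\xi$ with $\widetilde{\sO}(E^{/p_\xi})$, whose morphisms carry the same twist, and then use that $\ev_1: \widetilde{\sO}(C) \to C$ has weakly contractible fibers to deduce weak contractibility from the terminal object of $E^{/p_\xi}$. But that identification requires its own argument and does not drop out of join--slice. The paper takes a different route entirely: rather than identifying $P_\xi$ globally, it exhibits a reflective full subcategory $C_0 \subset P_\xi$ (those objects with $b' \to b$ degenerate) via the criterion of \cite[5.2.7.8]{HTT}, and then produces an initial object of $C_0$ by hand --- both steps carried out through explicit mapping-space computations that track the twisted structure directly.
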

\begin{proof} Let $(\alpha: u \to t, \beta: s \to t)$ be an object of $\twa(S) \times_S S^{s/}$. We will prove that 
\[ C = \twa(S^{s/}) \times_{\twa(S) \times_S S^{s/}} (\twa(S) \times_S S^{s/})_{/(\alpha,\beta)} \]
is weakly contractible. An object of $C$ is the data of an edge
\[ \begin{tikzcd}[row sep=1em, column sep=1em]
& s \ar{ld}[swap]{f} \ar{rd}{g} & \\
x \ar{rr}{h} & & y
\end{tikzcd} \]
in $S^{s/}$, which we will abbreviate as $f \overset{h}{\to} g$, and an edge
\[ \left( \begin{tikzcd}[row sep=1em, column sep=1em]
x \ar{r}{h} & y \ar{d}{\gamma} \\
u \ar{u}{\delta} \ar{r}{\alpha} & t
\end{tikzcd},
 \begin{tikzcd}[row sep=1em, column sep=1em]
s \ar{r}{g} \ar{rd}[swap]{\beta} & y \ar{d}{\gamma} \\
 & t
\end{tikzcd} \right) \]
in $\twa(S) \times_S {S^{s/}}$, which we will abbreviate as $(h,g) \overset{(\delta,\gamma)}{\to} (\alpha,\beta)$.

Let $C_0 \subset C$ be the full subcategory on objects $c = ((f \overset{h}{\to} g), (h,g) \overset{(\delta,\gamma)}{\to} (\alpha,\beta))$ such that $\gamma$ is a degenerate edge in $S^{s/}$. We will first show that $C_0$ is a reflective subcategory of $C$ by verifying the first condition of \cite[Proposition~5.2.7.8]{HTT}. Given an object $c$ of $C$, define $c'$ to be $((f \overset{\gamma h}{\to} \beta), (\gamma h, \beta) \overset{(\delta,\id_t)}{\to} (\alpha,\beta))$ and let $e: c \to c'$ be the edge given by
 \[ \left( \begin{tikzcd}[row sep=1em, column sep=1em]
f \ar{r}{h} & g \ar{d}{\gamma} \\
f \ar{u}{\id_f} \ar{r}{\gamma h} & \beta
\end{tikzcd},
 \begin{tikzcd}[row sep=1em, column sep=1em]
(h,g) \ar{rr}{(\id_x,\gamma)} \ar{rd}[swap]{(\delta,\gamma))} & & (\gamma h, \beta) \ar{ld}{(\delta,\id_t))} \\
 & (\alpha,\beta) & 
\end{tikzcd} \right). \]
We need to show that for all $d = ((f' \overset{h'}{\to} \beta), (h',\beta) \overset{(\delta',\id)}{\to} (\alpha,\beta)) \in C_0$, $\Map_C(c',d) \overset{e^\ast}{\to} \Map_C(c,d)$ is a homotopy equivalence. The space $\Map_C(c,d)$ lies in a commutative diagram
\[ \begin{tikzcd}[row sep=2em, column sep=2em]
\Map_C(c,d) \ar{r} \ar{d} & \Map_{\twa(S^{s/})}(f \overset{h}{\to} g, f' \overset{h'}{\to} \beta) \ar{d} \\
\Map_{(\twa(S) \times_S S^{s/})_{/(\alpha,\beta)}} ((h,g),(h',\beta)) \ar{r} \ar{d} & \Map_{\twa(S) \times_S S^{s/}} ((h,g),(h',\beta)) \ar{d}{(\delta',\id)_\ast} \\
\Delta^0 \ar{r}{(\delta,\gamma)} & \Map_{\twa(S) \times_S S^{s/}}((h,g),(\alpha,\beta))
\end{tikzcd} \]
where the two squares are homotopy pullback squares. We also have the analogous diagram for $\Map_C(c',d)$, and the map $e^\ast$ is induced by a natural transformation of these diagrams. The assertion then reduces to checking that the upper square in the diagram
\[ \begin{tikzcd}[row sep=2em, column sep=2em]
 \Map_{\twa(S^{s/})}(f \overset{\gamma h}{\to} \beta, f' \overset{h'}{\to} \beta) \ar{r}{(\id_f,\gamma)^\ast} \ar{d} & \Map_{\twa(S^{s/})}(f \overset{h}{\to} g, f' \overset{h'}{\to} \beta) \ar{d}\\
\Map_{\twa(S) \times_S S^{s/}}((\gamma h,\beta),(\alpha,\beta)) \ar{r}{(\id_x,\gamma))^\ast} \ar{d} & \Map_{\twa(S) \times_S S^{s/}}((h,g),(\alpha,\beta)) \ar{d} \\
\Map_{S^{s/}}(\beta,\beta) \ar{r}{\gamma^\ast} & \Map_{S^{s/}}(g,\beta)
\end{tikzcd} \]
is a homotopy pullback square. Since $(\id_x,\gamma)$ and $(\id_f,\gamma)$ are $\ev_1$-cocartesian edges in $\twa(S)$ and $\twa(S^{s/})$ respectively, the lower and outer squares are homotopy pullback squares (where we implicitly use that the map $(\delta',\id)$ covers the identity in $S^{s/}$ to identify the long vertical maps with those induced by $\ev_1$), and the claim is proven.

To complete the proof, we will show that $c = (\beta = \beta,(\id_t, \beta) \overset{(\alpha,\id_t)}{\to} (\alpha,\beta))$ is an initial object in $C_0$. Let $d \in C_0$ be as above. In the diagram
\[ \begin{tikzcd}[row sep=2em, column sep=2em]
\Delta^0 \ar{r}{(h',\id_\beta)} \ar{d} & \Map_{\twa(S^{s/})}(\beta = \beta, f' \overset{h'}{\to} \beta) \ar{d} \\
\Delta^0 \ar{r}{(\alpha,\id_t)} \ar{d} & \Map_{\twa(S) \times_S S^{s/}}((\id_t,\beta),(\alpha,\beta)) \ar{r} \ar{d} & \Map_{\twa(S)}(\id_t,\alpha) \ar{d} \\
\Delta^0 \ar{r}{\id_\beta} & \Map_{S^{s/}}(\beta,\beta) \ar{r} & \Map_S(t,t)
\end{tikzcd} \]
we need to show that the upper square is a homotopy pullback square in order to prove that $\Map_C(c,d) \simeq \ast$. The fiber of $\twa(S)$ over $t \in S$ is equivalent to $(S_{/t})^\op$; in particular, $\id_t$ is an initial object in the fiber over $t$. Therefore, the two outer squares are both homotopy pullbacks. Since the lower right square is a homotopy pullback, this shows that all squares in the diagram are homotopy pullbacks, as desired.
\end{proof}

Let $K$ be an $S$-category. Let $J_n$ be the poset with objects $i j$ for $0 \leq i \leq j \leq 2n+1$ which has a unique morphism $i j \to k l$ if and only if $k \leq i \leq j \leq l$. Let $I_n \subset J_n$ be the full subcategory on objects $i j$ such that $i \leq n$. In view of the isomorphisms $$J_n \cong \twa(\Delta^{2n+1}) \cong \twa((\Delta^n)^\op \star \Delta^n),$$ the $I_n$ and $J_n$ extend to functors
$$I_\bullet \subset J_\bullet \cong \twa((\Delta^\bullet)^\op \star \Delta^\bullet): \Delta \to s\Set.$$
Viewing $I_n$ and $J_n$ as marked simplicial sets where $ij \to kl$ is marked just in case $k=i$, we moreover have functors to $s\Set^+$. Define the simplicial set $X: \Delta^\op \to \Set$ to be the functor
$$\Hom_{s\Set^+}(I_\bullet,\leftnat{K}) \times_{\Hom(I_\bullet,S)} \Hom((\Delta^\bullet)^\op \star \Delta^\bullet, S)$$ where $I_\bullet \subset J_\bullet \to (\Delta^\bullet)^\op \star \Delta^\bullet$ is given by the target map. An $n$-simplex of $X$ is thus the data of a diagram
\[ \begin{tikzcd}[row sep=1em, column sep=2em]
& & & k_{n n} \ar{d} \ar{r} & k_{n(n+1)} \ar{r} \ar{d} & \dots \ar{r} & k_{n(2n+1)} \ar{d} \\
& & \iddots & \vdots & \vdots & \dots & \vdots \\
& k_{11} \ar{d} \ar{r} & \dots \ar{r} & k_{1n} \ar{r} \ar{d} & k_{1(n+1)} \ar{r} \ar{d} & \dots \ar{r} & k_{1(2n+1)} \ar{d} \\
k_{00} \ar{r} & k_{01} \ar{r} & \dots \ar{r} &  k_{0 n} \ar{r} & k_{0 (n+1)} \ar{r} & \dots \ar{r} & k_{0 (2n+1)} 
\end{tikzcd} \]
where the horizontal edges are cocartesian in $K$ and the vertical edges lie over degeneracies in $S$.

Declare an edge $e$ in $X$ to be marked if the corresponding map $I_1 \to \leftnat{K}$ sends all edges to marked edges. We have a commutative square of marked simplicial sets
\[ \begin{tikzcd}[row sep=2em, column sep=2em]
X \ar{r} \ar{d} & \twa(S)^\sharp \ar{d}{\ev_0} \\
\rightnat{(K^{\vee})} \ar{r} & (S^{\op})^\sharp
\end{tikzcd} \]
where $K^{\vee} = (K^{\vop})^{\op} \to S^{\op}$ is the dual cartesian fibration and the map $X \to K^{\vee}$ is defined by restricting $I_n \to K$ to $I'_n \to K$ (where $I'_n$ is the full subcategory of $I_n$ on $ij$ with $j \leq n$). Let $\psi$ denote the resulting map from $X$ to the pullback.

\begin{lem} \label{lm:strongPushforwardTwistedVariant} $\psi: X \to \rightnat{(K^{\vee})} \times_{(S^\op)^\sharp} \twa(S)^\sharp$ is a trivial fibration of marked simplicial sets.
\end{lem}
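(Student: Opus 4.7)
The strategy is to verify that $\psi$ has the right lifting property against the two generating classes of cofibrations in $s\Set^+$, namely the boundary inclusions $(\partial \Delta^n)^\flat \hookrightarrow (\Delta^n)^\flat$ and the marking map $(\Delta^1)^\flat \hookrightarrow (\Delta^1)^\sharp$. Unwinding the definitions of $X$ (built as a limit of mapping objects out of $I_\bullet$ and $J_\bullet$) and of the pullback $\twa(S)^\sharp \times_{(S^\op)^\sharp} \rightnat{K^\vee}$, a lifting problem for $\psi$ against a cofibration $A \hookrightarrow B$ in $s\Set^+$ transposes to the following extension problem for the cocartesian fibration $\leftnat{K} \to S^\sharp$: given a map $B \times J_\bullet \to S$, a marked extension $B \times I'_\bullet \to \leftnat{K}$ of its restriction, and a compatible marked extension $A \times I_\bullet \to \leftnat{K}$, find an extension $B \times I_\bullet \to \leftnat{K}$ over the given base map.

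To solve this, I will argue that the marked inclusion
\[ \iota_{A,B,n}\colon (A \times I_n) \cup_{A \times I'_n} (B \times I'_n) \hookrightarrow B \times I_n \]
is left marked anodyne. Since $\leftnat{K} \to S^\sharp$ has the right lifting property against left marked anodyne maps, such a lift then exists. Since left marked anodyne maps are closed under pushout-product with cofibrations of $s\Set^+$, it suffices to treat the case $A = \emptyset$, $B = \Delta^0$, i.e., to show that $I'_n \hookrightarrow I_n$ is left marked anodyne (with horizontal edges marked).

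The combinatorial heart is this last claim, which I plan to prove by a column-by-column filtration $I'_n = I_n^{(0)} \subset I_n^{(1)} \subset \cdots \subset I_n^{(n+1)} = I_n$, where $I_n^{(k)}$ is the full subcategory on objects $(i,j)$ with $j \leq n+k$. The step $I_n^{(k-1)} \hookrightarrow I_n^{(k)}$ adjoins the $(n+k)$th column along marked edges from the previous column, and fits into a pushout with left vertical leg
\[ \Delta^{\{0,\dots,n\}} \times \{n{+}k{-}1\} \;\hookrightarrow\; \Delta^{\{0,\dots,n\}} \times \leftnat{\Delta^{\{n{+}k{-}1,n{+}k\}}}. \]
This map is a pushout-product of the identity on $\Delta^n$ with the generating left marked anodyne map $\{0\} \hookrightarrow \leftnat{\Delta^1}$, hence left marked anodyne. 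Iterating across $k$ yields the claim. (A variant of Lm.~\ref{lm:generalizedCocartesianPushforward}, reinterpreted for this grid-shaped diagram, provides the same conclusion in one step.)

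For the marking map $(\Delta^1)^\flat \hookrightarrow (\Delta^1)^\sharp$ I need to show that $\psi$ reflects markings: if the image of an edge of $X$ in $\rightnat{K^\vee}$ is marked (i.e.\ is $K^\vee$-cartesian over $S^\op$), then the corresponding map $I_1 \to \leftnat{K}$ sends \emph{every} edge to a $K$-cocartesian edge. By construction of the dual cartesian fibration (see \cite{BGN}), cartesian edges of $K^\vee \to S^\op$ correspond precisely to cocartesian edges of $K \to S$. Combined with the fact that in $X$ the horizontal edges of $I_1$ are already sent to cocartesian edges, the uniqueness of cocartesian lifts up to equivalence (applied to the remaining diagonal and vertical edges) forces the rest of the diagram to consist of cocartesian edges as well. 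The main obstacle I anticipate is keeping the markings and pushout decompositions of the grid $I_n$ straight — especially verifying in detail that the filtration step really assembles as claimed and is compatible with the ambient markings inherited from the conventions on $J_n$.
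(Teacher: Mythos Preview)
Your approach has two genuine gaps.

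\textbf{First, the transposition is wrong.} The simplicial set $X$ is not an internal hom: its $n$-simplices are maps $I_n \to \leftnat{K}$ together with an extension to $J_n \to S$, where $I_\bullet$ is a \emph{cosimplicial} object. A map $\Delta^n \to X$ therefore does \emph{not} adjoin to a map out of $\Delta^n \times I_n$; rather, the lifting problem against $\partial\Delta^n \hookrightarrow \Delta^n$ unwinds to an extension problem from $\partial I_n \cup_{\partial I'_n} I'_n$ to $I_n$, where $\partial I_n = \bigcup_{[n-1]\subset[n]} I_{n-1}$ is the ``cosimplicial boundary''. Your pushout-product formulation $(A\times I_n)\cup_{A\times I'_n}(B\times I'_n)\to B\times I_n$ and the reduction to $A=\varnothing$, $B=\Delta^0$ do not model this. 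The paper handles the correct map by showing both $I'_n \to I_n$ and $I'_n \to \partial I_n \cup_{\partial I'_n} I'_n$ are left marked anodyne, hence the inclusion $\partial I_n \cup_{\partial I'_n} I'_n \hookrightarrow I_n$ is a trivial cofibration in $s\Set^+_{/S}$ by 2-out-of-3.

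\textbf{Second, your column filtration pushout is false.} Already for $n=1$, $k=1$: the edge $(0,0)\to(0,2)$ lies in $I_1^{(1)}$ but neither endpoint-pair lies entirely in the two-column piece $\{j=1,2\}$, nor does the edge lie in $I_1^{(0)}$. So $I_n^{(k)}$ is strictly larger than $I_n^{(k-1)}\cup_{\Delta^n\times\{n+k-1\}}(\Delta^n\times\leftnat{\Delta^1})$. The paper instead filters by \emph{rows}: with $I_{n,k}\subset I_n$ the full subposet on $i\leq k$, one gets an honest pushout
\[
\begin{tikzcd}[column sep=small]
((\Delta^{n-k})^\op)^\flat \times (\Delta^k)^\sharp \;\cup\; ((\Delta^{n-k-1})^\op)^\flat \times (\Delta^{n+k+1})^\sharp \ar[r]\ar[d] & I'_{n,n-k}\cup_{I'_{n,n-k-1}} I_{n,n-k-1} \ar[d]\\
((\Delta^{n-k})^\op)^\flat \times (\Delta^{n+k+1})^\sharp \ar[r] & I_{n,n-k}
\end{tikzcd}
\]
whose left leg is a pushout-product with $(\Delta^k)^\sharp\hookrightarrow(\Delta^{n+k+1})^\sharp$, hence left marked anodyne; the base case $I'_{n,0}=(\Delta^n)^\sharp\hookrightarrow I_{n,0}=(\Delta^{2n+1})^\sharp$ is clear. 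Your treatment of the marking lift (reflecting markings via cancellation properties of cocartesian edges) is correct in spirit; the paper simply observes that any lift of a marked edge is automatically marked.
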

\begin{proof} Since any lift of a marked edge in $\rightnat{(K^{\vee})} \times_{(S^\op)^\sharp} \twa(S)^\sharp$ to an edge in $X$ is marked, it suffices to prove that the underlying map of simplicial sets is a trivial fibration. 

We first show that $I_n' \subset I_n$ is left marked anodyne. Let $I_{n,k} \subset I_n$ be the full subcategory on objects $i j$ with $i \leq k$ and similarly for $I'_{n,k}$. For $0 \leq k < n$ we have a pushout decomposition
\[ \begin{tikzcd}[row sep=2em, column sep=2em]
((\Delta^{n-k})^\op)^\flat \times (\Delta^k)^\sharp \underset{((\Delta^{n-k-1})^\op)^\flat \times (\Delta^k)^\sharp}{\bigcup} ((\Delta^{n-k-1})^\op)^\flat \times (\Delta^{n+k+1})^\sharp \ar{r} \ar{d} & I'_{n,n-k} \underset{I'_{n,n-k-1}}{\bigcup} I_{n,n-k-1} \ar{d} \\
((\Delta^{n-k})^\op)^\flat \times (\Delta^{n+k+1})^\sharp \ar{r} & I_{n,n-k},
\end{tikzcd} \]
and the lefthand map is left marked anodyne by \cite[Proposition~3.1.2.3]{HTT}. It thus suffices to show that $$I'_{n,0} \cong (\Delta^n)^\sharp \to I_{n,0} \cong (\Delta^{2n+1})^\sharp$$ is left marked anodyne, and this is clear. 

We now explain how to solve the lifting problem
\[ \begin{tikzcd}[row sep=2em, column sep=2em]
\partial \Delta^n \ar{r} \ar{d} & X \ar{d} \\
\Delta^n \ar{r} \ar[dotted]{ur} & K^{\vee} \times_{S^\op} \twa(S).
\end{tikzcd} \]
To supply the dotted arrow we must provide a lift in the commutative square
\[ \begin{tikzcd}[row sep=2em, column sep=2em]
\partial I_n \cup_{\partial I'_n} I'_n \ar{r} \ar{d}{f} & \leftnat{K} \ar{d} \\
I_n \ar{r} \ar[dotted]{ur} & S^\sharp.
\end{tikzcd} \]
where $\partial I_n = \underset{[n-1] \subset [n]}{\cup} I_{n-1}$ as a simplicial subset of $I_n$ and likewise for $\partial I'_n$. Then since $I'_n \to \partial I_n \cup_{\partial I'_n} I'_n$ and $I'_n \to I_n$ are left marked anodyne, $f$ is a cocartesian equivalence in $s\Set^+_{/S}$, and the lift exists.
\end{proof}

For all $s \in S$, we have trivial cofibrations $i_s: K_s \overset{\simeq}{\to} (K^\vee)_s$, and thus commutative squares
\[ \begin{tikzcd}[row sep=2em, column sep=2em]
K_s \ar{r}{\id_s} \ar[hookrightarrow]{d} & \twa(S) \ar{d}{\ev_0} \\
K^{\vee} \ar{r} & S^\op.
\end{tikzcd} \]
From this we obtain a cofibration
$$\iota: \into{\bigsqcup_{s \in S} K_s}{K^{\vee} \times_{S^\op} \twa(S)}.$$
We have an explicit lift $\iota'$ of $\iota$ to $X$, where $K_s \to X$ is given by precomposition by $I_n \to \Delta^n$, $ij \mapsto n-i$.

By Lemma~\ref{lm:strongPushforwardTwistedVariant}, there exists a lift $\sigma$ in the commutative square
\[ \begin{tikzcd}[row sep=2em, column sep=2em]
\bigsqcup_{s \in S} K_s \ar{r}{\iota'} \ar[hookrightarrow]{d}{\iota} & X \ar{d}{\psi} \\
K^{\vee} \times_{S^\op} \twa(S) \ar{r}{=} \ar[dotted]{ru}{\sigma} &  K^{\vee} \times_{S^\op} \twa(S).
\end{tikzcd} \]
Let $\chi: X \to K$ be the functor induced by $\Delta^n \to I_n$, $i \mapsto (n-i)(n+i)$. Define the \emph{twisted pushforward}
$$\widetilde{P}: K^\vee \times_{S^\op} \twa(S) \to K$$
to be the map over $S$ given by the composite $\chi \circ \sigma$. Then for every object $\alpha: s \to t$ in $\twa(S)$, $\widetilde{P}_\alpha \circ i_s: K_s \to K_t$ is a choice of pushforward functor over $\alpha$, which is chosen to be the identity if $\alpha = \id_s$.

\begin{prp} \label{prp:endFormula} For all $A \in s\Set_{/S}$,
\[ \widetilde{P} \times_S \id_A: \rightnat{(K^{\vee})} \times_{(S^\op)^\sharp} \twa(S)^\sharp \times_S A^\sharp \to \leftnat{K} \times_S A^\sharp \]
is a cocartesian equivalence in $s\Set^+_{/A}$.
\end{prp}
\begin{proof} Let $(Z, E)$ denote the marked simplicial set $ \rightnat{(K^{\vee})} \times_{(S^\op)^\sharp} \twa(S)^\sharp$. Viewing $Z$ as $\twa(S) \times_{S^\op \times S} (K^\vee \times S)$, we see that $Z \to S$ is a cocartesian fibration with the cocartesian edges a subset of $E$. Moreover, every edge in $E$ factors as a cocartesian edge followed by an edge in $E$ in the fiber over $S$. By Proposition~\ref{prp:fiberwiseFibrantReplacement}, it suffices to verify that for all $s \in S$, $\widetilde{P}_s$ is a cocartesian equivalence in $s\Set^+$. Since $\id_s$ is an initial object in $\twa(S) \times_S \{s\}$, the inclusion of the fiber $(K^{\vee})_s^\sim \subset (Z_s,E_s)$ is a cocartesian equivalence in $s\Set^+$ by \cite[Lemma~3.3.4.1]{HTT}. We chose $\widetilde{P}$ so as to split the inclusion of $K_s$ in $Z$, so this completes the proof.
\end{proof}

Consider the commutative diagram
\[ \begin{tikzcd}[row sep=2em, column sep=2em]
\sO(S)^\sharp \times_S \leftnat{K} \ar[bend right]{rddd}[swap]{\ev_0} \ar[out=0,in=160]{rrrrd}[swap]{\ev_1}\\
& \sO(S)^\sharp \times_{\ev_1, S, \ev_1} (  \rightnat{(K^\vee)} \times_{S^\op} \twa(S)^\sharp ) \ar{r}{\pr} \ar{d}{\pr} \ar{ul}{\id_{\sO(S)} \times_S \widetilde{P}} &  \rightnat{(K^\vee)} \times_{S^\op} \twa(S)^\sharp \ar{r}{\id \times \ev_1} \ar{d}{\pr} & \rightnat{(K^\vee)} \times S^\sharp \ar{r}[swap]{\pr_S} \ar{d}[swap]{q^\vee \times \id} & S^\sharp \\
& \sO(S)^\sharp \times_{\ev_1,S,\ev_1} \twa(S)^\sharp \ar{r}{\pi'} \ar{d}{\pi} & \twa(S)^\sharp \ar{r}{\ev} & (S^\op)^\sharp \times S^\sharp \\
& S^\sharp.
\end{tikzcd} \]
Here, $\pi = \ev_0 \circ \pr_{\sO(S)}$ and $\pi' = \pr_{\twa(S)}$. Since $K^\vee \to S^\op$ is a cartesian fibration, by Theorem~\ref{thm:FunctorialityOfCocartesianModelStructure} $(q^\vee \times \id)_\ast$ is right Quillen. Therefore, given a $S$-category $C$, we obtain a $\twa(S)$-category
\[ \{K,C\}_S \coloneq (\ev^\ast \circ (q^\vee \times \id)_\ast \circ \pr_S^\ast)(\leftnat{C}). \]
Moreover, we saw in Example~\ref{exm:RKEcocartesian} that $\pi_\ast \pi'^\ast$ is right Quillen and computes right Kan extension along $\ev_1: \twa(S) \to S$. Finally, the map $\id_{\sO(S)} \times_S \widetilde{P}$ induces a $S$-functor
\begin{equation} \label{eqn:FunctorAsEndComparisonMap} \theta: \underline{\Fun}_S(K,C) \to \pi_\ast \pi'^\ast \{K,C\}_S,
\end{equation}

natural in $K$ and $C$. By Proposition~\ref{prp:endFormula} applied to $A = S^{s/}$ for all $s \in S$, $\theta$ is an equivalence.

\begin{rem} \label{rem:EndFormula} As a corollary, the global sections of $\{K,C\}_S$ are equivalent to $\Fun_S(K,C)$. If we knew that under the straightening functor $\text{St}$, $\{K,C\}_S$ was equivalent to the composite
\[ \twa(S) \to S^\op \times S \xrightarrow{\text{St}_S(K)^\op \times \text{St}_S(C) } \Cat_\infty^\op \times \Cat_\infty \xrightarrow{\Fun} \Cat_\infty, \]
then this would yield another proof of the end formula for the $\infty$-category of natural transformations, as proven in \cite[\S 6]{GHN}. As we manage to always stay within the environment of cocartesian fibrations, this identification is not necessary for our purposes.
\end{rem}

\begin{dfn} \label{dfn:twistedSlice} Given a $S$-functor $p: K \to C$ and a choice of twisted pushforward $\widetilde{P}$ for $K$, define the cocartesian section $\omega_p: \twa(S) \to \{ K,C\}_S$ to be the adjoint to
\[ p \circ \widetilde{P}:  \rightnat{K^\vee} \times_{S^\op} \twa(S)^\sharp \to \leftnat{K} \to \leftnat{C}. \]
For objects $[\alpha: s \rightarrow t]$ in $\twa(S)$, $\omega_p(\alpha) \in \Fun((K^\vee)_s,C_t)$ is the functor
\[ p_t \circ \widetilde{P}_\alpha: (K^\vee)_s \to K_t \to C_t. \]

Define the \emph{twisted slice $\twa(S)$-category} to be
\[ C^{\widetilde{(p,S)}/} \coloneq \twa(S) \times_{\{K,C\}_S} \{K \star_S S,C\}_S.\footnote{We omit the dependence on $\widetilde{P}$ from the notation.} \]
Note that the fiber of $C^{\widetilde{(p,S)}/}$ over an object $[\alpha: s \rightarrow t]$ is $C^{p_t \circ \widetilde{P}_\alpha /}$.
\end{dfn}

\nomenclature[twistedSlice]{$C^{\widetilde{(p,S)}/}$}{Twisted slice $\twa(S)$-category under a $S$-functor $p: K \to C$}

We now connect the constructions $C^{\widetilde{(p,S)}/}$ and $C^{(p,S)/}$. A check of the definitions reveals that $\theta \circ \sigma_p = \pi_\ast \pi'^\ast (\omega_p)$ for the canonical cocartesian section $\sigma_p: S \to \underline{\Fun}_S(K,C)$. We thus have a morphism of spans
\[ \begin{tikzcd}[row sep=2em, column sep=3em]
S \ar{r}[swap]{\sigma_p} \ar{d}{=} & \underline{\Fun}_S(K,C) \ar{d}{\simeq} & \underline{\Fun}_S(K \star_S S,C) \ar{d}{\simeq} \ar{l} \\
S \ar{r}[swap]{\pi_\ast \pi'^\ast (\omega_p)} & \pi_\ast \pi'^\ast \{K,C\}_S & \pi_\ast \pi'^\ast \{K \star_S S,C\}_S \ar{l}
\end{tikzcd} \]
with all objects fibrant and the right horizontal maps fibrations by a standard argument. Taking pullbacks, we deduce:

\begin{thm} \label{thm:ordinarySliceToParamSlice} We have an equivalence
\[ \pi_\ast \pi'^\ast (C^{\widetilde{(p,S)}/}) \overset{\simeq}{\to} C^{(p,S)/}. \]
In other words, the right Kan extension of $C^{\widetilde{(p,S)}/}$ along the target functor $\ev_1: \twa(S) \to S$ is equivalent to $C^{(p,S)/}$.
\end{thm}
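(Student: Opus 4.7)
The plan is to obtain the claimed equivalence by applying $\pi_\ast \pi'^\ast$ to the pullback defining $C^{\widetilde{(p,S)}/}$ and matching the result with the pullback that defines $C^{(p,S)/}$, using the morphism of spans displayed just above the theorem. Concretely, observe that by the definition of $\underline{\Fun}_{K/\!/S}(-,-)$ and of the alternative $S$-undercategory, $C^{(p,S)/} = \underline{\Fun}_{K/\!/S}(K \star_S S, C)$ is computed as the pullback of the top row
\[ S \xrightarrow{\sigma_p} \underline{\Fun}_S(K,C) \xleftarrow{F^\ast} \underline{\Fun}_S(K \star_S S, C), \]
where $F \colon K \hookrightarrow K \star_S S$ is the structural inclusion. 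This is a \emph{homotopy} pullback because $F^\ast$ is a fibration in $s\Set^+_{/S}$ by Prp.~\ref{prp:FunctorFirstVariable}(2) together with Prp.~\ref{prp:FibrationBetweenFibrantObjects}, applied to the cofibration $F$.

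Next, by definition $C^{\widetilde{(p,S)}/} = \twa(S) \times_{\{K,C\}_S} \{K \star_S S, C\}_S$ in $s\Set^+_{/\twa(S)}$. The functor $\pi_\ast \pi'^\ast$ is a composition of right adjoints and (by Exm.~\ref{exm:RKEcocartesian}) right Quillen, so it preserves homotopy pullbacks of fibrant objects; hence
\[ \pi_\ast \pi'^\ast(C^{\widetilde{(p,S)}/}) \;\simeq\; \pi_\ast \pi'^\ast(\twa(S)) \times_{\pi_\ast \pi'^\ast \{K,C\}_S} \pi_\ast \pi'^\ast \{K \star_S S, C\}_S. \]
To identify this with the previous pullback, I then invoke (i) the equivalence $\theta \colon \underline{\Fun}_S(K,C) \xrightarrow{\simeq} \pi_\ast \pi'^\ast \{K,C\}_S$ from Prp.~\ref{prp:endFormula} and its counterpart $\theta'$ for $K \star_S S$; (ii) the compatibility $\theta \circ \sigma_p = \pi_\ast \pi'^\ast(\omega_p)$ noted in the text; and (iii) the fact that $\pi_\ast \pi'^\ast(\twa(S) \to \twa(S))$ is canonically equivalent to $(S \to S)$ in $\Cat_{\infty/S}^{\cocart}$. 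For (iii), the identity on $\twa(S)$ corresponds under straightening to the constant functor at the terminal $\infty$-category $\Delta^0$, and by Exm.~\ref{exm:RKEcocartesian} the right adjoint $\pi_\ast \pi'^\ast$ corresponds to right Kan extension along $\ev_1 \colon \twa(S) \to S$; right adjoints preserve terminal objects, so the output is the constant functor at $\Delta^0$ over $S$, which unstraightens to $\id_S$.

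Combining (i)--(iii) with the morphism of spans appearing directly before the theorem — whose vertical maps are equivalences between fibrant objects and whose right-pointing legs on the right are fibrations — the two displayed homotopy pullbacks are identified, producing the desired equivalence $\pi_\ast \pi'^\ast(C^{\widetilde{(p,S)}/}) \xrightarrow{\simeq} C^{(p,S)/}$.

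The main obstacle is bookkeeping: ensuring that every square in sight is a genuine homotopy pullback (so that the equivalences on two sides force an equivalence on pullbacks) and that the base-change manipulations take place within the correct slice model category. All the technical inputs have, however, already been recorded — Prp.~\ref{prp:endFormula} for $\theta$ being an equivalence, Prp.~\ref{prp:FunctorFirstVariable} and Prp.~\ref{prp:FibrationBetweenFibrantObjects} for fibrancy, Exm.~\ref{exm:RKEcocartesian} for the right Quillen nature of $\pi_\ast \pi'^\ast$ — so the argument is essentially an assembly of existing ingredients, with no new combinatorial input required.
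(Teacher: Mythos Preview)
Your proposal is correct and follows essentially the same approach as the paper: the argument is exactly the ``take pullbacks of the displayed morphism of spans'' that the paper sets up in the paragraph preceding the theorem, and the paper's one-line proof just refers back to that setup. Your point (iii) can be streamlined---since $\pi'^\ast$ and $\pi_\ast$ are $1$-categorical right adjoints they preserve terminal objects strictly, so $\pi_\ast\pi'^\ast(\twa(S)^\sharp) = S^\sharp$ without passing through straightening---but the conclusion is the same.
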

\begin{proof} Our interpretation of this equivalence is by Example~\ref{exm:RKEcocartesian}.
\end{proof}

\subsection*{Relative cofinality}
Let us now apply Theorem~\ref{thm:ordinarySliceToParamSlice}. We have the $S$-analogue of the basic cofinality result \cite[Proposition~4.1.1.8]{HTT}.

\begin{thm} \label{thm:cofinality} Let $f: K \to L$ be a $S$-functor. The following conditions are equivalent:
\begin{enumerate} \item For every object $s \in S$, $f_s: K_s \to L_s$ is final.
\item For every $S$-functor $p: L \to C$, the functor $f^\ast: C^{(p,S)/} \to C^{(p f,S)/}$ is an equivalence.
\item For every $S$-colimit diagram $\overline{p}: L \star_S S \to C$, $\overline{p} \circ f^\rhd: K \star_S S \to C$ is a $S$-colimit diagram.
\end{enumerate}
\end{thm}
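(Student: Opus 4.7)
The plan is to prove the cyclic chain of implications $(1) \Rightarrow (2) \Rightarrow (3) \Rightarrow (1)$.

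For $(1) \Rightarrow (2)$, I would invoke the assembly Theorem \ref{thm:ordinarySliceToParamSlice}: both $C^{(p,S)/}$ and $C^{(pf,S)/}$ arise as right Kan extensions along $\ev_1 \colon \twa(S) \to S$ of the twisted slice $\twa(S)$-categories $C^{\widetilde{(p,S)}/}$ and $C^{\widetilde{(pf,S)}/}$, respectively, and $f^*$ is induced by a natural $\twa(S)$-functor between the twisted slices (via functoriality of the construction $\omega_{(-)}$ in $p$). On the fiber over $\alpha \colon s \to t$ in $\twa(S)$, choosing compatible twisted pushforwards for $K$ and $L$ so that $f_t \circ \widetilde{P}^K_\alpha \simeq \widetilde{P}^L_\alpha \circ f^\vee_s$, this fiber map is the precomposition $C^{p_t \circ \widetilde{P}^L_\alpha /} \to C^{p_t \circ \widetilde{P}^L_\alpha \circ f^\vee_s /}$ by $f^\vee_s$. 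Under the identification $f^\vee_s \simeq f_s$ (vertical opposites preserve fiberwise finality), the classical cofinality theorem HTT 4.1.1.8 gives that each such precomposition is an equivalence. Right Kan extension preserves fiberwise equivalences, and so $f^*$ is an equivalence of $S$-categories.

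For $(2) \Rightarrow (3)$, note that any equivalence of $S$-categories is fiberwise an equivalence and therefore preserves the property of being a fiberwise initial object, which is exactly the content of $S$-initiality. If $\overline{p}$ is an $S$-colimit diagram, then $\sigma_{\overline{p}} \colon S \to C^{(p,S)/}$ is $S$-initial; by naturality of the slice construction, $f^* \circ \sigma_{\overline{p}}$ is identified with $\sigma_{\overline{p} \circ f^\rhd}$. Hypothesis (2) then forces this to remain $S$-initial, witnessing that $\overline{p} \circ f^\rhd$ is an $S$-colimit diagram.

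For $(3) \Rightarrow (1)$, I would reduce to the classical cofinality criterion on each fiber. Fix $s \in S$ and, by HTT 4.1.1.8, it suffices to verify that for every cocomplete $\infty$-category $D$ (which is harmless, since cofinality of $f_s$ is testable in any sufficiently large ambient) and every functor $q \colon L_s \to D$ with colimit diagram $\overline{q}$, the composite $\overline{q} \circ f_s^\rhd$ is again a colimit diagram. Set $C = \underline{D}_S$ and, using Prp. \ref{prp:UMPofSCatOfObjects}, pick an $S$-functor $p \colon L \to \underline{D}_S$ whose corresponding ordinary functor $p^\dagger \colon L \to D$ restricts to $q$ on $L_s$ (for instance, by right Kan extending $q$ along the fully faithful inclusion $L_s \hookrightarrow L$, available since $D$ has the requisite limits). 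By Prp. \ref{prp:identifyingColimitsInCatOfObjects}, cocompleteness of $D$ guarantees an $S$-colimit diagram $\overline{p}$ extending $p$, with $\overline{p}^\dagger|_{L_s^\rhd}$ a colimit diagram of $q$. Hypothesis (3) now yields that $\overline{p} \circ f^\rhd$ is also an $S$-colimit diagram, so invoking Prp. \ref{prp:identifyingColimitsInCatOfObjects} in the other direction, its restriction $\overline{p}^\dagger|_{L_s^\rhd} \circ f_s^\rhd$ to the fiber over $s$ is a colimit diagram in $D$; uniqueness of colimits translates this into the desired statement for the given $\overline{q} \circ f_s^\rhd$.

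The principal obstacle lies in $(3) \Rightarrow (1)$, where one must realize a local colimit diagram in $L_s$ as the fiber at $s$ of an $S$-colimit diagram globally on $L$; this forces passage to a cocomplete ambient $\infty$-category together with an explicit Kan-theoretic extension of $q$, and hinges on Prp. \ref{prp:identifyingColimitsInCatOfObjects} for converting between $S$-colimits in $\underline{D}_S$ and fiberwise colimits in $D$. The $(1) \Rightarrow (2)$ direction is conceptually cleaner but still demands attention to the compatibility of the twisted pushforwards for $K$ and $L$ under $f$, which is encoded in the naturality of the construction $\widetilde{P}$.
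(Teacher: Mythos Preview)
Your treatment of $(1)\Rightarrow(2)$ and $(2)\Rightarrow(3)$ matches the paper's proof essentially verbatim: both invoke Theorem~\ref{thm:ordinarySliceToParamSlice} to reduce to fiberwise equivalence of twisted slices, use compatible twisted pushforwards $\widetilde{P}_K,\widetilde{P}_L$ (which the paper secures by first factoring $f$ as a cofibration followed by a trivial fibration), and conclude via \cite[4.1.1.8]{HTT} together with the identification $(f^\vee)_s \simeq f_s$.

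Your $(3)\Rightarrow(1)$ has a genuine gap. The inclusion $L_s \hookrightarrow L$ is \emph{not} fully faithful in general: for $x,y\in L_s$ one has $\Map_L(x,y)$ fibered over $\Map_S(s,s)$ with fiber $\Map_{L_s}(x,y)$ over $\id_s$, and there is no reason for $\Map_S(s,s)$ to be contractible. Consequently a right Kan extension of $q\colon L_s\to D$ along $L_s\hookrightarrow L$ will typically not restrict to $q$ on $L_s$, and you have no mechanism to produce a global $p\colon L\to\underline{D}_S$ with $p^\dagger|_{L_s}\simeq q$. Without that, the appeal to Proposition~\ref{prp:identifyingColimitsInCatOfObjects} does not recover the original colimit diagram $\overline{q}$ at $s$, and the argument collapses.

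The paper sidesteps this by working with the inclusion $L_s^\rhd \subset (L\star_S S)_{\underline{s}}$, which \emph{is} fully faithful because it is the inclusion of the fiber over the initial object $\id_s\in\underline{s}$ into a cocartesian fibration over $\underline{s}$. Concretely, the paper takes $D=\Top$, starts with an arbitrary colimit diagram $\overline{p_s}\colon L_s^\rhd\to\Top$, and \emph{left} Kan extends it along this fully faithful inclusion to obtain $\overline{p}\colon (L\star_S S)_{\underline{s}}\to\Top$; by transitivity this is a left Kan extension of its restriction to $L_{\underline{s}}$, and Proposition~\ref{prp:identifyingColimitsInCatOfObjects} then identifies the corresponding $S^{s/}$-functor as an $S^{s/}$-colimit diagram. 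The conclusion is reached by applying the hypothesis and reading Proposition~\ref{prp:identifyingColimitsInCatOfObjects} back on the $K$-side at the fiber over $s$, which recovers $\overline{p_s}\circ f_s$ as a colimit diagram. The essential point you are missing is that one must pass to $\underline{s}$ to obtain a fully faithful fiber inclusion before any Kan extension argument can guarantee the desired restriction identity.
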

\begin{proof} (1) $\Rightarrow$ (2): Factoring $f$ as the composition of a cofibration and a trivial fibration, we may suppose that $f$ is a cofibration, in which case we may choose compatible twisted pushforward functors $\widetilde{P}_K$ and $\widetilde{P}_L$. Let $p: L \to C$ be a $S$-functor. Precomposition by $f$ yields a $\twa(S)$-functor $\widetilde{f^\ast}: C^{\widetilde{(p,S)}/} \to C^{\widetilde{(p f,S)}/}$. Passing to the fiber over an object $\alpha: s \to t$, the compatibility of $\widetilde{P}_K$ and $\widetilde{P}_L$ implies that the diagram
\[ \begin{tikzcd}[row sep=2em, column sep=3em]
(K^\vee)_s \ar{r}{(\widetilde{P}_K)_\alpha} \ar{d}[swap]{(f^\vee)_s} & K_t \ar{d}[swap]{f_t} \ar{rd}{(pf)_t} \\
(L^\vee)_s \ar{r}{(\widetilde{P}_L)_\alpha} & L_t \ar{r}{p_t} & C_t
\end{tikzcd} \]
commutes and that 
\[ (\widetilde{f^\ast})_\alpha = (f^\vee)_s^\ast: C^{p_t \circ (\widetilde{P_L})_\alpha /} \to C^{(pf)_t \circ (\widetilde{P_K})_\alpha /}. \]
By \cite[Corollary~4.1.1.10]{HTT}, $(f^\vee)_s$ is final, so by \cite[Proposition~4.1.1.8]{HTT}, $(f^\vee)_s^\ast$ is an equivalence. Consequently, $\widetilde{f^\ast}$ is an equivalence. Now by Theorem~\ref{thm:ordinarySliceToParamSlice}, $f^\ast$ is an equivalence.

(2) $\Rightarrow$ (3): Immediate from the definition.

(3) $\Rightarrow$ (1): Let $s \in S$ be any object and $\overline{p_s}: L_s^\rhd \to \Top$ a colimit diagram. Let $\overline{p}: (L \star_S S)_{\underline{s}} \to \Top$ be a left Kan extension of $\overline{p_s}$ along the full and faithful inclusion $L_s^\rhd \subset (L \star_S S)_{\underline{s}}$. By transitivity of left Kan extensions, $\overline{p}$ is a left Kan extension of its restriction to $L_{\underline{s}}$. By Proposition~\ref{prp:identifyingColimitsInCatOfObjects}, under the equivalence $\Fun(L,\Top) \simeq \Fun_S(L,\underline{\Top}_S)$, $\overline{p}$ is a $S^{s/}$-colimit diagram. By assumption, $\overline{p} \circ (f^\rhd)_{\underline{s}}$ is a $S^{s/}$-colimit diagram. By Proposition~\ref{prp:identifyingColimitsInCatOfObjects} again, $\overline{p_s} \circ f_s$ is a colimit diagram, as desired.
\end{proof}

\begin{dfn} Let $f: K \to L$ be a $S$-functor. We say that $f$ is \emph{$S$-final} if it satisfies the equivalent conditions of Theorem~\ref{thm:cofinality}. We say that $f$ is \emph{$S$-initial} if $f^\vop$ is $S$-final.
\end{dfn}

\begin{exm} Let $\adjunct{F}{C}{D}{G}$ be a $S$-adjunction (Definition~\ref{dfn:sAdjunction}). Then $F$ is $S$-initial and $G$ is $S$-final.
\end{exm}

\begin{rem} Let $C, D$ be $S$-categories and $F: C \to D$ an $S$-functor.
\begin{enumerate}
\item Suppose $F$ is fiberwise a weak homotopy equivalence. Then $F$ is a weak homotopy equivalence by \cite[Proposition~4.1.2.15]{HTT}, \cite[Proposition~4.1.2.18]{HTT}, and \cite[Proposition~3.1.5.7]{HTT}.
\item Suppose $F$ is $S$-final. Then $F$ is final. Indeed, for any diagram $p: D \to \Spc$, we have that
\[ \colim_{d \in D} p(d) \simeq \colim_{s \in S} \colim_{d \in D_s} p(d) \simeq \colim_{s \in S} \colim_{c \in C_s} p F (c) \simeq \colim_{c \in C} p F(c). \]
\item Suppose $F$ is $S$-initial. Then $F$ is initial. To show this, by (the dual of) \cite[Theorem~4.1.3.1]{HTT} it suffices to show that for every $d \in D$, $C \times_D D^{/d}$ is weakly contractible. Let $s$ be the image of $d$ in $S$. By Lemma~\ref{lm:miscCofinalityLemma}, the inclusion $C_s \times_{D_s} (D_s)^{/d} \to C \times_{D} D^{/d}$ is final, so in particular is a weak homotopy equivalence. Hence the desired conclusion follows by our assumption that $F$ is $S$-initial and \cite[Theorem~4.1.3.1]{HTT} again.
\end{enumerate}
\end{rem}

We conclude by using the twisted slice $\twa(S)$-category to give a criterion for the presentability of the $S$-slice.

\begin{rem}[Presentability of the parametrized slice] \label{rem:presentability}
Suppose the functor $S \to \Cat_\infty$ classifying the cocartesian fibration $C \to S$ factors through $\Pr^R$, i.e. $C \to S$ is a \emph{right presentable fibration}. For any $X$ a presentable $\infty$-category and diagram $f: A \to X$, $X^{f/}$ is again presentable and the forgetful functor $X^{f/} \to X$ creates limits and filtered colimits. Therefore, the twisted slice $\twa(S)$-category $C^{\widetilde{(p,S)}/}$ is a right presentable fibration. Since the forgetful functor $\Pr^R \to \Cat_{\infty}$ creates limits, by Theorem~\ref{thm:ordinarySliceToParamSlice} we deduce that $C^{(p,S)/}$ is a right presentable fibration. In particular, in every fiber there exists an initial object. However, these initial objects may fail to be preserved by the pushforward functors. In fact, even if we assume that $C \to S$ is both left and right presentable, $C$ may fail to be $S$-cocomplete.
\end{rem}

\section{Types of \texorpdfstring{$S$}{S}-fibrations}

In this section we introduce some additional classes of fibrations which are all defined relative to $S$.

\begin{dfn} \label{dfn:ScocartesianFibration} Let $\phi: C \to D$ be an $S$-functor. We say that $\phi$ is an \emph{$S$-fibration} if it is a categorical fibration. We then say that $\phi$ is an \emph{$S$-cocartesian fibration} if it is an $S$-fibration such that for every object $s \in S$, $\phi_s: C_s \to D_s$ is a cocartesian fibration, and for every square in $C$
\[ \begin{tikzpicture}[baseline]
\matrix(m)[matrix of math nodes,
row sep=4ex, column sep=4ex,
text height=1.5ex, text depth=0.25ex]
 { x_s & x_t \\
y_s  & y_t \\ };
\path[>=stealth,->,font=\scriptsize]
(m-1-1) edge node[above]{$h$} (m-1-2)
edge node[right]{$f$} (m-2-1)
(m-1-2) edge node[right]{$g$} (m-2-2)
(m-2-1) edge node[above]{$k$} (m-2-2);
\end{tikzpicture} \]

with $h$ and $k$ $\phi$-cocartesian edges over $\phi(h)=\phi(k): s \to t$, if $f$ is a $\phi_s$-cocartesian edge then $g$ is a $\phi_t$-cocartesian edge.

Dually, we say that $\phi$ is an \emph{$S$-cartesian fibration} if it is an $S$-fibration such that for every object $s \in S$, $\phi_s: C_s \to D_s$ is a cartesian fibration, and for every square in $C$ labeled as above, but now with $h$ and $k$ $\phi$-cartesian edges over $\phi(h)=\phi(k): s \to t$, if $f$ is a $\phi_s$-cartesian edge then $g$ is a $\phi_t$-cartesian edge.
\end{dfn}

Equivalently, $\phi: C \to D$ is $S$-(co)cartesian if it is a categorical fibration, fiberwise a (co)cartesian fibration, and for every edge in $S$, the cocartesian pushforward along that edge preserves (co)cartesian edges in the fibers. We formulate our definition as above so as to avoid having to make any `straightening' constructions such as choosing pushforward functors.

\begin{rem}
Declare a morphism of $S$-cocartesian fibrations $[C \xto{\phi} D] \to [C' \xto{\phi'} D']$ to be a commutative square of $S$-functors
\[ \begin{tikzcd}
C \ar{r}{F} \ar{d}{\phi} & C' \ar{d}{\phi'} \\
D \ar{r}{G} & D'
\end{tikzcd} \]
in which for all $s \in S$, $F_s$ sends $\phi_s$-cocartesian edges to $\phi'_s$ cocartesian edges. Let $\sO^{\mathrm{cocart.fib}}(\Cat^{\cocart}_{\infty/S})$ be the $\infty$-category of $S$-cocartesian fibrations and morphisms thereof. Then one has the straightening equivalence
\[ \sO^{\mathrm{cocart.fib}}(\Cat^{\cocart}_{\infty/S}) \simeq \Fun(S, \sO^{\mathrm{cocart.fib}}(\Cat_{\infty})). \]
\end{rem}

\begin{rem} $\phi: C \to D$ is a $S$-fibration if and only if $\phi: \leftnat{C} \to \leftnat{D}$ is a marked fibration.
\end{rem}

\begin{rem} \label{relCocartFibIsCocartFib} In view of \cite[Proposition~2.4.2.11]{HTT}, \cite[Lemma~2.4.2.7]{HTT}, and \cite[Proposition~2.4.2.8]{HTT}, $\phi: C \to D$ is an $S$-cocartesian fibration if and only if $\phi$ is a cocartesian fibration. However, there is no corresponding simplification of the definition of an $S$-cartesian fibration.
\end{rem}

\begin{lem} \label{fiberwiseCartEdgesAreCart} Let $\phi: C \to D$ be a $S$-cartesian fibration and let $f:x \to y$ be a $\phi_s$-cartesian edge in $C_s$. Then $f$ is a $\phi$-cartesian edge.
\end{lem}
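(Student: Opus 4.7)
The plan is to verify the mapping space criterion for cartesian edges, namely the dual of Prp.~\ref{prp:MappingSpacesOfCocartesianFibration}: since $\phi$ is a categorical (hence inner) fibration, $f$ is $\phi$-cartesian if and only if for every $z \in C$, the square
\[ \begin{tikzcd}[row sep=2em, column sep=2em]
\Map_C(z,x) \ar{r}{f_*} \ar{d}{\phi_*} & \Map_C(z,y) \ar{d}{\phi_*} \\
\Map_D(\phi z, \phi x) \ar{r}{\phi(f)_*} & \Map_D(\phi z, \phi y)
\end{tikzcd} \]
is homotopy cartesian. Fix such a $z$ and let $s' = \pi_C z$, where $\pi_C: C \to S$ denotes the structure map.

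The key idea is to decompose the square according to the cocartesian fibration structures of $C$ and $D$ over $S$. The two vertical maps both further project to the \emph{same} map $\Map_S(s', s) \to \Map_S(s', s)$ (the identity), since $\phi$ lies over $S$. I would therefore reduce to showing the square is homotopy cartesian on fibers over each $\alpha: s' \to s$ in $\Map_S(s', s)$. By the cocartesian fibration structure (a pushforward $\alpha_!$ yields $\Map_C^\alpha(z, w) \simeq \Map_{C_s}(\alpha_! z, w)$ for any $w \in C_s$, and similarly for $D$), the fiber square identifies with
\[ \begin{tikzcd}[row sep=2em, column sep=2em]
\Map_{C_s}(\alpha_! z, x) \ar{r}{f_*} \ar{d}{(\phi_s)_*} & \Map_{C_s}(\alpha_! z, y) \ar{d}{(\phi_s)_*} \\
\Map_{D_s}(\alpha_!(\phi z), \phi x) \ar{r}{\phi_s(f)_*} & \Map_{D_s}(\alpha_!(\phi z), \phi y),
\end{tikzcd} \]
where the identification $\phi_s(\alpha_! z) \simeq \alpha_!(\phi z)$ uses that $\phi$ is an $S$-functor, hence preserves cocartesian edges over $S$.

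Now, since $f$ is $\phi_s$-cartesian by hypothesis, another application of (the dual of) Prp.~\ref{prp:MappingSpacesOfCocartesianFibration}, this time to the inner fibration $\phi_s: C_s \to D_s$ and the object $\alpha_! z \in C_s$, shows the displayed square is homotopy cartesian. Since this holds for every $\alpha$, the original square is homotopy cartesian, completing the proof.

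The only subtle point is the fiberwise reduction, which uses that both source and target of the two horizontal mapping-space maps project compatibly to $\Map_S(s',s)$; this is automatic from $\phi$ being an $S$-functor. Notably, the full strength of the $S$-cartesian hypothesis (compatibility of cocartesian pushforward in $C, D$ with fiberwise cartesian edges) is \emph{not} invoked here---only the fiberwise cartesian hypothesis and the $S$-functor structure are needed; the additional compatibility will be needed for subsequent statements about the $S$-category of $\phi$-cartesian edges.
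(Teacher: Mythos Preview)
Your proof is correct and follows essentially the same approach as the paper: both verify the mapping-space criterion for $\phi$-cartesian edges by using the cocartesian pushforward along edges of $S$ to reduce to the fiber $C_s$, where the hypothesis that $f$ is $\phi_s$-cartesian applies. The only cosmetic difference is that the paper first base-changes to $2$-simplices of $D$ to reduce to $S = \Delta^1$ (so that the decomposition over $\Map_S(s',s)$ becomes trivial), whereas you carry out the fiberwise decomposition over $\Map_S(s',s)$ directly; your observation that the compatibility clause in the definition of $S$-cartesian fibration is not actually used is also correct and consistent with the paper's argument.
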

\begin{proof} The property of being $\phi$-cartesian may be checked after base-change to the $2$-simplices of $D$. Consequently, we may suppose that $S = \Delta^1$ and $s = \{1\}$. We have to verify that for every object $w \in C$ we have a homotopy pullback square
\[ \begin{tikzpicture}[baseline]
\matrix(m)[matrix of math nodes,
row sep=4ex, column sep=4ex,
text height=1.5ex, text depth=0.25ex]
 { \Map_C(w,x) & \Map_C(w,y) \\
\Map_D(\phi w, \phi x)  & \Map_D(\phi w, \phi y). \\ };
\path[>=stealth,->,font=\scriptsize]
(m-1-1) edge node[above]{$f_\ast$} (m-1-2)
edge node[right]{$\phi_\ast$} (m-2-1)
(m-1-2) edge node[right]{$\phi_\ast$} (m-2-2)
(m-2-1) edge node[above]{$\phi(f)_\ast$} (m-2-2);
\end{tikzpicture} \]
If $w \in C_0$, for any choice of cocartesian edge $w \to w'$ over $0 \to 1$, the square is equivalent to 
\[ \begin{tikzpicture}[baseline]
\matrix(m)[matrix of math nodes,
row sep=4ex, column sep=4ex,
text height=1.5ex, text depth=0.25ex]
 { \Map_{C_1}(w',x) & \Map_{C_1}(w',y) \\
\Map_{D_1}(\phi w', \phi x)  & \Map_{D_1}(\phi w', \phi y). \\ };
\path[>=stealth,->,font=\scriptsize]
(m-1-1) edge node[above]{$f_\ast$} (m-1-2)
edge node[right]{$\phi_\ast$} (m-2-1)
(m-1-2) edge node[right]{$\phi_\ast$} (m-2-2)
(m-2-1) edge node[above]{$\phi(f)_\ast$} (m-2-2);
\end{tikzpicture} \]
Hence we may suppose that $w \in C_1$, in which case the square is a homotopy pullback square since $f$ is a $\phi_1$-cartesian edge.
\end{proof}

We next discuss an important example of $S$-(co)cartesian fibrations. Recall (Notation~\ref{ntn:fiberwiseArrowCategory}) the fiberwise arrow $S$-category $\sO_S(D)$. Fix $\phi: C \to D$ a $S$-functor.

\begin{dfn} \label{dfn:freeCocartesian} The \emph{free $S$-cocartesian} and \emph{free $S$-cartesian} fibrations on $\phi$ are the $S$-functors
\[ \text{Fr}^\cocart(\phi) \coloneq \ev_1 \circ \pr_2: C \times_D \sO_S(D) \to D, \]
 \[\text{Fr}^\cart(\phi) \coloneq \ev_0 \circ \pr_1: \sO_S(D) \times_D C \to D. \]
\end{dfn}

\nomenclature[freeCocart]{$\text{Fr}^\cocart(\phi)$}{Free $S$-cocartesian fibration on a $S$-functor $\phi$}
\nomenclature[freeCart]{$\text{Fr}^\cart(\phi)$}{Free $S$-cartesian fibration on a $S$-functor $\phi$}

\begin{prp} $\text{Fr}^\cocart(\phi)$ is a $S$-cocartesian fibration. Dually, $\text{Fr}^\cart(\phi)$ is a $S$-cartesian fibration.
\end{prp}
\begin{proof} We prove the second assertion, the proof of the first being similar but easier. First note that $\sO_S(D) \times_D C$ is a subcategory of $\sO(D) \times_D C$ stable under equivalences. Therefore, since $\ev_0: \sO(D) \times_D C \to D$ is a cartesian fibration, $\text{Fr}^\cart(\phi)$ is a categorical fibration. Moreover, for every object $s \in S$, $\text{Fr}^\cart(\phi)_s: \sO(D_s) \times_{D_s} C_s$ is the free cartesian fibration on $\phi_s: C_s \to D_s$. It remains to show that for every square
\[ \begin{tikzpicture}[baseline]
\matrix(m)[matrix of math nodes,
row sep=4ex, column sep=4ex,
text height=1.5ex, text depth=0.25ex]
 { (a \rightarrow \phi x, x) & (b \rightarrow \phi y, y) \\
(a' \rightarrow \phi x', x')  & (b' \rightarrow \phi y', y') \\ };
\path[>=stealth,->,font=\scriptsize]
(m-1-1) edge node[above]{$h$} (m-1-2)
edge node[right]{$f$} (m-2-1)
(m-1-2) edge node[right]{$g$} (m-2-2)
(m-2-1) edge node[above]{$k$} (m-2-2);
\end{tikzpicture} \]
in $\sO_S(D) \times_D C$ with the horizontal edges cocartesian over $S$ and the left vertical edge $\text{Fr}^\cart(\phi)_s$-cartesian, the right vertical edge is $\text{Fr}^\cart(\phi)_t$-cartesian. This amounts to verifying that $y \rightarrow y'$ is an equivalence in $C_t$. The above square yields a square
\[ \begin{tikzpicture}[baseline]
\matrix(m)[matrix of math nodes,
row sep=4ex, column sep=4ex,
text height=1.5ex, text depth=0.25ex]
 { x & y \\
x'  & y' \\ };
\path[>=stealth,->,font=\scriptsize]
(m-1-1) edge node[above]{$h$} (m-1-2)
edge node[right]{$f$} (m-2-1)
(m-1-2) edge node[right]{$g$} (m-2-2)
(m-2-1) edge node[above]{$k$} (m-2-2);
\end{tikzpicture} \]
in $C$ with $x \rightarrow x'$ an equivalence and the horizontal edges cocartesian over $S$, from which the claim follows.
\end{proof}

We conclude this section with an observation about the interaction between $S$-joins and $S$-cocartesian fibrations which will be used in the sequel.

\begin{lem} Let $C$, $C'$, and $D$ be $S$-categories and let $\phi,\phi':C, C' \to D$ be $S$-functors. If $\phi$ and $\phi'$ are $S$-(co)cartesian, then $\phi \star \phi': C \star_D C' \to D$ is $S$-(co)cartesian.
\end{lem}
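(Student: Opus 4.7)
The plan is to treat the cocartesian and cartesian cases separately, as the paper's machinery applies to each with different levels of directness.

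For the cocartesian case, by Rem.~\ref{relCocartFibIsCocartFib}, $\phi$ and $\phi'$ being $S$-cocartesian is equivalent to their being cocartesian fibrations (as maps of simplicial sets). Applying Prp.~\ref{joinrt}(b) with $D$ in place of $S$, we deduce that $\phi \star \phi' : C \star_D C' \to D$ is a cocartesian fibration, with cocartesian edges given by those in $C$ or $C'$. Since $\phi \star \phi'$ is already known to be an $S$-functor by Prp.~\ref{joinIsSCategory}(2), applying Rem.~\ref{relCocartFibIsCocartFib} once more shows that $\phi \star \phi'$ is $S$-cocartesian.

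For the cartesian case, the trick of reducing to a map being cocartesian is not available, so we verify the three conditions of the alternative formulation of $S$-cartesian given after the definition. First, $\phi \star \phi'$ is a categorical fibration by Prp.~\ref{joinrt}(a) with $D$ in place of $S$. Second, for each $s \in S$, base-change compatibility of the relative join gives $(\phi \star \phi')_s \cong \phi_s \star \phi'_s : C_s \star_{D_s} C'_s \to D_s$, which we must show is a cartesian fibration. Since only the cocartesian version of Prp.~\ref{joinrt}(b) is recorded in the paper, we pass to opposites: $(\phi'_s)^\op$ and $(\phi_s)^\op$ are cocartesian over $(D_s)^\op$, so Prp.~\ref{joinrt}(b) yields that $(C'_s)^\op \star_{(D_s)^\op} (C_s)^\op \to (D_s)^\op$ is cocartesian, and the standard identity $(A \star_B A')^\op \cong A'^\op \star_{B^\op} A^\op$ identifies this with the opposite of the desired fiberwise map.

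For the third condition, we observe that by Prp.~\ref{joinIsSCategory}(2), cocartesian edges of $C \star_D C' \to S$ lie wholly in $C$ or in $C'$, so for any $\alpha : s \to t$ in $S$ the pushforward $\alpha_!$ on $C \star_D C'$ acts by $\alpha_!^C$ and $\alpha_!^{C'}$ on the respective parts. A quick application of the mapping-space criterion (Prp.~\ref{prp:MappingSpacesOfCocartesianFibration} and its dual) shows that every $(\phi \star \phi')_s$-cartesian edge in $C_s \star_{D_s} C'_s$ must lie entirely within $C_s$ or $C'_s$: a ``mixing'' edge $x \to y$ with $x \in C_s$, $y \in C'_s$ fails the pullback condition against any $z \in C'_s$ (the source-side mapping space is empty while the target-side is not, for $\phi'_s$ a nontrivial cartesian fibration). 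The $S$-cartesian hypothesis on $\phi$ and $\phi'$ then ensures $\alpha_!^C$ preserves $\phi_s$-cartesian edges and $\alpha_!^{C'}$ preserves $\phi'_s$-cartesian edges, completing the verification. The main obstacle is the fiberwise cartesian step, where the duality-transfer across opposites must be set up carefully, though conceptually it is straightforward given the join-opposite identity.
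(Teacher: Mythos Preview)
Your overall strategy is correct and closely parallels what the paper intends by its one-line proof (``This is an easy corollary of Prp.~\ref{joinIsSCategory}''): namely, the same horn-lifting decomposition used in Prp.~\ref{joinIsSCategory} carries over verbatim with $D$ in the role of the base, and the identification of the (co)cartesian edges as those lying in $C$ or in $C'$ does the rest. Your cocartesian case is clean and correct.

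The cartesian case has a small but genuine flaw. In step (3) you argue directly via mapping spaces that a mixing edge $e:x \to y$ (with $x \in C_s$, $y \in C'_s$) is never $(\phi \star \phi')_s$-cartesian, claiming that for \emph{any} $z \in C'_s$ the pullback $\Map(z,y) \times_{\Map(\psi z,\psi y)} \Map(\psi z,\psi x)$ is nonempty. This fails in general: taking $z=y$, the pullback is the space of sections of $\psi(e)$ in $D_s$, which may well be empty. The hedge ``for $\phi'_s$ a nontrivial cartesian fibration'' does not repair this. The correct instantiation is to take $z \in C'_s$ to be a $\phi'_s$-cartesian lift of $\psi(e)$ with target $y$; then the pair (cartesian edge $z \to y$, $\id_{\phi x}$) exhibits the pullback as nonempty while $\Map(z,x)=\emptyset$.

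More to the point, this whole argument is unnecessary: your step (2), properly stated, already gives the identification of the $(\phi \star \phi')_s$-cartesian edges as exactly the $\phi_s$-cartesian edges in $C_s$ together with the $\phi'_s$-cartesian edges in $C'_s$ (this is part of the conclusion of Prp.~\ref{joinrt}(b), transported through opposites). You should simply invoke that characterization in step (3); then the compatibility with pushforward follows immediately from the $S$-cartesian hypotheses on $\phi$ and $\phi'$, since pushforward along $\alpha$ preserves membership in $C$ and in $C'$ by Prp.~\ref{joinIsSCategory}(2).
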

\begin{proof} This is an easy corollary of Proposition~\ref{joinIsSCategory}.
\end{proof}

\begin{dfn} \label{dfn:Sbifibration} We say that a $S$-functor $F: C \to D \times_S E$ is a \emph{$S$-bifibration} if for all objects $s \in S$, $F_s$ is a bifibration. Observe it is then automatic that $\pr_D F$ is $S$-cartesian and $\pr_E F: C \to E$ is $S$-cocartesian.
\end{dfn}

\begin{exm} \label{exm:bifibrationFunctor} The $S$-functor
\[ \underline{\Fun}_S(K \star_S L, C) \to \underline{\Fun}_S(K,C) \times_S \underline{\Fun}_S(L,C) \]
is a $S$-bifibration by Lemma~\ref{bifibration}. In particular, for a $S$-functor $p: K \to C$, the $S$-functors $C^{(p,S)/} \to C$ and $C^{/(p,S)} \to C$ are $S$-cocartesian resp. $S$-cartesian.
\end{exm}

\section{Relative adjunctions}

In \cite[\S 7.3.2]{HA}, Lurie introduces the notion of a \emph{relative adjunction}.

\begin{dfn}[{\cite[Definition~7.3.2.2]{HA}}] \label{dfn:RelAdjLurie}
Suppose given categorical fibrations $q: C \to S$, $p: D \to S$ and functors $F: C \to D$, $G: D \to C$ over $S$. Suppose there exists a natural transformation $u: \id_C \to G F$ such that
\begin{enumerate}
\item $u$ exhibits $F$ as a left adjoint to $G$, and
\item $q(u)$ is the identity transformation from $q$ to itself.
\end{enumerate}
Then we say that the adjunction $F \dashv G$ is a \emph{relative adjunction} with respect to $S$.
\end{dfn}

\begin{rec}
By \cite[Proposition~7.3.2.5]{HA}, relative adjunctions are stable under base-change; in particular, they restrict to adjunctions over every fiber.
\end{rec}

\begin{dfn} \label{dfn:sAdjunction} Let $C$ and $D$ be $S$-categories. We call a relative adjunction (with respect to $S$)
$$\adjunct{F}{C}{D}{G}$$
an \emph{$S$-adjunction} if $F$ and $G$ are $S$-functors.
\end{dfn}

We prove some basic results about $S$-adjunctions in this section. Let us first reformulate the definition of a relative adjunction in terms of a correspondence. Let $F: C \to D$ be a $S$-functor. By the relative nerve construction, $F$ defines a cocartesian fibration $M \to \Delta^1$ by prescribing, for every $\Delta^n \cong \Delta^{n_0} \star \Delta^{n_1} \to \Delta^1$, the set $\Hom_{\Delta^1}(\Delta^n, M)$ to be the collection of commutative squares
\[ \begin{tikzpicture}[baseline]
\matrix(m)[matrix of math nodes,
row sep=4ex, column sep=4ex,
text height=1.5ex, text depth=0.25ex]
 { \Delta^{n_0} & C \\
\Delta^{n}  & D  \\ };
\path[>=stealth,->,font=\scriptsize]
(m-1-1) edge (m-1-2)
edge (m-2-1)
(m-1-2) edge node[right]{$F$} (m-2-2)
(m-2-1) edge (m-2-2);
\end{tikzpicture}
\]
for $n_1 \geq 0$, and setting $\Hom_{\Delta^1}(\Delta^{n}, M) = \Hom(\Delta^{n},C)$ for $n_1 = -1$. Moreover, the structure maps for $C$ and $D$ to $S$ define a functor $M \to S$ by sending $\Delta^n \to M$ to $\Delta^n \to D \to S$ if $n_1 \geq 0$, and $\Delta^n \to C \to S$ if $n_1<0$. Then $M$ is a $S$-category, $M \to S \times \Delta^1$ is a $S$-cocartesian fibration, and $F$ admits a right $S$-adjoint if and only if $M \to S \times \Delta^1$ is a $S$-cartesian fibration.

\begin{prp}
\label{adjFromRelAdj}
Let $\adjunct{F}{C}{D}{G}$ be a $S$-adjunction and let $I$ be a $S$-category. Then we have adjunctions
\[ \adjunct{F_\ast}{\Fun_S(I,C)}{\Fun_S(I,D)}{G_\ast} \]
\[ \adjunct{G^\ast}{\Fun_S(C,I)}{\Fun_S(D,I)}{F^\ast} \]
\end{prp}
\begin{proof} Let $M \to S \times \Delta^1$ be the $S$-functor obtained from $F$. We first produce the adjunction $F_\ast \dashv G_\ast$. Invoking Theorem~\ref{thm:FunctorialityOfCocartesianModelStructure} on the span
\[ (\Delta^1) \xleftarrow{\pi} \leftnat{I} \times (\Delta^1)^\sharp \xrightarrow{\pi'} S^\sharp \times (\Delta^1)^\sharp \]
we deduce that $\pi_\ast \pi'^\ast: s\Set^+_{/(S^\sharp \times (\Delta^1)^\sharp)} \to s\Set^+_{/(\Delta^1)^\sharp}$ is right Quillen. Let $N = \pi_\ast \pi'^\ast(M)$. Then $N \to \Delta^1$ is a cocartesian fibration classified by the functor 
\[ F_\ast: \Fun_S(I, C) \to \Fun_S(I,D). \]

Now invoking Theorem~\ref{thm:FunctorialityOfCocartesianModelStructure} on the span
\[ ((\Delta^1)^\sharp)^{\op} \xleftarrow{\rho} (I^\sim  \times (\Delta^1)^\sharp)^\op \xrightarrow{\rho'} (S^\sim \times (\Delta^1)^\sharp)^\op \]
we deduce that with respect to the cartesian model structures $\rho_\ast \rho'^\ast: s\Set^+_{/(S^\sim \times (\Delta^1)^\sharp)} \to s\Set^+_{/(\Delta^1)^\sharp}$ is right Quillen. Let $N' = \rho_\ast \rho'^\ast M$. Since $G$ is right $S$-adjoint to $F$, $N' \to \Delta^1$ is a cartesian fibration classified by the functor 
\[ G_\ast: \Fun_{/S}(I, D) \to \Fun_{/S}(I,C) \]
where we view $I,C,D$ as categorical fibrations over $S$. $N$ is a subcategory of $N'$, and the cartesian edges $e$ in $N'$ with $d_0(e) \in N$ are in $N$. Hence $N \to \Delta^1$ is also a cartesian fibration classified by the functor
\[ G_\ast: \Fun_{S}(I, D) \to \Fun_{S}(I,C). \]

We now produce the adjunction $G^\ast \dashv F^\ast$ by similar methods. Let $\sE_0$ be the collection of edges $e: x \to y$ in $M$ such that $e$ admits a factorization as a cocartesian edge over $S$ followed by a cartesian edge in the fiber. Note that since $M \to S \times \Delta^1$ is a $S$-cartesian fibration, $\sE_0$ is closed under composition of edges. Invoking Theorem~\ref{thm:FunctorialityOfCocartesianModelStructure} on the span
\[ (\Delta^1)^\sharp \xleftarrow{\mu} (M,\sE_0) \xrightarrow{\mu'} S^\sharp \times (\Delta^1)^\sharp \]
we deduce that $\mu_\ast \mu'^\ast: s\Set^+_{/(S^\sharp \times (\Delta^1)^\sharp)} \to s\Set^+_{/(\Delta^1)^\sharp}$ is right Quillen. Let $P = \mu_\ast \mu'^\ast (\leftnat{I} \times (\Delta^1)^\sharp)$. Then $P \to \Delta^1$ is a cocartesian fibration classified by the functor 
\[ G^\ast: \Fun_S(C, I) \to \Fun_S(D,I). \]

Let $\sE_1$ be the collection of edges $e: x \to y$ in $M$ such that $e$ is a cocartesian edge over an equivalence in $S$. Now invoking Theorem~\ref{thm:FunctorialityOfCocartesianModelStructure} on the span
\[ ((\Delta^1)^\sharp)^{\op} \xleftarrow{\nu} (M,\sE_1)^\op \xrightarrow{\nu'} (S^\sim \times (\Delta^1)^\sharp)^\op \]
we deduce that with respect to the cartesian model structures $\nu_\ast \nu'^\ast: s\Set^+_{/(S^\sim \times (\Delta^1)^\sharp)} \to s\Set^+_{/(\Delta^1)^\sharp}$ is right Quillen. Let $P' = \nu_\ast \nu'^\ast (I^\sim \times (\Delta^1)^\sharp)$. $P' \to \Delta^1$ is a cartesian fibration with $P$ as a subcategory. One may check that $P \to \Delta^1$ inherits the property of being a cartesian fibration, which is classified by the functor $F^\ast: \Fun_S(D, I) \to \Fun_S(C,I)$.
\end{proof}

\begin{cor} \label{cor:RelAdjFromRelAdj} Let $\adjunct{F}{C}{D}{G}$ be a $S$-adjunction and let $I$ be a $S$-category. Then we have $S$-adjunctions
\[ \adjunct{F_\ast}{\underline{\Fun}_S(I,C)}{\underline{\Fun}_S(I,D)}{G_\ast} \]
\[ \adjunct{G^\ast}{\underline{\Fun}_S(C,I)}{\underline{\Fun}_S(D,I)}{F^\ast} \]
\end{cor}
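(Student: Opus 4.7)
\emph{Plan.} The corollary should be viewed as an upgrade of Prp. \ref{adjFromRelAdj} from ordinary adjunctions (on global sections $\Fun_S$) to $S$-adjunctions (on $\underline{\Fun}_S$). My strategy is to re-run the proof of Prp. \ref{adjFromRelAdj} one dimension higher, replacing fibrations over $\Delta^1$ with fibrations over $S \times \Delta^1$, while checking fiberwise that the output agrees with what Prp. \ref{adjFromRelAdj} produces after base change along $\underline{s} \to S$.

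First, I would encode the $S$-adjunction $F \dashv G$ by its correspondence $M \to S \times \Delta^1$, which is both a $S$-cocartesian and a $S$-cartesian fibration over $S \times \Delta^1$. Base-changing along $\underline{s} \to S$ produces $M_{\underline{s}} \to S^{s/} \times \Delta^1$ with the same properties, classifying a $S^{s/}$-adjunction $F_{\underline{s}} \dashv G_{\underline{s}}$ between $C_{\underline{s}}$ and $D_{\underline{s}}$. Applying Prp. \ref{adjFromRelAdj} to this and the $S^{s/}$-category $I_{\underline{s}} = I \times_S \underline{s}$ yields the ordinary adjunctions $(F_{\underline{s}})_\ast \dashv (G_{\underline{s}})_\ast$ and $(G_{\underline{s}})^\ast \dashv (F_{\underline{s}})^\ast$, which by the formula $\underline{\Fun}_S(I, C)_s \simeq \Fun_{S^{s/}}(I_{\underline{s}}, C_{\underline{s}})$ are precisely the fibers over $s$ of the candidate $S$-adjunctions in the statement.

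Next, to globalize, I would repeat the proof of Prp. \ref{adjFromRelAdj} verbatim with the span
\[ (\Delta^1)^\sharp \xleftarrow{\pi} \leftnat{I} \times (\Delta^1)^\sharp \xrightarrow{\pi'} S^\sharp \times (\Delta^1)^\sharp \]
replaced by its $\sO(S)^\sharp$-enlargement
\[ (S \times \Delta^1)^\sharp \xleftarrow{\ev_0 \times \id} \sO(S)^\sharp \times_S \leftnat{I} \times (\Delta^1)^\sharp \xrightarrow{\ev_1 \times \id} S^\sharp \times (\Delta^1)^\sharp, \]
so that the output $N \coloneqq (\ev_0 \times \id)_\ast (\ev_1 \times \id)^\ast M$ is a fibration over $S \times \Delta^1$ rather than $\Delta^1$. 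The Quillen adjunction produced by this span is the composite (via Lm. \ref{lm:SpanComposition}) of the one used in Prp. \ref{adjFromRelAdj} with the one defining $\underline{\Fun}_S(I, -)$ from Prp. \ref{func}, so hypothesis verification for Thm. \ref{thm:FunctorialityOfCocartesianModelStructure} is inherited. One checks that $N \to S \times \Delta^1$ is a cocartesian fibration classifying $F_\ast$ in the $\Delta^1$-direction (using the $S$-cocartesian structure of $M$) and, repeating the second half of the proof of Prp. \ref{adjFromRelAdj} with the marking $\sE_0$ on $M$ of edges factoring as $(S \times \Delta^1)$-cocartesian followed by fiberwise cartesian, is also a cartesian fibration in the $\Delta^1$-direction classifying $G_\ast$. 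Fiberwise over $s \in S$, $N$ reduces to the $\Delta^1$-fibration produced by Prp. \ref{adjFromRelAdj} applied to the base-changed $S^{s/}$-adjunction, confirming that the unit and counit are $S$-natural. The dual span construction (with cartesian and cocartesian markings exchanged) handles $G^\ast \dashv F^\ast$, using Prp. \ref{prp:FunctorFirstVariable} to see that precomposition preserves $S$-functors.

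The principal obstacle will be verifying hypothesis (4) of Thm. \ref{thm:FunctorialityOfCocartesianModelStructure} (or its dual) for the marking $\sE_0$ on $M$ over the composite span: that marked edges interact correctly with both cocartesian pushforward along $\sO(S)$ and cocartesian pushforward along $\Delta^1$. Concretely, this requires that fiberwise $\phi_s$-cartesian edges (over $\Delta^1$) are preserved under $S$-cocartesian pushforward (over $S$), which is exactly the defining condition of a $S$-cartesian fibration and holds for $M$ by hypothesis. Once this compatibility is established, the remainder of the construction is routine, following the template of Prp. \ref{adjFromRelAdj}.
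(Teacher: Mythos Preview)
Your first paragraph is the paper's entire proof. The paper simply observes that $\underline{\Fun}_S(I,C)_s \simeq \Fun_{S^{s/}}(I_{\underline{s}}, C_{\underline{s}})$, base-changes the $S$-adjunction $F \dashv G$ along $\underline{s} \to S$ to obtain an $S^{s/}$-adjunction, and applies Prp.~\ref{adjFromRelAdj} to conclude that $(F_\ast)_s \dashv (G_\ast)_s$ is an adjunction for every $s \in S$; similarly for the contravariant case. Since $F_\ast$ and $G_\ast$ are already given as $S$-functors (by functoriality of $\underline{\Fun}_S(I,-)$ and $\underline{\Fun}_S(-,I)$), this fiberwise check suffices to conclude that they form an $S$-adjunction.

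Your second and third paragraphs construct the global correspondence $N \to S \times \Delta^1$ explicitly via an $\sO(S)$-enlarged span, then verify the hypotheses of Thm.~\ref{thm:FunctorialityOfCocartesianModelStructure} for it. This is a legitimate and more constructive route---it produces the bicartesian fibration over $S \times \Delta^1$ directly rather than appealing to a fiberwise criterion---but the paper does none of it. What your approach buys is an explicit model for the $S$-adjunction at the level of marked simplicial sets, which could be useful if one later needed to manipulate the unit or counit concretely; what the paper's approach buys is brevity, since the relevant $S$-functors are already in hand and only the adjunction property remains to be checked. Your identification of hypothesis~(4) as the key compatibility (fiberwise cartesian edges preserved under $S$-cocartesian pushforward, which is the defining condition of $M \to S \times \Delta^1$ being $S$-cartesian) is correct.
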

\begin{proof} By Proposition~\ref{adjFromRelAdj}, for every $s \in S$
\[ \adjunct{F_\ast}{\Fun_{S^{s/}}(I \times_S S^{s/},C \times_S S^{s/})}{\Fun_{S^{s/}}(I \times_S S^{s/},D \times_S S^{s/})}{G_\ast} \]
is an adjunction, and similarly for the contravariant case.
\end{proof}

To state the next corollary, it is convenient to introduce a definition.

\begin{dfn} \label{dfn:sectionsOfSFibration} Suppose $\pi: C \to D$ a $S$-fibration. Define the $\infty$-category $\Sect_{D/S}(\pi)$ of $S$-sections of $\pi$ to be the pullback
\[ \begin{tikzcd}[row sep=2em, column sep=2em]
\Sect_{D/S}(\pi) \ar{r} \ar{d} & \Fun_S(D,C) \ar{d}{\pi_\ast} \\
\Delta^0 \ar{r}{\id_D} & \Fun_S(D,D).
\end{tikzcd} \]
Define the $S$-category $\underline{\Sect}_{D/S}(\pi)$ to be the pullback
\[ \begin{tikzcd}[row sep=2em, column sep=2em]
\underline{\Sect}_{D/S}(\pi) \ar{r} \ar{d} & \underline{\Fun}_S(D,C) \ar{d}{\pi_\ast} \\
S \ar{r}{\sigma_{\id_D}} & \underline{\Fun}_S(D,D).
\end{tikzcd} \]
We will often denote $\Sect_{D/S}(\pi)$ by $\Sect_{D/S}(C)$, the $S$-functor $\pi$ being left implicit.
\end{dfn}

\nomenclature[sectionsParam]{$\underline{\Sect}_{D/S}(C)$}{$S$-category of sections}

Note that for any object $s \in S$, the fiber $\underline{\Sect}_{D/S}(\pi)_s$ is isomorphic to $\Sect_{D_{\underline{s}}/ \underline{s}}(\pi_{\underline{s}})$.

\begin{cor} \label{cor:sectionsOfRelativeAdjIsAdj} Let $p: C \to E$ and $q: D \to E$ be $S$-fibrations. Let $\adjunct{F}{C}{D}{G}$ be an adjunction relative to $E$ where $F$ and $G$ are $S$-functors. Then for any $S$-category $I$,
\[ \adjunct{F_\ast}{\Fun_S(I,C)}{\Fun_S(I,D)}{G_\ast} \]
is an adjunction relative to $\Fun_S(I,E)$. In particular, taking $I=E$ and the fiber over the identity, we deduce that 
\[ \adjunct{F_\ast}{\Sect_{E/S}(p)}{\Sect_{E/S}(q)}{G_\ast} \]
is an adjunction, and also that
\[ \adjunct{F_\ast}{\underline{\Sect}_{E/S}(p)}{\underline{\Sect}_{E/S}(q)}{G_\ast} \]
is a $S$-adjunction.
\end{cor}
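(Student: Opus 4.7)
The plan is to adapt the proof of Prp. \ref{adjFromRelAdj}, carrying along the extra structure over $E$. A relative adjunction $F \dashv G$ over $E$ with $F,G$ both $S$-functors is encoded by a correspondence $M \to E \times \Delta^1$ such that $M \to \Delta^1$ is simultaneously a cocartesian fibration (with pushforward $F$) and a cartesian fibration (with pullback $G$). Because $p, q$ are $S$-fibrations and $F$ is an $S$-functor, the composite $M \to S \times \Delta^1$ exhibits $M$ as an $S$-cocartesian fibration over $\Delta^1$ once $\leftnat{M}$ is marked by its $\Delta^1$-cocartesian edges.

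First, I would apply the pushforward $\pi_\ast\pi'^\ast$ associated to the span
\[ (\Delta^1)^\sharp \xleftarrow{\pi} \leftnat{I} \times (\Delta^1)^\sharp \xrightarrow{\pi'} S^\sharp \times (\Delta^1)^\sharp \]
to $\leftnat{M}$, producing $N \to \Delta^1$. The argument of Prp. \ref{adjFromRelAdj} carries over unchanged to show that $N \to \Delta^1$ is a cocartesian fibration classified by $F_\ast$ and, on replacing the marking by those edges of $M$ which factor as a $\Delta^1$-cocartesian edge over an equivalence in $S$ followed by an edge in the fiber, also a cartesian fibration classified by $G_\ast$. The structure map $M \to E$ then induces by functoriality a map $N \to \Fun_S(I,E) \times \Delta^1$, using that $E \times \Delta^1 \to S \times \Delta^1$ is pulled back from $E \to S$ so that $\pi_\ast\pi'^\ast$ sends it to $\Fun_S(I,E) \times \Delta^1$. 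This exhibits $N$ as a correspondence relative to $\Fun_S(I,E)$. To check that this encodes a relative adjunction, I invoke the mapping-space criterion \cite[7.3.2]{HA}: for $\alpha \in \Fun_S(I,C)$ and $\beta \in \Fun_S(I,D)$, one must verify that the square
\[ \begin{tikzcd}[row sep=2em, column sep=2em]
\Map_{\Fun_S(I,D)}(F_\ast\alpha,\beta) \ar{r} \ar{d} & \Map_{\Fun_S(I,C)}(\alpha,G_\ast\beta) \ar{d} \\
\Map_{\Fun_S(I,E)}(q_\ast F_\ast\alpha, q_\ast\beta) \ar{r}{=} & \Map_{\Fun_S(I,E)}(p_\ast\alpha, q_\ast\beta)
\end{tikzcd} \]
is a pullback, which follows from the corresponding pointwise pullback in $I$ via the end formula for mapping spaces in $\Fun_S(I,-)$.

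For the remaining statements, take $I = E$ and base-change the correspondence $N$ along $\{\id_E\} \hookrightarrow \Fun_S(E,E)$, respectively along the cocartesian section $\sigma_{\id_E}: S \to \underline{\Fun}_S(E,E)$. Since fibers of a relative adjunction are adjunctions, the first base change yields $F_\ast \dashv G_\ast$ on $\Sect_{E/S}$; running the entire construction $S^{s/}$-parametrically for every $s \in S$ and invoking Cor. \ref{cor:RelAdjFromRelAdj} assembles the fiberwise adjunctions on each $\Sect_{E_{\underline{s}}/\underline{s}}$ into the $S$-adjunction on $\underline{\Sect}_{E/S}$. The main technical obstacle will be the mapping-space verification, where one must unwind the definition of $N$ and carefully translate the fibered mapping-space pullback for $M$ over $E$ into the pullback over $\Fun_S(I,E)$; the remaining steps are formal consequences of the functoriality of the pushforward construction.
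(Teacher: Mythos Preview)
Your approach is correct in outline but takes a longer route than the paper. The paper's proof is a single sentence: the proof of Prp.~\ref{adjFromRelAdj} already constructs the adjunction $F_\ast \dashv G_\ast$ via a correspondence $N$ built from $M$, and one simply observes that the unit of $F_\ast \dashv G_\ast$ is sent by $p_\ast$ to a natural transformation through equivalences. This holds because the cocartesian edges of $N$ over $\Delta^1$ are inherited from those of $M$, and those project to equivalences in $E$ precisely because $F \dashv G$ was assumed relative to $E$; the definition of relative adjunction in \cite[7.3.2.1]{HA} is in terms of the unit lying over equivalences, so this immediately gives the conclusion. The ``in particular'' clauses then follow by pulling back along $\{\id_E\}$ and $\sigma_{\id_E}$, as you say.

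By contrast, you rebuild $N$, produce the map $N \to \Fun_S(I,E) \times \Delta^1$, and then propose to check a mapping-space pullback criterion. Two remarks: first, the criterion you cite is not the one actually stated in \cite[\S 7.3.2]{HA}, which is phrased in terms of units rather than mapping-space pullbacks (the two are equivalent, but you would need to justify the translation). Second, your appeal to the end formula for mapping spaces in $\Fun_S(I,-)$ to reduce to the pointwise statement, while morally right, is doing more work than necessary: once you have the unit in hand from Prp.~\ref{adjFromRelAdj}, the pointwise check that $p_\ast(\eta_\alpha)$ is an equivalence for each $\alpha$ is immediate from the relative adjunction hypothesis, with no need to unpack mapping spaces at all. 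Your argument would go through, but the paper's route avoids the detour entirely.
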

\begin{proof} The proof of Proposition~\ref{adjFromRelAdj} shows that the unit for the adjunction $F_\ast \dashv G_\ast$ is sent by $p_\ast$ to a natural transformation through equivalences.
\end{proof}

\begin{lem} \label{lm:slicePullbackByAdjunction} Let $\adjunct{F}{C}{D}{G}$ be a $S$-adjunction. For every $S$-functor $p: K \to D$, we have a homotopy pullback square in $s\Set^+_{/S}$
\[ \begin{tikzcd}[row sep=2em, column sep=2em]
C^{/(G p,S)} \ar{r} \ar{d}{\ev^C_0} & D^{/(p,S)} \ar{d}{\ev^D_0} \\
C \ar{r}{F} & D
\end{tikzcd} \]
where the upper horizontal map is defined to be the composite $C^{/(G p,S)} \xrightarrow{F} C^{/(F G p,S)} \xrightarrow{\epsilon(p)_!} D^{/(p,S)}$. Dually, for every $S$-functor $p: K \to D$, we have a homotopy pullback square in $s\Set^+_{/S}$
\[ \begin{tikzcd}[row sep=2em, column sep=2em]
D^{(F p,S))/} \ar{r} \ar{d}{\ev^D_1} & C^{(p,S)/} \ar{d}{\ev^C_1} \\
D \ar{r}{G} & C.
\end{tikzcd} \]
where the upper horizontal map is defined to be the composite $D^{(F p,S))/} \xrightarrow{G} C^{(G F p,S)/} \xrightarrow{\eta(p)^\ast} C^{(p,S)/}$.
\end{lem}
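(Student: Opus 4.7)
The plan is to reduce checking the square is a homotopy pullback to a standard fact about adjunctions and slice categories, by working fiberwise. First I would observe that $\ev^D_0 : D^{/(p,S)} \to D$ is an $S$-cartesian fibration (by the bifibration example following Lm.~\ref{bifibration}), hence in particular an $S$-fibration; pulling back along $F : C \to D$ preserves this, so $\pr_C : C \times_D D^{/(p,S)} \to C$ is again $S$-cartesian. Likewise $\ev^C_0 : C^{/(Gp,S)} \to C$ is $S$-cartesian. The square determines a comparison $S$-functor $\gamma : C^{/(Gp,S)} \to C \times_D D^{/(p,S)}$ over $C$, and since both sides are fibrant in $s\Set^+_{/C}$ (with cartesian edges marked), it suffices by Prp.~\ref{prp:fiberwiseFibrantReplacement} to show that for every object $c \in C$ (say, lying over $s \in S$) the induced map of fibers $\gamma_c$ is an equivalence of Kan complexes.

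Next, I would compute these fibers as mapping spaces in $\underline{s}$-functor categories. Unwinding $C^{/(Gp,S)} = \underline{\Fun}_{K//S}(S \star_S K, C)$ via the universal property in Prp.~\ref{prp:SliceCompare} together with Cor.~\ref{cor:SliceCompare}, the fiber of $\ev^C_0$ over $c$ is naturally identified with
\[ \Map_{\underline{\Fun}_{\underline{s}}(K_{\underline{s}},\, C_{\underline{s}})}\bigl(c_K,\, Gp|_{K_{\underline{s}}}\bigr), \]
where $c_K$ denotes the constant $K_{\underline{s}}$-diagram at $c$. Similarly, the fiber of $\pr_C$ over $c$ is the fiber of $\ev^D_0$ over $Fc$, namely
\[ \Map_{\underline{\Fun}_{\underline{s}}(K_{\underline{s}},\, D_{\underline{s}})}\bigl((Fc)_K,\, p|_{K_{\underline{s}}}\bigr). \]

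Third, I would deduce the equivalence of these mapping spaces from the adjunction. The $S$-adjunction $F \dashv G$ is stable under base change to $\underline{s}$, and by Prp.~\ref{adjFromRelAdj} the resulting $\underline{s}$-adjunction yields an absolute adjunction $F_\ast \dashv G_\ast$ between the two functor $\infty$-categories appearing above. Since $F_\ast(c_K) \simeq (Fc)_K$ by inspection, this produces the desired equivalence of mapping spaces, realized concretely (via the triangle identities) as postcomposition with $F$ followed by postcomposition with the counit $\epsilon(p) : FGp \to p$. This matches, up to canonical homotopy, the definition of $\gamma$ as $F$ followed by $\epsilon(p)_!$; hence $\gamma_c$ is an equivalence for every $c$, and the square is a homotopy pullback. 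The dual pullback square follows by a symmetric argument using the unit in place of the counit (or, formally, by applying the first part to the $S$-adjunction obtained by taking vertical opposites and using compatibility of $(-)^\vop$ with $S$-slice categories, Prp.~\ref{prp:VOPisQuillenRightAdjoint}).

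The main obstacle I anticipate is the final compatibility check between the abstract adjunction-induced equivalence and the explicit model of $\gamma$ via $\epsilon(p)_!$: one needs to verify that the pushforward operation along $\epsilon(p)$ on the $S$-overcategory restricts, on fibers over $c$, to ordinary postcomposition by $\epsilon(p)$ in the mapping space computed via the universal property. This should be routine given the models for $S$-slice categories developed in Section 4, but is easy to underestimate because the $S$-slice construction is not strictly functorial in the diagram; the relevant homotopy coherence ultimately reduces to the triangle identity $\epsilon_{Fc} \circ F(\eta_c) \simeq \id_{Fc}$.
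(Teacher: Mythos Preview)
Your approach is sound but genuinely different from the paper's. The paper does \emph{not} fiber over $C$; instead it checks the square is a homotopy pullback fiberwise over $S$. After replacing $S$ by $S^{s/}$ so that $s$ is initial, it exhibits a trivial cofibration $\{s\}\star K \to S\star_S K$ (via Thm.~\ref{thm:JoinFirstVariable}) and uses this, together with contractibility of the space of left Kan extensions of $\pi_K$ over $\{s\}\star K$, to identify $(D^{/(p,S)})_s \simeq D_s\times_D D^{/p}$ and $(C^{/(Gp,S)})_s \simeq C_s\times_C C^{/Gp}$. It then invokes the non-parametrized fact \cite[5.2.5.5]{HTT} that $C^{/Gp}\to C\times_D D^{/p}$ is an equivalence, and takes the fiber over $s$. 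Your argument stays inside the parametrized framework: you fiber over $C$, identify the fibers as mapping spaces in $\underline{\Fun}_{\underline{s}}(K_{\underline{s}},-)$, and appeal to the adjunction $F_\ast\dashv G_\ast$ of Prp.~\ref{adjFromRelAdj}. Both routes work; the paper's has the advantage of sidestepping the compatibility check you flag (between $\epsilon(p)_!$ and the abstract adjunction map), since that is already packaged into \cite[5.2.5.5]{HTT}.

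Two small corrections. First, the reference you want for the fiberwise criterion is not Prp.~\ref{prp:fiberwiseFibrantReplacement} (which is about cocartesian fibrations with auxiliary markings over the original base $S$), but the simpler statement that a map between right fibrations over $C$ which is a fiberwise equivalence is an equivalence, e.g.\ \cite[3.3.1.5]{HTT}; within this paper the closest analogue is Lm.~\ref{lm:EquivalenceOfSFibrations}, which however appears later. Second, the identification of $(C^{/(Gp,S)})_c$ with the mapping space you write down is correct but is not an immediate consequence of Prp.~\ref{prp:SliceCompare} alone: one also needs that the restriction of an $\underline{s}$-functor $\underline{s}\star_{\underline{s}} K_{\underline{s}}\to C_{\underline{s}}$ to $\underline{s}$ is, up to contractible choice, determined by its value at the initial object $\id_s$, which uses that $\underline{s}$ has an initial object.
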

\begin{proof} We prove the first assertion; the second then follows by taking vertical opposites. We first explain how to define the map $\epsilon(p)_!$. Choose a counit transformation $\epsilon: D \times \Delta^1 \to D$ for $F \dashv G$ such that $\pi_D \circ \epsilon$ is the identity natural transformation from $\pi_D$ to itself. Then $\epsilon \circ (p \times \id)$ is adjoint to a $S$-functor $\epsilon(p): S \times \Delta^1 \to \underline{\Fun}_S(K,D)$ with $\epsilon(p)_0 = \sigma_{F G p}$ and $\epsilon(p)_0 = \sigma_{p}$. Because $\underline{\Fun}_S(S \star_S K,D) \to D \times_S \underline{\Fun}_S(K,D)$ is an $S$-bifibration, from $\epsilon(p)$ we obtain a pushforward $S$-functor $\epsilon(p)_!: D^{/(F G p,S)} \to D^{/(p,S)}$ compatible with the source maps to $D$. 

We need to check that for every object $s \in S$, passage to the fiber over $s$ yields a homotopy pullback square of $\infty$-categories. Because $(D^{/(p,S)})_s \cong (D_{\underline{s}}^{/(p_{\underline{s}}, \underline{s})})_s$, we may replace $S$ by $S^{s/}$ and thereby suppose that $s$ is an initial object in $S$. 

Let $r: \{s\} \star S \to S$ be a left Kan extension of the identity $S \to S$. By the formula for a left Kan extension, $r(s)$ is an initial object in $S$, which without loss of generality we may suppose to be $s$. Using $r \circ (\id \star \pi_K)$ as the structure map for $\{s\} \star K$ over $S$, define $\phi': \{s\} \star \leftnat{K} \to \{s\} \star_S \leftnat{K}$ as adjoint to the identity over $S \times \partial \Delta^1$. It is easy to show that $\phi'$ is a trivial cofibration in $s\Set^+_{/S}$. Moreover, since the inclusion $\{s\} \to S^\sharp$ is a trivial cofibration,  $\{s\} \star_S \leftnat{K} \to S^\sharp \star_S \leftnat{K}$ is a trivial cofibration in $s\Set^+_{/S}$ by Theorem~\ref{thm:JoinFirstVariable}. Let $\phi$ be the composition of these two maps. Then because $\Fun_S(-,-)$ is a right Quillen bifunctor, $\phi^\ast : \Fun_S(S^\sharp \star_S \leftnat{K}, \leftnat{D}) \to \Fun_S(\{s\} \star \leftnat{K}, \leftnat{D})$ is a trivial Kan fibration.

We further claim that the inclusion $j: \Fun_S(\{s\} \star \leftnat{K}, \leftnat{D}) \to D_s \times_{D}  \Fun(\{s\} \star K, D) \times_{\Fun(K,D)} \Fun_S(\leftnat{K},\leftnat{D})$ is an equivalence. Indeed, we have the pullback square
\[ \begin{tikzcd}[row sep=2em, column sep=2em]
\Fun_S(\{s\} \star \leftnat{K}, \leftnat{D}) \ar{r} \ar{d} & D_s \times_{D}  \Fun(\{s\} \star K, D) \times_{\Fun(K,D)} \Fun_S(\leftnat{K},\leftnat{D}) \ar{d} \\
\Delta^0 \ar{r}{r \circ (\id \star \pi_K)} & \{s\} \times_S \Fun(\{s\} \star K, S) \times_{\Fun(K,S)} \{ \pi_K \}
\end{tikzcd} \]
and the term in the lower right is contractible since it is equivalent to the full subcategory $\Fun'(\{s\} \star K, S) \subset \Fun(\{s\} \star K, S)$ of functors which are left Kan extensions of $\pi_K$.

Now taking the pullback of the composition $j \circ \phi^\ast$ over $\{p\}$, we obtain an equivalence
\[ (D^{/(p,S)})_s \to D_s \times_D D^{/p}. \]
Similarly, we have an equivalence
\[ (C^{/(G p,S)})_s \to C_s \times_C C^{/G p}. \]
Since $F \dashv G$ is in particular an adjunction, by \cite[Lemma~5.2.5.5]{HTT} $C^{/G p} \to C \times_D D^{/p}$ is an equivalence. Taking the fiber over $s$, we deduce the claim.
\end{proof}

\begin{cor} \label{cor:leftAdjointPreservesColimits} Let $\adjunct{F}{C}{D}{G}$ be a $S$-adjunction. Then $F$ preserves $S$-colimits and $G$ preserves $S$-limits.
\end{cor}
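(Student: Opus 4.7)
The plan is to treat both statements together: for each $s \in S$, I must show that $\sigma_{F \overline{p}}(s)$ is initial in $(D^{(Fp,S)/})_s$ whenever $\overline{p}: K \star_S S \to C$ is a $S$-colimit diagram (for the claim on $F$), and analogously that $\sigma_{G \overline{p}}(s)$ is terminal in $(C^{/(Gp,S)})_s$ whenever $\overline{p}: S \star_S K \to D$ is a $S$-limit diagram (for the claim on $G$). The two arguments are formally dual and rely on the two pullback squares in Lemma~\ref{lm:slicePullbackByAdjunction}; I will describe the $F$-case, the $G$-case being parallel (using instead the first pullback square).

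Fix $s$ and pass to the fiber of the second pullback square of Lemma~\ref{lm:slicePullbackByAdjunction}, yielding a homotopy pullback of $\infty$-categories
\[ (D^{(Fp,S)/})_s \simeq D_s \times^h_{C_s} (C^{(p,S)/})_s \]
under which the image of $\sigma_{F\overline{p}}(s)$ is the pair $(F\overline{p}(s), \Phi(\sigma_{F\overline{p}})(s))$. Because $F \dashv G$ is a $S$-adjunction, the restriction $F_s \dashv G_s$ is an ordinary adjunction. Moreover, since $(C^{(p,S)/})_s \to C_s$ is a left fibration (by Prp.~\ref{prp:SliceFunctorialityInDiagram}) which admits the initial object $\sigma_{\overline{p}}(s)$ lying over $\overline{p}(s)$, there is an equivalence $(C^{(p,S)/})_s \simeq (C_s)^{\overline{p}(s)/}$ under which $\sigma_{\overline{p}}(s) \mapsto \mathrm{id}_{\overline{p}(s)}$. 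The standard $\infty$-categorical fact (\cite[5.2.5.5]{HTT}, as already invoked in the proof of Lemma~\ref{lm:slicePullbackByAdjunction}) that $F_s \dashv G_s$ yields a further equivalence $D_s \times^h_{C_s} (C_s)^{\overline{p}(s)/} \simeq (D_s)^{F\overline{p}(s)/}$ then chains with the above to give
\[ (D^{(Fp,S)/})_s \simeq (D_s)^{F\overline{p}(s)/}, \]
whose initial object is $\mathrm{id}_{F\overline{p}(s)}$.

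The main obstacle — and the only nontrivial bookkeeping — is verifying that this manifest initial object corresponds to $\sigma_{F\overline{p}}(s)$ under the chain of equivalences, rather than to some other object in the same contractible space of initial objects. This amounts to tracing the construction of the comparison map $\Phi$ in Lemma~\ref{lm:slicePullbackByAdjunction} (built using the unit $\eta: \mathrm{id}_C \to GF$) against the canonical cocartesian section picking out $\sigma_{F\overline{p}}$. A cleaner alternative that entirely avoids this bookkeeping is to prove initiality via mapping spaces: for any $x \in (D^{(Fp,S)/})_s$ with image $(d, \tau)$ in the pullback, the pullback square gives
\[ \Map(\sigma_{F\overline{p}}(s), x) \simeq \Map_{D_s}(F\overline{p}(s), d) \times^h_{\Map_{C_s}(GF\overline{p}(s), Gd)} \Map_{(C^{(p,S)/})_s}(\Phi(\sigma_{F\overline{p}})(s), \tau), \]
and by applying (i) initiality of $\sigma_{\overline{p}}(s)$ together with the left fibration $\ev_1^C$ to compute the second factor, and (ii) the adjunction identity $\Map_{D_s}(F\overline{p}(s), d) \simeq \Map_{C_s}(\overline{p}(s), G d)$ (via precomposition with $\eta_{\overline{p}(s)}$) to identify the resulting pullback with a contractible space, one concludes directly.
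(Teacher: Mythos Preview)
Your argument is correct, and it uses the same key input (Lemma~\ref{lm:slicePullbackByAdjunction}) as the paper, but you deploy it differently. The paper applies the lemma \emph{twice}, to both $p$ and $\overline{p}$, obtaining a commutative square of $S$-categories
\[ \begin{tikzcd}[row sep=2em, column sep=2em]
D^{(F \overline{p},S)/} \ar{r} \ar{d} & C^{(\overline{p},S)/} \times_C D \ar{d} \\
D^{(F p, S)/} \ar{r} & C^{(p,S)/} \times_C D
\end{tikzcd} \]
in which the horizontal maps are equivalences by the lemma and the right vertical map is an equivalence since $\overline{p}$ is an $S$-colimit diagram (equivalently, $C^{(\overline{p},S)/} \to C^{(p,S)/}$ is an equivalence). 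Two-out-of-three then shows the left vertical restriction map is an equivalence, which is the desired statement. By contrast, you apply the lemma once (to $p$), pass to fibers, and then argue initiality of $\sigma_{F\overline{p}}(s)$ by identifying $(C^{(p,S)/})_s$ as a corepresentable left fibration and invoking the fiberwise adjunction $F_s \dashv G_s$. This is valid, but it requires exactly the object-tracing you flag as the ``main obstacle''; the paper's global argument sidesteps that bookkeeping entirely by never unpacking what the initial object looks like.
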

\begin{proof} Let $\overline{p}: K \star_S S \to C$ be a $S$-colimit diagram. To show that $F \overline{p}$ is a $S$-colimit diagram, it suffices to prove that the restriction map $D^{(F \overline{p},S)/} \to D^{(F p,S)/}$ is an equivalence. We have the commutative square
\[ \begin{tikzcd}[row sep=2em, column sep=2em]
D^{(F \overline{p},S)/} \ar{r} \ar{d} & C^{(\overline{p},S)/} \times_C D \ar{d} \\
D^{(F p, S)/} \ar{r} & C^{(p,S)/} \times_C D
\end{tikzcd} \]
(here we suppress some details about the naturality of $\epsilon(-)_!$). The righthand vertical map is an equivalence by assumption, and the horizontal maps are equivalences by Lemma~\ref{lm:slicePullbackByAdjunction}. Thus the lefthand vertical map is an equivalence.
\end{proof}

\subsection*{Free \texorpdfstring{$S$}{S}-(co)cartesian fibrations revisited}

With the theory of $S$-adjunctions, we can now establish a key property of the free $S$-(co)cartesian fibration (Definition~\ref{dfn:freeCocartesian}). Let $\phi: C \to D$ be an $S$-functor and define $S$-functors
\[ \iota_0: C \to C \times_D \sO_S(D), \quad \iota_1: C \to \sO_S(D) \times_D C \]
via the commutative square
\[ \begin{tikzpicture}[baseline]
\matrix(m)[matrix of math nodes,
row sep=4ex, column sep=4ex,
text height=1.5ex, text depth=0.25ex]
 { C & \sO_S(D) \\
C  & D \\ };
\path[>=stealth,->,font=\scriptsize]
(m-1-1) edge (m-1-2)
edge node[right]{$=$} (m-2-1)
(m-1-2) edge node[right]{$\ev_i$} (m-2-2)
(m-2-1) edge node[above]{$\phi$} (m-2-2);
\end{tikzpicture} \]
where the upper horizontal map is the composite $C \xto{\iota} \sO_S(C) \to \sO_S(D)$.

\begin{prp} \label{prp:inclusionToFreeFibrationIsAdjoint} $\iota_0$ is left $S$-adjoint to $\pr_C$. Dually, $\iota_1$ is right $S$-adjoint to $\pr_C$.
\end{prp}
\begin{proof} We prove the first assertion, the proof of the second being similar. To prove that we have a relative $S$-adjunction $\iota_0 \dashv \pr_C$, we must prove that for each $s \in S$ we have an adjunction $(\iota_0)_s \dashv (\pr_C)_s$. So suppose that $S = \Delta^0$. Since $\pr_C \circ \iota_0 = \id$, it suffices by \cite[Proposition~5.2.2.8]{HTT} to check that the identity is a unit transformation: that is, for every $x \in C$ and $(y,\phi y \rightarrow a) \in C \times_D \sO(D)$,
\[ \pr_C: \Map_{C \times_D \sO(D)}((x,\id_{\phi x}),(y, \phi y \rightarrow a)) \to \Map_C(x,y) \]
is an equivalence. Under the fiber product decomposition
\[ \Map_{C \times_D \sO(D)}((x,\id_{\phi x}),(y, \phi y \rightarrow a)) \simeq \Map_C(x,y) \times_{\Map_D(\phi x, \phi y)} \Map_{\sO(D)}((\id_{\phi x}), (\phi y \rightarrow b)) \]
the map $\pr_C$ is projection onto the first factor. The adjunction $\adjunct{\iota}{D}{\sO(D)}{\ev_0}$ obtained by exponentiating the adjunction $\adjunct{i_0}{\{0\}}{\Delta^1}{p}$ implies that 
\[ \Map_{\sO(D)}((\id_{\phi x}), (\phi y \rightarrow b)) \to \Map_D(\phi x, \phi y)\]
is an equivalence, so the claim follows.
\end{proof}

\begin{rem}[Universal property of the free $S$-cocartesian fibration]
Let $\phi: C \to D$ be an $S$-functor and $\psi: E \to D$ be an $S$-cocartesian fibration. Then we would like to show that the restriction functor
\[ \Fun^{\cocart}_{/D}(C \times_D \sO_S(D),E) \to \Fun_{/D,S}(C,E) = S \times_{\sigma_{\phi}, \Fun_S(C,D), \psi_*} \Fun_{S}(C,E) \]
is an equivalence of $\infty$-categories.\footnote{We use Remark~\ref{relCocartFibIsCocartFib} to simplify the appearance of the lefthand side, which would otherwise be denoted as $\Fun^{\cocart}_{/D,S}(C \times_D \sO_S(D),E)$.} We prove this as \cite[Example~3.8]{Exp2b} as an application of the theory of parametrized factorization systems.
\end{rem}

\section{Parametrized colimits} \label{sec:paramcolimits}

In this section, we first introduce a parametrized generalization of Lurie's pairing construction \cite[Corollary~3.2.2.13]{HTT}. We then employ it to study \emph{$D$-parametrized $S$-(co)limits}. This material recovers and extends \cite[\S 4.2.2]{HTT} (in view of Lemma~\ref{lm:DiamondJoinComparison}). It is a precursor to our study of Kan extensions.

\subsection*{An \texorpdfstring{$S$}{S}-pairing construction}

\begin{cnstr} \label{paramPair} Let $p: C \to S$, $q: D \to S$ be $S$-categories and let $\phi: C \to D$ be a $S$-functor. Let $\pi, \pi': \sO^\cocart(D) \times_D C \to D$ be given by $\pi = \ev_0 \circ \pr_1$, $\pi' = \ev_1 \circ \pr_1$. Let $\sE$ denote the collection of edges $e$ in $\sO^\cocart(D) \times_{D} C$ such that $\pi(e)$ is $q$-cocartesian and $\pr_2(e)$ is $p$-cocartesian (so $\pi'(e)$ is $q$-cocartesian). Then the span
\[ \leftnat{D} \xleftarrow{\pi} (\sO^\cocart(D) \times_{D} C, \sE) \xrightarrow{\pi'} \leftnat{D} \]
defines a functor
\[ \pi_\ast \pi'^\ast: s\Set^+_{/\leftnat{D}} \to s\Set^+_{/\leftnat{D}}. \]
For a $S$-category $E$ and a $S$-functor $\psi: E \to D$, define
\[ (\widetilde{\Fun}_{D/S}(C,E) \to \leftnat{D}) \coloneq \pi_\ast \pi'^\ast (\leftnat{E} \xto{\psi} \leftnat{D}). \]
\end{cnstr}

\nomenclature[pairingParam]{$\widetilde{\Fun}_{D/S}(C,E)$}{Parametrized pairing construction}

\begin{lem} \label{sourceCocartArrowsProperties} Let $q: D \to S$ be a $S$-category. 
\begin{enumerate}
\item $\ev_0: \sO^\cocart(D) \to D$ is a cartesian fibration, and an edge $e$ in $\sO^\cocart(D)$ is $\ev_0$-cartesian if and only $(\ev_{S,1} \circ q)(e)$ is an equivalence in $S$. In particular, if $\ev_0(e)$ is $q$-cocartesian, then $e$ is $\ev_0$-cartesian if and only if $\ev_1(e)$ is an equivalence in $D$.
\item If $f:x \to y$ is an edge in $D$ such that $q(f)$ is an equivalence, then there exists a $\ev_0$-cocartesian edge $e$ over $f$. Moreover, an edge $e$ over $f$ is $\ev_0$-cocartesian if and only if it is $\ev_0$-cartesian.
\end{enumerate}
\end{lem}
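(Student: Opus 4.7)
The plan is to use the trivial Kan fibration $\rho: \sO^\cocart(D) \to D \times_S \sO(S)$ supplied by the $n=1$ case of Lm. \ref{lm:generalizedCocartesianPushforward}, which is compatible with $\ev_0$ on the source and the projection $\pr_D: D \times_S \sO(S) \to D$ on the target. Since $\ev_{S,0}: \sO(S) \to S$ is a cartesian fibration whose cartesian edges are exactly those $\sigma$ with $\ev_{S,1}(\sigma)$ an equivalence (recorded in the arrow category example), its pullback $\pr_D$ along $q$ is again a cartesian fibration with the analogous characterization. Transferring this structure through the trivial fibration $\rho$, I conclude that $\ev_0$ is a cartesian fibration on $\sO^\cocart(D)$, with $e$ being $\ev_0$-cartesian if and only if $q(\ev_1(e)) = (\ev_{S,1} \circ q)(e)$ is an equivalence in $S$. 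This establishes the main assertion of (1).

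For the ``in particular'' clause, an edge $e$ of $\sO^\cocart(D)$ amounts to a commuting square
\[ \begin{tikzcd}[row sep=2em, column sep=2em]
x_0 \ar{r}{f} \ar{d}[swap]{g_0} & x_1 \ar{d}{g_1} \\
y_0 \ar{r}{h} & y_1
\end{tikzcd} \]
in $D$ with $g_0, g_1$ both $q$-cocartesian and $h = \ev_1(e)$. The forward direction is immediate since $q$ preserves equivalences. Conversely, assuming $f$ is $q$-cocartesian and $q(h)$ is an equivalence, the composite $g_1 \circ f = h \circ g_0$ is $q$-cocartesian as a composition of cocartesian edges; right cancellation for cocartesian edges (applied to $g_0$ and $h \circ g_0$) then shows that $h$ is itself $q$-cocartesian, and any cocartesian edge lying over an equivalence in the base is an equivalence.

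For part (2), I again transfer through $\rho$: cocartesian lifts of $f$ in $D \times_S \sO(S)$ correspond via pullback to cocartesian lifts of $q(f) = \alpha$ in $\sO(S)$. Because $\alpha$ is an equivalence in $S$ and $\ev_{S,0}$ is a categorical fibration, hence an isofibration, $\alpha$ lifts to an equivalence $\tilde\alpha$ in $\sO(S)$ beginning at any prescribed object. Such an equivalence $\tilde\alpha$ is automatically both $\ev_{S,0}$-cartesian and $\ev_{S,0}$-cocartesian. Pulling back to $D \times_S \sO(S)$ and transferring through $\rho$ produces an edge of $\sO^\cocart(D)$ over $f$ which is simultaneously $\ev_0$-cocartesian and $\ev_0$-cartesian. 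Since cocartesian (respectively cartesian) lifts of $f$ with a fixed source are unique up to equivalence, every $\ev_0$-cocartesian lift of $f$ is therefore also $\ev_0$-cartesian, and conversely.

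I expect the main subtlety to be the ``in particular'' clause of (1), which requires orchestrating the right cancellation argument for cocartesian edges together with the stability of equivalences under cocartesian transport over equivalences. Part (2) is then relatively formal, since the identification $\sO^\cocart(D) \simeq D \times_S \sO(S)$ reduces the existence and coincidence assertions to isofibrancy of $\ev_{S,0}$ and the elementary fact that equivalences in an $\infty$-category are simultaneously cartesian and cocartesian edges of any fibration under which they lie.
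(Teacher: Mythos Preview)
Your proof is correct and follows essentially the same strategy as the paper: factor $\ev_0$ through the trivial fibration $\sO^\cocart(D) \to D \times_S \sO(S)$ and the pullback of $\ev_{S,0}: \sO(S) \to S$, then read off the cartesian structure. The paper is terser—it omits the cancellation argument for the ``in particular'' clause and for part (2) simply cites \cite[B.2.9]{HA} (which packages exactly your isofibration-lifts-equivalences-to-equivalences argument)—but the underlying reasoning is the same.
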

\begin{proof} $\ev_0: \sO^\cocart(D) \to D$ factors as
\[ \sO^\cocart(D) \to D \times_S \sO(S) \to D \]
where the first functor is a trivial fibration and the second is a cartesian fibration, as the pullback of $\ev_{S,0}: \sO(S) \to S$. Thus $\ev_0$ is a cartesian fibration with cartesian edges as indicated. Moreover, since $\ev_{S,0}: \sO(S) \to S$ is a categorical fibration, the second claim follows from \cite[Proposition~B.2.9]{HA}.
\end{proof}

We have designed our construction so that for any object $x \in D$ and cocartesian section $S^{q x/} \to D$, the fiber of $\widetilde{\Fun}_{D/S}(C,E) \to D$ over $x$ is equivalent to $\Fun_{S^{q x/}}(C \times_D  S^{q x/}, E \times_D S^{q x/})$. For this reason, we think of $\widetilde{\Fun}_{D/S}(-,-)$ as the parametrized generalization of the pairing construction $\widetilde{\Fun}_D(-,-)$, to which it reduces when $S = \Delta^0$. 

\begin{thm} \label{paramPairMainThm} Notation as in Construction~\ref{paramPair}, $\widetilde{\Fun}_{D/S}(C,E)$ enjoys the following functoriality:
\begin{enumerate}
\item If $\phi$ is either a $S$-cartesian fibration or a $S$-cocartesian fibration and $\psi$ is a categorical fibration, then $\widetilde{\Fun}_{D/S}(C,E) \to S$ is a $S$-category with cocartesian edges marked as indicated in Construction~\ref{paramPair}, and $\widetilde{\Fun}_{D/S}(C,E) \to D$ is a categorical fibration.
\item If $\phi$ is a $S$-cartesian fibration and $\psi$ is a $S$-cocartesian fibration, then $\widetilde{\Fun}_{D/S}(C,E) \to D$ is a $S$-cocartesian fibration.
\item If $\phi$ is a $S$-cocartesian fibration and $\psi$ is a $S$-cartesian fibration, then $\widetilde{\Fun}_{D/S}(C,E) \to D$ is a $S$-cartesian fibration.
\end{enumerate}
\end{thm}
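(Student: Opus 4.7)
The plan is to deduce all three parts by applying Theorem~\ref{thm:FunctorialityOfCocartesianModelStructure} to the defining span
$$\leftnat{D} \xleftarrow{\pi} (\sO^\cocart(D) \times_D C,\sE) \xrightarrow{\pi'} \leftnat{D},$$
thereby showing that $\pi_\ast (\pi')^\ast$ is right Quillen. Since $\psi: E \to D$ is a categorical fibration in all three cases, $\leftnat{E}$ is fibrant in $s\Set^+_{/\leftnat{D}}$ by Proposition~\ref{prp:FibrationBetweenFibrantObjects}, hence so is $\widetilde{\Fun}_{D/S}(C,E) \to \leftnat{D}$. Invoking Proposition~\ref{prp:FibrationBetweenFibrantObjects} once more then yields that $\widetilde{\Fun}_{D/S}(C,E) \to D$ is a categorical fibration, while the corollary to that proposition shows $\widetilde{\Fun}_{D/S}(C,E) \to S$ is an $S$-category whose cocartesian edges are precisely the lifts of $q$-cocartesian edges of $D$ marked by the $s\Set^+_{/\leftnat{D}}$-fibrant structure. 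This establishes~(1).

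The verification of the hypotheses of Theorem~\ref{thm:FunctorialityOfCocartesianModelStructure} for $\pi$ uses Lemma~\ref{sourceCocartArrowsProperties} crucially. For hypothesis~(1), given a $q$-cocartesian edge $f: z \to d_0$ and a point $(\alpha: d_0 \to d_1, c)$ in the middle, the composite $\alpha \circ f$ is again $q$-cocartesian (as cocartesian edges compose), and the square with edges $f,\alpha,\alpha f, \id_{d_1}$ defines an $\ev_0$-cartesian edge in $\sO^\cocart(D)$ over $f$ by Lemma~\ref{sourceCocartArrowsProperties}(1), since its $\ev_1$-image is $\id_{d_1}$ which projects to an equivalence in $S$; pairing with $\id_c$ yields the required locally $\pi$-cartesian lift in $\sO^\cocart(D) \times_D C$. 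Hypothesis~(2) (flatness of $\pi$) follows by factoring $\pi$ as the pullback of $\phi$ along $\ev_1$ composed with $\ev_0$, both flat categorical fibrations — the first using flatness of $S$-(co)cartesian fibrations, the second by the trivial fibration $\sO^\cocart(D) \to D \times_S \sO(S)$ from Lemma~\ref{sourceCocartArrowsProperties} composed with the pullback of the flat $\ev_{S,0}$. Hypotheses~(3) and~(4) are routine from the definition of $\sE$ together with stability of cocartesian edges under composition and equivalence.

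For parts~(2) and~(3), we must sharpen this to an $S$-cocartesian (resp. $S$-cartesian) fibration structure on $\widetilde{\Fun}_{D/S}(C,E) \to D$. The fiber over $d \in D_s$ is identified via the construction with $\Fun_{S^{s/}}(C_{\underline{d}}, E_{\underline{d}})$, and for~(2), given an edge $\alpha: d_0 \to d_1$ in $D_s$, the $S$-cartesian structure on $\phi$ provides a pullback $C_{\underline{d_1}} \to C_{\underline{d_0}}$ while the $S$-cocartesian structure on $\psi$ provides a pushforward $E_{\underline{d_0}} \to E_{\underline{d_1}}$; these assemble into a cocartesian lift of $\alpha$ in $\widetilde{\Fun}_{D/S}(C,E)$ whose underlying functor is given by precomposition in the source and postcomposition in the target (as in the pairing example at the end of \S2). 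Dually for~(3). Compatibility with cocartesian pushforward across edges of $S$ follows from the naturality of these pullback/pushforward operations under base change along $S^{s/} \to S^{s'/}$, using the defining compatibility square of $S$-(co)cartesian fibrations for $\phi$ and $\psi$.

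The main obstacle is the verification of hypothesis~(1) of the functoriality theorem, which must interleave the $q$-cocartesian structure on the base $D$ with the $\ev_0$-cartesian structure on $\sO^\cocart(D)$ — Lemma~\ref{sourceCocartArrowsProperties} is engineered precisely to handle this. For parts~(2) and~(3), the remaining difficulty lies in the compatibility of the fiberwise (co)cartesian pushforwards inside $\widetilde{\Fun}_{D/S}(C,E)$ with the cocartesian transitions across edges of $D$, which is where the full $S$-fibration hypotheses on $\phi$ and $\psi$ (rather than merely pointwise fiberwise (co)cartesian structure) are essential.
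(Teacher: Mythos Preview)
Your treatment of part~(1) is essentially the paper's: both apply Theorem~\ref{thm:FunctorialityOfCocartesianModelStructure} to the span, and your explicit verification of hypotheses~(1) and~(2) fills in what the paper leaves as ``easy verifications'' (the paper actually defers flatness of $\pi$ to the arguments in (2) and (3), where it is observed case-by-case that $\pi$ is cartesian, respectively factors as cocartesian followed by cartesian).

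For parts~(2) and~(3), however, your approach diverges from the paper's and is not complete as written. You attempt to verify the $S$-(co)cartesian property by hand: identify the fibers as functor categories, describe what the (co)cartesian pushforward along an edge of $D_s$ ought to be (pre/postcomposition), and assert that these ``assemble'' into (co)cartesian lifts with the correct compatibility. But you never actually construct an edge in $\widetilde{\Fun}_{D/S}(C,E)$ realizing this description, nor verify the (co)cartesian lifting property for it; the phrase ``these assemble into a cocartesian lift'' is doing all the work and is not justified. Making this rigorous would require substantial additional argument.

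The paper avoids this entirely by re-applying Theorem~\ref{thm:FunctorialityOfCocartesianModelStructure} with \emph{different markings} on the span. For~(2), one enlarges the marking to $\sE'$ (edges whose ``fiberwise component'' after pulling back along a $\pi$-cartesian edge lies in $\sE$) and takes the base to be $D^\sharp$ rather than $\leftnat{D}$; since $\phi$ $S$-cartesian makes $\pi$ an honest cartesian fibration, the theorem applies and yields a cocartesian fibration over $D$ directly. For~(3), one passes to opposites with base $(D,\sF)^\op$ (where $\sF$ consists of edges over equivalences in $S$) and marking $\sE''$, obtaining a cartesian-over-$(D,\sF)$ structure; one then checks that $\widetilde{\Fun}_{D/S}(C,E)$ sits inside as a full subcategory inheriting this structure, which is where the compatibility condition in the definition of $S$-cartesian for $\psi$ is used. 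This systematic reuse of the span machinery with tailored markings is the key idea you are missing.
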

\begin{proof} \begin{enumerate}[leftmargin=*]
\item It suffices to check that Theorem~\ref{thm:FunctorialityOfCocartesianModelStructure} applies to the span
\[ \leftnat{D} \xleftarrow{\pi} (\sO^\cocart(D) \times_D C, \sE) \xrightarrow{\pi'} \leftnat{D}. \]
In the remainder of this proof we will verify that $\sO^\cocart(D) \times_D C \to D$ is a flat categorical fibration. For condition (4) we appeal to Lemma~\ref{sourceCocartArrowsProperties}. The rest of the conditions are easy verifications.
\item By Lemma~\ref{sourceCocartArrowsProperties} and \ref{fiberwiseCartEdgesAreCart}, $\pi: \sO^\cocart(D) \times_D C \to D$ is a cartesian fibration (hence flat) with an edge $e$ $\pi$-cartesian if and only if $\pr_1(e)$ is $\ev_0$-cartesian and $\pr_2(e)$ is $\phi$-cartesian. Let $\sE'$ be the collection of edges $e$ in $\sO^\cocart(D) \times_{\ev_1,D} C$ such that for any $\pi$-cartesian lift $e'$ of $\pi(e)$, the induced edge $d_1(e) \to d_1(e')$ is in $\sE$. Note that since $\phi$ is $S$-cartesian (and not just fiberwise cartesian), $\sE'$ is closed under composition. Invoking Theorem~\ref{thm:FunctorialityOfCocartesianModelStructure} on the span
\[ D^\sharp \xleftarrow{\pi} (\sO^\cocart(D) \times_D C, \sE') \xrightarrow{\pi'} D^\sharp \]
we deduce that
$$\pi_\ast \pi'^\ast: s\Set^+_{/D} \to s\Set^+_{/D}$$
is right Quillen. Note that there is no conflict of notation with the functor $\pi_\ast \pi'^\ast$
defined before on $s\Set^+_{/\leftnat{D}}$ since $\sE \subset \sE'$ and the two restrict to the same collections of marked edges in the fibers of $\pi$. Since $S$-cocartesian fibrations are cocartesian fibrations over $D$ (Remark~\ref{relCocartFibIsCocartFib}), we conclude.
\item First note that $\pi$ factors as a cocartesian fibration followed by a cartesian fibration, so is flat. Let $\sF$ be the collection of edges $f$ in $D$ such that $q(f)$ is an equivalence. By Lemma~\ref{sourceCocartArrowsProperties}, we have that $\pi: \sO^\cocart(D) \times_{\ev_1,D} C \to D$ admits cocartesian lifts of edges in $\sF$. Let $\sE''$ be the collection of those $\pi$-cocartesian edges. Invoking Theorem~\ref{thm:FunctorialityOfCocartesianModelStructure} on the span
\[ (D,\sF)^\op \xleftarrow{\rho} (\sO^\cocart(D) \times_D C, \sE'')^\op \xrightarrow{\rho'} (D,\sF)^\op, \]
where $\rho = \pi^\op$ and $\rho' = \pi'^\op$, we deduce that with respect to the cartesian model structures
$$\rho_\ast \rho'^\ast: s\Set^+_{/(D,\sF)} \to s\Set^+_{/(D,\sF)}$$
is right Quillen. We have that $\widetilde{\Fun}_{D/S}(C,E)$ is a full subcategory of $\rho_\ast \rho'^\ast (\psi)$. Moreover, the compatibility condition in the definition of a $S$-cartesian fibration ensures that $\widetilde{\Fun}_{D/S}(C,E) \to D$ inherits the property of being fibrant in $s\Set^+_{/(D,\sF)}$. Another routine verification shows that $\widetilde{\Fun}_{D/S}(C,E) \to D$ is indeed $S$-cartesian.
\end{enumerate}
\end{proof}

\begin{lem} \label{lm:paramPairFirstVariableCofibrationGivesCategoricalFibration} Let $C \to C'$ be a monomorphism between $S$-cartesian or $S$-cocartesian fibrations over $D$ and let $E \to D$ be a $S$-fibration. Then the induced functor
\[ \widetilde{\Fun}_{D/S}(C',E) \to \widetilde{\Fun}_{D/S}(C,E) \]
is a categorical fibration.
\end{lem}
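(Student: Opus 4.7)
The plan is to verify directly that $\rho := \widetilde{\Fun}_{D/S}(C', E) \to \widetilde{\Fun}_{D/S}(C, E)$ has the right lifting property against every trivial cofibration $A \hookrightarrow B$ in $s\Set_{\text{Joyal}}$, via a pushout-product argument carried out in the cocartesian model structure on $s\Set^+_{/\leftnat{D}}$. Write $M_C$ and $M_{C'}$ for the marked simplicial sets $(\sO^\cocart(D) \times_D C, \sE_C)$ and $(\sO^\cocart(D) \times_D C', \sE_{C'})$ of Cnstr. \ref{paramPair}. The given monomorphism $C \hookrightarrow C'$ induces a cofibration $\iota: M_C \hookrightarrow M_{C'}$ in $s\Set^+_{/\leftnat{D}}$, and $\rho$ is precisely the map that $\iota$ induces on $\pi_\ast \pi'^\ast(\leftnat{E})$.

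The first step is to transpose the lifting problem via the defining adjunction $\pi'_! \pi^\ast \dashv \pi_\ast \pi'^\ast$: the existence of a lift of $A \hookrightarrow B$ against $\rho$ becomes equivalent to the existence of a dotted lift in
\[
\begin{tikzcd}
A^\flat \times_D M_{C'} \cup_{A^\flat \times_D M_C} B^\flat \times_D M_C \ar[r] \ar[d] & \leftnat{E} \ar[d] \\
B^\flat \times_D M_{C'} \ar[r] \ar[ur, dotted] & \leftnat{D}
\end{tikzcd}
\]
in $s\Set^+_{/\leftnat{D}}$. Since $\leftnat{E} \to \leftnat{D}$ is fibrant, it is then enough to show that the left vertical pushout-product is a trivial cofibration in $s\Set^+_{/\leftnat{D}}$.

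The second step, which is the crux, is to establish this pushout-product claim. The proof of Thm. \ref{paramPairMainThm}(1) has already verified, via Thm. \ref{thm:FunctorialityOfCocartesianModelStructure}, that both endofunctors $- \times_D M_C$ and $- \times_D M_{C'}$ of $s\Set^+_{/\leftnat{D}}$ are left Quillen. Combined with the fact that $A^\flat \to B^\flat$ is a trivial cofibration in $s\Set^+_{/\leftnat{D}}$ (which follows from the left Quillenness of $(-)^\flat: s\Set_{\text{Joyal}} \to s\Set^+$ together with the standard comparison of cocartesian equivalences across different markings of $D$), applying these two endofunctors produces trivial cofibrations $A^\flat \times_D M_C \to B^\flat \times_D M_C$ and $A^\flat \times_D M_{C'} \to B^\flat \times_D M_{C'}$. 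Left properness then gives that the pushout map $A^\flat \times_D M_{C'} \to P$ (where $P$ denotes the pushout in the diagram) is itself a trivial cofibration, and a two-out-of-three argument comparing the composite $A^\flat \times_D M_{C'} \to P \to B^\flat \times_D M_{C'}$ with the trivial cofibration $A^\flat \times_D M_{C'} \to B^\flat \times_D M_{C'}$ yields that the pushout-product $P \to B^\flat \times_D M_{C'}$ is a cocartesian equivalence. Since it is also a monomorphism (being the pushout-product of two monomorphisms), it is a trivial cofibration, as required.

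The main expected obstacle is the marking-level bookkeeping: namely, verifying rigorously that $A^\flat \to B^\flat$ is a trivial cofibration in $s\Set^+_{/\leftnat{D}}$ (not merely in absolute $s\Set^+$), and that $\leftnat{E} \to \leftnat{D}$ is genuinely fibrant in $s\Set^+_{/\leftnat{D}}$ given only the $S$-fibration hypothesis on $E \to D$. Both amount to standard compatibility results between the cocartesian model structures associated to different markings of $D$, but warrant careful attention; once they are granted, the remainder of the argument is entirely formal.
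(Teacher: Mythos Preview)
Your proposal is correct and structurally identical to the paper's proof: both transpose the lifting problem via the $\pi'_! \pi^\ast \dashv \pi_\ast \pi'^\ast$ adjunction and then argue that the resulting pushout-product map is a trivial cofibration. The only difference is the ambient model structure. The paper works in the unmarked Joyal model structure on $s\Set_{/D}$: it observes (from the proof of Thm.~\ref{paramPairMainThm}) that $\sO^\cocart(D) \times_D C \to D$ and $\sO^\cocart(D) \times_D C' \to D$ are flat categorical fibrations, invokes \cite[B.4.5]{HA} to conclude that pullback along each is left Quillen for Joyal, and then the same left-properness/two-out-of-three pushout-product argument you spell out finishes the job, with the lift existing because $E \to D$ is a categorical fibration. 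Your version instead runs everything in $s\Set^+_{/\leftnat{D}}$. This works, but it is the reason you encounter the two marking-level ``expected obstacles'' (that $A^\flat \to B^\flat$ is a trivial cofibration over $\leftnat{D}$, and that $\leftnat{E}$ is fibrant there); the paper's choice of the Joyal model structure sidesteps both, since the hypothesis that $E \to D$ is an $S$-fibration already says it is a categorical fibration.
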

\begin{proof} Given a trivial cofibration $A \to B$ in $s\Set_{\text{Joyal}}$, we need to solve the lifting problem
\[ \begin{tikzcd}[row sep=2em, column sep=2em]
A \ar{r} \ar{d} & \widetilde{\Fun}_{D/S}(C',E) \ar{d} \\
B \ar{r} \ar[dotted]{ru} & \widetilde{\Fun}_{D/S}(C,E).
\end{tikzcd} \]
This diagram transposes to
\[ \begin{tikzcd}[row sep=2em, column sep=2em]
A \times_{D} \sO^\cocart(D) \times_{D} C' \underset{A \times_{D} \sO^\cocart(D) \times_{D} C}{\bigcup} B \times_{D} \sO^\cocart(D) \times_{D} C \ar{r} \ar{d} & E \ar{d} \\
B \times_{D} \sO^\cocart(D) \times_{D} C' \ar{r} \ar[dotted]{ru}  & D.
\end{tikzcd} \]
By the proof of Theorem~\ref{paramPairMainThm}, $\sO^\cocart(D) \times_{D} C \to D$ is a flat categorical fibration. Therefore, by \cite[Proposition~B.4.5]{HA} the left vertical arrow is a trivial cofibration in $s\Set_{\text{Joyal}}$.
\end{proof}

For later use, we analyze some degenerate instances of the $S$-pairing construction.

\begin{lem} \label{lm:pairingFirstVariableTrivial} There is a natural equivalence $\widetilde{\Fun}_{D/S}(D,E) \xrightarrow{\simeq} E$ of $S$-categories over $D$.
\end{lem}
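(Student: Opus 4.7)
The strategy is to exhibit the desired equivalence as an instance of Lemma~\ref{lm:SpanHomotopyInvariance}. When $C = D$ and $\phi = \id_D$, the span defining $\widetilde{\Fun}_{D/S}(D, -)$ reduces to
\[ \leftnat{D} \xleftarrow{\ev_0} (\sO^\cocart(D), \sE) \xrightarrow{\ev_1} \leftnat{D}, \]
where $\sE$ consists of those edges $e$ for which both $\ev_0(e)$ and $\ev_1(e)$ are $q$-cocartesian in $D$. The identity section $\iota \colon D \to \sO^\cocart(D)$, $x \mapsto \id_x$, satisfies $\ev_0 \circ \iota = \ev_1 \circ \iota = \id_D$ and carries a cocartesian edge $v$ in $D$ to a square whose two rows are degenerate and whose two columns are both $v$, hence lies in $\sE$. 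We thus obtain a morphism from the trivial span $\leftnat{D} \xleftarrow{=} \leftnat{D} \xrightarrow{=} \leftnat{D}$ to the pairing span, both of which induce left Quillen functors on $s\Set^+_{/\leftnat{D}}$ (the former trivially, the latter by the proof of Theorem~\ref{paramPairMainThm}(1) applied with $\phi = \id_D$). Lemma~\ref{lm:SpanHomotopyInvariance} will then produce a natural transformation $\widetilde{\Fun}_{D/S}(D, E) \to E$ over $\leftnat{D}$ on fibrant $E$, and will show it is an equivalence once we verify that $\iota$ is a homotopy equivalence in $s\Set^+_{/\leftnat{D}}$, where $\sO^\cocart(D)$ receives its structure map via $\ev_1$.

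The map $\ev_1 \colon (\sO^\cocart(D), \sE) \to \leftnat{D}$ is the candidate homotopy inverse: it preserves markings by the definition of $\sE$, and $\ev_1 \circ \iota = \id_D$ strictly. I construct a homotopy $h \colon (\sO^\cocart(D), \sE) \times (\Delta^1)^\sharp \to (\sO^\cocart(D), \sE)$ over $\leftnat{D}$ from $\id$ to $\iota \circ \ev_1$ by precomposition with the poset map $h_0 \colon \Delta^1 \times \Delta^1 \to \Delta^1$, $(i, j) \mapsto \max(i, j)$. Concretely, under the identification $\sO(D) = \Fun(\Delta^1, D)$, $h$ is adjoint to the composite $\sO(D) \times \Delta^1 \times \Delta^1 \xrightarrow{\id \times h_0} \sO(D) \times \Delta^1 \xrightarrow{\ev} D$; restricting to the $\Delta^1$-parameter being $0$ yields $\id$, and to its being $1$ yields $\iota \circ \ev_1$. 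Since $h_0(1, j) = 1$ for all $j$, the composition $\ev_1 \circ h$ agrees with $\ev_1 \circ \pr_1$, so $h$ is indeed over $\leftnat{D}$.

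It remains to verify that $h$ takes values in $\sO^\cocart(D)$ and preserves the markings $\sE$. For the first point, the vertex of $h(\phi, \tau)$ at level $k$ of a simplex is the edge $\phi(k, \tau(k)) \to \phi(k, 1)$, which is either the full cocartesian edge $\phi|_{\{k\} \times \Delta^1}$ or a degenerate edge, hence cocartesian. For the marking claim, I analyze an edge $(f, g)$ of the source with $f \in \sE$ (a square in $D$ all of whose rows and columns are cocartesian) and $g \colon \Delta^1 \to \Delta^1$ one of $\id$, $\mathrm{const}_0$, or $\mathrm{const}_1$. A direct computation of $h(f, g)$ in each case shows that its rows are either the original cocartesian rows of $f$ or identity edges, while its columns are either columns of $f$, identity edges, or (in the case $g = \id$) the diagonal of $f$, which is a composite of cocartesian edges and hence cocartesian. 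Thus $h(f, g) \in \sE$.

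The main obstacle is the bookkeeping of this case analysis, but conceptually the content is just that collapsing the arrow direction of $\sO(D)$ via $\max$ preserves cocartesianness of all the relevant edges. Once $h$ is in hand, the naturality of the equivalence in $E$ and its compatibility with the structure maps to $D$ follow formally from the naturality built into Lemma~\ref{lm:SpanHomotopyInvariance}.
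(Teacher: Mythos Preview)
Your proof is correct and follows essentially the same approach as the paper: both identify the equivalence as arising from the morphism of spans induced by the identity section $\iota_D: \leftnat{D} \to (\sO^\cocart(D), \sE)$ and then verify that $\iota_D$ is an appropriate equivalence. The only difference is in that verification step: the paper cites Lemma~\ref{funclem}(1') (with $X = \leftnat{D}$ and $C = S$) to conclude that $\iota_D$ is a cocartesian equivalence in $s\Set^+_{/S}$, and then notes that the cocartesian model structure on $s\Set^+_{/\leftnat{D}}$ is created from $s\Set^+_{/S}$; you instead construct the homotopy inverse $\ev_1$ and the explicit ``max'' homotopy directly, thereby verifying the hypothesis of Lemma~\ref{lm:SpanHomotopyInvariance} by hand. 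Your route is slightly longer but entirely self-contained.
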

\begin{proof} The map is induced by the identity section $\iota_D: D \to \sO^\cocart(D)$ fitting into a morphism of spans
\[ \begin{tikzcd}[row sep=2em, column sep=2em]
 & \ar{ld}[swap]{=} \leftnat{D} \ar{rd}{=} \ar{d}{\iota_D} &  \\
\leftnat{D} & (\sO^\cocart(D),\sE) \ar{l} \ar{r} & \leftnat{D}.
\end{tikzcd} \]
By Lemma~\ref{funclem}(1\textquotesingle), $\iota_D$ is a cocartesian equivalence in $s\Set^+_{/S}$ via the target map. Since the cocartesian model structure on $s\Set^+_{/\leftnat{D}}$ is created by the forgetful functor to $s\Set^+_{/S}$, the assertion follows.
\end{proof}

\begin{lem} \label{lm:pairingProducts} Let $C' \to D'$ be a cartesian fibration of $\infty$-categories and let $E'$ be a $S$-category. For all $s \in S$, there is a natural equivalence
\[ \widetilde{\Fun}_{D' \times S/S}(C' \times S,D' \times E')_s \xrightarrow{\simeq} \widetilde{\Fun}_{D'}(C',D' \times E'_s) \]
of cartesian fibrations over $D'$.
\end{lem}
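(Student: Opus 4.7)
My plan is a fiberwise analysis over $D'$. First, both sides are cocartesian fibrations over $D'$: the LHS because Thm.~\ref{paramPairMainThm}(2) applies (with $\phi \times \id_S$ being $S$-cartesian and $\id_{D'} \times \pi_{E'}$ being $S$-cocartesian) to yield that $\widetilde{\Fun}_{D' \times S/S}(C' \times S, D' \times E')$ is $S$-cocartesian over $D' \times S$, so its fiber over $s$ is cocartesian over $D'$; the RHS by the classical design of the unparametrized pairing (the example after Thm.~\ref{thm:FunctorialityOfCocartesianModelStructure}). I would read the statement's ``cartesian fibrations over $D'$'' as a typo for ``cocartesian fibrations over $D'$.''

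Next I would identify the fibers over $d \in D'$. By the design principle recorded after Cnstr.~\ref{paramPair}, using the cocartesian section $\sigma_d: S^{s/} \to D' \times S$, $\alpha \mapsto (d, \alpha(1))$, the LHS-fiber over $(d, s)$ is $\Fun_{S^{s/}}(C'_d \times S^{s/}, E'_{\underline{s}})$ with $E'_{\underline{s}} = E' \times_S S^{s/}$; the RHS-fiber over $d$ is $\Fun(C'_d, E'_s)$. Since $S^{s/}$ has initial object $\id_s$ and $(E'_{\underline{s}})_{\id_s} = E'_s$, evaluation at $\id_s$ yields $\Fun_{S^{s/}}(C'_d \times S^{s/}, E'_{\underline{s}}) \simeq \Fun(C'_d, E'_s)$ via the adjunction between constant $S^{s/}$-categories and the fiber at the initial object. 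Moreover, the cocartesian pushforward along $\alpha: d_0 \to d_1$ in $D'$ agrees on both sides: for the LHS the cocartesian lift over $(\alpha, \id_s) \in D' \times S$ is precomposition by the cartesian pullback $\alpha^*_{C'}: C'_{d_1} \to C'_{d_0}$ in the source variable and postcomposition by the trivial $(\id_s)_!$ in the $E'$-factor; for the RHS it is $f \mapsto f \circ \alpha^*_{C'}$ by the standard formula. Thus both sides classify the same functor $d \mapsto \Fun(C'_d, E'_s)$, $D' \to \Cat_\infty$.

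To upgrade this to a natural equivalence at the model level, construct $F: \widetilde{\Fun}_{D' \times S/S}(C' \times S, D' \times E')_s \to \widetilde{\Fun}_{D'}(C', D' \times E'_s)$ using a chosen cartesian pushforward $P: \sO(D') \times_{D', \ev_1, \phi} C' \to C'$ (dual to Lm.~\ref{lm:generalizedCocartesianPushforward}): send an $A$-point of LHS, presented as $g: A \times_{D'} C' \times S^{s/} \to E'_{\underline{s}}$ over $S^{s/}$, to its restriction $g|_{\id_s}: A \times_{D'} C' \to E'_s$ precomposed with $\id_A \times P$, producing an $A$-point of RHS. By the fiberwise identifications above, $F$ restricts over each $d$ to the identity under the equivalence $\Fun_{S^{s/}}(C'_d \times S^{s/}, E'_{\underline{s}}) \simeq \Fun(C'_d, E'_s)$, so it is a fiberwise equivalence, and hence an equivalence of cocartesian fibrations over $D'$ by Prp.~\ref{prp:fiberwiseFibrantReplacement}. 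The main technical obstacle is verifying $F$ is well-defined and natural at the model level — in particular that it respects markings and that the auxiliary choice of $P$ gives an answer independent (up to equivalence) of the choice — but given the clean fiberwise picture, these checks are routine.
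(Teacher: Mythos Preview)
Your fiberwise strategy is plausible, but it takes a genuinely different route from the paper and leaves the hardest part undone. The paper does not construct a comparison map and then check it fiberwise over $D'$; instead it works directly at the level of the defining spans. The key observation is that cocartesian edges in $D' \times S$ (over $S$) are precisely those projecting to equivalences in $D'$, so $\sO^\cocart(D' \times S) \cong \Fun((\Delta^1)^\sharp,(D')^\sim) \times \sO(S)$, and the identity section $D' \to \Fun((\Delta^1)^\sharp,(D')^\sim)$ is a categorical equivalence. From this one gets a cocartesian equivalence in $s\Set^+_{/S}$
\[
\rightnat{(C')} \times (S^{s/})^\sharp \;\longrightarrow\; ((D')^\sharp \times \{s\}) \times_{D' \times S} (\sO^\cocart(D' \times S) \times_{D'} C',\sE'),
\]
and then the inclusion $\{s\} \to S^{s/}$ yields a morphism from the span $(D')^\sharp \leftarrow \rightnat{(C')} \to \{s\}$ (which computes $\widetilde{\Fun}_{D'}(C',D' \times E'_s)$) to the span computing the LHS, through cocartesian equivalences. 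The equivalence then falls out of Lm.~\ref{lm:SpanHomotopyInvariance}. No fiberwise bookkeeping, no auxiliary pushforward $P$, no marking checks.

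By contrast, your description of an $A$-point of the LHS as a map $A \times_{D'} C' \times S^{s/} \to E'_{\underline{s}}$ is not what the definition gives: unwinding $\pi_\ast\pi'^\ast$ for $D = D' \times S$, an $A$-point over $(d,s)$ involves $(A \times \{s\}) \times_{D' \times S} \sO^\cocart(D' \times S) \times_{D' \times S} (C' \times S)$, and simplifying this is exactly the content of the paper's span argument. So the ``routine'' step you defer---making $F$ well-defined at the model level---is in fact the whole proof, and carrying it out would force you through the same simplification the paper does directly. Your observation about ``cartesian'' versus ``cocartesian'' is reasonable: under Thm.~\ref{paramPairMainThm}(2) both sides are cocartesian over $D'$, and the paper's own proof uses the marking $\sE'$ from that case.
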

\begin{proof} The lefthand side is defined using the span
\[ \begin{tikzcd}
(D')^\sharp \times \{s\} & ((D')^\sharp \times \{s\}) \times_{D' \times S} (\sO^\cocart(D' \times S) \times_{D'} C',\sE') \ar{r} \ar{l} & S^\sharp
\end{tikzcd} \]
with $\sE'$ as in the proof of Theorem~\ref{paramPairMainThm}. Cocartesian edges (over $S$) in $D' \times S$ are precisely those edges which become equivalences when projected to $D'$, so $\sO^\cocart(D' \times S) \cong \Fun((\Delta^1)^\sharp,(D')^\sim) \times \sO(S)$, and the identity section $\iota_{D'}:D' \to \Fun((\Delta^1)^\sharp,(D')^{\sim})$ is a categorical equivalence. Therefore, the map
 \[ (D' \times S^{s/})^\sharp \to ((D')^\sharp \times \{s\}) \times_{D' \times S} (\sO^\cocart(D' \times S),\sE) \]
induced by $\iota_{D'}$ is a cocartesian equivalence in $s\Set^+_{/S}$. Since $C' \times S \to D' \times S$ is a cartesian fibration, it follows that
\[ \rightnat{(C')} \times (S^{s/})^\sharp \to ((D')^\sharp \times \{s\}) \times_{D' \times S} (\sO^\cocart(D' \times S) \times_{D'} C',\sE')\]
is also a cocartesian equivalence in $s\Set^+_{/S}$. Finally, using the inclusion $C' \times \{s\} \to C' \times S^{s/}$, we obtain a morphism from the span
\[ \begin{tikzcd}
(D')^\sharp & \rightnat{(C')} \ar{r} \ar{l} & \{s\} \subset S^\sharp
\end{tikzcd} \]
through a cocartesian equivalence in $s\Set^+_{/S}$. This yields the equivalence of the lemma.
\end{proof}

Directly from the definition, we have that for an object $x \in D$, the fiber $\widetilde{\Fun}_{D/S}(C,E)_x$ is isomorphic to $\Fun_{\underline{x}}(C_{\underline{x}}, E_{\underline{x}})$. We now proceed to identify the $S$-fiber $\widetilde{\Fun}_{D/S}(C,E)_{\underline{x}}$.

\begin{prp} \label{prp:identifyFibersOfPairing} There is a $\underline{x}$-functor
\[ \epsilon^\ast: \widetilde{\Fun}_{D/S}(C,E)_{\underline{x}} \to \underline{\Fun}_{\underline{x}} (C_{\underline{x}},E_{\underline{x}}) \]
which is a cocartesian equivalence in $s\Set^+_{/ \underline{x}}$.
\end{prp}
\begin{proof} We first define the $\underline{x}$-functor $\epsilon^\ast$. The data of maps of marked simplicial sets
\begin{align*} A &\to \leftnat{\widetilde{\Fun}_{D/S}(C,E)_{\underline{x}}} \\
A &\to \leftnat{\underline{\Fun}_{\underline{x}} (C_{\underline{x}},(E \times_S D)_{\underline{x}})}
\end{align*}
over $\underline{x}$ is identical to the data of maps
\begin{align*} A \times_{\underline{x}} \underline{x}^\sharp \times_{D} (\sO^\cocart(D),\sE) \times_D \leftnat{C} &\to \leftnat{E} \\
A \times_{\underline{x}} \sO(\underline{x})^\sharp \times_{\ev_1 \circ \ev_1, D} \leftnat{C} & \to \leftnat{E}
\end{align*}
over $\leftnat{D}$ (where $\sE$ is the collection of edges $e$ in $\sO^\cocart(D)$ such that $\ev_0(e)$ and $\ev_1(e)$ are cocartesian). We have a commutative square
\[ \begin{tikzpicture}[baseline]
\matrix(m)[matrix of math nodes,
row sep=6ex, column sep=4ex,
text height=1.5ex, text depth=0.25ex]
 {  \sO(\underline{x})^\sharp & \underline{x}^\sharp \\
 (\sO^\cocart(D),\sE) & \leftnat{D} \\ };
\path[>=stealth,->,font=\scriptsize]
(m-1-1) edge node[above]{$\ev_0$} (m-1-2)
edge node[right]{$\sO(\ev_1)$} (m-2-1)
(m-1-2) edge node[right]{$\ev_1$} (m-2-2)
(m-2-1) edge node[above]{$\ev_0$} (m-2-2);
\end{tikzpicture} \]
which defines the functor $\epsilon: \sO(\underline{x}) \to \underline{x} \times_D \sO^\cocart(D)$, and this in turn induces the functor $\epsilon^\ast$. To show that $\epsilon^\ast$ is a cocartesian equivalence, it will suffice to show that $\epsilon$ is a trivial fibration, for then a choice of section $\sigma$ and homotopy $\sigma \circ \epsilon \simeq \id$ will furnish a strong homotopy inverse to $\epsilon^\ast$ in the sense of \cite[Proposition~3.1.3.5]{HTT}. Since we have a pullback diagram
\[ \begin{tikzpicture}[baseline]
\matrix(m)[matrix of math nodes,
row sep=6ex, column sep=4ex,
text height=1.5ex, text depth=0.25ex]
 {  \sO(\underline{x}) & D \times_{\Fun(\Delta^1,D)} \Fun(\Delta^1 \times \Delta^1,D) \\
 \underline{x} \times_D \sO^\cocart(D) & \Fun(\Lambda^2_1,D) \\ };
\path[>=stealth,->,font=\scriptsize]
(m-1-1) edge (m-1-2)
edge node[right]{$\epsilon$} (m-2-1)
(m-1-2) edge node[right]{$\epsilon'$} (m-2-2)
(m-2-1) edge (m-2-2);
\end{tikzpicture} \]
it will further suffice to show that $\epsilon'$ is a trivial Kan fibration. $\epsilon'$ factors as the composition
\[ D \times_{\Fun(\Delta^1,D)} \Fun(\Delta^1 \times \Delta^1,D) \xrightarrow{\epsilon''} \Fun(\Delta^2,D) \xrightarrow{\epsilon'''} \Fun(\Lambda^2_1,D) \]
where $\epsilon''$ is defined by precomposing by the inclusion $i: \Delta^2 \to \Delta^1 \times \Delta^1$ which avoids the degenerate edge for objects in $D \times_{\Fun(\Delta^1,D)} \Fun(\Delta^1 \times \Delta^1,D)$, and $\epsilon'''$ is precomposition by $\Lambda^2_1 \to \Delta^2$. $\epsilon'''$ is a trivial fibration since $\Lambda^2_1 \to \Delta^2$ is inner anodyne. To argue that $\epsilon''$ is a trivial fibration, first note that $\epsilon''$ inherits the property of being a categorical fibration from $i^\ast: \Fun(\Delta^1 \times \Delta^1,D) \to \Fun(\Delta^2,D)$. Define an inverse $\sigma''$ by precomposing by the unique retraction $r: \Delta^1 \times \Delta^1 \to \Delta^2$ chosen so that $r \circ i = \id$. Then $\sigma''$ is a section of $\epsilon''$ and one can write down an explicit homotopy through equivalences of the identity functor on $D \times_{\Fun(\Delta^1,D)} \Fun(\Delta^1 \times \Delta^1,D)$ to $\sigma'' \circ \epsilon''$, so $\epsilon''$ is a trivial fibration.
\end{proof}

\subsection*{\texorpdfstring{$D$}{D}-parametrized slice}

We now study another slice construction defined using the $S$-pairing construction.

\begin{cnstr} \label{cnstr:pairingSlice} Let $\phi: C \to D$ be a $S$-cocartesian fibration, let $E \to D$ be a $S$-fibration, and let $F: C \to E$ be a $S$-functor over $D$. Then $F$ defines a section $S$-functor
\[ \tau_F:  D \to \widetilde{\Fun}_{D/S}(C,E) \]
as adjoint to the functor $\sO^\cocart(D) \times_{ev_1,D} C \to C \xrightarrow{F} E$. Define 
\[ E^{(\phi,F)/ S} \coloneq D \times_{\tau_F, \widetilde{\Fun}_{D/S}(C,E)} \widetilde{\Fun}_{D/S}(C \star_D D,E) \]
and let $\pi_{(\phi,F)}$ denote the projection $E^{(\phi,F)/ S} \to D$.
\end{cnstr}

\nomenclature[sliceParametrized]{$E^{(\phi,F)/ S}$}{$D$-parametrized slice for $S$-cocartesian fibration $\phi: C \to D$ and $S$-functor $F:C \to E$ over $D$}

Given an object $x \in D$, the functor $\tau_F: D \to \widetilde{\Fun}_{D/S}(C,E)$ induces via pullback a $\underline{x}$-functor
\[ \tau_{F_{\underline{x}}}: \underline{x} \to \widetilde{\Fun}_{D/S}(C,E)_{\underline{x}}. \]
We also have the $\underline{x}$-functor
\[ \sigma_{F_{\underline{x}}}: \underline{x} \to \underline{\Fun}_{\underline{x}}(C_{\underline{x}},E_{\underline{x}}) \]
adjoint to
\[ \sO(\underline{x}) \times_{\underline{x}} C_{\underline{x}} \xrightarrow{\pr_2} C_{\underline{x}} \xrightarrow{F_{\underline{x}}} E_{\underline{x}}. \]
An inspection of the definition of the comparison functor $\epsilon^\ast$ of \ref{prp:identifyFibersOfPairing} shows that the triangle
\[ \begin{tikzpicture}[baseline]
\matrix(m)[matrix of math nodes,
row sep=6ex, column sep=4ex,
text height=1.5ex, text depth=0.25ex]
 {  \underline{x} & \widetilde{\Fun}_{D/S}(C,E)_{\underline{x}} \\
  & \underline{\Fun}_{\underline{x}}(C_{\underline{x}},E_{\underline{x}}) \\ };
\path[>=stealth,->,font=\scriptsize]
(m-1-1) edge node[above]{$\tau_{F_{\underline{x}}}$} (m-1-2)
edge node[below]{$\sigma_{F_{\underline{x}}}$} (m-2-2)
(m-1-2) edge node[right]{$\epsilon^\ast$} (m-2-2);
\end{tikzpicture} \]
commutes. Recalling the definitions
\begin{align*} (E^{(\phi,F)/S})_{\underline{x}} &= \underline{x} \times_{\widetilde{\Fun}_{D/S}(C,E)_{\underline{x}}} \widetilde{\Fun}_{D/S}(C \star_D D,E)_{\underline{x}}, \\
(E_{\underline{x}})^{F_{\underline{x}} / \underline{x}} &= \underline{x} \times_{\underline{\Fun}_{\underline{x}} (C_{\underline{x}},E_{\underline{x}})} \underline{\Fun}_{\underline{x}} (C_{\underline{x}} \star_{\underline{x}} \underline{x},E_{\underline{x}}),
\end{align*}
\noindent we therefore obtain a comparison $\underline{x}$-functor
\[ \psi:(E^{(\phi,F)/S})_{\underline{x}} \to (E_{\underline{x}})^{F_{\underline{x}} / \underline{x}}. \]

\begin{cor} \label{cor:fibersOfParamSliceAreSliceCats} The functor $\psi$ is a cocartesian equivalence in $s\Set^+_{/ \underline{x}}$.
\end{cor}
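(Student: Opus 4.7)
The plan is to realize $\psi$ as the induced map on pullbacks of two squares, both of which compute homotopy pullbacks and three of whose four vertex comparisons are already known to be cocartesian equivalences.

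Concretely, I would assemble the commutative diagram of $\underline{x}$-categories
\[
\begin{tikzcd}
\underline{x} \ar{r}{\tau_{F_{\underline{x}}}} \ar{d}{=} & \widetilde{\Fun}_{D/S}(C,E)_{\underline{x}} \ar{d}{\epsilon^\ast} & \widetilde{\Fun}_{D/S}(C \star_D D, E)_{\underline{x}} \ar{l} \ar{d}{\epsilon^\ast} \\
\underline{x} \ar{r}{\sigma_{F_{\underline{x}}}} & \underline{\Fun}_{\underline{x}}(C_{\underline{x}}, E_{\underline{x}}) & \underline{\Fun}_{\underline{x}}(C_{\underline{x}} \star_{\underline{x}} \underline{x}, E_{\underline{x}}) \ar{l}
\end{tikzcd}
\]
using the base-change isomorphism $(C \star_D D)_{\underline{x}} \cong C_{\underline{x}} \star_{\underline{x}} \underline{x}$. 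The left square commutes by the triangle displayed just before the statement, and the right square commutes by the manifest naturality of the map $\epsilon: \sO(\underline{x}) \to \underline{x} \times_D \sO^\cocart(D)$ in the first variable of the pairing construction. Pullback of the top row yields $(E^{(\phi,F)/S})_{\underline{x}}$, pullback of the bottom row yields $(E_{\underline{x}})^{F_{\underline{x}}/\underline{x}}$, and the induced map of pullbacks is $\psi$ by construction.

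Next, Prp. \ref{prp:identifyFibersOfPairing} applied to $C$ and again to $C \star_D D$ identifies both vertical maps $\epsilon^\ast$ as cocartesian equivalences between fibrant objects in $s\Set^+_{/\underline{x}}$; the second application requires that $C \star_D D \to D$ is itself $S$-cocartesian, which holds since $\phi$ is $S$-cocartesian and joins of $S$-cocartesian fibrations are $S$-cocartesian. To conclude, I need each row to compute a homotopy pullback, for which it suffices to exhibit the right-pointing horizontal map in each row as a fibration. The top one is the fiber over $\underline{x}$ of the map $\widetilde{\Fun}_{D/S}(C \star_D D, E) \to \widetilde{\Fun}_{D/S}(C,E)$ induced by the cofibration $C \to C \star_D D$ of $S$-cocartesian fibrations over $D$; by Lm. \ref{lm:paramPairFirstVariableCofibrationGivesCategoricalFibration} this is a categorical fibration between fibrant objects, hence a fibration in $s\Set^+_{/\leftnat{D}}$ by Prp. \ref{prp:FibrationBetweenFibrantObjects}, so its base change to $\underline{x}$ is a fibration in $s\Set^+_{/\underline{x}}$. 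The bottom map is induced by the cofibration $C_{\underline{x}} \to C_{\underline{x}} \star_{\underline{x}} \underline{x}$ and is a fibration by Prp. \ref{prp:FunctorFirstVariable}(2).

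With both squares being homotopy pullbacks and three of four vertex comparisons cocartesian equivalences between fibrant objects, the standard two-out-of-three property for homotopy pullbacks forces $\psi$ to be a cocartesian equivalence. The main obstacle I anticipate is bookkeeping: carefully tracking the commutativity of the right square through the formula defining $\epsilon^\ast$, and verifying that replacing $C$ with $C \star_D D$ in the hypotheses of Prp. \ref{prp:identifyFibersOfPairing} and Lm. \ref{lm:paramPairFirstVariableCofibrationGivesCategoricalFibration} really does go through.
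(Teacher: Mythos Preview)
Your proposal is correct and follows the same overall strategy as the paper: exhibit $\psi$ as the map induced on pullbacks of a morphism of spans whose vertex comparisons are the equivalences $\epsilon^\ast$ of Prp.~\ref{prp:identifyFibersOfPairing}, and then argue that both pullbacks are homotopy pullbacks. The execution differs only in how the homotopy pullback property is verified. The paper first reduces to fibers via \cite[3.3.1.5]{HTT} and then, on each fiber $e \in \underline{x}$, invokes Lm.~\ref{bifibration} to identify the restriction legs as \emph{cartesian} fibrations, so that \cite[3.3.1.4]{HTT} applies directly. You instead work globally over $\underline{x}$ and show the restriction legs are model-categorical fibrations, using Lm.~\ref{lm:paramPairFirstVariableCofibrationGivesCategoricalFibration} together with Prp.~\ref{prp:FibrationBetweenFibrantObjects} for the top row, and Prp.~\ref{prp:FunctorFirstVariable}(2) for the bottom. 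Your route is a touch more self-contained (it avoids the two \cite{HTT} citations), while the paper's is shorter once those results are granted; neither gains real generality over the other.
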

\begin{proof} By \cite[Proposition~3.3.1.5]{HTT}, we have to verify that $\psi$ induces a categorical equivalence on the fibers. But after passage to the fiber over an object $e = [x \to y]$ in $\underline{x}$, by Lemma~\ref{bifibration} $\psi_e$ is a functor between two pullback squares in which one leg is a cartesian fibration. Therefore, by Proposition~\ref{prp:identifyFibersOfPairing} and \cite[Corollary~3.3.1.4]{HTT}, $\psi_e$ is a categorical equivalence.
\end{proof}

\begin{prp} \label{prp:paramSliceIsCartesianFibration} Setup as in Construction~\ref{cnstr:pairingSlice}, suppose in addition that $E \to D$ is a $S$-cartesian fibration. Then $\pi_{(\phi,F)}: E^{(\phi,F)/ S} \to D$ is a $S$-cartesian fibration.
\end{prp}
\begin{proof} By Lemma~\ref{lm:paramPairFirstVariableCofibrationGivesCategoricalFibration}, $\pi_{(\phi,F)}$ is a categorical fibration. By Theorem~\ref{paramPairMainThm}, Lemma~\ref{lm:paramPairFirstVariableCofibrationGivesCategoricalFibration}, and Lemma~\ref{bifibration}, the functor
\[ (\iota_C^\ast)_s: \widetilde{\Fun}_{D/S}(C \star_D D,E)_s \to \widetilde{\Fun}_{D/S}(C,E)_s  \]
over $D_s$ satisfies the hypotheses of \cite[Proposition~2.4.2.11]{HTT}, hence is a locally cartesian fibration.  To then show that $(\iota_C^\ast)_s$ is a cartesian fibration, it suffices to check that for every square
\[ \begin{tikzpicture}[baseline]
\matrix(m)[matrix of math nodes,
row sep=6ex, column sep=4ex,
text height=1.5ex, text depth=0.25ex]
 { \left[ G: C_{\underline{x}} \star_{\underline{x}} \underline{x} \to E_{\underline{x}} \right]  & \left[ G': C_{\underline{y}} \star_{\underline{y}} \underline{y} \to E_{\underline{y}} \right] \\
  \left[ H: C_{\underline{x}} \star_{\underline{x}} \underline{x} \to E_{\underline{x}} \right] & \left[ H': C_{\underline{y}} \star_{\underline{y}} \underline{y} \to E_{\underline{y}} \right] \\ };
\path[>=stealth,->,font=\scriptsize]
(m-1-1) edge (m-1-2)
edge  (m-2-1)
(m-1-2) edge  (m-2-2)
(m-2-1) edge (m-2-2);
\end{tikzpicture} \]

in $\widetilde{\Fun}_{D/S}(C \star_D D,E)_s$ lying over an edge $e: x \to y$ in $D_s$, if the horizontal edges are cartesian lifts over $e$ and the right vertical edge is $(\iota_C^\ast)_{s,y}$-cartesian, then the left vertical edge is $(\iota_C^\ast)_{s,x}$-cartesian. In other words, if we let $e_!: C_{\underline{x}} \star_{\underline{x}} {\underline{x}} \to C_{\underline{y}} \star_{\underline{y}} {\underline{y}}$ and $e^\ast: E_{\underline{y}} \to E_{\underline{x}}$ denote choices of pushforward and pullback functors, then we want to show that given $G \simeq e^\ast \circ G' \circ e_!$, $H \simeq e^\ast \circ H' \circ e_!$, and $G'|_{\underline{y}} \simeq H'|_{\underline{y}}$, we have that $G|_{\underline{x}} \simeq H|_{\underline{x}}$. But this is clear. We deduce that $(\pi_{(\phi,F)})_s$, being pulled back from $(\iota_C^\ast)_s$, is a cartesian fibration.

For the final verification, let us abbreviate objects
 \[ (x \in D, \left[ G: C_{\underline{x}} \star_{\underline{x}} \underline{x} \to E_{\underline{x}} \right] : G|_{C_{\underline{x}}} = F_{\underline{x}} ) \in E^{(\phi,F)/S} \]
as $\left[ G: C_{\underline{x}} \star_{\underline{x}} \underline{x} \to E_{\underline{x}} \right]$, the restriction to $C_{\underline{x}}$ equaling $F_{\underline{x}}$ being left implicit. We must check that given a square
\[ \begin{tikzpicture}[baseline]
\matrix(m)[matrix of math nodes,
row sep=6ex, column sep=4ex,
text height=1.5ex, text depth=0.25ex]
 {  x & x' \\
  y & y' \\ };
\path[>=stealth,->,font=\scriptsize]
(m-1-1) edge node[above]{$\widetilde{\alpha}_x$} (m-1-2)
edge node[right]{$e$} (m-2-1)
(m-1-2) edge node[right]{$e'$} (m-2-2)
(m-2-1) edge node[above]{$\widetilde{\alpha}_y$} (m-2-2);
\end{tikzpicture} \]
in $D$ lying over $\alpha: s \to t$ with the vertical edges in the fiber and the horizontal edges cocartesian lifts of $\alpha$, and given a lift of that square to a square
\[ \begin{tikzpicture}[baseline]
\matrix(m)[matrix of math nodes,
row sep=6ex, column sep=4ex,
text height=1.5ex, text depth=0.25ex]
 { \left[ G: C_{\underline{x}} \star_{\underline{x}} \underline{x} \to E_{\underline{x}} \right]  & \left[G' : C_{\underline{x'}} \star_{\underline{x'}} \underline{x'} \to E_{\underline{x'}} \right] \\
  \left[ H: C_{\underline{y}} \star_{\underline{y}} \underline{y} \to E_{\underline{y}} \right] & \left[ H': C_{\underline{y'}} \star_{\underline{y'}} \underline{y'} \to E_{\underline{y'}} \right] \\ };
\path[>=stealth,->,font=\scriptsize]
(m-1-1) edge (m-1-2)
edge  (m-2-1)
(m-1-2) edge  (m-2-2)
(m-2-1) edge (m-2-2);
\end{tikzpicture} \]
in $E^{(\phi,F)/S}$ with the horizontal edges cocartesian lifts of $\alpha$ and the left vertical edge $(\pi_{(\phi,F)})_s$-cartesian, then the right vertical edge is $(\pi_{(\phi,F)})_t$-cartesian. We will once more translate this compatibility statement into a more obvious looking one so as to conclude. Let $e_!, e^\ast, e'_!, e'^\ast$ be defined as above. Let $\alpha^\ast: \underline{x'} \to \underline{x}$, $\alpha^\ast: \underline{y'} \to \underline{y}$ be choices of pullback functors (e.g. the first sends a cocartesian edge $f:x' \to z$ to $f \circ \widetilde{\alpha}_x: x \to z$), and also label related functors by $\alpha^\ast$. Then the cocartesianness of the horizontal edges amounts to the equivalences $G' \simeq G \circ \alpha^\ast$ and $H' \simeq H \circ \alpha^\ast$, and the cartesianness of the left vertical edge amounts to the equivalence $G|_{\underline{x}} \simeq (e^\ast \circ H \circ e_!)|_{\underline{x}}$. Our desired assertion now is implied by the homotopy commutativity of the diagram
\[ \begin{tikzpicture}[baseline]
\matrix(m)[matrix of math nodes,
row sep=6ex, column sep=4ex,
text height=1.5ex, text depth=0.25ex]
 { \underline{x'} & \underline{x} & E_{\underline{x}} \\
  \underline{y'} & \underline{y} & E_{\underline{y}} \\ };
\path[>=stealth,->,font=\scriptsize]
(m-1-1) edge node[above]{$\alpha^\ast$} (m-1-2)
edge node[right]{$e'_!$} (m-2-1)
(m-1-2) edge node[right]{$e_!$} (m-2-2)
edge node[above]{$G|_{\underline{x}}$} (m-1-3)
(m-2-1) edge node[above]{$\alpha^\ast$} (m-2-2)
(m-2-2) edge node[above]{$H|_{\underline{y}}$} (m-2-3)
(m-2-3) edge node[right]{$e^\ast$} (m-1-3);
\end{tikzpicture} \]
(the content being in the commutativity of the first square), for this demonstrates that $G'|_{\underline{x'}} \simeq (e'^\ast \circ H' \circ e'_!)|_{\underline{x'}}$.
\end{proof}

\begin{lem} \label{lm:fiberwiseInitialToGlobalInitialSection} Let $p: W \to S$, $q: D \to S$ be $S$-categories and let $\pi: W \to D$ be a $S$-fibration such that for every object $s \in S$, $\pi_s$ is a cartesian fibration.
\begin{enumerate}
\item Suppose that:
	\begin{enumerate}
	\item For every object $x \in D$, there exists an initial object in $W_x$.
	\item For every $p$-cocartesian edge $w \rightarrow w'$ in $W$, if $w$ is an initial object in $W_{\pi(w)}$, then $w'$ is an initial object in $W_{\pi(w')}$.
	\end{enumerate}
	Let $W' \subset W$ be the full simplicial subset of $W$ spanned by those objects $w \in W$ which are initial in $W_{\pi(w)}$ and let $\pi' = \pi|_{W'}$. Then $W'$ is a full $S$-subcategory of $W$ and $\pi'$ is a trivial fibration.	
\item Let $\sigma: D \to W$ be a $S$-functor which is a section of $\pi$. Then $\sigma$ is a left adjoint of $\pi$ relative to $D$ if and only if, for every object $x \in D$, $\sigma(x)$ is an initial object of $W_x$.
\end{enumerate}
\end{lem}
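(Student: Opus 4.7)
For Part (1), we verify $W'$ is a sub-$S$-category of $W$ using that initial objects are preserved under equivalence (so $W' \subset W$ is closed under equivalences in $W$) and hypothesis (b) (so $W'$ is stable under the cocartesian edges of $W \to S$, which by Rem.~\ref{relCocartFibIsCocartFib} are the $\pi$-cocartesian edges). Thus $W' \to S$ inherits a cocartesian fibration structure, and since $W' \hookrightarrow W$ is a categorical fibration (being the inclusion of a full subcategory closed under equivalences) and $\pi$ is a categorical fibration, so is $\pi'$. To upgrade $\pi'$ to a trivial fibration, by \cite[3.3.1.5]{HTT} applied to the $S$-functor $\pi'$ between cocartesian fibrations over $S$ it suffices to show fiberwise that $\pi'_s \colon W'_s \to D_s$ is a categorical equivalence. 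Now $\pi_s$ is a cartesian fibration by hypothesis and $W'_s \subset W_s$ is precisely the full subcategory on fiberwise initial objects of $\pi_s$; essential surjectivity is hypothesis (a), while for full faithfulness, given $w_0, w_1 \in W'_s$ lying over $x_0, x_1 \in D_s$ and $\alpha \colon x_0 \to x_1$, a $\pi_s$-cartesian lift $\alpha^\ast w_1 \to w_1$ identifies the fiber of $\Map_{W_s}(w_0, w_1) \to \Map_{D_s}(x_0, x_1)$ over $\alpha$ with $\Map_{W_{x_0}}(w_0, \alpha^\ast w_1) \simeq \ast$ by initiality of $w_0$. Combined with $\pi'$ being a categorical fibration, this yields the desired trivial fibration.

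For Part (2), direction ($\Rightarrow$) follows from mapping space analysis: for $x \in D$ lying over $s \in S$ and $w \in W_x$, the relative adjunction produces an equivalence $\pi_\ast \colon \Map_W(\sigma(x), w) \xrightarrow{\simeq} \Map_D(x, \pi(w)) = \Map_D(x,x)$, and taking fibers iteratively --- first over $\id_s \in \Map_S(s,s)$, then over $\id_x \in \Map_{D_s}(x, x)$, the latter using that $\id_w$ is $\pi_s$-cartesian over $\id_x$ --- one obtains $\Map_{W_x}(\sigma(x), w) \simeq \ast$, so $\sigma(x)$ is initial in $W_x$. For ($\Leftarrow$), assume each $\sigma(x)$ is initial. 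Since $\sigma$ is an $S$-functor preserving cocartesian edges and sends sources of cocartesian edges to initial objects, hypothesis (b) holds automatically and $\sigma$ factors through the trivial fibration $\pi' \colon W' \to D$ from Part (1). By \cite[5.2.2.8]{HTT} applied to the identity natural transformation $\eta = \id \colon \id_D \to \pi\sigma$, to establish $\sigma \dashv \pi$ it suffices that $\pi_\ast \colon \Map_W(\sigma(x), w) \to \Map_D(x, \pi(w))$ be an equivalence for all $x, w$. For $f \colon x \to \pi(w)$ lying over $\alpha \colon s \to t$ in $S$, we factor $f$ through a cocartesian lift $x \to \alpha_!(x)$ and use that $\sigma$ preserves cocartesian edges --- so that $\sigma(\alpha_!(x)) \simeq \alpha_!(\sigma(x))$ remains initial in $W_{\alpha_!(x)}$ --- to reduce to the case $s = t$; there a $\pi_s$-cartesian lift of $f$ identifies the fiber of $\pi_\ast$ over $f$ with $\Map_{W_x}(\sigma(x), f^\ast w) \simeq \ast$ by initiality. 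As $\pi\sigma = \id_D$ strictly and $\eta = \id$, the resulting adjunction automatically lives over $D$.

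The principal obstacle will be ensuring the ``relative to $D$'' nature of the adjunction in Part (2) ($\Leftarrow$): the counit $\epsilon \colon \sigma\pi \to \id_W$ extracted from the mapping space criterion must lie over $\id_D$, i.e., $\pi\epsilon = \id_\pi$. This is forced by the triangle identity once $\eta = \id$ is chosen, but it must be reconciled with the fiberwise-initial description so that each $\epsilon_w$ genuinely inhabits the fiber $W_{\pi(w)}$ --- which is exactly what the contractibility of $\Map_{W_{\pi(w)}}(\sigma\pi(w), w)$, sitting over the identities of $\pi(w)$ in $D$ and $p(\pi(w))$ in $S$, delivers.
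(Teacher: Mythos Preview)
Your proof is correct and reaches the same conclusions, but the route differs from the paper's in both parts. For Part~(1), the paper invokes \cite[2.4.4.9]{HTT} directly to obtain that each $\pi'_s$ is a trivial fibration, then observes that $\pi'$ is therefore an $S$-cocartesian fibration (vacuously, since every edge in $W'_s$ is $\pi'_s$-cocartesian), hence a cocartesian fibration with contractible fibers, hence a trivial fibration. You instead reprove the fiberwise equivalence by hand and then appeal to \cite[3.3.1.5]{HTT} plus the categorical fibration property; this is more self-contained but duplicates work already packaged in \cite[2.4.4.9]{HTT}. For Part~(2), the paper's argument is more modular: for $(\Rightarrow)$ it simply pulls back the relative adjunction along $\{x\} \hookrightarrow D$, and for $(\Leftarrow)$ it applies \cite[5.2.4.3]{HTT} fiberwise to get $\sigma_s \dashv \pi_s$, upgrades to an $S$-adjunction since $\sigma$ is already an $S$-functor, and then uses full faithfulness of $\sigma$ (from Part~(1)) together with the definition \cite[7.3.2.1]{HA} to conclude the adjunction is relative to $D$. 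Your direct mapping-space computation via \cite[5.2.2.8]{HTT}, reducing along cocartesian and then cartesian edges, is a valid alternative that avoids the intermediate notion of $S$-adjunction; the paper's approach buys reusability of the fiberwise adjunction lemma, while yours makes the contractibility of the relevant fibers completely explicit and handles the ``relative to $D$'' condition through the triangle identity rather than through full faithfulness.
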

\begin{proof} 
\begin{enumerate}[leftmargin=*]
\item Condition (b) ensures that $W'$ is a $S$-subcategory of $W$. By \cite[Proposition~2.4.4.9]{HTT}, for every object $s \in S$, $\pi'_s$ is a trivial fibration. In particular, $\pi'$ is $S$-cocartesian fibration (the compatibility condition being vacuous since all edges in $W'_s$ are $\pi'_s$-cocartesian). By Remark~\ref{relCocartFibIsCocartFib}, $\pi'$ is a cocartesian fibration. As a cocartesian fibration with contractible fibers, $\pi'$ is a trivial fibration.

\item Since relative adjunctions are stable under base change, if $\sigma$ is a left adjoint of $\pi$ relative to $D$, passage to the fiber over $x \in D$ shows that $\sigma(x)$ is an initial object of $W_x$. Conversely, if for all $x \in D$, $\sigma(x)$ is an initial object of $W_x$, then by \cite[Proposition~5.2.4.3]{HTT}, $\sigma_s$ is left adjoint to $\pi_s$ for all $s \in S$. Since $\sigma$ is already given as a $S$-functor, this implies that $\sigma$ is $S$-left adjoint to $\pi$; in particular, $\sigma$ is left adjoint to $\pi$. The existence of $\sigma$ implies the hypotheses of (1), so $\sigma$ is fully faithful. Now by definition, $\sigma$ is left adjoint to $\pi$ relative to $D$.
\end{enumerate}
\end{proof}

We now connect the construction $\widetilde{\Fun}_{D/S}(-,-)$ with $\underline{\Fun}_S(-,-)$. To this end, consider the commutative diagram

\[ \begin{tikzpicture}[baseline]
\matrix(m)[matrix of math nodes,
row sep=6ex, column sep=4ex,
text height=1.5ex, text depth=0.25ex]
 { \sO(S)^\sharp \times_S \leftnat{C} \\
 & \sO(S)^\sharp \times_S (\sO^\cocart(D) \times_D C, \sE) & (\sO^\cocart(D) \times_D C, \sE) & S^\sharp \\
   & \sO(S)^\sharp \times_S \leftnat{D} & \leftnat{D} \\
   & S^\sharp, \\ };
\path[>=stealth,->,font=\scriptsize]
(m-1-1) edge node[above]{$i$} (m-2-2)
edge (m-4-2)
edge[out=0, in=165] (m-2-4)
(m-2-2) edge (m-2-3)
edge (m-3-2)
(m-2-3) edge (m-2-4)
edge (m-3-3)
(m-3-2) edge node[above]{$\pr_D$} (m-3-3)
edge node[right]{$\ev_0$} (m-4-2);
\end{tikzpicture} \]
where the map $i$ is induced by the identity section $D \to \sO^\cocart(D)$.

\begin{lem} \label{pairingToFunctorLem} $i$ is a homotopy equivalence in $s\Set^+_{/S}$ (considered over $S$ via $p: C \to S$).
\end{lem}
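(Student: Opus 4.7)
The plan is to construct an explicit simplicial homotopy inverse $r$ to $i$, together with a homotopy $h \colon \mathrm{id} \simeq i \circ r$ over $S$, by contracting an arbitrary cocartesian edge $e$ in $D$ to the identity on its target while simultaneously absorbing $q(e)$ into the $\sO(S)$-factor $\alpha$ via composition.

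First, I would choose a composition map $\mu \colon \sO(S) \times_S \sO(S) \to \sO(S)$ by lifting through the trivial fibration $\Fun(\Delta^2, S) \twoheadrightarrow \Fun(\Lambda^2_1, S) \cong \sO(S) \times_S \sO(S)$ induced by the inner anodyne inclusion $\Lambda^2_1 \subset \Delta^2$. By choosing a section $\sigma$ that extends the canonical degeneracies on the subcomplex where one input factor is degenerate, I can enforce strict left and right unitality: $\mu(\alpha, s_{s_1}) = \alpha$ and $\mu(s_{s_0}, \beta) = \beta$. Then I define $r(\alpha, e, c) = (\mu(\alpha, q(e)), c)$; since $q(\id_{\phi(c)}) = s_{s_1}$, strict right unitality immediately gives $r \circ i = \mathrm{id}$.

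For the homotopy $h$, I would produce it by combining two adjoint data. In the $\sO^\cocart(D)$-factor, take the adjoint of $\sO^\cocart(D) \times \Delta^1 \to \sO^\cocart(D)$ obtained by precomposing each edge $e \colon \Delta^1 \to D$ with the map $\Delta^1 \times \Delta^1 \to \Delta^1$ sending $(0,0) \mapsto 0$ and the other vertices to $1$; this deforms $e$ to $\id_{d_1}$ through a square whose $\ev_0$-edge is $e$ itself (hence a cocartesian edge in $D$) and whose $\ev_1$-edge is degenerate at $d_1$. In the $\sO(S)$-factor, take the deformation coming from $\sigma$ via $\Delta^1 \times \Delta^1 \to \Delta^2$ sending $(0,0), (1,0) \mapsto 0$, $(0,1) \mapsto 1$, $(1,1) \mapsto 2$; this produces an edge in $\sO(S)$ from $\alpha$ to $\mu(\alpha, q(e))$ whose $\ev_0$-edge is the degenerate edge $s_{s_0}$ and whose $\ev_1$-edge is precisely $q(e)$. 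The compatibility condition for combining these into a map into the pullback is that $\ev_1$ of the $\sO(S)$-deformation matches $q \circ \ev_0$ of the $\sO^\cocart(D)$-deformation, and both trace out $q(e)$ along the time axis, so the pair assembles into the desired $h$ by adjunction. The $C$-component is held constant (i.e., degenerate at $c$), which matches the $\ev_1$-edge of the $\sO^\cocart(D)$-deformation under $\phi$.

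Once $h$ is defined, I would verify the following: at time $0$ it is the identity; at time $1$ it equals $i \circ r$; it lies over $S$ because the $\ev_0$-component of the $\sO(S)$-deformation is the degenerate edge $s_{s_0}$ throughout; and it preserves the marking, since edges in $\sO(S)^\sharp$ are all marked, while the marked edges introduced into $\sO^\cocart(D) \times_D C$ satisfy the $\sE$-criterion: $\pi$ returns the cocartesian edge $e$ in $D$, and $\pr_2$ returns a degenerate (hence cocartesian) edge in $C$.

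The main technical obstacle will be ensuring the marking preservation coherently in all simplicial dimensions, not merely on the time-edge of a fixed object; this amounts to checking that the simplicial structure of the adjoint homotopy takes marked simplices of $X \times (\Delta^1)^\sharp$ to marked simplices of $X$. This ultimately reduces to two observations: first, the section $\sigma$ can be chosen compatibly with markings (all edges in $\sO(S)^\sharp$ are marked, so nothing needs to be checked there); second, the explicit form of the $\sO^\cocart(D)$-contraction at each dimension produces squares in $D$ whose image in $S$ under $q \circ \ev_0$ consists of cocartesian edges, so the required $\sE$-markings persist. Strict unitality of $\sigma$ on degenerate inputs handles all degenerate higher simplices automatically.
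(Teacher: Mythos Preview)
Your proposal is correct and takes essentially the same approach as the paper's proof: the paper also chooses a section $\sigma$ of $\Fun(\Delta^2,S) \twoheadrightarrow \Fun(\Lambda^2_1,S)$ (with the same one-sided unitality you use for $r \circ i = \id$), builds the homotopy $h$ as the product of your three ingredients---the $\Delta^1 \times \Delta^1 \to \Delta^1$ contraction on $\sO^\cocart(D)$, the $\sigma$-induced deformation on $\sO(S)$, and the degeneracy on $C$---and then reads off the retraction $r$ from $h_1$. Your coordinate conventions for the maps $\Delta^1 \times \Delta^1 \to \Delta^1$ and $\Delta^1 \times \Delta^1 \to \Delta^2$ differ from the paper's only by swapping the ``time'' and ``arrow'' axes, and your marking check is exactly what the paper leaves implicit when asserting $h$ is a map of marked simplicial sets over $S$.
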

\begin{proof} Define a map $h': \sO(S) \times_S \sO^\cocart(D) \to \Fun(\Delta^1, \sO(S) \times_S \sO^\cocart(D))$ to be the product of the following three maps:
\begin{enumerate}
\item Choose a lift $\sigma$
\[ \begin{tikzpicture}[baseline]
\matrix(m)[matrix of math nodes,
row sep=6ex, column sep=4ex,
text height=1.5ex, text depth=0.25ex]
 {  \Fun(\Delta^{\{0,1\}} ,S) & \Fun(\Delta^2,S) \\
  \Fun(\Lambda^2_1,S) & \Fun(\Lambda^2_1,S) \\ };
\path[>=stealth,->,font=\scriptsize]
(m-1-1) edge node[above]{$s_1$} (m-1-2)
edge (m-2-1)
(m-1-2) edge[->>] node[left]{$\sim$} (m-2-2)
(m-2-1) edge node[above]{$=$} (m-2-2)
edge[dotted] node[above]{$\sigma$} (m-1-2);
\end{tikzpicture} \]
and let $\Delta^1 \times \Delta^1 \to \Delta^2$ be the unique map so that the induced map $\Fun(\Delta^2,S) \to \Fun(\Delta^1 \times \Delta^1, S) \cong \Fun(\Delta^1,\sO(S))$ sends $(s \to t \to u)$ to $[s \to t] \to [s \to u]$. Use these two maps to define
\[ \sO(S) \times_S \sO^\cocart(D) \times_D C \to \sO(S) \times_S \sO(S) \cong \Fun(\Lambda^2_1,S) \to \Fun(\Delta^1,\sO(S)). \]
\item Use the unique map $\Delta^1 \times \Delta^1 \to \Delta^1$ which sends $(0,0)$ to $0$ and all other vertices to $1$ to define
\[ \sO(S) \times_S \sO^\cocart(D) \times_D C \to \sO^\cocart(D) \to \Fun(\Delta^1, \sO^\cocart(D)). \]
\item The degeneracy map $s_0: C \to \Fun(\Delta^1,C)$ defines
\[ \sO(S) \times_S \sO^\cocart(D) \times_D C \to C \to \Fun(\Delta^1,C). \]
\end{enumerate}
Then $h'$ is adjoint to a map of marked simplicial sets over $S$
\[ h: (\Delta^1)^\sharp \times \sO(S)^\sharp \times_S (\sO^\cocart(D) \times_D C, \sE) \to \sO(S)^\sharp \times_S (\sO^\cocart(D) \times_D C, \sE) \]
such that $h_0 = \id$ and $h_1$ factors as a composition
\[ \sO(S)^\sharp \times_S (\sO^\cocart(D) \times_D C, \sE) \xrightarrow{r} \sO(S)^\sharp \times_S \leftnat{C} \xrightarrow{i} \sO(S)^\sharp \times_S (\sO^\cocart(D) \times_D C, \sE) \]
where $r$ is defined by
\[ \sO(S)^\sharp \times_S (\sO^\cocart(D) \times_D C, \sE) \to \Fun(\Lambda^2_1,S)^\sharp \times_S \leftnat{C} \xrightarrow{d_1 \circ \sigma} \sO(S)^\sharp \times_S \leftnat{C}. \]
Our choice of $\sigma$ ensures that $r \circ i = \id$, completing the proof.
\end{proof}

Note that for any $S$-fibration $\pi: X \to D$, the $S$-category $\underline{\Sect}_{D/S}(\pi)$ defined in \ref{dfn:sectionsOfSFibration} may be identified with $(\ev_0)_\ast (\pr_D)^\ast (\leftnat{X} \overset{\pi}{\to} \leftnat{D})$. Combining Lemma~\ref{pairingToFunctorLem}, Lemma~\ref{lm:SpanComposition}, and Lemma~\ref{lm:SpanHomotopyInvariance}, we see that if $E$ is a $S$-category and $C \to D$ is $S$-cocartesian or $S$-cartesian, then the map induced by $i$
\[ i^\ast: \underline{\Sect}_{D/S}(\widetilde{\Fun}_{D/S}(C,E \times_S D)) \to \underline{\Fun}_S(C,E) \]
 is a equivalence of $S$-categories. Moreover, a chase of the definitions reveals that for every $S$-functor $F: C \to E$, we have an identification
\[ i^\ast \circ \underline{\Sect}_{D/S}(\tau_{F \times \phi}) = \sigma_{F}: S \to \underline{\Fun}_S(C,E). \]

We thus have a morphism of spans
\[ \begin{tikzcd}[row sep=2em, column sep=4em]
S \ar{r}{\underline{\Sect}_{D/S}(\tau_{F \times \phi})} \ar{d}{=} &[4em] \underline{\Sect}_{D/S}(\widetilde{\Fun}_{D/S}(C, E \times_S D)) \ar{d}{\simeq} & \underline{\Sect}_{D/S}(\widetilde{\Fun}_{D/S}(C \star_D D, E \times_S D)) \ar{l} \ar{d}{\simeq} \\
S \ar{r}{\sigma_F} & \underline{\Fun}_S(C,E) & \underline{\Fun}_S(C \star_D D, E). \ar{l}
\end{tikzcd} \]

The right horizontal maps are $S$-fibrations by Lemma~\ref{lm:paramPairFirstVariableCofibrationGivesCategoricalFibration} and \cite[Proposition~9.11(2)]{BDGNS1}, so taking pullbacks yields an equivalence
\begin{equation} \label{eqn:rightKanExtOfParamSlice} \underline{\Sect}_{D/S}((E \times_S D)^{(\phi, F \times \phi)/S}) \overset{\simeq}{\to} S \times_{\sigma_F,\underline{\Fun}_S(C,E)} \underline{\Fun}_S(C \star_D D, E).
\end{equation}

We are now prepared to introduce the main definition of this section.

\begin{dfn} \label{dfn:paramColimit} Let $\phi: C \to D$ be a $S$-cocartesian fibration. A $S$-functor $\overline{F}: C \star_D D \to E$ is a \emph{$D$-parametrized $S$-colimit diagram} if for every object $x \in D$, the $\underline{x}$-functor $\overline{F}|_{C_{\underline{x}} \star_{\underline{x}} \underline{x}}: C_{\underline{x}} \star_{\underline{x}} \underline{x} \to E_{\underline{s}}$ is a $\underline{s}$-colimit diagram.
\end{dfn}

\begin{prp} \label{prp:paramColimitIsInitialObject} Let $\phi: C \to D$ be a $S$-cocartesian fibration, let $F: C \to E$ be a $S$-functor, and let $\overline{F}: C \star_D D \to E$ be a $D$-parametrized $S$-colimit diagram extending $F$. Then the section
\[ \id_S \times \sigma_{\overline{F}}: S \to S \times_{\sigma_F, \underline{\Fun}_S(C,E)} \underline{\Fun}_S(C \star_D D, E) \]
is a $S$-initial object.
\end{prp}
\begin{proof} Combine Equation~\ref{eqn:rightKanExtOfParamSlice}, Lemma~\ref{lm:fiberwiseInitialToGlobalInitialSection}(2), and Corollary~\ref{cor:sectionsOfRelativeAdjIsAdj}.
\end{proof}

We have the following existence and uniqueness result for $D$-parametrized $S$-colimits.

\begin{thm} \label{thm:ExistenceAndUqnessOfParamColimit} Let $\phi: C \to D$ be a $S$-cocartesian fibration and let $F: C \to E$ be a $S$-functor. Suppose that for every object $x \in D$, the $\underline{s}$-functor $F|_{C_{\underline{x}}}: C_{\underline{x}} \to E_{\underline{s}}$ admits a $\underline{s}$-colimit. Then there exists a $D$-parametrized $S$-colimit diagram $\overline{F}: C \star_D D \to E$ extending $F$. Moreover, the full subcategory of $\{F\} \times_{\Fun_S(C, E)} \Fun_S(C \star_D D, E)$ spanned by the $D$-parametrized $S$-colimit diagrams coincides with that spanned by the initial objects.
\end{thm}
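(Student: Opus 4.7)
The strategy is to apply Lm.~\ref{lm:fiberwiseInitialToGlobalInitialSection} to the $S$-cartesian fibration $\pi: W \to D$, where $W = (E \times_S D)^{(\phi, F \times \phi)/S}$ and $F \times \phi: C \to E \times_S D$ is regarded as a $S$-functor over $D$. Prp.~\ref{prp:paramSliceIsCartesianFibration} ensures $\pi$ is $S$-cartesian, and Cor.~\ref{cor:fibersOfParamSliceAreSliceCats} identifies the fibers as $W_{\underline{x}} \simeq (E_{\underline{s}})^{F|_{C_{\underline{x}}}/\underline{s}}$; under this equivalence, $\underline{x}$-initial objects of $W_{\underline{x}}$ correspond tautologically to $\underline{s}$-colimit diagrams extending $F|_{C_{\underline{x}}}$.

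I first verify the two hypotheses of Lm.~\ref{lm:fiberwiseInitialToGlobalInitialSection}(1). For (a), the assumption that every $F|_{C_{\underline{x}}}$ admits an $\underline{s}$-colimit furnishes an $\underline{x}$-initial object $\widehat{F}_x$ of $W_{\underline{x}}$, namely a cocartesian section $\underline{x} \to W_{\underline{x}}$ whose values are fiberwise initial; evaluation at the cone point $\id_s$ yields an initial object of the strict fiber $W_x$. For (b), given a cocartesian edge $w \to w'$ in $W \to S$ over $\alpha: s \to t$ with $w$ initial in $W_x$, the image $\pi(w) \to \pi(w')$ is a cocartesian edge $x \to \alpha_! x$ in $D$, and we must verify that the induced pushforward $W_x \to W_{\alpha_! x}$ preserves initial objects. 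The section $\widehat{F}_x$ of (a) already exhibits the transported value at $\alpha$ as initial in $W_{\alpha_! x}$, so by the essential uniqueness of initial objects, any initial $w$ is sent to an initial object.

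Lm.~\ref{lm:fiberwiseInitialToGlobalInitialSection}(1) then yields a trivial fibration $\pi': W' \to D$ with $W' \subset W$ the full $S$-subcategory of fiberwise-initial objects, and any lift $\sigma: D \to W$ of $\id_D$ along $\pi$ is automatically a $S$-functor factoring through $W'$. By Lm.~\ref{lm:fiberwiseInitialToGlobalInitialSection}(2), $\sigma$ is left adjoint to $\pi$ relative to $D$. The equivalence of Eqn.~\ref{eqn:rightKanExtOfParamSlice} translates $\sigma$ into a $S$-functor $\overline{F}: C \star_D D \to E$ extending $F$; since $\sigma$ restricted to each $\underline{x}$ is an $\underline{x}$-initial object of $W_{\underline{x}}$, the restriction $\overline{F}|_{C_{\underline{x}} \star_{\underline{x}} \underline{x}}$ is an $\underline{s}$-colimit diagram, so $\overline{F}$ is the desired $D$-parametrized $S$-colimit diagram.

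For the second claim, I apply Cor.~\ref{cor:sectionsOfRelativeAdjIsAdj} to the $S$-adjunction $\sigma \dashv \pi$ relative to $D$, taking the other structure map to be $\id_D$: this produces an adjunction $\sigma_\ast \dashv \pi_\ast: \Sect_{D/S}(\id_D) \rightleftarrows \Sect_{D/S}(\pi)$, and since $\Sect_{D/S}(\id_D) \simeq \Delta^0$, the left adjoint exhibits $\sigma$ as an initial object of $\Sect_{D/S}(\pi)$. Eqn.~\ref{eqn:rightKanExtOfParamSlice} (passed to cocartesian sections) identifies $\Sect_{D/S}(\pi) \simeq \{F\} \times_{\Fun_S(C,E)} \Fun_S(C \star_D D, E)$, with fiberwise-initial sections corresponding to $D$-parametrized $S$-colimit diagrams. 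Since $\pi'$ is a trivial fibration, $\Sect_{D/S}(W')$ is contractible, so every fiberwise-initial section is equivalent to $\sigma$ and thus initial in $\Sect_{D/S}(\pi)$; conversely every initial object is equivalent to $\sigma$ and hence fiberwise-initial, so the two full subcategories coincide. The main obstacle will be the explicit verification of condition (b), which requires unravelling the cocartesian transport in $W$ via its defining pullback and the description of cocartesian edges in $\widetilde{\Fun}_{D/S}$ supplied by Thm.~\ref{paramPairMainThm}.
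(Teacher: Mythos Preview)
Your proposal is correct and follows essentially the same route as the paper: apply Lm.~\ref{lm:fiberwiseInitialToGlobalInitialSection} to $\pi_{(\phi,F\times\phi)}:(E\times_S D)^{(\phi,F\times\phi)/S}\to D$, using Prp.~\ref{prp:paramSliceIsCartesianFibration} and Cor.~\ref{cor:fibersOfParamSliceAreSliceCats} to identify the fibers, then extract $\overline{F}$ from the resulting section. Your second paragraph is more explicit than the paper, which simply asserts that the hypothesis yields conditions (a) and (b); for the uniqueness clause the paper cites Prp.~\ref{prp:paramColimitIsInitialObject}, whose proof is precisely the combination of Eqn.~\ref{eqn:rightKanExtOfParamSlice}, Lm.~\ref{lm:fiberwiseInitialToGlobalInitialSection}(2), and Cor.~\ref{cor:sectionsOfRelativeAdjIsAdj} that you spell out. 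Your closing worry about condition (b) is unfounded: the argument you already gave---that the $\underline{x}$-initial cocartesian section $\widehat{F}_x$ transports the initial object of $W_x$ along any $p$-cocartesian edge to an initial object of $W_{\alpha_! x}$, and uniqueness of cocartesian lifts then handles an arbitrary initial $w$---is complete and does not require unwinding Thm.~\ref{paramPairMainThm}.
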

\begin{proof} By Proposition~\ref{prp:paramSliceIsCartesianFibration} and Corollary~\ref{cor:fibersOfParamSliceAreSliceCats}, the functor
\[ \pi_{(\phi, F \times \phi)}: (E \times_S D)^{(\phi, F \times \phi)/S} \to D \]
is a $S$-cartesian fibration with $\underline{x}$-fibers equivalent to $(E_{\underline{s}})^{(F|_{C_{\underline{x}}}, \underline{s}) /}$. Our hypothesis ensures that the conditions of Lemma~\ref{lm:fiberwiseInitialToGlobalInitialSection}(1) are satisfied, so $\pi_{(\phi, F \times \phi)}$ admits a section $\sigma$ which is a $S$-functor that selects an initial object in each fiber. The resulting $S$-functor $D \to \widetilde{\Fun}_{D/S}(C \star_D D, E \times_S D)$ covering $\tau_{F \times \phi}$ is adjoint to a $S$-functor $\overline{F}: C \star_D D \to E$ extending $F$, which is a $D$-parametrized $S$-colimit diagram. Having proven existence, the second statement now follows from Proposition~\ref{prp:paramColimitIsInitialObject}.
\end{proof}

Theorem~\ref{thm:ExistenceAndUqnessOfParamColimit} also admits the following `global' consequence.

\begin{cor} \label{cor:leftAdjointToRestrictionIsColimitFunctor} Let $\phi: C \to D$ be a $S$-cocartesian fibration and $E$ be an $S$-category. Suppose that for every $s \in S$ and $x \in D_s$, $E_{\underline{s}}$ admits all $S^{s/}$-colimits of shape $C_{\underline{x}}$. Then $U: \underline{\Fun}_S(C \star_D D, E) \to \underline{\Fun}_S(C,E)$ admits a left $S$-adjoint $L$ which is a section of $U$ such that for every object $F: C_{\underline{s}} \to E_{\underline{s}}$, $L(F)$ is a $D_{\underline{s}}$-parametrized $S^{s/}$-colimit diagram.
\end{cor}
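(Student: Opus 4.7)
The plan is to apply Lm. \ref{lm:fiberwiseInitialToGlobalInitialSection} with $\pi = U$ and base $\underline{\Fun}_S(C,E)$ (playing the role of ``$D$'' in the lemma). First I need to check the standing hypotheses: $U$ is an $S$-fibration by Prp. \ref{prp:FunctorFirstVariable}(2) applied to the monomorphism $C \hookrightarrow C \star_D D$, and for each $s \in S$, the fiber $U_s: \Fun_{\underline{s}}(C_{\underline{s}} \star_{\underline{s}} \underline{s}, E_{\underline{s}}) \to \Fun_{\underline{s}}(C_{\underline{s}}, E_{\underline{s}})$ is a cartesian fibration by Lm. \ref{bifibration}.

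Next I verify conditions (a) and (b) of Lm. \ref{lm:fiberwiseInitialToGlobalInitialSection}(1). For (a), given $F: C_{\underline{s}} \to E_{\underline{s}}$, the fiber $U^{-1}(F)$ is $\{F\} \times_{\Fun_{\underline{s}}(C_{\underline{s}}, E_{\underline{s}})} \Fun_{\underline{s}}(C_{\underline{s}} \star_{\underline{s}} \underline{s}, E_{\underline{s}})$. Since $\phi_{\underline{s}}: C_{\underline{s}} \to D_{\underline{s}}$ remains $\underline{s}$-cocartesian and $E_{\underline{s}}$ is $\underline{s}$-cocomplete (both properties being stable under slicing), Thm. \ref{thm:ExistenceAndUqnessOfParamColimit} produces a $D_{\underline{s}}$-parametrized $\underline{s}$-colimit diagram extending $F$ and identifies such diagrams with the initial objects of the fiber. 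For (b), the cocartesian pushforward along $\alpha: s \to t$ in $\underline{\Fun}_S(C \star_D D, E)$ is given by base-change of an $\underline{s}$-functor $\overline{F}$ along the functor $\underline{t} \to \underline{s}$ determined by $\alpha$; since the property of being a $D_{\underline{s}}$-parametrized colimit diagram is checked pointwise at each object $x \in D$ over $\underline{s}$ (by Dfn. \ref{dfn:paramColimit}), and the restriction to $\underline{t}$ only demands this at the subcollection of $x$ over $\underline{t}$, the property is preserved.

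With (a) and (b) established, Lm. \ref{lm:fiberwiseInitialToGlobalInitialSection}(1) produces a full $S$-subcategory $W \subseteq \underline{\Fun}_S(C \star_D D, E)$ spanned by the $D$-parametrized colimit diagrams, and shows that $U|_W: W \to \underline{\Fun}_S(C, E)$ is a trivial fibration. Choosing a section and composing with the inclusion yields an $S$-functor $L: \underline{\Fun}_S(C, E) \to \underline{\Fun}_S(C \star_D D, E)$ which is a section of $U$ and which, by construction, selects a $D_{\underline{s}}$-parametrized $\underline{s}$-colimit diagram in each fiber. By Lm. \ref{lm:fiberwiseInitialToGlobalInitialSection}(2), $L$ is left adjoint to $U$ relative to $\underline{\Fun}_S(C, E)$; since $\underline{\Fun}_S(C,E) \to S$ is a cocartesian fibration (by Prp. \ref{func}), this relative adjunction is in particular an $S$-adjunction, completing the proof.

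The main obstacle will be the verification of condition (b): one must trace the cocartesian structure of $\underline{\Fun}_S(C \star_D D, E) \to S$ through the span defining $\underline{\Fun}_S$ and confirm that the cocartesian pushforward really does coincide with fiberwise restriction of the $\underline{s}$-functor along the base-change $\underline{t} \to \underline{s}$. Everything else is a direct invocation of Thm. \ref{thm:ExistenceAndUqnessOfParamColimit} and the general machinery of Lm. \ref{lm:fiberwiseInitialToGlobalInitialSection}.
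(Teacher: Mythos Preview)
Your approach is essentially identical to the paper's: both apply Lm.~\ref{lm:fiberwiseInitialToGlobalInitialSection} to $U$, invoke Thm.~\ref{thm:ExistenceAndUqnessOfParamColimit} for condition (a), appeal to stability of parametrized colimit diagrams under base change for condition (b), and then use part (2) of the lemma to conclude that the resulting section is a relative (hence $S$-) left adjoint. Your write-up is simply more explicit about the standing hypotheses (citing Prp.~\ref{prp:FunctorFirstVariable}(2) and Lm.~\ref{bifibration}), which the paper leaves implicit. One small slip: the fiber over $F$ should be $\{F\} \times_{\Fun_{\underline{s}}(C_{\underline{s}}, E_{\underline{s}})} \Fun_{\underline{s}}(C_{\underline{s}} \star_{D_{\underline{s}}} D_{\underline{s}}, E_{\underline{s}})$, not $\star_{\underline{s}} \underline{s}$; your surrounding discussion of $\phi_{\underline{s}}: C_{\underline{s}} \to D_{\underline{s}}$ makes clear you had the right object in mind.
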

\begin{proof} By Example~\ref{exm:bifibrationFunctor}, Theorem~\ref{thm:ExistenceAndUqnessOfParamColimit} and the stability of parametrized colimit diagrams under base change, the conditions of Lemma~\ref{lm:fiberwiseInitialToGlobalInitialSection}(1) are satisfied for $U$. Thus $U$ admits a section $L$ which selects an initial object in each fiber, necessarily a parametrized colimit diagram. By Lemma~\ref{lm:fiberwiseInitialToGlobalInitialSection}(2), $L$ is a left adjoint of $U$ relative to $\underline{\Fun}_S(C,E)$; in particular, $L$ is $S$-left adjoint to $U$.
\end{proof}

\subsection*{Application: Functor categories}

\begin{prp} \label{prp:colimitsInFunctorCategories}  Let $K$, $I$, and $C$ be $S$-categories.
\begin{enumerate}
\item Suppose that for all $s \in S$, $C_{\underline{s}}$ admits all $K_{\underline{s}}$-indexed colimits. $\overline{p}: K \star_S S \to \underline{\Fun}_S(I,C)$ is a $S$-colimit diagram if and only if, for every object $x \in I$ over $s$,
\[ K_{\underline{s}} \star_{\underline{s}} \underline{s} \xrightarrow{\overline{p}_{\underline{s}}} \underline{\Fun}_{\underline{s}}(I_{\underline{s}}, C_{\underline{s}}) \xrightarrow{\ev_x} C_{\underline{s}} \]
 is a $S^{s/}$-colimit diagram.
\item A $S$-functor $p: K \to \underline{\Fun}_S(I,C)$ admits an extension to a $S$-colimit diagram $\overline{p}$ if for all $x \in I$, $\ev_x \circ p_{\underline{s}}$ admits an extension to a $S^{s/}$-colimit diagram.
\end{enumerate}
\end{prp}
\begin{proof} We prove (1), the proof for (2) being similar. Let
$$\overline{p'}: (K \times_S I) \star_I I \cong (K \star_S S) \times_S I \to C$$
be a choice of adjoint of $p$ under the equivalence
$$\Fun_S(K \star_S S, \underline{\Fun}_S(I,C)) \simeq \Fun_S((K \star_S S) \times_S I, C).$$
By Theorem~\ref{thm:ExistenceAndUqnessOfParamColimit} applied to the $S$-cocartesian fibration $K \times_S I \to I$ and the hypothesis on $C$, there exists an $I$-parametrized $S$-colimit diagram $p''$ extending $p' = \overline{p'}|_{K \times_S I}$. By Proposition~\ref{prp:paramColimitIsInitialObject}, $p''$ defines an $S$-initial object in
\[ S \times_{\underline{\Fun}_S(K \times_S I,C)} \underline{\Fun}_S((K \times_S I) \star_I I,C) \simeq \underline{\Fun}_S(I,C)^{(p,S)/} \] 
so \emph{its} adjoint is a $S$-colimit diagram. For the `if' direction, supposing that $\overline{p}$ is a $S$-colimit diagram, then by the uniqueness of $S$-initial objects, $p''$ is equivalent to $\overline{p'}$. Then $\ev_x \circ \overline{p}_{\underline{s}}$ is equivalent to $p''_{\underline{x}}$, which is a $S^{s/}$-colimit diagram by definition of $I$-parametrized $S$-colimit diagram. For the `only if' direction, supposing that all the $\ev_x \overline{p}_{\underline{s}}$ are $S^{s/}$-colimit diagrams, we get that $\overline{p'}$ is a $I$-parametrized $S$-colimit diagram, so is equivalent to $p''$.
\end{proof}

\begin{cor} \label{cor:FunctorCategoryCocomplete} Suppose $C$ is $S$-cocomplete and $I$ is a $S$-category. Then $\underline{\Fun}_S(I,C)$ is $S$-cocomplete.
\end{cor}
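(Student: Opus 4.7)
The plan is to reduce the claim directly to Proposition~\ref{prp:colimitsInFunctorCategories}(2). By Definition~\ref{dfn:cocomplete}, showing that $\underline{\Fun}_S(I,C)$ is $S$-cocomplete amounts to showing that for every object $s \in S$ and every small $\underline{s}$-category $K$, every $\underline{s}$-functor $p: K \to \underline{\Fun}_S(I,C)_{\underline{s}}$ admits a $\underline{s}$-colimit. Using the natural identification $\underline{\Fun}_S(I,C)_{\underline{s}} \simeq \underline{\Fun}_{\underline{s}}(I_{\underline{s}}, C_{\underline{s}})$, we may pull everything back along $\underline{s} \to S$; since $C_{\underline{s}} \to \underline{s}$ remains $\underline{s}$-cocomplete (the hypothesis of $S$-cocompleteness was already a fiberwise-over-$S^{s/}$ condition), we are free to rename $\underline{s}$ as the new base and thereby assume $s$ is initial in $S$ and $K$ is a (small) $S$-category mapping to $\underline{\Fun}_S(I,C)$.

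Now I would apply Proposition~\ref{prp:colimitsInFunctorCategories}(2): to extend $p: K \to \underline{\Fun}_S(I,C)$ to an $S$-colimit diagram, it suffices to check, for every object $x \in I$ lying over some $t \in S$, that the composite
\[ \ev_x \circ p_{\underline{t}}: K_{\underline{t}} \longrightarrow \underline{\Fun}_{\underline{t}}(I_{\underline{t}}, C_{\underline{t}}) \longrightarrow C_{\underline{t}} \]
admits an extension to a $\underline{t}$-colimit diagram. But this is precisely guaranteed by the $S$-cocompleteness of $C$: since $C_{\underline{t}}$ is $\underline{t}$-cocomplete, any small $\underline{t}$-diagram into $C_{\underline{t}}$ admits a $\underline{t}$-colimit. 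Applying this to each $x \in I$ supplies the hypothesis of Proposition~\ref{prp:colimitsInFunctorCategories}(2), which then produces the desired $S$-colimit of $p$.

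There is no real obstacle here beyond verifying that the reduction to the case where $s$ is initial is legitimate, i.e. that the formation of $\underline{\Fun}_S(I,C)$ is compatible with base change along $\underline{s} \to S$ (which follows from the construction of $\underline{\Fun}_S$ via $f_\ast$ and the base-change properties of the cocartesian model structure recorded earlier in \S 2--3), and that the notion of ``$\underline{s}$-diagram'' appearing in Definition~\ref{dfn:cocomplete} matches the input accepted by Proposition~\ref{prp:colimitsInFunctorCategories}(2) after this base change. Once these bookkeeping points are dispensed with, the result is immediate from Proposition~\ref{prp:colimitsInFunctorCategories}(2).
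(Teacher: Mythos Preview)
Your proposal is correct and is exactly the intended argument: the paper states this as an immediate corollary of Proposition~\ref{prp:colimitsInFunctorCategories} without further proof, and your expansion via part~(2) together with the base-change identification $\underline{\Fun}_S(I,C)_{\underline{s}} \simeq \underline{\Fun}_{\underline{s}}(I_{\underline{s}}, C_{\underline{s}})$ is precisely how one fills in the details.
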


\section{Kan extensions}

We now combine the theory of $S$-colimits parametrized by a base $S$-category $D$ and that of free $S$-cocartesian fibrations to establish the theory of left $S$-Kan extensions.

\begin{dfn} \label{dfn:SLKE} Suppose a diagram of $S$-categories
\[ \begin{tikzcd}[row sep=2em, column sep=3em]
C \ar{r}{F}[name=F,below,near start]{} \ar{d}[swap]{\phi} & E \\
D \ar{ru}[swap]{G}[name=G,above]{}
\arrow[Rightarrow,to path={(F) -- node[left]{$\scriptstyle\eta$} (G)}]
\end{tikzcd} \]
where by the `$2$-cell' $\eta$ we mean exactly the datum of a $S$-functor $\eta: C \times \Delta^1 \to E$ restricting to $F$ on $0$ and $G \circ \phi$ on $1$. Let
\[  G': (C \times_D \sO_S(D)) \star_D D \overset{\pi_D}{\to} D \overset{G}{\to} E, \]
let
\[ \theta: (C \times_D \sO_S(D)) \times \Delta^1 \to E \]
be the natural transformation adjoint to $G_\ast: C \times_D \sO_S(D) \to \sO_S(E)$, let
\[ \eta': (C \times_D \sO_S(D)) \times \Delta^1 \to C \times \Delta^1 \overset{\eta}{\to} E \] 
be the natural transformation obtained from $\eta$, and let $\theta' = \theta \circ \eta'$ be a choice of composition in $\Fun_S(C \times_D \sO_S(D),E)$. Let
\[ r: \Fun_S((C \times_D \sO_S(D)) \star_D D,E) \to \Fun_S(C \times_D \sO_S(D),E)\]
denote the restriction functor. By Lemma~\ref{bifibration}, we may select a $r$-cartesian edge $e$ in $\Fun_S((C \times_D \sO_S(D)) \star_D D,E)$ with $d_0(e) = G'$ covering $\theta'$, chosen so that $e|_D$ is degenerate. Let $G'' = d_1(e)$.

We say that $G$ is a \emph{left $S$-Kan extension} of $F$ along $\phi$ if $G''$ is a $D$-parametrized $S$-colimit diagram.
\end{dfn}

\begin{rem} The following are equivalent:
\begin{enumerate}
\item $G$ is a left $S$-Kan extension of $F$ along $\phi$.
\item For all $s \in S$, $G_{\underline{s}}$ is a left $S^{s/}$-Kan extension of $F_{\underline{s}}$ along $\phi_{\underline{s}}$. 
\item For all $s \in S$ and $x \in D_s$, $G|_{\underline{x}} : \underline{x} \to E_{\underline{s}}$ is a left $S^{s/}$-Kan extension of $F|_{C_{\underline{x}}}: C_{\underline{x}} \to E_{\underline{s}}$ along $\phi_{\underline{x}}: C_{\underline{x}} \to \underline{x}$.
\end{enumerate}
In other words, our notion of $S$-Kan extension generalizes the concept of \emph{pointwise} Kan extensions.
\end{rem}

We can bootstrap Theorem~\ref{thm:ExistenceAndUqnessOfParamColimit} to prove existence and uniqueness of left $S$-Kan extensions.

\begin{thm} \label{thm:existenceOfKanExtensions} Let $\phi: C \to D$ and $F: C \to E$ be $S$-functors. Suppose that for every object $x \in D$, the $S^{s/}$-functor
\[ C \times_D D^{/\underline{x}} \to C_{\underline{s}} \overset{F_{\underline{s}}}{\to} E_{\underline{s}} \]
admits a $S^{s/}$-colimit. Then there exists a left $S$-Kan extension $G: D \to E$ of $F$ along $\phi$, uniquely specified up to contractible choice.
\end{thm}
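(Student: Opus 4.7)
The plan is to reduce the theorem to Thm.~\ref{thm:ExistenceAndUqnessOfParamColimit} by passing from $\phi$ to the \emph{free} $S$-cocartesian fibration on it. Namely, set $W \coloneq C \times_D \sO_S(D)$ and let $\psi \coloneq \text{Fr}^\cocart(\phi): W \to D$ denote the free $S$-cocartesian fibration; this is genuinely $S$-cocartesian by the discussion preceding Prp.~\ref{prp:inclusionToFreeFibrationIsAdjoint}. I will apply Thm.~\ref{thm:ExistenceAndUqnessOfParamColimit} to $\psi$ with the $S$-functor $F' \coloneq F \circ \pr_C : W \to E$. Unwinding Ntn.~\ref{ntn:parametrizedFibers} and Ntn.~\ref{ntn:fiberwiseArrowCategory}, the parametrized fiber $W_{\underline{x}}$ is canonically identified with $C \times_D D^{/\underline{x}}$, and under this identification $F'|_{W_{\underline{x}}}$ is precisely the $S^{s/}$-functor appearing in the hypothesis. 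Hence the hypotheses of Thm.~\ref{thm:ExistenceAndUqnessOfParamColimit} are satisfied, yielding a $D$-parametrized $S$-colimit diagram $\overline{F'}: W \star_D D \to E$ extending $F'$. I then define $G \coloneq \overline{F'}|_D : D \to E$.

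Next, I will exhibit the required 2-cell $\eta: F \Rightarrow G \circ \phi$. By Prp.~\ref{prp:inclusionToFreeFibrationIsAdjoint}, $\iota_0: C \to W$ is an $S$-functor over $D$ satisfying $\psi \circ \iota_0 = \phi$, so $S$-join functoriality furnishes $\iota_0 \star_D \id_D: C \star_D D \to W \star_D D$. Precomposing $\overline{F'}$ with this and with the comparison $\psi_{(C,D)}: C \diamond_D D \to C \star_D D$ from Lm.~\ref{lm:DiamondJoinComparison}, then restricting along the evident map $C \times \Delta^1 \to C \diamond_D D$ (built from the graph $C \to C \times_D D$ and the cylinder summand), extracts the natural transformation $\eta$, which restricts to $F$ on $0$ and to $G \circ \phi$ on $1$ by construction. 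To verify that $G$ (equipped with $\eta$) is a left $S$-Kan extension, I appeal to the fiberwise characterization in the remark following Dfn.~\ref{dfn:SLKE}: it suffices to check that for every $x \in D$ over $s \in S$, the $S^{s/}$-functor $G|_{\underline{x}}$ is a left $S^{s/}$-Kan extension of $F|_{C_{\underline{x}}}$ along $\phi_{\underline{x}}: C_{\underline{x}} \to \underline{x}$. Applying base change $(-)_{\underline{x}}$ to $\overline{F'}$ produces an $\underline{x}$-parametrized $\underline{s}$-colimit diagram of $F|_{C_{\underline{x}}} \circ \pr_{C_{\underline{x}}}$, whose value at the cone point $\id_x$ is $G(x)$; by definition of $D$-parametrized $S$-colimit, this is exactly the pointwise Kan extension formula.

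I expect the main technical obstacle to lie in rigorously matching our $\overline{F'}$ with the $G''$ defined via the zigzag in Dfn.~\ref{dfn:SLKE}. That definition produces $G''$ as the source of an $r$-cartesian edge $e$ in $\Fun_S(W \star_D D, E)$ covering a composition $\theta' = \theta \circ \eta'$ of 2-cells built from $G_\ast$ and $\eta$. Concretely, one must verify that $\theta'$ is, up to equivalence, the natural transformation exhibiting $\overline{F'}$ as the common refinement of the constant diagram $G \circ \psi$ and the given $F'$. The cleanest way to do this is to invoke the identification (\ref{eqn:rightKanExtOfParamSlice}), under which both our $\overline{F'}$ and the canonical $G''$ correspond to the same $S$-initial object of $\underline{\Sect}_{D/S}\bigl((E \times_S D)^{(\psi, F' \times \psi)/S}\bigr)$ by Prp.~\ref{prp:paramColimitIsInitialObject}. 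The uniqueness of $G$ up to contractible choice is then immediate from the initial-object characterization in Thm.~\ref{thm:ExistenceAndUqnessOfParamColimit}: any two left $S$-Kan extensions of $F$ along $\phi$ correspond to two $S$-initial sections of the same $S$-cartesian fibration, hence are canonically equivalent.
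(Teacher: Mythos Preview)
Your overall strategy coincides with the paper's: one passes to the free $S$-cocartesian fibration $\psi\colon W=C\times_D\sO_S(D)\to D$, applies Thm.~\ref{thm:ExistenceAndUqnessOfParamColimit} to $F'=F\circ\pr_C$ to obtain a $D$-parametrized $S$-colimit diagram $\overline{F'}\colon W\star_D D\to E$, and sets $G=\overline{F'}|_D$. The paper then constructs the $2$-cell $\eta$ directly, by defining $h\colon C\times\Delta^1\to W\star_D D$ as the map over $D\times\Delta^1$ adjoint (via the defining property of $\star_D$) to $(\iota_0,\phi)$; your detour through $C\diamond_D D$ works but is unnecessary.

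There is, however, a genuine gap in your verification that $(G,\eta)$ satisfies Dfn.~\ref{dfn:SLKE}. You correctly flag the crux as matching $\overline{F'}$ with the $G''$ produced by the $r$-cartesian lift of $\theta'=\theta\circ\eta'$, but your proposed resolution via initial-object uniqueness is circular. Equation~(\ref{eqn:rightKanExtOfParamSlice}) and Prp.~\ref{prp:paramColimitIsInitialObject} tell you that $\overline{F'}$ is $S$-initial in the relevant sections category; they do not tell you that $G''$ is. Concluding that $G''$ is initial is precisely the statement that $G$ is a left $S$-Kan extension, so you cannot assume it in order to identify $G''$ with $\overline{F'}$. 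What must actually be done is to exhibit an $r$-cartesian edge $e$ with $d_1(e)=\overline{F'}$, $d_0(e)=G'=G\circ\pi_D$, and $r(e)\simeq\theta'$. The paper does this explicitly: it takes $e=\overline{F'}\circ e'$ where $e'\colon (W\star_D D)\times\Delta^1\to W\star_D D$ is adjoint to $(\id,\pi_D)$, and then builds a map $\gamma\colon W\times\Delta^1\times\Delta^1\to W\star_D D$ whose composite with $\overline{F'}$ is a square witnessing $r(e)\simeq\theta\circ\eta'$. That explicit homotopy is the missing ingredient in your argument; appealing to the fiberwise characterization in the remark after Dfn.~\ref{dfn:SLKE} does not avoid it, since the same identification of $G''$ with the parametrized colimit diagram is required at each $\underline{x}$.
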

\begin{proof} We spell out the details of existence and leave the proof of uniqueness to the reader. By Theorem~\ref{thm:ExistenceAndUqnessOfParamColimit}, there exists a $D$-parametrized $S$-colimit diagram
\[ \overline{F}: (C \times_D \sO_S(D)) \star_D D \to E \] 
extending $C \times_D \sO_S(D) \to C \overset{F}{\to} E$. Let $G = \overline{F}|_D$. Define a map
\[ h: C \times \Delta^1 \to (C \times_D \sO_S(D)) \star_D D \]
over $D \times \Delta^1$ as adjoint to $(C \overset{(\id,\iota \phi)}{\to} C \times_D \sO_S(D), C \overset{\phi}{\to} D)$ and let $\eta = \overline{F} \circ h$, so that $\eta$ is a natural transformation from $F$ to $G \circ \phi$.

We claim that $\eta$ exhibits $G$ as a left Kan extension of $F$ along $\phi$. To show this, we will exhibit a $r$-cartesian edge $e$ from $\overline{F}$ to $G'$ such that the restriction $r(e)$ of $e$ to $C \times_D \sO_S(D)$ is a choice of composition $\theta \circ \eta'$. Define
\[ e': (C \times_D \sO_S(D)) \star_D D \times \Delta^1 \to (C \times_D \sO_S(D)) \star_D D \]
over $D \times \Delta^1$ as adjoint to $(\id, \pi_D)$, and let $e = \overline{F} \circ e'$, so that $e$ is an edge from $\overline{F}$ to $G'$. Since $(\pi_D)|_D = \id_D$, $e|_{D}$ is a degenerate edge in $\Fun_S(D,E)$, so $e$ is $r$-cartesian.

To finish the proof, we need to introduce a few more maps. Define
\[ \alpha = (\pr_C, \alpha'): C \times_D \sO_S(D) \times \Delta^1 \to C \times_D \sO_S(D) \]
where $\alpha'$ is adjoint to
\[ C \times_D \sO_S(D) \to \sO_S(D) = \widetilde{\Fun}_S(S \times \Delta^1,D) \overset{\text{min}^\ast}{\to} \widetilde{\Fun}_S(S \times \Delta^1 \times \Delta^1,D). \]
Here $\text{min}: \Delta^1 \times \Delta^1 \to \Delta^1$ is the functor which takes the minimum. Define
\[ \beta: C \times_D \sO_S(D) \times \Delta^1 \to \sO_S(D) \times \Delta^1 \overset{\ev}{\to} D. \]
Use $\alpha$ and $\beta$ to define
\[ \gamma: C \times_D \sO_S(D) \times \Delta^1 \times \Delta^1 \to (C \times_D \sO_S(D)) \star_D D \]
so that on objects $(c, \phi c \overset{f}{\rightarrow} d)$, $\gamma$ sends $\Delta^1 \times \Delta^1$ to the square
\[ \begin{tikzcd}[row sep=2em,column sep=2em]
(c, \phi c = \phi c) \ar{r} \ar{d}{(\id,f)} & \phi c \ar{d}{f} \\
(c,\phi c \overset{f}{\rightarrow} d) \ar{r} & d.
\end{tikzcd} \]
Then $\overline{F} \circ \gamma$ defines a square
\[ \begin{tikzcd}[row sep=2em,column sep = 2em]
F \circ \pr_C \ar{r}{\eta'} \ar{d}{=} & G \circ \phi \circ \pr_C \ar{d}{\theta} \\
F \circ \pr_C \ar{r}{r(e)} & G'.
\end{tikzcd} \]
in $\Fun_S(C \times_D \sO_S(D),E)$, which proves that $r(e) \simeq \theta \circ \eta'$.
\end{proof}

We also have the Kan extension counterpart to Corollary~\ref{cor:leftAdjointToRestrictionIsColimitFunctor}.

\begin{dfn}
Let $\phi: C \to D$ be a $S$-functor and $E$ a $S$-category. We say that $E$ \emph{admits the relevant $S$-colimits for $\phi$} if for every $s \in S$ and $x \in D_s$, $E_{\underline{s}}$ admits all $S^{s/}$-colimits of shape $C \times_D D^{/\underline{x}}$. 
\end{dfn}

\begin{thm} \label{thm:leftKanExtensionIsLeftAdjointToRestriction} Let $\phi: C \to D$ be a $S$-functor and $E$ a $S$-category. Suppose that $E$ admits the relevant $S$-colimits for $\phi$. Then the $S$-functor
\[ \phi^\ast: \underline{\Fun}_S(D,E) \to \underline{\Fun}_S(C,E) \]
given by restriction along $\phi$ admits a left $S$-adjoint $\phi_!$ such that for every $S$-functor $F: C \to E$, the unit map $F \to \phi^\ast \phi_! F$ exhibits $\phi_! F$ as a left $S$-Kan extension of $F$ along $\phi$.
\end{thm}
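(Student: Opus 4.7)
The plan is to construct $\phi_!$ as a composite of three left $S$-adjoints arising from tools already in the paper, and then verify that its unit coincides with the 2-cell produced in Thm.~\ref{thm:existenceOfKanExtensions}. I would factor $\phi$ as $C \xrightarrow{\iota_0} K \xrightarrow{\pi_D} D$, where $K = \text{Fr}^\cocart(\phi) = C \times_D \sO_S(D)$ and $\iota_0, \pr_C, \pi_D$ are as in Prp.~\ref{prp:inclusionToFreeFibrationIsAdjoint}; this factorization holds strictly since $\pi_D \circ \iota_0 = \phi$. Factor $\pi_D$ further as $\pi_{K\star D} \circ j$, where $j \colon K \to K \star_D D$ is the inclusion and $\pi_{K\star D} \colon K \star_D D \to D$ is the structure map of the $D$-join, and note that the second inclusion $i_D \colon D \to K \star_D D$ satisfies $\pi_{K\star D} \circ i_D = \id_D$. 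Thus $\phi^\ast = \iota_0^\ast \circ j^\ast \circ \pi_{K\star D}^\ast$.

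Next I assemble three $S$-adjunctions: (i) $\iota_0 \dashv \pr_C$ from Prp.~\ref{prp:inclusionToFreeFibrationIsAdjoint}, which via Cor.~\ref{cor:RelAdjFromRelAdj} yields $\pr_C^\ast \dashv \iota_0^\ast$ on functor $S$-categories; (ii) $L \dashv j^\ast$ from Cor.~\ref{cor:leftAdjointToRestrictionIsColimitFunctor} applied to the $S$-cocartesian fibration $K \to D$, using $S$-cocompleteness of $E$, with the additional feature that $L$ is a section of $j^\ast$ whose values are $D$-parametrized $S$-colimit diagrams; (iii) $\pi_{K\star D} \dashv i_D$, which I derive from the dual of Lm.~\ref{lm:fiberwiseInitialToGlobalInitialSection}(2): by Prp.~\ref{joinrt}, $\pi_{K\star D}$ is a cocartesian fibration (hence fiberwise cocartesian over $S$), base-change identifies the fiber $(K \star_D D)_d$ with $(K_d)^\rhd$, the section $i_D$ picks out the cone point (which is final), and $i_D$ is an $S$-functor so preserves cocartesian edges. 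Applying Cor.~\ref{cor:RelAdjFromRelAdj} again gives $i_D^\ast \dashv \pi_{K\star D}^\ast$ on functor categories. Setting $\phi_! := i_D^\ast \circ L \circ \pr_C^\ast$ and composing these three $S$-adjunctions establishes $\phi_! \dashv \phi^\ast$.

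For the final clause I would unwind the composite unit $F \to \phi^\ast \phi_! F$. The units of (i) and (ii) are equivalences—the first because $\pr_C \circ \iota_0 = \id_C$, the second because $L$ is a section of $j^\ast$—so the composite reduces to $\iota_0^\ast j^\ast$ of the unit $\overline{F} \to \pi_{K\star D}^\ast i_D^\ast \overline{F}$ of (iii), where $\overline{F} := L(F \circ \pr_C)$. Pointwise at $c \in C$, this is $\overline{F}$ applied to the canonical edge $\iota_0(c) \to i_D(\phi c)$ in $K \star_D D$, which is precisely what the map $h \colon C \times \Delta^1 \to K \star_D D$ of Thm.~\ref{thm:existenceOfKanExtensions} picks out: $h(c,0) = \iota_0(c)$, $h(c,1) = i_D(\phi c)$, and $h(c,-)$ is the unit edge for $\pi_{K\star D} \dashv i_D$. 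Since $\overline{F}$ is a $D$-parametrized $S$-colimit diagram extending $F \circ \pr_C$ with $\overline{F}|_D = \phi_! F$, the argument of Thm.~\ref{thm:existenceOfKanExtensions} then shows that the unit exhibits $\phi_! F$ as a left $S$-Kan extension of $F$ along $\phi$.

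The main obstacle I anticipate is step (iii): one must verify that $\pi_{K\star D}$ satisfies the dualized hypotheses of Lm.~\ref{lm:fiberwiseInitialToGlobalInitialSection} uniformly over $S$—in particular confirming the fiber identification $(K \star_D D)_d \simeq (K_d)^\rhd$ and that $i_D$ preserves $S$-cocartesian edges so condition (b) dualizes—and then carefully identify the resulting unit edges with the $h$-construction of Thm.~\ref{thm:existenceOfKanExtensions}. Once (iii) is in hand the rest is a formal composition of adjunctions.
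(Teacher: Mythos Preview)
Your proposal is correct and follows essentially the same route as the paper: factor $\phi$ as $C \xrightarrow{\iota_0} K \xrightarrow{j} K \star_D D \xrightarrow{\pi_{K\star D}} D$, obtain left $S$-adjoints $\pr_C^\ast$, $L$, $i_D^\ast$ to the three restriction functors via Prp.~\ref{prp:inclusionToFreeFibrationIsAdjoint}, Cor.~\ref{cor:leftAdjointToRestrictionIsColimitFunctor}, and the adjunction $\pi_{K\star D} \dashv i_D$ respectively, then appeal to Thm.~\ref{thm:existenceOfKanExtensions} for the unit clause. The paper simply asserts that $i_D$ is right $S$-adjoint to $\pi_{K\star D}$, whereas you justify it via the dual of Lm.~\ref{lm:fiberwiseInitialToGlobalInitialSection}(2); note that for the fiberwise-cocartesian hypothesis you want the (unlabelled) lemma at the end of \S7 on joins of $S$-cocartesian fibrations rather than Prp.~\ref{joinrt} directly, and that part (2) of Lm.~\ref{lm:fiberwiseInitialToGlobalInitialSection} does not itself require condition (b).
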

\begin{proof} Factor $\phi$ as the composition
\[ C \overset{\iota_C}{\to} C \times_D \sO_S(D) \overset{i}{\to} (C \times_D \sO_S(D)) \star_D D \overset{\pi_D}{\to} D. \]
Then $\phi^\ast$ factors as the composition
\[ \underline{\Fun}_S(D,E) \overset{\pi_D^\ast}{\to} \underline{\Fun}_S((C \times_D \sO_S(D)) \star_D D,E) \overset{i^\ast}{\to} \underline{\Fun}_S(C \times_D \sO_S(D),E) \overset{\iota_C^\ast}{\to} \underline{\Fun}_S(C,E). \]
By Proposition~\ref{prp:inclusionToFreeFibrationIsAdjoint} and Corollary~\ref{cor:RelAdjFromRelAdj}, $\pr_C^\ast$ is left $S$-adjoint to $\iota_C^\ast$. Since $i_D$ is right $S$-adjoint to $\pi_D$, by Corollary~\ref{cor:RelAdjFromRelAdj} again $i_D^\ast$ is left $S$-adjoint to $\pi_D^\ast$. By Corollary~\ref{cor:leftAdjointToRestrictionIsColimitFunctor}, $i^\ast$ admits a left $S$-adjoint $L$ which extends functors to $D$-parametrized $S$-colimit diagrams. Let $\phi_!$ be the composite of these three functors. The proof of Theorem~\ref{thm:existenceOfKanExtensions} shows that $\phi_!(F)$ is as asserted.
\end{proof}

The next proposition permits us to eliminate the datum of the natural transformation $\eta$ from the definition of a left $S$-Kan extension when $\phi$ is fully faithful.

\begin{prp} \label{prp:KanExtensionAlongFullyFaithfulInclusionIsExtension} Suppose $\phi: C \to D$ is the inclusion of a full $S$-subcategory. Then for any left $S$-Kan extension $G$ of $F: C \to E$ along $\phi$, $\eta$ is a natural transformation through equivalences. Consequently, $G$ is homotopic to a functor $\overline{F}: D \to E$ which is both an extension of $F$ and a left $S$-Kan extension (with the natural transformation $F \to \overline{F} \circ \phi = F$ chosen to be the identity).
\end{prp}
\begin{proof} Let $G'': (C \times_D \sO_S(D)) \star_D D \to E$ be as in the definition of a left $S$-Kan extension. Because $D$-parametrized $S$-colimit diagrams are stable under restriction to $S$-subcategories,
\[ (G'')_C: (C \times_D \sO_S(D) \times_D C) \star_C C \to E \]
is a $C$-parametrized $S$-colimit diagram. The additional assumption that $C$ is a full $S$-subcategory has the consequence that $(C \times_D \sO_S(D) \times_D C) \cong \sO_S(C)$. Also, for any object $x \in C$, the inclusion $\underline{x}$-functor $i_x: \underline{x} \to C^{/ \underline{x}}$ is $\underline{x}$-final, using the first criterion of Theorem~\ref{thm:cofinality}. Therefore, $\sO_S(C) \star_C C \overset{\pi_C}{\to} C \overset{F}{\to} E$ is a $C$-parametrized $S$-colimit diagram extending $\sO_S(C) \overset{\ev_0}{\to} C \overset{F}{\to} E$, so $(G'')_C \simeq F \circ \pi_C$.

The map $h$ in the proof of Theorem~\ref{thm:existenceOfKanExtensions} factors as
\[ C \times \Delta^1 \overset{h'}{\to} \sO_S(C) \star_C C \to (C \times_D \sO_S(D)) \star_D D. \]
We have the chain of equivalences
\[ \eta \simeq G'' \circ h \simeq F \circ \pi_C \circ h' = F \circ \pr_C, \]
proving the first assertion. For the second assertion, use that
\[ (\leftnat{D} \times \{1\}) \cup_{\leftnat{C} \times \{1\}} (\leftnat{C} \times (\Delta^1)^\sharp) \to \leftnat{D} \times (\Delta^1)^\sharp \]
is a cocartesian equivalence in $s\Set^+_{/S}$ to extend $(G,\eta)$ to a homotopy between $G$ and an extension $\overline{F}$, which is then necessarily a left $S$-Kan extension.
\end{proof}

\begin{cor} Suppose $\phi: C \to D$ a fully faithful $S$-functor and $E$ a $S$-cocomplete $S$-category. Then the left $S$-adjoint $\phi_!$ to the restriction $S$-functor $\phi^\ast$ exists and is fully faithful.
\end{cor}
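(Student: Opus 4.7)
Existence of $\phi_!$ is immediate: since $E$ is $S$-cocomplete, Thm.~\ref{thm:leftKanExtensionIsLeftAdjointToRestriction} produces a left $S$-adjoint $\phi_!$ to $\phi^\ast$ together with the assertion that, for every $S$-functor $F: C \to E$, the unit $\eta_F: F \to \phi^\ast \phi_! F$ exhibits $\phi_! F$ as a left $S$-Kan extension of $F$ along $\phi$. The substantive claim is full faithfulness of $\phi_!$, which by the standard criterion is equivalent to showing that the unit of the $S$-adjunction $\phi_! \dashv \phi^\ast$ is an equivalence. Applying Prp.~\ref{adjFromRelAdj} fiberwise (or working directly with the $S$-adjunction, using that an $S$-functor is an equivalence iff it is an equivalence on every fiber), this amounts to showing that $\eta_F$ is an equivalence for every $S$-functor $F: C \to E$.

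The plan is to reduce to the situation of Prp.~\ref{prp:KanExtensionAlongFullyFaithfulInclusionIsExtension}, which handles the case of the inclusion of a full $S$-subcategory. For this, I would factor $\phi$ as
\[ C \xrightarrow{\phi'} C' \hookrightarrow D \]
where $C' \subset D$ is the replete full $S$-subcategory on the essential image of $\phi$. Because $\phi$ is an $S$-functor, it carries cocartesian edges to cocartesian edges; hence the fiberwise essential image is stable under the pushforward functors of $D$, so $C'$ genuinely assembles into a full $S$-subcategory. Since $\phi'$ is an equivalence of $S$-categories, the induced restriction $(\phi')^\ast$ is an equivalence of functor $S$-categories (via Prp.~\ref{prp:FunctorFirstVariable}(1)), so it suffices to prove the claim after replacing $\phi$ by the inclusion $\iota: C' \hookrightarrow D$.

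Now Prp.~\ref{prp:KanExtensionAlongFullyFaithfulInclusionIsExtension} applies directly: the left $S$-Kan extension $\iota_! F$ is homotopic to an actual extension $\overline{F}: D \to E$ satisfying $\overline{F} \circ \iota = F$ with the comparison natural transformation $F \to \overline{F} \circ \iota$ chosen to be the identity. Consequently $\eta_F$ is an equivalence, establishing full faithfulness.

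\textbf{Main obstacle.} The only nontrivial verification is the reduction to the case of a full $S$-subcategory inclusion, namely checking that the fiberwise essential image $C' \subset D$ of a fully faithful $S$-functor is closed under the cocartesian pushforwards of $D$ and hence forms an $S$-subcategory. This is a routine stability argument using that $\phi$ preserves cocartesian edges together with the (fiberwise) fact that fully faithful functors reflect equivalences, so I expect no serious difficulty beyond bookkeeping.
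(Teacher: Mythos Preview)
Your proposal is correct and follows essentially the same approach as the paper, whose proof reads in its entirety ``Combine Thm.~\ref{thm:leftKanExtensionIsLeftAdjointToRestriction} and Prp.~\ref{prp:KanExtensionAlongFullyFaithfulInclusionIsExtension}.'' You have additionally spelled out the reduction from an arbitrary fully faithful $S$-functor to the inclusion of a full $S$-subcategory (needed because Prp.~\ref{prp:KanExtensionAlongFullyFaithfulInclusionIsExtension} is stated only for such inclusions), which the paper leaves implicit; this reduction is indeed routine for the reasons you give.
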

\begin{proof} Combine Theorem~\ref{thm:leftKanExtensionIsLeftAdjointToRestriction} and Proposition~\ref{prp:KanExtensionAlongFullyFaithfulInclusionIsExtension}.
\end{proof}

As expected, $S$-colimit diagrams are examples of $S$-left Kan extensions.

\begin{prp} \label{prp:ColimitsAreLKEs} Suppose $\phi: C \to D$ a $S$-cocartesian fibration and $\overline{F}: C \star_D D \to E$ a $S$-functor extending $F: C \to E$. Then $\overline{F}$ is a $D$-parametrized $S$-colimit diagram if and only if $\overline{F}$ is a $S$-left Kan extension of $F$.
\end{prp}
\begin{proof} We may check the assertion objectwise on $D$, so let $x \in D_s$. Consider the commutative diagram
\[ \begin{tikzcd}[row sep=2em, column sep=2em]
C_{\underline{x}} \ar[hookrightarrow]{r} \ar[hookrightarrow]{d}[swap]{\theta} & C_{\underline{s}} \ar{d}{F_{\underline{s}}} \\
C \times_{C \star_D D} (C \star_D D)^{/ \underline{x}} \ar{r} \ar{ru}{\pr_C} & E_{\underline{s}}.
\end{tikzcd} \]
The value of a $D$-parametrized colimit of $F$ on $x$ is computed as the $S^{s/}$-colimit of $(F_{\underline{s}})|_{C_{\underline{x}}}$, and that of a $S$-left Kan extension of $F$ as the $S^{s/}$-colimit of $F_{\underline{s}} \circ \pr_C$. Therefore, it suffices to prove that $\theta$ is $\underline{x}$-final. Let $f: x \rightarrow y$ be an object in $\underline{x}$, i.e. a cocartesian edge in $D$, which lies over $s \to t$. Then $\theta_f$ is equivalent to the inclusion
\[ C_y \cong C_y \times_{(C_y)^\rhd} ((C_y)^\rhd)^{/ \{\infty\}} \to C_t \times_{C_t \star_{D_t} D_t} (C_t \star_{D_t} D_t)^{/y}. \]
Applying Lemma~\ref{lm:miscCofinalityLemma} to the map $C_t \to C_t \star_{D_t} D_t$ of cocartesian fibrations over $D_t$, we deduce that $\theta_f$ is final.
\end{proof}

\begin{lem} \label{lm:miscCofinalityLemma} Let $X \to Y$ be a map of cocartesian fibrations over $Z$ and let $y \in Y$ be an object over $z \in S$. Then the inclusion $X_z \times_{Y_z} Y_z^{/y} \to X \times_Y Y^{/y}$ is final.
\end{lem}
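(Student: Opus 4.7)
The plan is to exhibit $\iota: X_z \times_{Y_z} Y_z^{/y} \hookrightarrow X \times_Y Y^{/y}$ as a right adjoint, whence it is final since right adjoints between $\infty$-categories are final (HTT 4.1.1.3(b)).

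The candidate left adjoint $L$ acts on objects as follows. Given $(x, \alpha: f(x) \to y)$ with $\gamma = q(\alpha): p(x) \to z$, choose a $p$-cocartesian edge $\epsilon: x \to x_0$ lifting $\gamma$. Since $f$ preserves cocartesian edges, $f(\epsilon)$ is $q$-cocartesian; applying its cocartesian lifting property to the degenerate $2$-simplex $s_0 \gamma$ in $Z$ (together with $\alpha$ filling the long edge of the relevant $\Lambda^2_0$) produces a $2$-simplex $\tau$ in $Y$ with $\partial_0 \tau = f(\epsilon)$, $\partial_1 \tau = \alpha$, and $\partial_2 \tau = \alpha_0: f(x_0) \to y$ lying over $\id_z$. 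Thus $(x_0, \alpha_0) \in X_z \times_{Y_z} Y_z^{/y}$, and the pair $(\epsilon, \tau)$ defines an edge $\eta: (x, \alpha) \to (x_0, \alpha_0)$ in $X \times_Y Y^{/y}$, which will serve as the unit. To upgrade this object-level recipe to an actual functor $L$ together with a natural transformation $\eta: \id \Rightarrow \iota L$, I would select a cocartesian pushforward $P$ for $p$ (as in Lm. \ref{lm:generalizedCocartesianPushforward} with $n=1$) and assemble $L$ from a single section of the resulting trivial fibration, exactly as in the construction of pushforward functors after that lemma.

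To verify the adjunction, one checks that precomposition with $\eta$ induces an equivalence on mapping spaces into each $(x_0', \alpha_0') \in X_z \times_{Y_z} Y_z^{/y}$. Via the pullback decomposition
\[ \Map_{X \times_Y Y^{/y}}((x, \alpha), (x_0', \alpha_0')) \simeq \Map_X(x, x_0') \times_{\Map_Y(f(x), f(x_0'))} \Map_{Y^{/y}}(\alpha, \alpha_0'), \]
and the analogue at $(x_0, \alpha_0)$, this splits into a statement about $\epsilon^*$ on $\Map_X$ and $\tau^*$ on $\Map_{Y^{/y}}$. By Prp. \ref{prp:MappingSpacesOfCocartesianFibration}, $p$-cocartesianness of $\epsilon$ makes the $\epsilon^*$-square homotopy cartesian over $\Map_Z$; the corresponding statement for $\tau^*$ follows from $q$-cocartesianness of $f(\epsilon)$. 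Since $p(x_0') = z$ and $q(\alpha_0') = \id_z$, the bottom $\Map_Z$-edges collapse so that both squares become equivalences on the fibers over $\gamma$. Combining gives the required mapping space equivalence. The main obstacle is the coherent globalization of $L$: promoting the object-level assignment to a functor and the choice of $\eta$ to a natural transformation. This is a familiar manipulation on marked simplicial sets once the pushforward $P$ is fixed, but it is the step that requires the most care.
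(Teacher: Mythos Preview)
Your approach is correct in outline but takes a substantially different and longer route than the paper's three-line proof. The paper observes that the composite $X \times_Y Y^{/y} \to Z^{/z}$ is a cocartesian fibration (citing the dual of \cite[3.4.1.10]{HA}), that the inclusion is precisely the pullback along $\{\id_z\} \hookrightarrow Z^{/z}$, and that this last map is final since $\id_z$ is terminal. Since cocartesian fibrations are smooth, the pullback of a final map is final, and the lemma follows immediately.

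Your right-adjoint strategy is really a hands-on reconstruction of the same fact: the edge $\eta = (\epsilon,\tau)$ you build is exactly a cocartesian lift in $X \times_Y Y^{/y} \to Z^{/z}$ of the unique edge to the terminal object $\id_z$, and the fiber inclusion over a terminal object of a cocartesian fibration is always a right adjoint. A few remarks on the execution: (i) your face-labels for $\tau$ are swapped ($d_2\tau = f(\epsilon)$, $d_0\tau = \alpha_0$); (ii) the phrase ``the corresponding statement for $\tau^*$ follows from $q$-cocartesianness of $f(\epsilon)$'' hides the step that $Y^{/y} \to Z^{/z}$ is itself cocartesian with cocartesian edges detected in $Y$, which is exactly the content of the cited HA result; (iii) the mapping-space check you sketch actually verifies that $\eta^*$ is an equivalence on $\Map_B$, but the adjunction requires the composite $\Map_A(L(w),a) \to \Map_B(\iota L(w),\iota a) \xrightarrow{\eta^*} \Map_B(w,\iota a)$ to be an equivalence---so you are implicitly using that $\iota$ is fully faithful, which holds because $\id_z$ is terminal in $Z^{/z}$ but deserves a sentence; (iv) the ``globalization of $L$'' is not actually needed: once each $\eta_w$ has the universal property pointwise, \cite[5.2.7.8]{HTT} produces the adjunction (or one can bypass adjointness entirely and just invoke Theorem~A, since $\eta_w$ exhibits an initial object of $(X_z \times_{Y_z} Y_z^{/y}) \times_{X \times_Y Y^{/y}} (X \times_Y Y^{/y})^{w/}$). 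The paper's smoothness argument packages all of this into a single black-box citation.
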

\begin{proof} By the dual of \cite[Lemma~3.4.1.10]{HA}, $X \times_Y Y^{/y} \to Z^{/z}$ is a cocartesian fibration. We have a pullback square
\[ \begin{tikzcd}[row sep=2em, column sep=2em]
X_z \times_{Y_z} Y_z^{/y} \ar{r} \ar{d} & X \times_Y Y^{/y} \ar{d}  \\
\{z\} \ar{r}{\id_z} & Z^{/z}.
\end{tikzcd} \]
Since the bottom horizontal map is final and cocartesian fibrations are smooth, the top horizontal map is final.
\end{proof}

As with $S$-colimits, $S$-left Kan extensions reduce to the usual notion of left Kan extension when taken in a $S$-category of objects.

\begin{prp} \label{prp:LKEsInCategoryOfObjects} Suppose a diagram of $S$-categories
\[ \begin{tikzcd}[row sep=2em, column sep=3em]
C \ar{r}{F}[name=F,below,near start]{} \ar{d}[swap]{\phi} & \underline{E}_S \\
D \ar{ru}[swap]{G}[name=G,above]{} & \quad \cdot
\arrow[Rightarrow,to path={(F) -- node[left]{$\scriptstyle\eta$} (G)}]
\end{tikzcd} \]
The following are equivalent:
\begin{enumerate}
	\item $G$ is a left $S$-Kan extension of $F$ along $\phi$.
	\item $G^\dagger$ is a left Kan extension of $F^\dagger$ along $\phi$.
	\item For all objects $s \in S$, $G^\dagger|_{D_s}$ is a left Kan extension of $F^\dagger|_{C_s}$ along $\phi_s$.
\end{enumerate}
\end{prp}
\begin{proof} We first prove that (1) and (2) are equivalent. Factor $\phi: C \to D$ through the free $S$-cocartesian fibration on $\phi$:
\[ \phi: C \xrightarrow{\iota_C} C \times_D \sO_S{D} \xrightarrow{\Fr^\cocart(\phi)} D. \]
Since $\iota_C$ is $S$-left adjoint to $\pr_C$, it is also left adjoint. Therefore, the $S$-left Kan extension resp. the left Kan extension of $F$ resp. $F^\dagger$ along $\iota_C$ is computed by $F \circ \pr_C$ resp. $F^\dagger \circ \pr_C$. By transitivity of Kan extensions, we thereby reduce to the case that $\phi$ is $S$-cocartesian. The claim now follows easily by combining Proposition~\ref{prp:identifyingColimitsInCatOfObjects} and Proposition~\ref{prp:ColimitsAreLKEs}.

We next prove that (2) and (3) are equivalent. For this, it suffices to observe that for all objects $d \in D$ over some $s \in S$, $C_s \times_{D_s} D_s^{/d} \to C \times_D D^{/d}$ is final by Lemma~\ref{lm:miscCofinalityLemma} applied to $C \to D$.
\end{proof}


\section{Yoneda lemma}

By Proposition~\ref{prp:identifyingColimitsInCatOfObjects}, $\underline{\Top}_S$ is $S$-cocomplete, so by Corollary~\ref{cor:FunctorCategoryCocomplete}, the $S$-category of presheaves
$$\PP_S(C) \coloneq \underline{\Fun}_S(C^\vop,\underline{\Top}_S)$$
is $S$-cocomplete. The $S$-Yoneda embedding $j: C \to \PP_S(C)$ was constructed in \cite[\S 10]{BDGNS1} via $S$-straightening the left fibration $\widetilde{\sO}_S(C) \to C^\vop \times_S C$ given fiberwise by twisted arrows. It was shown there that $j$ is fully faithful \cite[Theorem~10.4]{BDGNS}. In this section, we first prove the $S$-Yoneda lemma and then establish the universal property of $\PP_S(C)$ as the free $S$-cocompletion of $C$.

\nomenclature[presheaves]{$\PP_S(C)$}{Parametrized presheaves}

\begin{lem}[$S$-Yoneda lemma] \label{lm:Yoneda} Let $j: C \to \PP_S(C)$ denote the $S$-Yoneda embedding. Then the identity on $\PP_S(C)$ is a $S$-left Kan extension of $j$ along itself.
\end{lem}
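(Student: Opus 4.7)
The plan is to verify the pointwise criterion for left $S$-Kan extensions and then successively peel off layers in the target until the statement becomes an instance of the classical Yoneda lemma. By the remark after Definition \ref{dfn:SLKE}, it suffices to show that for every $F \in \PP_S(C)$ over $s \in S$, the $\underline{F}$-functor $\id|_{\underline{F}}: \underline{F} \to \PP_S(C)_{\underline{s}}$ is a left $S^{s/}$-Kan extension of $j|_{C_{\underline{F}}}: C_{\underline{F}} \to \PP_S(C)_{\underline{s}}$ along $j_{\underline{F}}$. Base-changing to $S^{s/}$, I may assume $s$ is initial in $S$ and thereby identify $\underline{F}$ with $S$.

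By Theorem \ref{thm:existenceOfKanExtensions} together with Proposition \ref{prp:ColimitsAreLKEs}, this reduces to showing that the cocartesian section $F: S \to \PP_S(C)$ is the $S$-colimit of the $S$-functor $p_F: C_{\underline{F}} \to C \xrightarrow{j} \PP_S(C)$. Since $\PP_S(C) = \underline{\Fun}_S(C^\vop, \underline{\Top}_S)$, Proposition \ref{prp:colimitsInFunctorCategories} reduces this further to the assertion that, for every $c \in C^\vop$ over $t \in S$, the composite
\[ (C_{\underline{F}})_{\underline{t}} \star_{\underline{t}} \underline{t} \to \PP_S(C)_{\underline{t}} \xrightarrow{\mathrm{ev}_c} \underline{\Top}_{\underline{t}}, \]
with cone point the object of $\underline{\Top}_{\underline{t}}$ determined by $F$ and $c$, is a $S^{t/}$-colimit diagram. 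Proposition \ref{prp:identifyingColimitsInCatOfObjects} then turns this into a statement about ordinary left Kan extensions in $\Top$, checkable at each $t' \in S^{t/}$.

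The remaining task is to identify the resulting diagram in $\Top$ with an instance of the classical Yoneda lemma. Unwinding the construction of $j$ via the left fibration $\widetilde{\sO}_S(C) \to C^\vop \times_S C$ from \cite[\S 10]{BDGNS1}, one recognizes the diagram as the functor $c' \mapsto \Map_C(c,c')$, indexed by the ordinary slice of $C_{t'}$ over the appropriate component of $F$; its colimit computes the desired value of $F$ at $(c, t')$ by \cite[5.1.5.3]{HTT}, concluding the argument.

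The principal obstacle is the last identification, in which one must untangle the parametrized fiber $C_{\underline{F}} = \underline{F} \times_{\PP_S(C)} C$ and its base change to $S^{t/}$, and then match the resulting diagram, after passage through $\mathrm{ev}_c$ and fiberwise evaluation, with an ordinary slice of $C$ indexing a representable $\Top$-valued functor. This combinatorial matching is rendered tractable precisely because $j$ is defined via $\widetilde{\sO}_S(C)$, which tautologically encodes mapping spaces in $C$.
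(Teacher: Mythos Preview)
Your reduction in the first paragraph is where the proof breaks down. You invoke item (3) of the remark after Definition~\ref{dfn:SLKE} and then pass to the claim that $F$ is the $S$-colimit of $p_F: C_{\underline{F}} \to C \xrightarrow{j} \PP_S(C)$, where you explicitly take $C_{\underline{F}} = \underline{F} \times_{\PP_S(C)} C$ to be the \emph{strict} parametrized fiber. But the pointwise formula for a left $S$-Kan extension (unwinding Definition~\ref{dfn:SLKE} and Definition~\ref{dfn:paramColimit}) requires the \emph{comma} fiber $C \times_{\PP_S(C)} \PP_S(C)^{/\underline{F}}$, which is the $S$-analogue of the category of elements of $F$. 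For a presheaf $F$ not in the essential image of $j$, the strict fiber $C_{\underline{F}}$ is typically empty (its objects are pairs consisting of a cocartesian pushforward $F_t$ of $F$ together with a $c \in C_t$ with $j(c) = F_t$), so the colimit you write down would be the initial object, not $F$. Your closing paragraph hints that you expect to find ``an ordinary slice of $C$'' at the end, which is the comma fiber, but this is not what $C_{\underline{F}}$ is.

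The paper sidesteps this issue entirely by decomposing on the \emph{target} side first. Since $\PP_S(C) = \underline{\Fun}_S(C^{\vop}, \underline{\Top}_S)$, Proposition~\ref{prp:colimitsInFunctorCategories} reduces the Kan extension condition to checking, for each $x \in C_s$, that $\ev_x : \PP_{\underline{s}}(C_{\underline{s}}) \to \underline{\Top}_{\underline{s}}$ is an $S^{s/}$-left Kan extension of $\ev_x \circ j_{\underline{s}}$. One then identifies $(\ev_x j)^\dagger \simeq \Map_C(x,-)$ directly from the twisted arrow description of $j$, and Proposition~\ref{prp:LKEsInCategoryOfObjects} reduces the remaining statement to a fiberwise ordinary left Kan extension, where it is the classical Yoneda lemma (plus a short argument passing from $\PP(C_t)$ to $\Fun(C^{\vop}_{\underline{t}}, \Top)$). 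This order of reductions avoids ever having to analyze comma categories over a general object of $\PP_S(C)$.
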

\begin{proof} By Proposition~\ref{prp:colimitsInFunctorCategories}, it suffices to show that for every $s \in S$ and object $x \in C_s$, $\ev_x: \PP_{\underline{s}}(C_{\underline{s}}) \to \underline{\Top}_{\underline{s}}$ is a $S^{s/}$-left Kan extension of $\ev_x j_{\underline{s}}$. To ease notation, let us replace $S^{s/}$ by $S$ and suppose that $s \in S$ is an initial object.

We claim that $(\ev_x j)^\dagger: C \to \Top$ is homotopic to $\Map_C(x,-)$. By definition of the $S$-Yoneda embedding, $(\ev_x j)^\dagger$ classifies the left fibration
$$\ev_1: \widetilde{\sO}_S(C)_{x \rightarrow } \to C$$
pulled back from $\widetilde{\sO}_S(C) \to C^\vop \times_S C$ via the cocartesian section $\sigma: S \to C^\vop$ defined by $\sigma(s)=x$. By \cite[Proposition~4.4.4.5]{HTT}, it suffices to show that $\id_x$ is an initial object in $\widetilde{\sO}_S(C)_{x \rightarrow }$. For this, because $s \in S$ is an initial object we reduce to checking that for all edges $\alpha: s \rightarrow t$, the pushforward of $\id_x$ by $\alpha$ is an initial object in the fiber $(\widetilde{\sO}_S(C)_{x \rightarrow })_t$. But this fiber is equivalent to $\widetilde{\sO}(C_t)_{\alpha_! x \rightarrow} \simeq (C_t)^{\alpha_! x /}$.

Applying Proposition~\ref{prp:LKEsInCategoryOfObjects}, we reduce to showing that for all $t \in S$, $(\ev_x)^\dagger|_{\PP_S(C)_t}$ is a left Kan extension of $(\ev_x j)^\dagger|_{C_t}$. Note that for $y$ any cocartesian pushforward of $x$ over the essentially unique edge $s \rightarrow t$, we have both that $(\ev_x j)^\dagger|_{C_t}$ is homotopic to $\Map_{C_t}(y,-)$ and $(\ev_x)^\dagger|_{\PP_S(C)_t}$ is homotopic to $\ev_y$ (regarding $y$ as an object in $C_{\underline{t}}^\vop$). The inclusion
$$C_t \to \PP_S(C)_t \simeq \Fun(C^\vop_{\underline{t}}, \Top)$$
factors through $\PP(C_t)$ with $\PP(C_t) \to \Fun(C^\vop_{\underline{t}}, \Top)$ left adjoint to precomposition by the inclusion $i: C_t^\op \to C_{\underline{t}}^\vop$. By the usual Yoneda lemma for $\infty$-categories, $\ev_y: \PP(C_t) \to \Top$ is the left Kan extension of $\Map_{C_t}(y,-)$. The left Kan extension of $\ev_y$ to $\PP_S(C)_t$ is then given by precomposition by $i$, so is again $\ev_y$.
\end{proof}

To state the universal property of $\PP_S(C)$, we need to introduce a bit of terminology.

\begin{dfn} \label{def:strong-preservation} Let $F:C \to D$ be a $S$-functor. We say that $F$ \emph{strongly preserves $S$-(co)limits} if for all $s \in S$, $F_{\underline{s}}$ preserves $S^{s/}$-(co)limits.
\end{dfn}

\begin{rem} If $F$ strongly preserves $S$-colimits then $F$ preserves $S$-colimits. However, the converse is not necessarily true.
\end{rem}

\begin{ntn} Suppose that $C$ and $D$ are $S$-cocomplete $S$-categories. Let $\Fun^L_S(C,D)$ denote the full subcategory of $\Fun_S(C,D)$ on the $S$-functors $F$ which strongly preserve $S$-colimits. Let $\underline{\Fun}_S^L(C,D)$ denote the full $S$-subcategory of $\underline{\Fun}_S(C,D)$ with fibers $\Fun^L_{S^{s/}}(C,D)$ over $s \in S$.
\end{ntn}

\begin{thm} \label{thm:UniversalPropertyOfPresheaves} Let $E$ be a $S$-cocomplete $S$-category. Then restriction along the $S$-Yoneda embedding defines equivalences
\begin{align*} \Fun^L_S(\PP_S(C),E) &\xrightarrow{\simeq} \Fun_S(C,E) \\
\underline{\Fun}^L_S(\PP_S(C),E) &\xrightarrow{\simeq} \underline{\Fun}_S(C,E)
\end{align*}
with the inverse given by $S$-left Kan extension.
\end{thm}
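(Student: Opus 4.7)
The plan is to prove the second (parametrized) equivalence directly and deduce the first one from it. Indeed, $\Fun^L_S(\PP_S(C),E)$ is the full subcategory of $\Fun_S(S,\underline{\Fun}_S(\PP_S(C),E))$ on those $S$-functors whose slicewise restrictions land in $\Fun^L_{S^{s/}}$; once we know the essential images of $j^*$ and $j_!$ match up at the $S$-category level, the first equivalence follows by passing to cocartesian sections.

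By Theorem \ref{thm:leftKanExtensionIsLeftAdjointToRestriction}, applied to the $S$-functor $j:C\to \PP_S(C)$ and the $S$-cocomplete target $E$, we get a $S$-adjunction
\[ \adjunct{j_!}{\underline{\Fun}_S(C,E)}{\underline{\Fun}_S(\PP_S(C),E)}{j^*} \]
with $j_!F$ a left $S$-Kan extension of $F$ along $j$. Since $j$ is fully faithful (\cite[\S 10]{BDGNS1}), Proposition \ref{prp:KanExtensionAlongFullyFaithfulInclusionIsExtension} tells us that $j_!$ is fully faithful and the unit $F \xrightarrow{\simeq} j^* j_! F$ is an equivalence. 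Thus it remains to identify the essential image of $j_!$ with $\underline{\Fun}_S^L(\PP_S(C),E)$, i.e.\ to verify two things: (a) every $j_!F$ strongly preserves $S$-colimits; and (b) every $F\in\underline{\Fun}_S^L(\PP_S(C),E)$ is of the form $j_!G$ for some $G$.

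Claim (b) is the formal half. Given such an $F$, the $S$-Yoneda Lemma \ref{lm:Yoneda} says that $\id_{\PP_S(C)}$ is a left $S$-Kan extension of $j$ along $j$. Unwinding Definition \ref{dfn:SLKE}, this exactly exhibits every object of $\PP_S(C)_{\underline{s}}$ as the $\underline{s}$-colimit of a canonical diagram of representables indexed by a slice. Since $F$ strongly preserves $S$-colimits, postcomposition carries this family of $S$-colimit diagrams to $S$-colimit diagrams in $E$; unravelling, this shows that $F\simeq F\circ\id_{\PP_S(C)}$ is a left $S$-Kan extension of $F\circ j=j^*F$ along $j$. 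By the uniqueness clause of Theorem \ref{thm:existenceOfKanExtensions}, $F\simeq j_!j^*F$, verifying (b) and simultaneously showing that the counit $j_!j^*\to \id$ is an equivalence on $\underline{\Fun}_S^L$.

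The hard step is (a): that $j_!F$ strongly preserves $S$-colimits. Reducing along slices to a fixed $s\in S$ (using that $(j_!F)_{\underline{s}}\simeq j_!(F_{\underline{s}})$, by base-change compatibility of left $S$-Kan extensions built into Theorem \ref{thm:existenceOfKanExtensions}), we may assume $s$ is initial and show $j_!F$ preserves $S$-colimits. Given a $S$-colimit diagram $\overline{p}\colon K\star_S S\to \PP_S(C)$, we compute $(j_!F\circ \overline{p})$ on the cone point via the pointwise formula of Theorem \ref{thm:existenceOfKanExtensions}: it is the $S$-colimit of $F$ over $C\times_{\PP_S(C)}\PP_S(C)^{/\overline{p}(\infty)}$. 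On the other hand, the putative $S$-colimit $\colim_K (j_!F\circ p)$ is an iterated $S$-colimit, each stage of which is again described by the pointwise formula. To identify these, I would show that the canonical comparison $S$-functor
\[ \bigl(C\times_{\PP_S(C)}\PP_S(C)^{/\overline{p}(\infty)}\bigr) \leftarrow \bigl(\text{total slice over the diagram } p\bigr) \]
is $S$-final in the sense of Theorem \ref{thm:cofinality}, checking this fiberwise; the required contractibility of comma fibers reduces to the statement that $S$-colimits in $\PP_S(C)=\underline{\Fun}_S(C^\vop,\underline{\Top}_S)$ are computed pointwise in $\underline{\Top}_S$ (Proposition \ref{prp:colimitsInFunctorCategories}) combined with the identification of mapping spaces out of representables with evaluation (which is Lemma \ref{lm:Yoneda} again). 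The main obstacle is checking this cofinality statement cleanly in the parametrized setting; once it is in hand, (a) follows, and the combination of (a), (b), and the full faithfulness of $j_!$ gives the second equivalence of the theorem, from which the first follows by taking cocartesian sections.
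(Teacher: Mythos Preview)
Your overall architecture matches the paper's: set up the $S$-adjunction $j_! \dashv j^*$ via Theorem~\ref{thm:leftKanExtensionIsLeftAdjointToRestriction}, use full faithfulness of $j$ to get $j^* j_! \simeq \id$, and reduce both statements to showing that a $S$-functor $F:\PP_S(C)\to E$ strongly preserves $S$-colimits if and only if it is a left $S$-Kan extension of $f=F|_C$. Your claim (b) is exactly the paper's ``only if'' direction and your argument for it (postcompose the $S$-Yoneda Lemma~\ref{lm:Yoneda} with the strongly colimit-preserving $F$) is the same as theirs.

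The gap is in claim (a), your ``if'' direction. You propose to verify directly that $j_!F$ preserves a given $S$-colimit diagram by a cofinality comparison between slice categories, and you yourself flag that ``checking this cofinality statement cleanly in the parametrized setting'' is the main obstacle. As written this is not a proof: the comparison $S$-functor is not defined precisely, and no argument is supplied for the fiberwise contractibility you would need. The paper avoids this computation entirely by passing through mapping spaces. By Corollary~\ref{cor:MappingSpaceDetectsIfFunctorPreservesColimits}, $F$ strongly preserves $S$-colimits iff for every $x\in E_s$ the functor $\underline{\Map}_E(F(-),x):\PP_S(C)^{\vop}_{\underline{s}}\to\underline{\Top}_{S^{s/}}$ preserves $S^{s/}$-limits. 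Since $F$ is a left $S$-Kan extension of $f$, $F^{\vop}$ is a right $S$-Kan extension of $f^{\vop}$, and postcomposing with the strongly limit-preserving $\underline{\Map}_{E^{\vop}}(x,-)$ (Proposition~\ref{prp:MappingSpacePreservesLimits}) exhibits $\underline{\Map}_E(F(-),x)$ as the right $S$-Kan extension of $G=\underline{\Map}_E(f(-),x)$ along $j^{\vop}$. But the vertical opposite of Lemma~\ref{lm:Yoneda} identifies that right $S$-Kan extension with $\underline{\Map}_{\PP_S(C)}(-,G)$, which is strongly $S$-limit preserving again by Proposition~\ref{prp:MappingSpacePreservesLimits}. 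This closes the argument without any direct cofinality computation over $\PP_S(C)$; the trick is that a second application of the Yoneda lemma turns ``left Kan extensions preserve colimits'' into a tautology about representable functors.
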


We prepare for the proof of Theorem~\ref{thm:UniversalPropertyOfPresheaves} with some necessary results concerning $S$-mapping spaces. Recall that given an $\infty$-category $C$, we have a number of equivalent options for describing mapping spaces in $C$. The relevant ones to consider for us are:

\begin{enumerate}
	\item Straightening the left fibration $\widetilde{\sO}(C) \to C^\op \times C$, we obtain the mapping space functor
	$$\Map_C(-,-): C^\op \times C \to \Top;$$
	\item Fixing an object $x \in C$, straightening the left fibration $C^{x/} \to C$ also yields the functor
	$$\Map_C(x,-): C \to \Top;$$
	\item Fixing objects $x, y \in C$, we have that the space $\Map_C(x,y)$ is given by
	$$\{x\} \times_C \sO(C) \times_C \{y\}.$$
\end{enumerate}

Likewise, given a $S$-category $C$, we have these possibilities:

\begin{enumerate}
	\item The $S$-functor
	$$\underline{\Map}_C(-,-): C^\vop \times_S C \to \underline{\Spc}_S$$
	given by the $S$-straightening of $\widetilde{\sO}_S(C) \to C^\vop \times_S C$;
	\item Fixing an object $x \in C$, we have the left fibration $C^{\underline{x}/} = \underline{x} \times_C \sO_S(C) \to C$, which $S$-straightens to
	$$\underline{\Map}_C(x,-): C \to \underline{\Spc}_S;$$
	\item Fixing an object $x \in C$, we have the left fibration $C^{x/} \to C$, which $S$-straightens to
	$$\underline{\Map}_C(x,-): C \to \underline{\Spc}_S;$$
	\item Fixing objects $x \in C$ and $y \in C_s$, we have the $S^{s/}$-space
	$$\underline{\Map}_C(x,y) = \underline{x} \times_C \sO_S(C) \times_C \underline{y} \to \underline{y} \xto{\simeq} S^{s/}.$$
\end{enumerate}

In the proof of Lemma~\ref{lm:Yoneda}, we showed that (1) and (3) were equivalent, and by Proposition~\ref{SlicingUnderObject}, (2) and (3) are equivalent. Finally, (2) specializes to (4) by definition. We are thus justified in our abuse of notation when we interchangeably refer to any of these options by $\underline{\Map}_C(-,-)$.

\nomenclature[mappingSpace]{$\underline{\Map}_C(-,-)$}{Parametrized mapping space}

Our next goal is to prove that $\underline{\Map}_C(-,-)$ preserves $S$-limits in the second variable, and dually, takes $S$-colimits in the first variable to $S$-limits. For this, we need a few lemmas.

\begin{lem} \label{lm:EquivalenceOfSFibrations} Let $F:X \to Y$ be a map of $S$-cocartesian or $S$-cartesian fibrations over an $S$-category $C$. The following are equivalent:
\begin{enumerate} \item $F$ is an equivalence.
\item For all $s \in S$ and $S^{s/}$-functors $Z \to C_{\underline{s}}$,
$$\underline{\Fun}_{/C_{\underline{s}}, S^{s/}} (Z, X_{\underline{s}}) \to \underline{\Fun}_{/C_{\underline{s}}, S^{s/}} (Z, Y_{\underline{s}})$$
is an equivalence.
\item For all $s \in S$ and $c \in C_s$,
$$\underline{\Fun}_{/C_{\underline{s}}, S^{s/}} (\underline{c}, X_{\underline{s}}) \to \underline{\Fun}_{/C_{\underline{s}}, S^{s/} } (\underline{c}, Y_{\underline{s}})$$
is an equivalence.
\item For all $c \in C$, $F_c: X_c \to Y_c$ is an equivalence.
\end{enumerate}
If $X$ and $Y$ are $S$-left or $S$-right fibrations over $C$, then all instances of $\underline{\Fun}$ can be replaced by $\underline{\Map}$.\footnote{$\underline{\Map}$ refers here to the maximal sub-left fibration of $\underline{\Fun}$ and not the $S$-mapping space functor.}
\end{lem}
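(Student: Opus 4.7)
The structure of the proof will be the cyclic chain $(1) \Rightarrow (2) \Rightarrow (3) \Rightarrow (4) \Rightarrow (1)$, with the bulk of the work located in $(3) \Rightarrow (4)$, while the outer implications are formal consequences of Quillen functoriality and the fiberwise characterization of cocartesian equivalences.

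For $(1) \Rightarrow (2)$, if $F$ is an equivalence then so is its base change $F_{\underline{s}}$ to $\underline{s}$, and $\underline{\Fun}_{/C_{\underline{s}}}(Z, -)$ is a right Quillen functor (it is the pullback of $\underline{\Fun}_{\underline{s}}(Z, -)$ by the section $\sigma_{\id_{C_{\underline{s}}}}$, and the latter is right Quillen by Prp.~\ref{func} and Lm.~\ref{funclem2}), and thus preserves equivalences between fibrant objects. The implication $(2) \Rightarrow (3)$ is the specialization $Z = \underline{c}$.

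The key step is $(3) \Rightarrow (4)$. Given $c \in C$ over $s$, take the fiber of $\underline{\Fun}_{/C_{\underline{s}}}(\underline{c}, X_{\underline{s}}) \to \underline{\Fun}_{/C_{\underline{s}}}(\underline{c}, Y_{\underline{s}})$ over the initial object $\id_s \in \underline{s} = S^{s/}$, obtaining an equivalence $\Fun_{/C_{\underline{s}}}(\underline{c}, X_{\underline{s}}) \xrightarrow{\simeq} \Fun_{/C_{\underline{s}}}(\underline{c}, Y_{\underline{s}})$. I then claim that evaluation at $\id_c \in \underline{c}$ yields a natural equivalence $\Fun_{/C_{\underline{s}}}(\underline{c}, X_{\underline{s}}) \xrightarrow{\simeq} X_c$, and similarly for $Y$; composing, this forces $F_c \colon X_c \to Y_c$ to be an equivalence. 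To justify the claim, note that $\underline{c} = \sO^\cocart_{c \to}(C_{\underline{s}})$ is the corepresentable cocartesian fibration at $c$, and that the inclusion $\{\id_c\}^\flat \hookrightarrow \leftnat{\underline{c}}$ of the initial object is left marked anodyne over $\leftnat{C_{\underline{s}}}$ (it factors as a left marked anodyne map into any cocartesian lift, followed by equivalences of fibers). Mapping into the fibrant object $\leftnat{X_{\underline{s}}}$ converts this into a trivial Kan fibration on $\Fun$-complexes, which is the desired evaluation equivalence.

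For $(4) \Rightarrow (1)$, in the $S$-cocartesian case, $X, Y \to C$ are in particular cocartesian fibrations (Rm.~\ref{relCocartFibIsCocartFib}), hence fibrant objects in $s\Set^+_{/\leftnat{C}}$; by the characterization of cocartesian equivalences between fibrant objects recalled in Section 2 (condition $(4')$), $F$ is an equivalence iff $F_c$ is a categorical equivalence for every $c \in C$. The $S$-cartesian case follows by the dual argument, either by taking vertical opposites (Prp.~\ref{prp:VOPisQuillenRightAdjoint}) or by working in the cartesian model structure on $s\Set^+_{/\rightnat{C}}$. Finally, for the parenthetical about $S$-left (resp.\ $S$-right) fibrations, note that in those cases every relevant $\Fun_{/C_{\underline{s}}}$ is a Kan complex (since $X_{\underline{s}} \to C_{\underline{s}}$ remains a left fibration after base change), so $\underline{\Fun}$ may be replaced with $\underline{\Map}$ throughout without changing the content. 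The main obstacle in the argument is the justification of the corepresentable evaluation equivalence $\Fun_{/C_{\underline{s}}}(\underline{c}, X_{\underline{s}}) \simeq X_c$; this is a parametrized Yoneda-type assertion, and once it is in hand the rest of the proof is essentially bookkeeping.
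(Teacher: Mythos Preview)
Your proof is correct and follows essentially the same route as the paper: the same cycle $(1)\Rightarrow(2)\Rightarrow(3)\Rightarrow(4)\Rightarrow(1)$, with $(3)\Rightarrow(4)$ handled by taking the fiber over $\id_s\in\underline{s}$ and then using that $\{\id_c\}\hookrightarrow\underline{c}$ is initial to identify $\Fun_{/C_{\underline{s}}}(\underline{c},X_{\underline{s}})\simeq X_c$; you supply more detail for this step than the paper does.

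One small caveat on your handling of $(4)\Rightarrow(1)$ in the $S$-cartesian case: the alternative you suggest of ``working in the cartesian model structure on $s\Set^+_{/\rightnat{C}}$'' does not directly apply, because an $S$-cartesian fibration $X\to C$ need not be a cartesian fibration over $C$ (there is no analogue of Rm.~\ref{relCocartFibIsCocartFib} in that direction). Your vertical-opposite alternative is fine, and so is the paper's uniform approach of first reducing to the fibers $F_s$ over $S$ and then checking fiberwise over $C_s$, where $X_s\to C_s$ is genuinely (co)cartesian.
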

\begin{proof} (1) $\Rightarrow$ (2): If $F$ is an equivalence, so is $F_{\underline{s}}$ for all $s \in S$. The map in question is then induced by a map of pullbacks through equivalences in which two matching legs are $S$-fibrations, so is an equivalence.

\noindent (2) $\Rightarrow$ (3) is obvious.

\noindent (3) $\Rightarrow$ (4): Given $c \in C_s$, take fibers over $\{s\} \in \underline{s}$ and note that
$$\underline{\Fun}_{/C_{\underline{s}}, S^{s/} } (\underline{c}, X_{\underline{s}})_{s} \simeq \Fun_{/C_c}(\{c\},X_s) \simeq X_c.$$

\noindent (4) $\Rightarrow$ (1): We must check that $F_s$ is an equivalence for all $s \in S$, for which it suffices to check fiberwise over $C_s$ by the hypothesis.
\end{proof}

\begin{lem} \label{lm:LimitsInSCategoryOfSpaces} Let $\overline{q}: S \star_S K \to \underline{\Top}_S$ be a $S$-functor which extends $q: K \to \underline{\Top}_S$. Let $\overline{X} \to S \star_S K$ be a left fibration which is an unstraightening of $\overline{q}^\dagger$, and let $X = \overline{X} \times_{S \star_S K} K$. Then $\overline{q}$ is a $S$-limit diagram if and only if the restriction $S$-functor 
\[ R: \underline{\Map}_{/S \star_S K, S}(S \star_S K, \overline{X}) \to \underline{\Map}_{/S \star_S K, S}(K, \overline{X}) \cong \underline{\Map}_{/K, S}(K,X) \]
is an equivalence.
\end{lem}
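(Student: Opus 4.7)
The plan is to reduce the claim to a fiberwise statement over $S$ and then match it with the classical characterization of limit diagrams in $\Top$ via section spaces of left fibrations, using Prp.~\ref{prp:LimitsInCatOfObjects} to translate the fiberwise limit condition back into $\bar q$ being an $S$-limit diagram.

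First, both source and target of $R$ are $S$-spaces (i.e.\ left fibrations over $S$), because $\bar X \to S \star_S K$ is a left fibration and the $\underline{\Map}_{/(-)}(-,-)$ construction preserves the property of having $\infty$-groupoid fibers. Hence by Lm.~\ref{lm:EquivalenceOfSFibrations}, $R$ is an equivalence iff its fiber $R_s$ is an equivalence for every $s\in S$. Unwinding the definitions and using the base-change formula for the $S$-join $(S\star_S K)\times_S\underline{s}\cong \underline{s}\star_{\underline{s}}K_{\underline{s}}$, the fiber $R_s$ is identified with the ordinary restriction map of mapping spaces
\[
\Map_{/\underline{s}\star_{\underline{s}}K_{\underline{s}}}(\underline{s}\star_{\underline{s}}K_{\underline{s}},\bar X_{\underline{s}})\longrightarrow \Map_{/K_{\underline{s}}}(K_{\underline{s}},X_{\underline{s}}).
\]

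Second, since $\mathrm{id}_s$ is initial in $\underline{s}=S^{s/}$, it is also initial in the $S$-join $\underline{s}\star_{\underline{s}}K_{\underline{s}}$ (as the $\underline{s}$-join construction preserves initial objects on the left factor). Combined with the classical fact that sections of a left fibration over a base with an initial object compute the value of the straightening at that initial object (cf.\ \cite[3.3.3.2]{HTT}), this yields
\[
\Map_{/\underline{s}\star_{\underline{s}}K_{\underline{s}}}(\underline{s}\star_{\underline{s}}K_{\underline{s}},\bar X_{\underline{s}})\simeq \bar q^\dagger(s),\qquad \Map_{/K_{\underline{s}}}(K_{\underline{s}},X_{\underline{s}})\simeq \lim q^\dagger|_{K_{\underline{s}}},
\]
with $R_s$ corresponding to the canonical comparison map. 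So $R_s$ is an equivalence iff $\bar q^\dagger(s)\to\lim q^\dagger|_{K_{\underline{s}}}$ is an equivalence, i.e.\ iff $\bar q^\dagger|_{K_{\underline{s}}^\lhd}\colon K_{\underline{s}}^\lhd\to\Top$ is an (ordinary) limit diagram.

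Finally, since $\Top$ is complete, a right Kan extension of $q^\dagger$ along $K\hookrightarrow S\star_S K$ exists, so Prp.~\ref{prp:LimitsInCatOfObjects} applies and gives the equivalence between $\bar q$ being an $S$-limit diagram and $\bar q^\dagger|_{K_{\underline{s}}^\lhd}$ being a limit diagram for every $s\in S$. Chaining the three reductions completes the proof. The main technical step will be the fiberwise identification of $R_s$ with the classical restriction map and the verification that $\mathrm{id}_s$ remains initial after the relative join; the rest is a formal consequence of Lm.~\ref{lm:EquivalenceOfSFibrations} and Prp.~\ref{prp:LimitsInCatOfObjects}.
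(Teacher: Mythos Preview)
Your proof is correct and follows essentially the same strategy as the paper: identify the fiber $R_s$ with the comparison map between section spaces of the pulled-back left fibration, recognize these section spaces as limits of $\overline{q}^\dagger$ over $\underline{s}\star_{\underline{s}}K_{\underline{s}}$ and of $q^\dagger$ over $K_{\underline{s}}$ via \cite[3.3.3.4]{HTT}, and then invoke Prp.~\ref{prp:LimitsInCatOfObjects}. Your argument is in fact slightly more explicit than the paper's on one point: you observe directly that $\id_s$ is initial in $\underline{s}\star_{\underline{s}}K_{\underline{s}}$ (because $\underline{s}\hookrightarrow\underline{s}\star_{\underline{s}}K_{\underline{s}}$ is a left adjoint, as noted in the proof of Prp.~\ref{prp:LimitsInCatOfObjects}), so the source of $R_s$ is unconditionally $\overline{q}^\dagger(s)$, whereas the paper phrases this identification as contingent on the right Kan extension condition.
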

\begin{proof} In view of \cite[Corollary~3.3.3.4]{HTT}, $R_s$ is a map from the limit of $\overline{q}^\dagger|_{\underline{s} \star_{\underline{s}} K_{\underline{s}}}$ to the limit of $q^\dagger|_{K_{\underline{s}}}$ induced by precomposition on the diagram. But by Proposition~\ref{prp:LimitsInCatOfObjects}, $\overline{q}$ is a $S$-limit diagram if and only if $\overline{q}^\dagger$ is a right Kan extension of $q^\dagger$, in which case both of the limits in question are equivalent to $\overline{q}^\dagger(s)$. The assertion now follows.
\end{proof}

\begin{prp} \label{prp:MappingSpacePreservesLimits} Let $\overline{p}: S \star_S K \to C$ be a $S$-functor. The following are equivalent:
\begin{enumerate}
	\item $\overline{p}$ is a $S$-limit diagram.
	\item For all $s \in S$ and $c \in C_s$,
	$$\underline{\Map}_{C_{\underline{s}}}(c,\overline{p}_{\underline{s}}(-)): \underline{s} \star_{\underline{s}} K_{\underline{s}}  \to \underline{\Top}_{S^{s/}}$$
	is a $S^{s/}$-limit diagram.
	\item For all $s \in S$ and $c \in C_s$,
	$$\underline{\Map}_{/C_{\underline{s}}, S^{s/}}(\underline{c}, C_{\underline{s}}^{/(\overline{p}_{\underline{s}},S^{s/})}) \to \underline{\Map}_{/C_{\underline{s}}, S^{s/} }(\underline{c}, C_{\underline{s}}^{/(p_{\underline{s}},S^{s/})})$$
	is an equivalence.
\end{enumerate}
Moreover, if the above conditions obtain, then
\[ \underline{\Map}_{/C_{\underline{s}}, S^{s/}}(\underline{c}, C_{\underline{s}}^{/(p_{\underline{s}},S^{s/})}) \simeq \underline{\Map}_{C_{\underline{s}}}(c,\overline{p}_{\underline{s}}(v)) \]
where $v$ is the cone point $\{s\} \in \underline{s} \star_{\underline{s}} K_{\underline{s}}$.
\end{prp}
\begin{proof}
(2) $\Leftrightarrow$ (3): We will show that the statements match after fixing $c \in C_s$. To ease notation, let us replace $S^{s/}$ by $S$ and suppose that $s \in S$ is an initial object. By Lemma~\ref{lm:LimitsInSCategoryOfSpaces} and using that $C^{\underline{c}/}$ is the $S$-unstraightening of $\underline{\Map}_C(c,-)$, $\underline{\Map}_C(c,\overline{p}(-))$ is a $S$-limit diagram if and only if
\[ \underline{\Map}_{/C,S}(S \star_S K, C^{\underline{c}/}) \to \underline{\Map}_{/C,S}(K, C^{\underline{c}/}) \]
is an equivalence. By Corollary~\ref{cor:SliceCompare}, this map is equivalent by a zig-zag to the map
\[ \underline{\Map}_{/C,S}(\underline{c}, C^{/(\overline{p},S)}) \to \underline{\Map}_{/C,S}(\underline{c}, C^{/(p,S)}). \]
The assertion now follows. The last assertion also follows in view of the equivalence $C^{/(\overline{p},S)} \simeq C^{/\underline{\overline{p}(v)}}$ and $\underline{\Map}_{/C,S}(\underline{c}, C^{/\underline{\overline{p}(v)}}) \simeq \underline{c} \times_{C} C^{/\underline{\overline{p}(v)}} \simeq \underline{\Map}_{C}(c,\overline{p}(v))$.

\noindent (1) $\Leftrightarrow$ (3): This follows from Lemma~\ref{lm:EquivalenceOfSFibrations} applied to $C^{/(\overline{p},S)} \to C^{/(p,S)}$, which is a map of $S$-right fibrations over $C$.
\end{proof}

\begin{cor} \label{cor:MappingSpaceDetectsIfFunctorPreservesColimits} Let $F: C \to D$ be a $S$-functor. Then
\begin{enumerate}
\item $F$ strongly preserves $S$-limits if and only if for all $s \in S$ and $d \in D_s$,
$$\underline{\Map}_{D_{\underline{s}}}(d,F_{\underline{s}}(-)): C_{\underline{s}} \to \underline{\Top}_{S^{s/}}$$
preserves $S^{s/}$-limits. 
\item $F$ strongly preserves $S$-colimits if and only if for all $s \in S$ and $d \in D_s$,
$$\underline{\Map}_{D_{\underline{s}}}(F_{\underline{s}}(-),d) = \underline{\Map}_{D^\vop_{\underline{s}}}(d,F^\vop_{\underline{s}}(-)): C^\vop_{\underline{s}} \to \underline{\Top}_{S^{s/}}$$
preserves $S^{s/}$-limits.
\end{enumerate}
\end{cor}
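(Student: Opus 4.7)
The plan is to deduce both statements as direct consequences of Proposition \ref{prp:MappingSpacePreservesLimits}, which characterizes $S$-limit diagrams via the representable mapping space functors. Statement (2) will follow from (1) by passing to vertical opposites, using that $(-)^\vop$ is compatible with parametrized (co)limits (Proposition \ref{prp:VOPisQuillenRightAdjoint} and its consequences) and that $\underline{\Map}_{D^\vop}(d,-)$ on $D^\vop$ is identified with $\underline{\Map}_D(-,d)$ on $D$.

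For (1), I unwind the definition: $F$ strongly preserves $S$-limits means that for every $s \in S$ and every $\underline{s}$-limit diagram $\overline{p} : \underline{s} \star_{\underline{s}} K \to C_{\underline{s}}$, the composite $F_{\underline{s}} \circ \overline{p}$ is a $\underline{s}$-limit diagram. For the forward direction, suppose $F$ strongly preserves $S$-limits. Given $s \in S$, $d \in D_s$, and a $\underline{s}$-limit diagram $\overline{p}$ in $C_{\underline{s}}$, the composite $F_{\underline{s}} \circ \overline{p}$ is a $\underline{s}$-limit diagram by hypothesis. Applying Proposition \ref{prp:MappingSpacePreservesLimits}(1)$\Rightarrow$(2) to this $\underline{s}$-limit diagram and specializing the outer variable to the cone object $\id_s \in \underline{s}$ and to $d$ in the corresponding fiber, I obtain that $\underline{\Map}_{D_{\underline{s}}}(d, F_{\underline{s}}\circ\overline{p}(-))$ is a $\underline{s}$-limit diagram. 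Since $\overline{p}$ was arbitrary, $\underline{\Map}_{D_{\underline{s}}}(d, F_{\underline{s}}(-))$ preserves $\underline{s}$-limits.

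For the reverse direction, I argue: assume for every $s \in S$ and every $d \in D_s$ the functor $\underline{\Map}_{D_{\underline{s}}}(d, F_{\underline{s}}(-))$ preserves $\underline{s}$-limits. Fix $s$ and let $\overline{p}$ be any $\underline{s}$-limit diagram in $C_{\underline{s}}$; I must show $F_{\underline{s}} \circ \overline{p}$ is again a $\underline{s}$-limit diagram. By Proposition \ref{prp:MappingSpacePreservesLimits}(2)$\Rightarrow$(1) applied in $D_{\underline{s}}$, it suffices to check that for every object of $\underline{s}$ corresponding to a morphism $\alpha : s \to t$ and every $d \in (D_{\underline{s}})_{\alpha} \cong D_t$, the functor $\underline{\Map}_{(D_{\underline{s}})_{\underline{\alpha}}}(d, (F_{\underline{s}}\circ\overline{p})_{\underline{\alpha}}(-))$ is a $\underline{\alpha}$-limit diagram. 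Using the natural identifications $\underline{\alpha} \simeq \underline{t}$, $(D_{\underline{s}})_{\underline{\alpha}} \simeq D_{\underline{t}}$, and $(F_{\underline{s}})_{\underline{\alpha}} \simeq F_{\underline{t}}$, combined with the stability of parametrized limit diagrams under base change (so that $\overline{p}_{\underline{\alpha}}$ is a $\underline{t}$-limit diagram), this reduces to the hypothesis applied at $t$ with $d \in D_t$.

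Statement (2) is then formal: $F$ strongly preserves $S$-colimits if and only if $F^\vop$ strongly preserves $S$-limits (by applying the vertical opposite fiberwise, as opposites of (co)limit diagrams are (co)limit diagrams in the opposite). Applying (1) to $F^\vop : C^\vop \to D^\vop$, this holds if and only if for all $s$ and $d \in D^\vop_s = D_s$, $\underline{\Map}_{D^\vop_{\underline{s}}}(d, F^\vop_{\underline{s}}(-))$ preserves $\underline{s}$-limits, which is exactly the stated condition. The main bookkeeping obstacle is keeping track of the slice operation $(\underline{s})^{\alpha/} \simeq \underline{t}$ and checking that the mapping space functor behaves compatibly under base change; once this is set up carefully, both directions follow immediately from Proposition \ref{prp:MappingSpacePreservesLimits}.
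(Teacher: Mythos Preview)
Your proposal is correct and matches the paper's intended argument: the corollary is stated without proof immediately after Proposition~\ref{prp:MappingSpacePreservesLimits}, and the deduction you spell out (apply the equivalence (1)$\Leftrightarrow$(2) of that proposition over each $\underline{s}$, together with stability of parametrized limit diagrams under base change along $\underline{\alpha} \simeq \underline{t} \to \underline{s}$, then dualize via $(-)^\vop$ for part (2)) is exactly what is being left implicit.
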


\begin{cor} Let $C$ be a $S$-category. The Yoneda embedding $j:C \to \PP_S(C)$ strongly preserves and detects $S$-limits.
\end{cor}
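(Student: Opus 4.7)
The plan is to reduce both claims --- preservation and detection of $S$-limits --- to Proposition \ref{prp:MappingSpacePreservesLimits}, by exploiting the pointwise description of $S$-limits in the functor $S$-category $\PP_S(C) = \underline{\Fun}_S(C^\vop, \underline{\Top}_S)$. Since strong preservation and detection are fiberwise assertions for each slice $S^{s/}$, and the Yoneda setup is stable under base change (via Lemma \ref{lm:functorialityofVOP} and the evident identification $(\underline{\Top}_S)_{\underline{s}} \simeq \underline{\Top}_{S^{s/}}$, which together give $\PP_S(C)_{\underline{s}} \simeq \PP_{\underline{s}}(C_{\underline{s}})$), I would first reduce to showing that $j$ itself preserves and detects $S$-limits; the strong versions then follow by running the same argument over each $S^{s/}$.

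Given an $S$-functor $\overline{p}: S \star_S K \to C$, the next step is to apply the limit-dual of Proposition \ref{prp:colimitsInFunctorCategories}: the composite $j \circ \overline{p}$ is an $S$-limit diagram in $\PP_S(C)$ if and only if, for every $y \in C^\vop$ over some $s \in S$, the composite $\ev_y \circ (j \circ \overline{p})_{\underline{s}}$ is an $S^{s/}$-limit diagram in $\underline{\Top}_{S^{s/}}$. The crux is then to identify $\ev_y \circ j$ with the $S$-functor $\underline{\Map}_C(y, -) : C \to \underline{\Top}_S$; this is immediate from the construction of $j$ as the straightening of the left fibration $\widetilde{\sO}_S(C) \to C^\vop \times_S C$, and is essentially what is used in the proof of Lemma \ref{lm:Yoneda}.

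With this identification in hand, both halves of the statement drop out of the equivalence of conditions (1) and (2) in Proposition \ref{prp:MappingSpacePreservesLimits}. Preservation is the implication (1) $\Rightarrow$ (2): if $\overline{p}$ is an $S$-limit diagram, then for every $s \in S$ and $y \in C^\vop_s$ the functor $\underline{\Map}_{C_{\underline{s}}}(y, \overline{p}_{\underline{s}}(-))$ is an $S^{s/}$-limit diagram, and so $j \circ \overline{p}$ is an $S$-limit diagram by the pointwise criterion. Detection is the reverse implication: if $j \circ \overline{p}$ is an $S$-limit diagram, the pointwise criterion supplies hypothesis (2) of the proposition for $\overline{p}$, whence $\overline{p}$ is itself an $S$-limit diagram.

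I do not anticipate substantive obstacles; the only step requiring some care is the promotion of the identification $\ev_y \circ j \simeq \underline{\Map}_C(y, -)$ to a genuine equivalence of $S$-functors, as opposed to pointwise data, but this is built into the defining straightening of $j$. The rest is a purely formal deduction from the two propositions already recorded.
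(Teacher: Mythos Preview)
Your proposal is correct and follows essentially the same approach as the paper, which simply says to combine Proposition~\ref{prp:MappingSpacePreservesLimits} and Proposition~\ref{prp:colimitsInFunctorCategories}; you have faithfully unpacked what that combination entails, including the key identification $\ev_y \circ j \simeq \underline{\Map}_C(y,-)$ that the paper leaves implicit (but which is available from the very definition of $j$ and the discussion surrounding Lemma~\ref{lm:Yoneda}).
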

\begin{proof} Combine Proposition~\ref{prp:MappingSpacePreservesLimits} and Proposition~\ref{prp:colimitsInFunctorCategories}.
\end{proof}

\begin{proof}[Proof of Theorem~\ref{thm:UniversalPropertyOfPresheaves}] By Theorem~\ref{thm:leftKanExtensionIsLeftAdjointToRestriction}, we have a $S$-adjunction
\[ \adjunct{j_!}{\underline{\Fun}_S(C,E)}{\underline{\Fun}_S(\PP_S(C),E)}{j^\ast} \]
with $j^\ast j_! \simeq \id$ and the essential image of $j_!$ spanned by the left $S^{s/}$-Kan extensions ranging over all $s \in S$. By Proposition~\ref{adjFromRelAdj}, taking cocartesian sections yields an adjunction
\[ \adjunct{j_!}{\Fun_S(C,E)}{\Fun_S(\PP_S(C),E)}{j^\ast} \]
again with $j^\ast j_! \simeq \id$ and the essential image of $j_!$ spanned by the left $S$-Kan extensions. Both assertions will therefore follow if we prove that for a $S$-functor $F: \PP_S(C) \to E$, $F$ strongly preserves $S$-colimits if and only if $F$ is a left $S$-Kan extension of its restriction $f = F|_C$.

For the `only if' direction, because $\id_{\PP_S(C)}$ is a $S$-left Kan extension of $j$ by the $S$-Yoneda lemma \ref{lm:Yoneda}, $F =  F \circ \id_{\PP_S(C)}$ is a left $S$-Kan extension as it is the postcomposition of $\id_{\PP_S(C)}$ with a strongly $S$-colimit preserving functor.

For the `if' direction, we use the criterion of Corollary~\ref{cor:MappingSpaceDetectsIfFunctorPreservesColimits}. Replacing $S^{s/}$ by $S$ and supposing that $s \in S$ is an initial object, we reduce to showing that for all $x \in E_s$, $\underline{\Map}_E(F(-),x): \PP_S(C)^\vop \to \underline{\Top}_S$ preserves $S$-limits. We first observe that $F^\vop$ is a $S$-right Kan extension (of $f^\vop$), hence so is $\underline{\Map}_E(F(-),x) = \underline{\Map}_{E^\vop}(x,-) \circ F^\vop$ as the postcomposition of a $S$-right Kan extension with a strongly $S$-limit preserving functor. However, by the vertical opposite of the $S$-Yoneda lemma, for any $S$-functor $G: C^\vop \to \underline{\Top}_S$, the strongly $S$-limit preserving $S$-functor $\underline{\Map}_{\PP_S(C)}(-,G)$ is a $S$-right Kan extension of $G$. Applying this for $G = \underline{\Map}_E(f(-),x)$, we conclude.
\end{proof}

\section{Bousfield--Kan formula}

In this section, we prove two decomposition formulas for $S$-colimits which resemble the classical Bousfield--Kan formula for computing homotopy colimits. We first study the situation when $S = \Delta^0$.

\begin{ntn} Let $K$ be a simplicial set and let $\Delta_{/K}$ be the nerve of the category of simplices of $K$. We denote the first vertex map by $\upsilon_K: \Delta_{/K}^\op \to K$ and the last vertex map by $\mu_K: \Delta_{/K} \to K$.
\end{ntn}

\nomenclature[vertexFirst]{$\upsilon_K$}{First vertex map}
\nomenclature[vertexLast]{$\mu_K$}{Last vertex map}

By \cite[Proposition~4.2.3.14]{HTT}, $\mu_K$ is final. Unfortunately, this is the wrong direction for the purposes of obtaining a Bousfield--Kan type formula, since $\Delta_{/K}$ is a \emph{cartesian} fibration over $\Delta$. To rectify this state of affairs, we prove that $\upsilon_K$ is in fact final.

\begin{prp} \label{prp:cofinalityFirstVertex} Let $K$ be a simplicial set. Then the first vertex map $\upsilon_K: \Delta_{/K}^\op \to K$ is final. Equivalently, the last vertex map $\mu_{K^\op}$ is initial.
\end{prp}
\begin{proof} Note that $\upsilon_K$ is natural in $K$ and that $\Delta_{/(-)}^\op: s\Set \to s\Set$ preserves colimits. Recall from \cite[Proposition~4.1.2.5]{HTT} that a map $f: X \to Y$ is final if and only if it is a contravariant equivalence in $s\Set_{/Y}$. It follows that the class of final maps is stable under filtered colimits, so we may suppose that $K$ has finitely many nondegenerate simplices. Using left properness of the contravariant model structure, by induction we reduce to the assertion for $K = \Delta^n$. But in this case $\upsilon_K$ is final by the proof of \cite[Variant~4.2.3.15]{HTT} (which proves the result when $K$ is the nerve of a category).

For the second assertion, we note that the reversal isomorphism $\Delta_{/K^\op} \cong \Delta_{/K}$ interchanges $\mu_{K^\op}$ and $(\upsilon_K)^\op$.
\end{proof}

\begin{cor}[Bousfield--Kan formula] \label{cor:ordinaryBKformula} Suppose that $C$ admits (finite) coproducts. Then for a (finite) simplicial set $K$ and a map $p: K \to C$, the colimit of $p$ exists if and only if the geometric realization 
\[ \left| \begin{tikzcd}[row sep = 2em, column sep = 2em]
 \bigsqcup_{x \in K_0} p(x) & \bigsqcup_{\alpha \in K_1} p(\alpha(0)) \ar[shift right]{l} \ar[shift left]{l} & \bigsqcup_{\sigma \in K_2} p(\sigma(0)) \ar{l} \ar[shift left = 2]{l} \ar[shift right = 2]{l}
\end{tikzcd} \dots \right| \]
exists, in which case the colimit of $p$ is computed by the geometric realization.
\end{cor}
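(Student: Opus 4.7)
The plan is to combine Prp. \ref{prp:cofinalityFirstVertex} with the pointwise formula for left Kan extension along a cocartesian fibration, realizing the stated simplicial object as such a Kan extension. First, since the first vertex map $\upsilon_K: \Delta_{/K}^\op \to K$ is final, the colimit of $p: K \to C$ exists if and only if the colimit of $p \circ \upsilon_K: \Delta_{/K}^\op \to C$ exists, and they coincide when they exist.

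Next, consider the forgetful functor $q: \Delta_{/K}^\op \to \Delta^\op$. Since $\Delta_{/K}$ is the category of elements of the functor $K: \Delta^\op \to \mathrm{Set}$, the functor $q$ is a cocartesian fibration whose fiber over $[n]$ is the discrete category $K_n$. Under the hypothesis that $C$ admits (finite) coproducts, the fiberwise pointwise formula yields a left Kan extension $q_!(p \circ \upsilon_K): \Delta^\op \to C$ whose value at $[n]$ is
\[ q_!(p \circ \upsilon_K)([n]) \simeq \colim_{K_n}(p \circ \upsilon_K)|_{K_n} \simeq \bigsqcup_{\sigma \in K_n} p(\sigma(0)). \]
A direct check shows that cocartesian pushforward along a morphism $\alpha:[m] \to [n]$ in $\Delta$ sends the $\sigma$-summand into the $(\sigma\circ\alpha)$-summand via $p$ applied to the edge $\sigma|_{\{0,\alpha(0)\}}$ of $K$. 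Specializing to face and degeneracy maps, this identifies $q_!(p\circ\upsilon_K)$ as a simplicial object with the object $p_\bullet$ appearing in the statement; in particular, the outer face $d_0$ records $p$ applied to the edge $\sigma|_{\{0,1\}}$.

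Finally, since $q_! \dashv q^\ast$ and the colimit functor is itself a left adjoint to the constant-diagram functor, left Kan extension commutes with taking colimits, so
\[ \colim_K p \simeq \colim_{\Delta_{/K}^\op}(p\circ\upsilon_K) \simeq \colim_{\Delta^\op} q_!(p\circ\upsilon_K) = |p_\bullet|, \]
with existence of any one of these colimits equivalent to existence of the others. In the finite case, each $K_n$ is finite, so only finite coproducts in $C$ are used in the construction of $p_\bullet$.

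The main anticipated obstacle is the bookkeeping needed to identify $q_!(p\circ\upsilon_K)$ with $p_\bullet$ as simplicial objects, tracking how the pointwise Kan extension formula encodes all of the simplicial structure (degeneracies, inner faces, and especially the outer face $d_0$) through cocartesian pushforward along $q$. Once this identification is in hand, the remaining equivalences are formal consequences of finality of $\upsilon_K$ and transitivity of colimits via $q_!$.
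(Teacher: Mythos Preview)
Your proof is correct and follows essentially the same approach as the paper: you use the finality of $\upsilon_K$ from Prp.~\ref{prp:cofinalityFirstVertex}, observe that $\Delta_{/K}^\op \to \Delta^\op$ is a cocartesian fibration with discrete fibers $K_n$, form the left Kan extension along it using (finite) coproducts, and invoke transitivity of left Kan extensions. The paper's proof is more terse but uses exactly these ingredients; your additional bookkeeping on the simplicial structure maps is a reasonable elaboration.
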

\begin{proof} The fibers of the cocartesian fibration $\pi_K: \Delta^\op_{/K} \to \Delta^\op$ are the discrete sets $K_n$. Therefore, the left Kan extension of $p \circ \upsilon_K$ along $\pi_K$ exists. By Proposition~\ref{prp:cofinalityFirstVertex}, $\colim p \simeq \colim p \circ \upsilon_K$, and the latter is computed as the colimit of $(\pi_K)_! (p \circ \upsilon_K)$ by the transitivity of left Kan extensions.
\end{proof}

We also have a variant of Cor \ref{cor:ordinaryBKformula} where the coproducts over $K_n$ are replaced by colimits indexed by the spaces $\Map(\Delta^n,K)$. To formulate this, we need to introduce some auxiliary constructions. Let $\xi: W \to \Delta^\op$ be the opposite of the relative nerve of the inclusion $\Delta \to s\Set$; this is a cartesian fibration which is an explicit model for the tautological cartesian fibration over $\Delta^\op$ pulled back from the universal cartesian fibration over $\Cat_\infty^\op$. Let $\lambda: \Delta^\op \to W$ be the `first vertex' section of $\xi$ which sends an $n$-simplex $\Delta^{a_0} \leftarrow ... \leftarrow \Delta^{a_n}$ to the $n$-simplex
\[ \begin{tikzcd}[row sep=2em, column sep=2em]
\Delta^n \ar{d}{(\lambda a)_0} & \ar{l} ... & \Delta^{\{n-1,n\}} \ar{l} \ar{d}{(\lambda a)_{n-1}} & \Delta^{\{n\}} \ar{l} \ar{d}{(\lambda a)_n} \\
\Delta^{a_0} & \ar{l} ... & \Delta^{a_{n-1}}  \ar{l} & \Delta^{a_n} \ar{l}
\end{tikzcd} \]
of $W$ specified by $(\lambda a)_i(0) = 0$ for all $0 \leq i \leq n$.

For an $\infty$-category $C$, let $Z_C = \widetilde{\Fun}_{\Delta^\op}(W,C \times \Delta^\op)$ and let $Z'_C \subset Z_C$ be the sub-simplicial set on the simplices $\sigma$ such that every edge of $\sigma$ is cocartesian (with respect to the structure map to $\Delta^\op$), so that $Z'_C \to \Delta^\op$ is the maximal sub-left fibration in $Z_C \to \Delta^\op$. Define a $\Delta^\op$-functor $\Delta_{/C}^\op \to Z_C$ as adjoint to the map $\Delta_C^\op \times_{\Delta^\op} W \to C$ which sends an $n$-simplex 
\[ \begin{tikzcd}[row sep=2em, column sep=2em]
\Delta^n \ar{d}{(\lambda a)_0} & \ar{l} ... & \Delta^{\{n-1,n\}} \ar{l} \ar{d}{(\lambda a)_{n-1}} & \Delta^{\{n\}} \ar{l} \ar{d}{(\lambda a)_n} \\
\Delta^{a_0} \ar{d}{\tau} & \ar{l} ... & \Delta^{a_{n-1}}  \ar{l} \ar{dll} & \Delta^{a_n} \ar{l} \ar{dlll} \\
C
\end{tikzcd} \]
to $\tau \circ (\lambda a)_0 \in C_n$. Note that since $\Delta_{/C}^\op \to \Delta^\op$ is a left fibration, this functor factors through $Z'_C$.

Define a `first vertex' functor $\Upsilon_C: Z_C \to C$ by precomposition with $\iota$ (using the isomorphism $\widetilde{\Fun}_{\Delta^\op}(\Delta^\op, C \times \Delta^\op) \cong C \times \Delta^\op$). We then have a factorization of the first vertex map as
\[ \begin{tikzcd}[row sep=2em, column sep=2em]
\Delta^\op_{/C} \ar{r} & Z'_C \ar{r} & Z_C \ar{r}{\Upsilon_C} & C.
\end{tikzcd} \]
\nomenclature[vertexFirstSpace]{$\Upsilon_K$}{First vertex functor, space variant}
\begin{prp} \label{prp:spaceBousfieldKan} The functors $\Upsilon_C$ and $\Upsilon'_C = (\Upsilon_C)|_{Z'_C}$ are final.
\end{prp}
\begin{proof} We first prove that $\Upsilon_C$ is final by verifying the hypotheses of \cite[Theorem~4.1.3.1]{HTT}. Let $c \in C$. The map $Z_C \to C$ is functorial in $C$, so we have a map $Z_{C_{c/}} \to Z_C \times_C C_{c/}$. We claim that this map is a trivial Kan fibration. Unwinding the definitions, this amounts to showing that for every cofibration $A \to B$ of simplicial sets over $\Delta^\op$, we can solve the lifting problem
\[ \begin{tikzcd}[row sep=2em, column sep=2em]
B \cup_A A \times_{\Delta^\op} W \ar{r} \ar{d} & C_{c/} \ar{d} \\
B \times_{\Delta^\op} W \ar{r} \ar[dotted]{ru} & C.
\end{tikzcd} \]

Since the class of left anodyne morphisms is right cancellative, we may suppose $A = \varnothing$. It thus suffices to prove that $\lambda_B = B \times_{\Delta^\op} \lambda: B \to B \times_{\Delta^\op} W$ is left anodyne for any map of simplicial sets $B \to \Delta^\op$. Observe that even though $\lambda$ is not a cartesian section, it is a left adjoint relative to $\Delta^\op$ to $\xi$ by \cite[Proposition~7.3.2.6]{HA} and the uniqueness of adjoints, since on the fibers it restricts to the adjunction $\adjunctb{\{0\}}{\Delta^n}$. Consequently, for any $\infty$-category $B$ and functor $B \to \Delta^\op$, by \cite[Proposition~7.3.2.5]{HA} $\lambda_B$ is a left adjoint, hence left anodyne. From this, we deduce the general case by using the characterization in \cite[Proposition~4.1.2.1]{HTT} of the left anodyne maps $X \to Y$ as the trivial cofibrations in $s\Set_{/Y}$ equipped with the covariant model structure. Indeed, arguing as in the proof of Proposition~\ref{prp:cofinalityFirstVertex}, by induction on the nondegenerate simplices of $B$ we reduce to the known case $B = \Delta^n$.

We next prove that $Z_C$ is weakly contractible if $C$ is, which will conclude the proof for $\Upsilon_C$. For this, another application of (the opposite of) \cite[Proposition~7.3.2.6]{HA} shows that the $\Delta^\op$-functor $C \times \Delta^\op \to Z_C$ defined by precomposition by $\xi$ is a left adjoint relative to $\Delta^\op$ to the functor $(\Upsilon_C,\id_{\Delta^\op})$, because it restricts to the adjunction $\adjunct{\iota}{C}{\Fun(\Delta^n,C)}{\ev_0}$ on the fibers. Hence, $|Z_C| \simeq |C \times \Delta^\op| \simeq |C|$, and the latter is contractible by hypothesis.

We employ the same strategy to show that $\Upsilon'_C$ is final. Since $C_{c/} \to C$ is conservative, the trivial Kan fibration above restricts to yield a trivial Kan fibration $Z'_{C_{c/}} \to Z'_C \times_C C_{c/}$. Thus it suffices to show that $Z'_C$ is weakly contractible if $C$ is. By (the opposite of) \cite[Proposition~7.3]{GHN}, the cocartesian fibration $Z'_C \to \Delta^\op$ is classified by the functor
$$\Delta^\op \xrightarrow{i^\op} \Cat_\infty \xrightarrow{\Map(-,C)} \Top.$$
Let $R$ denote the right adjoint to the colimit-preserving functor $L: \Fun(\Delta^\op,\Top) \to \Cat_\infty$ left Kan extended from the inclusion $i: \Delta \subset \Cat_\infty$; $R$ sends an $\infty$-category to its corresponding complete Segal space. Then $R(C) \simeq \Map(-,C) \circ i^\op$. For any $X_\bullet \in \Fun(\Delta^\op,\Top)$, we have $\colim X \simeq |L(X_\bullet)|$, hence
$$\colim R(C) \simeq |(L \circ R)(C)| \simeq |C|,$$
where $L \circ R \simeq \id$ by \cite[Corollary~4.3.16]{G}. By \cite[Corollary~3.3.4.6]{HTT}, $|Z'_C| \simeq \colim R(C)$, so we conclude that $|Z'_C|$ is contractible.
\end{proof}

The following corollary was previously proven by Mazel-Gee in \cite{MAZELGEE20194602}.

\begin{cor}[Bousfield--Kan formula, `simplicial' variant] \label{cor:BousfieldKanFormulaHomotopyInvariantVersion} Suppose that $C$ admits colimits indexed by spaces. Then for any $\infty$-category $K$ and functor $p:K \to C$, the colimit of $p$ exists if and only if the geometric realization
\[ \left| \begin{tikzcd}[row sep = 2em, column sep = 2em]
 \colim\limits_{x \in \Map(\Delta^0,K)} p(x) & \colim\limits_{\alpha \in \Map(\Delta^1,K)} p(\alpha(0)) \ar[shift right]{l} \ar[shift left]{l} & \colim\limits_{\sigma \in \Map(\Delta^2,K)} p(\sigma(0)) \ar{l} \ar[shift left = 2]{l} \ar[shift right = 2]{l}
\end{tikzcd} \dots \right| \]
exists, in which case the colimit of $p$ is computed by the geometric realization.
\end{cor}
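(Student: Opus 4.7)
The plan is to apply Prp. \ref{prp:spaceBousfieldKan} with $C$ replaced by $K$, and then decompose the resulting colimit via left Kan extension along $Z'_K \to \Delta^\op$.

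First, Prp. \ref{prp:spaceBousfieldKan} produces a left fibration $\pi: Z'_K \to \Delta^\op$ whose fiber over $[n]$ is the Kan complex $\Map(\Delta^n,K)$, together with a final first-vertex functor $\Upsilon'_K: Z'_K \to K$. By the characterization of final maps, the colimit of $p$ exists if and only if the colimit of $p \circ \Upsilon'_K$ exists, in which case the two colimits agree.

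Next, I would compute $\colim(p \circ \Upsilon'_K)$ in two stages using the transitivity of left Kan extensions along $Z'_K \xrightarrow{\pi} \Delta^\op \to \Delta^0$. Since $\pi$ is a left fibration with fibers the spaces $\Map(\Delta^n,K)$ and $C$ is assumed to admit all space-indexed colimits, the pointwise left Kan extension $\pi_!(p \circ \Upsilon'_K): \Delta^\op \to C$ exists; unwinding, an object of the fiber $(Z'_K)_{[n]} \simeq \Map(\Delta^n,K)$ corresponds to a simplex $\sigma: \Delta^n \to K$ and $\Upsilon'_K$ sends it to $\sigma(0) \in K$, so
\[ \pi_!(p \circ \Upsilon'_K)([n]) \simeq \colim_{\sigma \in \Map(\Delta^n,K)} p(\sigma(0)). \]
The face and degeneracy functoriality is induced by restriction along the cosimplicial maps $\Delta^m \to \Delta^n$, compatibly with the first-vertex map (noting that $d_0$ changes the first vertex, which is what matches the cocartesian pushforward along the corresponding edge of $\Delta^\op$).

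Finally, by transitivity of left Kan extensions, $\colim(p \circ \Upsilon'_K)$ exists and equals $\colim(\pi_!(p \circ \Upsilon'_K))$, i.e.\ the geometric realization of the simplicial object displayed in the statement. The main technical point to verify with care is the identification of the simplicial functoriality, but this is purely a bookkeeping exercise, as the cocartesian edges of $\pi$ are computed by precomposition in the $\Delta$-variable and $\Upsilon'_K$ is defined precisely as evaluation at the $0$-vertex.
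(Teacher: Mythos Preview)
Your proof is correct and follows essentially the same approach as the paper: use the span $\Delta^\op \leftarrow Z'_K \xrightarrow{\Upsilon'_K} K$ from Prp.~\ref{prp:spaceBousfieldKan}, invoke finality of $\Upsilon'_K$ to replace $\colim p$ by $\colim(p\circ\Upsilon'_K)$, then compute the latter via transitivity of left Kan extensions along the left fibration $Z'_K \to \Delta^\op$ whose fibers are the spaces $\Map(\Delta^n,K)$. The paper's proof is more terse, simply pointing to the analogous argument for Cor.~\ref{cor:ordinaryBKformula}, but your unpacking of the fiberwise Kan extension and the simplicial functoriality is exactly what that reference amounts to.
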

\begin{proof} Using Proposition~\ref{prp:spaceBousfieldKan}, we may repeat the proof of Corollary~\ref{cor:ordinaryBKformula}, now using the span
\[ \Delta^\op \leftarrow Z'_K \xrightarrow{\Upsilon'_K} K.\]
\end{proof}

We now proceed to relativize the above picture, starting with the map $\Upsilon_C$. Let $C \to S$ be a $S$-category. Define the map
\[ \Upsilon_{C,S}: \widetilde{\Fun}_{\Delta^\op \times S/S} (W \times S, \Delta^\op \times C) \to C \]
to be the composition of the map to $\widetilde{\Fun}_{\Delta^\op \times S/S} (\Delta^\op \times S, \Delta^\op \times C)$ given by precomposition by $\lambda \times \id_S$, together with the equivalence of Lemma~\ref{lm:pairingFirstVariableTrivial} of this to $\Delta^\op \times C$ and the projection to $C$. Define $\Upsilon'_{C,S}$ to be the restriction of $\Upsilon_{C,S}$ to the maximal sub-left fibration (with respect to $\Delta^\op \times S$).

\nomenclature[firstVertexParamSpace]{$\Upsilon_{C,S}$}{Parametrized first vertex functor, space variant}

\begin{thm} \label{thm:relativeBKwithSpaces} The $S$-functors $\Upsilon_{C,S}$ and $\Upsilon'_{C,S}$ are $S$-final.
\end{thm}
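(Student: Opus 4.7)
The plan is to reduce the $S$-finality of $\Upsilon_{C,S}$ and $\Upsilon'_{C,S}$ to the non-parametrized finality of $\Upsilon_C$ and $\Upsilon'_C$ already established in Prp.~\ref{prp:spaceBousfieldKan}, using the criterion of Thm.~\ref{thm:cofinality}(1): a $S$-functor is $S$-final if and only if it is fiberwise final.

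First I would fix an object $s \in S$ and identify the fiber $(\Upsilon_{C,S})_s$ with $\Upsilon_{C_s}$. The target is easy: the fiber of $C \to S$ over $s$ is $C_s$. For the source, I would apply Lm.~\ref{lm:pairingProducts} to the cartesian fibration $W \to \Delta^\op$ and the $S$-category $\Delta^\op \times C$, which yields a natural equivalence
\[
\widetilde{\Fun}_{\Delta^\op \times S / S}(W \times S, \Delta^\op \times C)_s \xrightarrow{\simeq} \widetilde{\Fun}_{\Delta^\op}(W, \Delta^\op \times C_s) = Z_{C_s}
\]
of cartesian fibrations over $\Delta^\op$. Under this identification, restriction along $\lambda \times \id_S$ becomes restriction along $\lambda$, and the subsequent projection is unchanged, so $(\Upsilon_{C,S})_s$ is homotopic to the composite defining $\Upsilon_{C_s}: Z_{C_s} \to C_s$. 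By Prp.~\ref{prp:spaceBousfieldKan}, this is final, so Thm.~\ref{thm:cofinality} gives that $\Upsilon_{C,S}$ is $S$-final.

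For $\Upsilon'_{C,S}$ the argument is the same once we check that the equivalence from Lm.~\ref{lm:pairingProducts} is compatible with passage to maximal sub-left fibrations over $\Delta^\op$. This is a formal consequence of the construction: an edge of $\widetilde{\Fun}_{\Delta^\op \times S/S}(W \times S, \Delta^\op \times C)$ lying over a single object $s \in S$ is cocartesian over $\Delta^\op \times S$ if and only if its image under the equivalence is cocartesian over $\Delta^\op$, because in both cases cocartesianness is detected on the underlying $W$-indexed diagram valued in $C_s$. Hence $(\Upsilon'_{C,S})_s \simeq \Upsilon'_{C_s}$, which is final by Prp.~\ref{prp:spaceBousfieldKan}, so $\Upsilon'_{C,S}$ is also $S$-final.

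The only nontrivial step is the compatibility of Lm.~\ref{lm:pairingProducts} with the maps $\lambda \times \id_S$ and $\lambda$ defining $\Upsilon_{C,S}$ and $\Upsilon_{C_s}$; this should follow by direct chase through the span constructions, using that $\lambda$ is itself a section of $W \to \Delta^\op$ so that the evaluation maps $\iota_{D'}$ appearing in the proof of Lm.~\ref{lm:pairingProducts} interact with $\lambda$ via the obvious commutative diagram. No further input from the theory of $S$-colimits is required.
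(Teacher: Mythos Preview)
Your proposal is correct and follows essentially the same route as the paper: pass to fibers over each $s \in S$, use Lm.~\ref{lm:pairingProducts} to identify the fiber of the parametrized pairing with $Z_{C_s}$, check compatibility with precomposition by $\lambda$ so that $(\Upsilon_{C,S})_s \simeq \Upsilon_{C_s}$, invoke Prp.~\ref{prp:spaceBousfieldKan}, and conclude via Thm.~\ref{thm:cofinality}. The paper additionally remarks that the equivalences of Lm.~\ref{lm:pairingFirstVariableTrivial} and Lm.~\ref{lm:pairingProducts} agree when the first variable is trivial (this is what makes the rightmost square of the comparison commute), and handles $\Upsilon'_{C,S}$ by the same argument without spelling out the sub-left-fibration compatibility you outline.
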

\begin{proof} For every object $s \in S$, we have a commutative diagram
\[ \begin{tikzcd}[row sep=2em, column sep=2em]
\widetilde{\Fun}_{\Delta^\op \times S/S} (W \times S, \Delta^\op \times C)_s \ar{r}{(\lambda \times \id_S)^\ast_s} \ar[bend left=15]{rr}{(\Upsilon_{C,S})_s} \ar{d}{\simeq} & \widetilde{\Fun}_{\Delta^\op \times S/S} (\Delta^\op \times S, \Delta^\op \times C)_s \ar{d}{\simeq} \ar{r} & C_s \ar{d}{=} \\
\widetilde{\Fun}_{\Delta^\op}(W, \Delta^\op \times C_s) \ar{r}{\lambda^\ast} \ar[bend right=15]{rr}{\Upsilon_{C_s}} & \widetilde{\Fun}_{\Delta^\op}(\Delta^\op, \Delta^\op \times C_s) \cong \Delta^\op \times C_s \ar{r}{\pr_{C_s}} & C_s
\end{tikzcd} \]
where the left two vertical maps are given by the natural categorical equivalences of Lemma~\ref{lm:pairingProducts}; the only point to note is that the equivalences of Lemma~\ref{lm:pairingFirstVariableTrivial} and Lemma~\ref{lm:pairingProducts} coincide when the first variable is trivial. By Proposition~\ref{prp:spaceBousfieldKan}, $\Upsilon_{C_s}$ is final, so $(\Upsilon_{C,S})_s$ is final. By the $S$-cofinality Theorem~\ref{thm:cofinality}, $\Upsilon_{C,S}$ is $S$-final. A similar argument shows that $\Upsilon'_{C,S}$ is $S$-final.
\end{proof}

The process of relativizing $\upsilon_C$ is considerably more involved. We begin with some preliminaries on the relative nerve construction. Let $J$ be a category.

\begin{lem} The adjunctions
\begin{align*}
\adjunct{\mathfrak{F}_J}{s\Set_{/N(J)}}{\Fun(J,s\Set)}{N_J} \\
\adjunct{\mathfrak{F}^+_J}{s\Set^+_{/N(J)}}{\Fun(J,s\Set^+)}{N^+_J}
\end{align*}
of \cite[\S 3.2.5]{HTT} are simplicial.
\end{lem}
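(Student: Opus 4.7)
The plan is to reformulate ``simplicial'' as preservation of cotensoring by the right adjoint. Recall that $s\Set_{/N(J)}$ is enriched and cotensored over $s\Set$ via $K \pitchfork (Y \to N(J)) = (Y^K \times_{N(J)^K} N(J) \to N(J))$, while $\Fun(J,s\Set)$ is enriched and cotensored objectwise, $(F^K)(j) = F(j)^K$. The adjunction $\mathfrak{F}_J \dashv N_J$ is simplicial if and only if we have natural isomorphisms
\[ N_J(F^K) \xrightarrow{\simeq} K \pitchfork N_J(F) \]
in $s\Set_{/N(J)}$, and dually for the marked case. Since both sides send colimits in $K$ to limits, it suffices to take $K = \Delta^n$ and check naturality in $n$.

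I would construct the comparison map directly on simplices. By definition, an $m$-simplex of $N_J(F^{\Delta^n})$ is a pair $(\sigma: [m] \to J, \{\tau_S\})$ consisting of a compatible family of maps $\tau_S: \Delta^S \times \Delta^n \to F(\sigma(\max S))$ indexed by nonempty $S \subseteq [m]$, using the defining adjunction $\Hom(\Delta^S,F(j)^{\Delta^n}) = \Hom(\Delta^S \times \Delta^n, F(j))$. An $m$-simplex of $\Delta^n \pitchfork N_J(F)$ is a pair $(\sigma, h)$ with $h: \Delta^m \times \Delta^n \to N_J(F)$ a map over $N(J)$ (via $\sigma \circ \pr_1$). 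The assignment $(\sigma,\{\tau_S\}) \mapsto (\sigma,h)$ sends a chain $(\alpha: [k] \to [m], \beta: [k] \to [n])$, corresponding to a nondegenerate $k$-simplex of $\Delta^m \times \Delta^n$, to the $k$-simplex of $N_J(F)$ over $\sigma\alpha$ whose $T$-component ($T \subseteq [k]$) is the pullback of $\tau_{\alpha(T)}$ along $(\alpha|_T,\beta|_T): \Delta^T \to \Delta^{\alpha(T)} \times \Delta^n$. The inverse reconstructs $\tau_S$ as the restriction of $h$ to $\Delta^S \times \Delta^n$, read off as the values on chains of the form $(\alpha, \beta)$ with $\alpha$ factoring through $S$.

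For the marked version, the same formula works with essentially no change: the marked relative nerve $N_J^+$ has an analogous simplex-by-simplex description (now with compatible markings over chains of marked edges), and the cotensoring in $s\Set^+_{/N(J)}$ uses $\Map^\sharp$, so the bijection on simplices constructed above automatically respects markings. The main obstacle is the combinatorial bookkeeping in verifying that the two assignments above are mutually inverse and strictly natural in $\sigma$, in the face maps of $[m]$, and in $\Delta^n$; this is routine but notationally heavy, as one must repeatedly invoke the identification of nondegenerate simplices of $\Delta^m \times \Delta^n$ with chains in $[m] \times [n]$ and compare the resulting compatibility conditions. Once this is done, naturality in $K$ extends the isomorphism from $K = \Delta^n$ to arbitrary $K$ by the initial reduction.
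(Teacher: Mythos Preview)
Your approach is correct but dual to the paper's. You verify that the right adjoint $N_J$ preserves cotensors, while the paper verifies that the left adjoint $\mathfrak{F}_J$ preserves tensors, constructing the comparison map $\theta_{X,K}: \mathfrak{F}_J(X \times K) \to \mathfrak{F}_J(X) \times \underline{K}$ and showing it is an isomorphism. The paper's route has a structural advantage: since both sides preserve colimits separately in each variable, one reduces simultaneously to $X = (\Delta^n \to J)$ and $K = \Delta^m$, and then computes both sides using the formula $\mathfrak{F}_I(I)(-) \cong N(I_{/-})$ from \cite[3.2.5.6]{HTT} together with base-change compatibility of $\mathfrak{F}$ with $f_!$ along $\Delta^n \times \Delta^m \to \Delta^n \to J$. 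This gives a closed-form identification rather than a simplex-by-simplex bijection. Your approach can only reduce in $K$ (cotensors turn colimits into limits), so you must work with an arbitrary diagram $F$ and carry out the direct combinatorial matching you describe; this is more elementary in that it never invokes properties of $\mathfrak{F}_J$, but the bookkeeping is correspondingly heavier. For the marked case the two arguments align: the paper simply replaces $\Delta^m$ by $(\Delta^m)^\sharp$ via the tensor formula $(K,X) \mapsto K^\sharp \times X$, which is the tensor-side counterpart of your remark about $\Map^\sharp$.
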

\begin{proof} Let $\underline{K}: J \to s\Set$ denote the constant functor at a simplical set $K$. We have an obvious map $\chi_K: N(J) \times K \to N_J(\underline{K})$ natural in $K$ and hence a map
\[ (\eta_X,\chi_K \circ \pr): X \times K \to N_J (\mathfrak{F}_J X \times \underline{K}) \cong N_J \mathfrak{F}_J X \times N_J(\underline{K}) \]
natural in $X$ and $K$. We want to show the adjoint
\[ \theta_{X,K}: \mathfrak{F}_J(X \times K) \to \mathfrak{F}_J(X) \times \underline{K} \]
is an isomorphism. Both sides preserve colimits separately in each variable, so we may suppose $X = \Delta^n \to J$ and $K = \Delta^m$. By \cite[Example~3.2.5.6]{HTT}, $\mathfrak{F}_I(I)(-) \cong N(I_{/-})$, and by \cite[Remark~3.2.5.8]{HTT}, for any functor $f: I \to J$, the square
\[ \begin{tikzcd}[row sep=2em, column sep=2em]
s\Set_{/N(I)} \ar{r}{f_!} \ar{d}{\mathfrak{F}_I} & s\Set_{/N(J)} \ar{d}{\mathfrak{F}_J} \\
\Fun(I,s\Set) \ar{r}{f_!} & \Fun(J,s\Set)
\end{tikzcd} \]
commutes. Letting $I = \Delta^n \times \Delta^m$ and $f: I \to J$ be the structure map, we have
\[ \mathfrak{F}_I(\Delta^n \times \Delta^m)(k,l) \cong (\Delta^n)_{/k} \times (\Delta^m)_{/l} \cong \Delta^k \times \Delta^l. \]
Factoring $f$ as $\Delta^n \times \Delta^m \xrightarrow{g} \Delta^n \xrightarrow{h} J$, we then have
\[ g_! \mathfrak{F}_I(\Delta^n \times \Delta^m)(k) \cong \Delta^i \times \Delta^m. \]
Let $G = g_! \mathfrak{F}_I(\Delta^n \times \Delta^m)$, so that $\mathfrak{F}_J(\Delta^n \times \Delta^m)(j) \cong (h_! G)(j)$. Then
\[(h_! G)(j) \cong \colim\limits_{\Delta^n \times_J J_{/j}} \left( (k,h(k) \rightarrow j) \mapsto \Delta^k \right) \times \Delta^m \cong \mathfrak{F}_J(\Delta^n)(j) \times \Delta^m \]
and one can verify that $\theta_{X,K}$ implements this isomorphism. For the assertion about $\mathfrak{F}^+_J \dashv N^+_J$, recall that the simplicial tensor $s\Set \times s\Set^+ \to s\Set^+$ is given by $(K,X) \mapsto K^\sharp \times X$. Consequently, in the above argument we may simply replace $\Delta^m$ by $(\Delta^m)^\sharp$ to conclude.
\end{proof}

Since $N^+_J(\underline{S^\sharp}) = N(J) \times S^\sharp$, the adjunction $\mathfrak{F}^+_J \dashv N^+_J$ lifts to an adjunction
\[ \adjunct{\mathfrak{F}^+_{J,S}}{s\Set^+_{/N(J) \times S}}{\Fun(J,s\Set^+_{/S})}{N^+_{J,S}} \]
between the overcategories. Moreover, for any functor $f:T \to S$, the square
\[ \begin{tikzcd}[row sep=2em, column sep=2em]
\Fun(J,s\Set^+_{/S}) \ar{r}{N^+_{J,S}} \ar{d}{f^\ast} & s\Set^+_{/N(J) \times S} \ar{d}{(\id \times f)^\ast} \\
\Fun(J,s\Set^+_{/T}) \ar{r}{N^+_{J,T}}  & s\Set^+_{/N(J) \times T},
\end{tikzcd} \]
commutes.

\begin{prp} \label{prp:SrelativeNerve} Equip $s\Set^+_{/N(J) \times S}$ with the cocartesian model structure and $\Fun(J,s\Set^+_{/S})$ with the projective model structure, where $s\Set^+_{/S}$ has the cocartesian model structure. Then the adjunction
\[ \adjunct{\mathfrak{F}^+_{J,S}}{s\Set^+_{/N(J) \times S}}{\Fun(J,s\Set^+_{/S})}{N^+_{J,S}} \]
is a Quillen equivalence.
\end{prp}
\begin{proof} We first prove that the adjunction is Quillen. Because this is a simplicial adjunction between left proper simplicial model categories, it suffices to show that $\mathfrak{F}^+_{J,S}$ preserves cofibrations and $N^+_{J,S}$ preserves fibrant objects. Observe that the slice model structure on $$s\Set^+_{/N(J) \times S} \cong (s\Set^+_{/N(J)})_{/(N(J) \times S)^\sharp}$$ is a localization of the cocartesian model structure. Similarly, the slice model structure on $$\Fun(J,s\Set^+_{/S}) \cong \Fun(J,s\Set^+)_{/\underline{S^\sharp}}$$ is a localization of the projective model structure, since the trivial fibrations for the two model structures coincide and postcomposition by $\pi_!: s\Set^+_{/S} \to s\Set^+$ gives a Quillen left adjoint between the projective model structures. Since the lift of a Quillen adjunction $\adjunct{L}{M}{N}{R}$ to the adjunction $\adjunct{\widetilde{L}}{M_{/R(x)}}{N_{/x}}{\widetilde{R}}$ is Quillen for the slice model structures, we deduce that $\mathfrak{F}^+_{J,S}$ preserves cofibrations.

Now suppose $F: J \to s\Set^+_{/S}$ is fibrant. Since $S$ is an $\infty$-category, $F \rightarrow \underline{S}$ is a fibration in $\Fun(J,s\Set)$. Hence $N_{J,S}(F) \to N(J) \times S$ is a categorical fibration. We verify that it is a cocartesian fibration (with every marked edge cocartesian) by solving the lifting problem ($n \geq 1$)
\[ \begin{tikzcd}[row sep=2em, column sep=2em]
\leftnat{\Lambda^n_0} \ar{r} \ar{d} & N^+_{J,S}(F) \ar{d} \\
\leftnat{\Delta^n} \ar{r}[swap]{(j_\bullet, s_\bullet)} \ar[dotted]{ru} & (N(J) \times S)^\sharp.
\end{tikzcd} \]
Unwinding the definitions, this amounts to solving the lifting problem
\[ \begin{tikzcd}[row sep=2em, column sep=2em]
\leftnat{\Lambda^n_0} \ar{r} \ar{d} & F(j_n) \ar{d} \\
\leftnat{\Delta^n} \ar{r}[swap]{s_\bullet} \ar[dotted]{ru} & S^\sharp,
\end{tikzcd} \]
and the dotted lift exists because $F(j_n)$ is cocartesian over $S$ with the cocartesian edges marked. Finally, it is easy to see that marked edges compose and are stable under equivalence. We conclude that $N^+_{J,S}(F)$ is fibrant in $s\Set^+_{/N(J) \times S}$.

To prove that the Quillen adjunction is a Quillen equivalence, we will show that the induced adjunction of $\infty$-categories
\[ \adjunct{\mathfrak{F}'^+_{J,S}}{N((s\Set^+_{/N(J) \times S})^\circ)}{N(\Fun(J,s\Set^+_{/S})^\circ)}{N'^+_{J,S}} \]
is an adjoint equivalence, where $N'^+_{J,S}$ is the simplicial nerve of $N^+_{J,S}$ and $\mathfrak{F}'^+_{J,S}$ is any left adjoint to $N'^+_{J,S}$. We first check that $N'^+_{J,S}$ is conservative. Indeed, for this we may work in the model category: for a natural transformation $\alpha: F \rightarrow G$ in $\Fun(J,s\Set^+_{/S})$, $N^+_{J,S}(F) \to N^+_{J,S}(G)$ on fibers is given by $F(j)_s \to G(j)_s$, hence if $F,G$ are fibrant and $N^+_{J,S}(\alpha)$ is an equivalence then $\alpha$ is as well. It now suffices to show that the unit transformation $\eta: \id \to N'^+_{J,S} \mathfrak{F}'^+_{J,S}$ is an equivalence. We have the known equivalence $N((s\Set^+_{/N(J) \times S})^\circ) \simeq \Fun(N(J) \times S, \Cat_\infty)$ so it further suffices to check that the map
\[ (\id \times i_s)^\ast \to (\id \times i_s)^\ast N'^+_{J,S} \mathfrak{F}'^+_{J,S} \simeq N'^+_{J} i_s^\ast \mathfrak{F}'^+_{J,S} \]
is an equivalence for all $s \in S$, $i_s: \{s\} \to S$ the inclusion. Equivalently, since $\mathfrak{F}^+_J \dashv N^+_J$ is a Quillen equivalence by \cite[Proposition~3.2.5.18]{HTT}, we must show that the adjoint map
\[ \mathfrak{F}'^+_{J} i_s^\ast \to (\id \times i_s)^\ast \mathfrak{F}'^+_{J,S} \]
is an equivalence. This statement is in turn equivalent to the adjoint map
\[ \theta: N'^+_{J,S} (i_s)_\ast \to (\id \times i_s)_\ast N'^+_{J} \]
being an equivalence. Recall that for a functor $f: T \to S$, $f_\ast: \Fun(T,\Cat_\infty) \to \Fun(S,\Cat_\infty)$ is induced by $\pi_\ast \rho^\ast: s\Set^+_{/T} \to s\Set^+_{/S}$ for the span
\[ \begin{tikzcd}[row sep=2em, column sep=2em]
S^\sharp & (\sO(S) \times_S T)^\sharp \ar{r}{\rho} \ar{l}[swap]{\pi} & T^\sharp
\end{tikzcd} \]
with $\pi$ given by evaluation at $0$ and $\rho$ projection to $T$. Moreover, for a functor $\id \times f: U \times T \to U \times S$, we may elect to use the span 
\[ \begin{tikzcd}[row sep=2em, column sep=2em]
(U \times S)^\sharp & (U \times \sO(S) \times_S T)^\sharp \ar{r}{\id \times \rho} \ar{l}[swap]{\id \times \pi} & (U \times T)^\sharp
\end{tikzcd} \]
to model $(\id \times f)_\ast$. Letting $f = i_s$, we see that $\theta$ is induced by the map
\[ N^+_{J,S} \pi_\ast \rho^\ast \to (\id \times \pi)_\ast N^+_{J, S^{s/}} \rho^\ast \cong (\id \times \pi)_\ast (\id \times \rho)^\ast N^+_{J}.  \]
where the first map is adjoint to the isomorphism $(\id \times \pi)^\ast N^+_{J,S} \cong N^+_{J, S^{s/}} \pi^\ast$. Direct computation reveals that this map is an equivalence on fibrant $F: J \to s\Set^+$.
\end{proof}

We now return to the situation of interest. Let $C$ be a $S$-category with structure map $\pi: C \to S$. We first extend our existing notation $\underline{x}$ for objects $x \in C$.

\begin{ntn} For an $n$-simplex $\sigma$ of $C$, define
 \[ \underline{\sigma} = \{ \sigma \} \times _{\Fun(\Delta^n \times \{0\},C)} \Fun((\Delta^n)^\flat \times (\Delta^1)^\sharp,\leftnat{C}) \times_{\Fun(\Delta^n \times \{1\},S)} S. \]
\end{ntn}

\begin{lem} \label{lm:fattenedSlice} There exists a map $b_\sigma: \underline{\sigma} \to \{ \pi \sigma(n) \} \times_S \sO(S) = S^{\pi \sigma(n)/}$ which is a trivial Kan fibration.
\end{lem}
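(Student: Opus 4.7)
The map $b_\sigma$ is defined by restricting $h \colon \Delta^n \times \Delta^1 \to C$ (a vertex of $\underline{\sigma}$) to the edge $\{n\} \times \Delta^1$ and composing with $\pi$. My plan is to verify the right lifting property of $b_\sigma$ against each boundary inclusion $\partial \Delta^k \hookrightarrow \Delta^k$ for $k \geq 0$, thereby showing $b_\sigma$ is a trivial Kan fibration.

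After unwinding the pullback definitions of $\underline{\sigma}$ and $S^{\pi \sigma(n)/}$, such a lifting problem transposes to a marked lifting problem
\[
\begin{tikzcd}[row sep=2em, column sep=2em]
L_k \ar{r}{f_L} \ar{d}[swap]{j_k} & \leftnat{C} \ar{d}{\pi} \\
(\Delta^k)^\flat \times (\Delta^n)^\flat \times (\Delta^1)^\sharp \ar{r}[swap]{g_S} \ar[dotted]{ru} & S^\sharp
\end{tikzcd}
\]
against the marked fibration $\pi$, where $L_k$ is the pushout of the marked simplicial sets $(\partial \Delta^k)^\flat \times (\Delta^n)^\flat \times (\Delta^1)^\sharp$, $(\Delta^k)^\flat \times (\Delta^n)^\flat \times \{0\}$, and $(\Delta^k)^\flat \times \{n\}^\flat \times (\Delta^1)^\sharp$ glued along their common sub-objects. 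The central observation is that $j_k$ is the iterated pushout-product
\[
j_k \;=\; \bigl(\partial \Delta^k \hookrightarrow \Delta^k\bigr) \boxtimes \bigl(\{n\}^\flat \hookrightarrow (\Delta^n)^\flat\bigr) \boxtimes \bigl(\{0\} \hookrightarrow (\Delta^1)^\sharp\bigr),
\]
where the first two factors are cofibrations of marked simplicial sets and the third is the generating left marked anodyne map $\leftnat{\Lambda^1_0} \hookrightarrow \leftnat{\Delta^1}$. Iterated application of the pushout-product axiom \cite[3.1.2.3]{HTT} shows $j_k$ is left marked anodyne, so the marked fibration $\pi$ satisfies the right lifting property against it.

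The main obstacle is producing the compatible top and bottom maps of the lifting square from the lifting data $(\sigma, H_\partial, \alpha)$. The top map $f_L$ is assembled from $\sigma$ on $\Delta^k \times \Delta^n \times \{0\}$, from $H_\partial$ on $\partial \Delta^k \times \Delta^n \times \Delta^1$, and from a cocartesian lift $\tilde{\alpha} \colon (\Delta^k)^\flat \times (\Delta^1)^\sharp \to \leftnat{C}$ of $\alpha$ starting at $\sigma(n)$ and extending $H_\partial|_{\partial \Delta^k \times \{n\} \times \Delta^1}$ (whose existence is itself a simpler marked anodyne lifting, for the pushout-product of $\partial \Delta^k \hookrightarrow \Delta^k$ with $\{0\} \hookrightarrow (\Delta^1)^\sharp$). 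The bottom map $g_S$ is forced to equal $\pi f_L$ on $L_k$; imposing the $\underline{\sigma}$-constancy condition that $g_S|_{\Delta^k \times \Delta^n \times \{1\}}$ factor through $\Delta^k$ further specifies $g_S$ on that face. The remaining extension to $\Delta^k \times \Delta^n \times \Delta^1$ exists because, after collapsing $\Delta^n \times \{1\}$ to a point (so that $\Delta^n \times \Delta^1$ becomes $\Delta^{n+1}$), the inclusion one must extend across is the pushout-product $(\partial \Delta^k \hookrightarrow \Delta^k) \boxtimes (\Delta^{\{0,\ldots,n\}} \cup \Delta^{\{n, n+1\}} \hookrightarrow \Delta^{n+1})$; the latter factor is a categorical equivalence (by 2-out-of-3 applied to the left anodyne map $\Delta^{\{0,\ldots,n\}} \hookrightarrow \Delta^{n+1}$), so the whole pushout-product is a trivial cofibration in $s\Set_{\text{Joyal}}$, hence extensible across the $\infty$-category $S$. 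A direct check then shows that the resulting lift $H \colon \Delta^k \times \Delta^n \times \Delta^1 \to C$ satisfies the $\underline{\sigma}$-conditions and projects to $\alpha$, completing the argument.
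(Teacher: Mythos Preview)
Your direct lifting strategy is sound and genuinely different from the paper's, but the construction of $g_S$ contains two errors.

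First, collapsing $\Delta^n \times \{1\}$ in $\Delta^n \times \Delta^1$ does \emph{not} yield $\Delta^{n+1}$: the quotient is the diamond join $Q = \Delta^n \diamond \Delta^0$, which is only categorically equivalent to $\Delta^{n+1}$ via the comparison map of \cite[4.2.1.2]{HTT}. This matters because you need the datum $\pi H_\partial$ on $\partial\Delta^k \times \Delta^n \times \Delta^1$ to descend, and while it descends to $\partial\Delta^k \times Q$ (by the constancy on $\Delta^n \times \{1\}$), it has no reason to factor through $\partial\Delta^k \times \Delta^{n+1}$. The extension problem must therefore be posed in $\Delta^k \times Q$, and the relevant inclusion is $(\partial\Delta^k \hookrightarrow \Delta^k) \boxtimes \bigl((\Delta^n \cup e_n) \hookrightarrow Q\bigr)$ with $e_n$ the image of $\{n\} \times \Delta^1$. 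Second, your justification ``by 2-out-of-3 applied to the left anodyne map $\Delta^{\{0,\ldots,n\}} \hookrightarrow \Delta^{n+1}$'' fails: left anodyne maps are not categorical equivalences (e.g.\ $\{0\} \hookrightarrow \Delta^1$), so 2-out-of-3 in the Joyal model structure is unavailable. The correct argument is that $\Delta^{\{0,\ldots,n\}} \cup \Delta^{\{n,n+1\}} \hookrightarrow \Delta^{n+1}$ is inner anodyne by \cite[2.1.2.3]{HTT} (it is the pushout-join of the right anodyne $\{n\} \hookrightarrow \Delta^n$ with $\emptyset \hookrightarrow \Delta^0$); then $(\Delta^n \cup e_n) \hookrightarrow Q$ is a categorical equivalence by 2-out-of-3 along $Q \xrightarrow{\simeq} \Delta^{n+1}$, and hence a Joyal trivial cofibration. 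With these repairs your argument goes through.

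For comparison, the paper avoids constructing $g_S$ entirely by factoring $b_\sigma = b''_\sigma \circ b'_\sigma$ through the intermediate object $\underline{\pi\sigma}$. The first map $b'_\sigma \colon \underline{\sigma} \to \underline{\pi\sigma}$ is a pullback of the trivial Kan fibration $\Fun(\Delta^n, \sO^{\cocart}(C)) \to \Fun(\Delta^n, C \times_S \sO(S))$, which absorbs all the cocartesian lifting in one stroke. The second map $b''_\sigma$ is a pullback of $\Fun(\Delta^n, \sO(S)) \times_{S^{\Delta^n}} S \to \Fun(K,S)$ for $K = (\Delta^n \times \{0\}) \cup (\{n\} \times \Delta^1)$, shown to be a trivial Kan fibration by a single right-anodyne pushout-product lifted against the cartesian fibration $\ev_0 \colon \sO(S) \to S$. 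This factorization is cleaner precisely because it separates the ``lift cocartesian edges in $C$'' step from the ``collapse $\Delta^n$ at time $1$ in $S$'' step, whereas your approach intertwines them and so requires the delicate quotient argument for $g_S$.
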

\begin{proof} First define a map $b'_\sigma: \underline{\sigma} \to \underline{\pi \sigma}$ to be the pullback of the map
\[ (e_0, \sO(\pi))_\ast: \Fun(\Delta^n, \sO^\cocart(C)) \to C^{\Delta^n} \times_{S^{\Delta^n}} \Fun(\Delta^n, \sO(S))\]
over $\{ \sigma \}$ and $S$. Since $(e_0, \sO(\pi))$ is a trivial Kan fibration, so is $b'_\sigma$. Next, let $K$ be the pushout $\Delta^n \times\{0\} \cup_{\{n\} \times \{0\}} \{n\} \times \Delta^1$. We claim that the map $\Fun(\Delta^n, \sO(S)) \times_{S^{\Delta^n}} S \to \Fun(K,S)$ induced by $K \subset \Delta^n \times \Delta^1$ is a trivial Kan fibration. For a monomorphism $A \to B$, we need to solve the lifting problem
\[ \begin{tikzcd}[row sep=2em, column sep = 2em]
A \ar{r} \ar{d} & \Fun(\Delta^n,\sO(S)) \times_{S^{\Delta^n}} S \ar{d} \\
B \ar[dotted]{ru} \ar{r} & \Fun(K,S).
\end{tikzcd} \]
This transposes to
\[ \begin{tikzcd}[row sep=2em, column sep = 2em]
A \times \Delta^n \bigcup_{A \times \{n\}} B \times \{n\} \ar{r} \ar{d} & \sO(S) \ar{d}{\ev_0} \\
B \times \Delta^n \ar[dotted]{ru} \ar{r} & S
\end{tikzcd} \]
and the lefthand map is right anodyne by \cite[Corollary~2.1.2.7]{HTT}, hence the dotted lift exists as $\ev_0$ is a cartesian fibration. Now define $b''_{\sigma}$ to be the pullback
\[ \underline{\pi \sigma} = \{ \pi \sigma \} \times_{S^{\Delta^n}} \Fun(\Delta^n, \sO(S)) \times_{S^{\Delta^n}} S \to \{ \pi \sigma\} \times_{S^{\Delta^n}} \Fun(K,S) \cong S^{\pi \sigma(n)/}; \]
this is also a trivial Kan fibration. Finally, let $b_\sigma = b''_\sigma \circ b'_\sigma$.
\end{proof}

We will regard $\underline{\sigma}$ as a $S^{\pi \sigma(n)/}$ or $S$-category via $b_\sigma$. We also have a target map $\underline{\sigma} \to C^{\Delta^n}$ induced by $\Delta^n \times \{1\} \subset \Delta^n \times \Delta^1$. This covers the target map $S^{\pi \sigma(n)/} \to S$ and is a $S$-functor.

Define a functor $F_C: \Delta^\op \to s\Set^+_{/S}$ on objects $[n]$ by
\[ F_C([n]) = \bigsqcup\limits_{\sigma \in C_n} \underline{\sigma}^\sharp \]
and on morphisms $\alpha: [m] \rightarrow [n]$ by the map $\underline{\sigma} \to \underline{\sigma \alpha}$ induced by precomposition by $\alpha: \Delta^m \to \Delta^n$.

\begin{rem} The map $\underline{\sigma} \to \underline{\sigma(n)}$ is compatible with the maps $b_\sigma$ and $b_{\sigma(n)}$ of Lemma~\ref{lm:fattenedSlice}, hence is a categorical equivalence (in fact, a trivial Kan fibration). Consequently, given a morphism $f: x \rightarrow y$ in $C$, by choosing an inverse to $\underline{f} \xrightarrow{\simeq} \underline{y}$ we obtain a map $f^\ast: \underline{y} \to \underline{x}$, unique up to contractible choice. Moreover, if $f$ lies over an equivalence, then $\underline{f} \to \underline{x}$ is a trivial Kan fibration, so we also obtain a map $f_!: \underline{x} \to \underline{y}$.
\end{rem}

In order to define the $S$-first vertex map $N^+_{\Delta^\op,S}(F_C) \to C$, we need to introduce a few preliminary constructions. Let $A_n \subset \sO(\Delta^n)$ be the sub-simplicial set where a $k$-simplex $x_0 y_0 \rightarrow ... \rightarrow x_k y_k$ is in $A_n$ if and only if $x_k \leq y_0$. For the reader's aid we draw a picture of the inclusion $A_n \subset \sO(\Delta^n)$ for $n=2$, where dashed edges are not in $A_2$:
\[ \begin{tikzcd}[row sep=2em, column sep=2em]
0 0 \ar[dotted]{rd} \ar[dotted,bend left]{rrdd} \ar{d} \ar[bend right]{dd} \ar[dotted]{rdd} &  \\
0 1 \ar{r} \ar{d} \ar{rd} \ar[dotted]{rrd} & 1 1 \ar{d} \ar[dotted]{rd} \\
0 2 \ar{r} \ar[bend right]{rr} & 1 2 \ar{r} & 2 2.
\end{tikzcd} \]

\begin{lem} \label{lem:innerAnodyneMap} The inclusion $A_n \to \sO(\Delta^n)$ is inner anodyne.
\end{lem}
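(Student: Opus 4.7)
The plan is to realize $A_n \hookrightarrow \sO(\Delta^n)$ as a transfinite composition of pushouts of inner horn inclusions. Writing a non-degenerate $k$-simplex of $\sO(\Delta^n)$ as a strictly increasing chain $\sigma = ((x_0,y_0),\dots,(x_k,y_k))$ of pairs, $\sigma$ fails to lie in $A_n$ precisely when $x_k > y_0$. For each such ``bad'' simplex set $m(\sigma) := \max\{i : x_i \le y_0\} \in \{0,\dots,k-1\}$, and totally order the bad non-degenerate simplices by dimension $k$ first (ascending), then by depth $k - m(\sigma)$ (ascending), with an arbitrary tiebreak; I will adjoin the bad simplices to the filtration in this order.

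For each bad $\sigma$ I construct an auxiliary $(k+1)$-simplex $\tilde\sigma$ of $\sO(\Delta^n)$ of which $\sigma$ is the $(m+1)$-th face (an \emph{inner} face, as $0 < m+1 < k+1$), obtained by splicing the mediator vertex $v = (x_m, y_{m+1})$ between positions $m$ and $m+1$ of $\sigma$. That $v$ lies in $\sO(\Delta^n)$ follows from the string of inequalities $x_m \le y_0 < x_{m+1} \le y_{m+1}$. The key combinatorial observation is that the remaining faces of $\tilde\sigma$ either lie in $A_n$ (when a terminal vertex is removed, the resulting chain satisfies $x_k \le y_0$) or are bad simplices of strictly smaller dimension or strictly smaller depth, and hence already in the filtration by the inductive ordering. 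The inner horn pushout $\Lambda^{k+1}_{m+1} \hookrightarrow \Delta^{k+1}$ along the attaching map $\tilde\sigma$ then adds $\tilde\sigma$ together with $\sigma$.

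The main obstacle is the degenerate case in which the mediator $v$ coincides with a neighboring vertex of $\sigma$; this can only occur when $y_m = y_{m+1}$, since $x_m = x_{m+1}$ is ruled out by $x_m \le y_0 < x_{m+1}$. In that case the constructed $\tilde\sigma$ is degenerate and the scheme above fails, so one must either pick a different mediator (e.g.\ $(y_0, y_m)$ when $x_m < y_0$) or recognize that such $\sigma$ itself serves as the auxiliary simplex used to horn-fill some bad simplex of strictly smaller dimension. For instance, a $\sigma$ of the form $((x_0, y_0), (x_0, x_1), (x_1, y_1))$ with $x_1 > y_0$ is precisely the canonical filler of the inner horn $\Lambda^2_1$ whose missing face is the bad edge $(x_0, y_0) \to (x_1, y_1)$, and $\sigma$ thus gets added as part of that earlier pushout step. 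The bulk of the proof is the careful case-analysis ensuring that for every bad $\sigma$ a consistent choice of $\tilde\sigma$ can be made whose other faces lie in the filtration at the stage we reach $\sigma$.
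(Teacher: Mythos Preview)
Your overall strategy—exhaust the inclusion by inner-horn pushouts built from inserting a mediator vertex—is the paper's strategy. But your specific ordering and single mediator do not work, and the failure is not confined to the degenerate case you flag.

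Consider, in the non-degenerate situation $y_m < y_{m+1}$, the face $d_m\tilde\sigma$ obtained by deleting $(x_m,y_m)$ from $\tilde\sigma$. This is a bad $k$-simplex with first vertex $(x_0,y_0)$ and $x$-coordinates $x_0,\dots,x_{m-1},x_m,x_{m+1},\dots,x_k$ at positions $0,\dots,k$; hence its $m$-value is again $m$ and its depth equals that of $\sigma$. Worse, its own mediator $(x_m,y_{m+1})$ coincides with its vertex at position $m$, so it falls into your degenerate case; and it is \emph{not} $\widetilde{d_m\sigma}$, since $d_m\sigma$ has $m$-value $m-1$ with mediator $(x_{m-1},y_{m+1})\ne(x_m,y_{m+1})$. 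Neither of your proposed fixes places $d_m\tilde\sigma$ in the filtration before $\sigma$, so the horn $\Lambda^{k+1}_{m+1}$ cannot be attached. Your alternative mediator $(y_0,y_m)$ also fails outright when $x_m = y_0$.

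The paper organizes the filtration differently to avoid this. It first filters by the bad \emph{edge} $[x_0y_0,x_ky_k]$, totally ordered so that $[a'b',x'y']<[ab,xy]$ whenever $ab\le a'b'$ and $xy\le x'y'$; this disposes of both outer faces at once. Within each stage it does \emph{not} use a single mediator: the new simplices are split into six classes according to whether $x_1y_1$ equals $x_0(y_0{+}1)$, $(x_0{+}1)y_0$, or neither (and, in the middle case, similarly for $x_2y_2$), and each ``class 1'' simplex is paired with a ``class 2'' simplex obtained by inserting one of the three specific vertices $x_0(y_0{+}1)$, $(x_0{+}1)(y_0{+}1)$, $(x_0{+}1)y_0$ at position $1$ or $2$. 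Ordering the pairs by dimension and then $A<B<C$, the remaining inner faces of each filler land in an earlier class or lower dimension by a short check. The ``careful case-analysis'' you defer is thus the entire content of the argument, and it genuinely requires this multi-mediator pairing rather than the single insertion at position $m{+}1$.
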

\begin{proof} In this proof we adopt the notation $[x_0 y_0,...,x_k y_k]$ for a $k$-simplex of $\sO(\Delta^n)$. Let $E$ be the collection of edges $[a b ,x y]$ in $\sO(\Delta^n)$ where $x > b$, and choose a total ordering $\leq$ on $E$ such that if we have a factorization
\[ \begin{tikzcd}[row sep=2em, column sep=2em]
a b \ar{r} \ar{d} & x y \ar{d} \\
a' b' \ar{r} & x' y'
\end{tikzcd} \]
then $[a' b' , x' y'] \leq [a b , x y]$. Index edges in $E$ by $I = \{0,...,N\}$. Define simplicial subsets $A_{n,i}$ of $\sO(\Delta^n)$ such that $A_{n,i}$ is obtained by expanding $A_n$ to contain every $k$-simplex $[x_0 y_0, ... ,x_k y_k]$ with $[x_0 y_0, x_k y_k]$ in $E_{< i}$. We will show that each inclusion $A_{n,i} \to A_{n,i+1}$ is inner anodyne. We may divide the nondegenerate $k$-simplices $[x_0 y_0, x_1 y_1,...,x_k y_k]$ in $A_{n,i+1}$ but not in $A_{n,i}$ into six classes:
\begin{itemize}	
	\item $A1$: $x_1 y_1 \neq x_0 (y_0+1)$ and $y_1 > y_0$.
	\item $A2$: $x_1 y_1 = x_0 (y_0+1)$.	
	\item $B1$: $x_1 y_1 = (x_0+1) y_0$, $y_2 > y_0$, and $x_2 y_2 \neq (x_0+1) (y_0+1)$.
	\item $B2$: $x_1 y_1 = (x_0+1) y_0$ and $x_2 y_2 = (x_0+1) (y_0+1)$.
	\item $C1$: $x_1 y_1 \neq (x_0+1) y_0$ and $y_1 = y_0$.
	\item $C2$: $x_1 y_1 = (x_0+1) y_0$ and $y_2 = y_0$.
\end{itemize}

We have bijections between classes of form 1 and classes of form 2 given by
\begin{itemize}
	\item $A$: $[x_0 y_0, x_1 y_1,...,x_k y_k] \mapsto [x_0 y_0, x_0 (y_0+1) , x_1 y_1,...,x_k y_k]$.	
	\item $B$: $[x_0 y_0, x_0 +1 y_1, x_2 y_2,...,x_k y_k] \mapsto [x_0 y_0, (x_0+1) y_0, (x_0+1) (y_0+1), x_2 y_2,...,x_k y_k]$.
	\item $C$: $[x_0 y_0, x_1 y_1,...,x_k y_k] \mapsto [x_0 y_0, (x_0+1) y_0 ,x_1 y_1,...,x_k y_k]$.
\end{itemize}

Moreover, this identifies simplices in a class of form 1 as inner faces of simplices in the corresponding class of form 2. Let $P$ be the collection of pairs $\tau \subset \tau'$ of nondegenerate $k-1$ and $k$-simplices matched by this bijection. Choose a total ordering on $P$ where pairs are ordered first by the dimension of the smaller simplex, and then by $A<B<C$, and then randomly. Let $J = \{0,...,M\}$ be the indexing set for $P$. We define a sequence of inner anodyne maps
\[ A_{n,i} = A_{n,i,0} \to A_{n,i,1} \to ... \to A_{n,i,M+1} = A_{n,i+1} \]
such that $A_{n,i,j+1}$ is obtained from $A_{n,i,j}$ by attaching the $j$th pair $\tau \subset \tau'$ along an inner horn. For this to be valid, we need the other faces of $\tau'$ to already be in $A_{n,i,j}$. The ordering on $E$ was chosen so that the outer faces of $\tau'$ are in $A_{n,i}$. The argument for the inner faces proceeds by cases:
\begin{itemize}
	\item $\tau'$ is in class $A2$: The other inner faces are also in class $A2$ since they contain $x_0 (y_0+1)$, hence were added at some earlier stage.
	\item $\tau'$ is in class $B2$: The other inner faces of $[x_0 y_0, (x_0+1) y_0, (x_0+1) (y_0+1), x_2 y_2,...,x_k y_k]$ are all in class $B2$, except for $[x_0 y_0, (x_0+1) (y_0+1), x_2 y_2,...,x_k y_k]$, which is in class $A1$. Both of these were added at an earlier stage.
	\item $\tau'$ is in class $C2$: The other inner faces are in class $C2$ or $B1$ since they contain $(x_0 +1) y_0$, hence were added at some earlier stage.
\end{itemize}
\end{proof}

Let $E_n \subset (A_n)_1 \subset \sO(\Delta^n)_1$ be the subset of edges $x_0 y_0 \rightarrow x_1 y_1$ where $y_0 = y_1$. Define simplicial sets $C'$ and $C''$ to be the pullbacks
\[ \begin{tikzcd}[row sep=2em, column sep=2em]
C'_\bullet \ar{r} \ar{d} & \Hom((\sO(\Delta^\bullet),E_\bullet),\leftnat{C}) \ar{d} \\
\Hom(\Delta^\bullet,S) \ar{r}{\ev_0^\ast} & \Hom(\sO(\Delta^\bullet),S)
\end{tikzcd},
\begin{tikzcd}[row sep=2em, column sep=2em]
C''_\bullet \ar{r} \ar{d} & \Hom((A_\bullet,E_\bullet),\leftnat{C}) \ar{d} \\
\Hom(\Delta^\bullet,S) \ar{r}{\ev_0^\ast} & \Hom(A_\bullet,S).
\end{tikzcd} \]

We now show that the map $C' \to C''$ induced by precomposition by $A_\bullet \to \sO(\Delta^\bullet)$ is a trivial Kan fibration. Indeed, in order to solve the lifting problem
\[ \begin{tikzcd}[row sep=2em, column sep=2em]
\partial \Delta^n \ar{r} \ar{d} & C' \ar{d} \\
\Delta^n \ar{r} \ar[dotted]{ru} & C''
\end{tikzcd} \]
we must supply a lift
\[ \begin{tikzcd}[row sep=2em, column sep=2em]
A_n \bigcup\limits_{\cup A_{n-1}} ( \bigcup \sO(\Delta^{n-1})) \ar{r} \ar{d} & C \ar{d} \\
\sO(\Delta^n) \ar{r} \ar[dotted]{ru} & S
\end{tikzcd} \]
and the left vertical map is a trivial cofibration by Lemma~\ref{lem:innerAnodyneMap}. Let $\sigma: C'' \to C'$ be any section. Also let $\delta: C' \to C$ be the map induced by precomposition by the identity section $\Delta^\bullet \to \sO(\Delta^\bullet)$.

\nomenclature[firstVertexParam]{$\upsilon_{C,S}$}{Parametrized first vertex functor}

Define a map $\upsilon_{C,S}: N^+_{\Delta^\op,S}(F_C) \to C$ over $S$ as follows: the data of an $n$-simplex of $N^+_{\Delta^\op,S}(F_C)$ consists of
\begin{itemize} \item an $n$-simplex $\Delta^{a_0} \leftarrow ... \leftarrow \Delta^{a_n}$ in $\Delta^\op$ (so we have maps $f_{ij}: \Delta^{a_j} \to \Delta^{a_i}$ for $i \leq j$);
\item an $n$-simplex $s_{\bullet}: \Delta^n \to S$;
\item a choice of $a_0$-simplex $\sigma_0 \in C_{a_0}$;
\item for $0 \leq i \leq n$, a map $\gamma_i: \Delta^i \to \underline{\sigma_i}$, where $\sigma_i = \sigma_0 \circ f_{0i}$
\end{itemize} 
such that for all $0 \leq i \leq j \leq n$, the diagram
\[ \begin{tikzcd}[row sep = 2em, column sep = 2em]
\Delta^i \ar{r}{\gamma_i} \ar{d}[swap]{\{0,...,i\} \subset [j]} & \underline{\sigma_i} \ar{d}{f_{ij}^\ast} \\
\Delta^j \ar{r}{\gamma_j} \ar{rd}[swap]{(s_\bullet)|_{ \{0,...,j\} }} & \underline{\sigma_j} \ar{d} \\
& S
\end{tikzcd} \]
commutes. Let $\overline{\gamma_i}: \Delta^i \times \Delta^{a_i} \times \Delta^1 \to C$ denote the adjoint map. 

We now define a map $A_n \to C$ to be that uniquely specified by sending for all $0 \leq k \leq n$ the rectangle $\Delta^k \times \Delta^{n-k} \subset A_n$ given by $00 \mapsto 0 k$ and $k (n-k) \mapsto k n$ to 
\[ \Delta^k \times \Delta^{n-k} \xrightarrow{\id \times (\lambda a)_k} \Delta^k \times \Delta^{a_k} \times \{1\} \xrightarrow{\overline{\gamma_i}|_{ \{1\} }} C \]

where the maps $(\lambda a)_k$ are obtained from the first vertex section of $W \to \Delta^\op$ restricted to $a_\bullet$ as before. One may check that the composite $A_n \to C \to S$ factors as $A_n \to \Delta^n \xrightarrow{s_\bullet} S$, so this defines a $n$-simplex of $C''$. This procedure is natural in $\Delta^n \in \Delta$, so yields a map $N^+_{\Delta^\op,S}(F_C) \to C''$. Finally, postcomposition by $\delta \circ \sigma: C'' \to C$ define our desired map $\upsilon_{C,S}$. By Proposition~\ref{prp:SrelativeNerve}, $N^+_{\Delta^\op,S}(F_C) \xrightarrow{\pi'} S$ is an $S$-category with an edge $\pi'$-cocartesian if and only if it is degenerate when projected to $\Delta^\op$. These edges are evidently sent to $\pi$-cocartesian edges in $C$, so $\upsilon_C$ is a $S$-functor.

\begin{thm} \label{thm:relativeBKwithCoproducts} The $S$-first vertex map $\upsilon_{C,S}: N^+_{\Delta^\op,S}(F_C) \to C$ is fiberwise a weak homotopy equivalence. Moreover, $\upsilon_{C,S}$ is $S$-final if either $C \to S$ is a left fibration, or $S$ is equivalent to the nerve of a $1$-category.
\end{thm}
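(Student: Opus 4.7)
The plan is to reduce the fiberwise statement to the ordinary Bousfield--Kan formula (Prp.~\ref{prp:cofinalityFirstVertex}) via the base-change compatibility of the $S$-relative nerve recorded just before Prp.~\ref{prp:SrelativeNerve}. For each $s \in S$, writing $i_s: \{s\} \hookrightarrow S$ for the inclusion, this gives
\[
(N^+_{\Delta^\op,S}(F_C)) \times_S \{s\} \;\cong\; N^+_{\Delta^\op}(G_s), \qquad G_s([n]) \;=\; \bigsqcup_{\sigma \in C_n} (\underline{\sigma})_s^\sharp,
\]
so that $(\upsilon_{C,S})_s$ becomes a map $N^+_{\Delta^\op}(G_s) \to C_s$. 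Recall from Lm.~\ref{lm:fattenedSlice} that each $\underline{\sigma}$ admits a trivial fibration $b_\sigma: \underline{\sigma} \to S^{\pi\sigma(n)/}$, whose fiber over $s$ is a trivial Kan fibration onto $\Map_S(\pi\sigma(n), s)$.

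For the fiberwise weak homotopy equivalence I will construct a natural inclusion $j: \Delta^\op_{/C_s} \hookrightarrow N^+_{\Delta^\op}(G_s)$ sending an $m$-simplex $(\sigma \in (C_s)_{a_0}, f_{\bullet\bullet})$ to the relative-nerve simplex in which each $\gamma_i$ picks out the canonical degenerate natural transformation at $\sigma_i = \sigma \circ f_{0i}$; this transformation lies in $(\underline{\sigma_i})_s$ because $\sigma$ already lies over the constant simplex at $s$, and under $b_{\sigma_i}$ it is sent to $\id_s$. A direct unwinding of the definition of $\upsilon_{C,S}$ then shows $(\upsilon_{C,S})_s \circ j = \upsilon_{C_s}$, which is a weak homotopy equivalence by Prp.~\ref{prp:cofinalityFirstVertex}. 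I will argue $j$ is itself a weak homotopy equivalence by constructing a deformation retraction using a chosen cocartesian pushforward $P'$ for $C \to S$: a generic pair $(\sigma, \gamma)$ is sent to $(f_! \sigma, \mathrm{id}_s)$ with $f = b_\sigma(\gamma)$, while $\gamma$ itself (extended in the $\Delta^1$-direction) supplies the homotopy from the identity to $j \circ r$. Two-out-of-three then closes out the first assertion.

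For the $S$-finality statement, Thm.~\ref{thm:cofinality} reduces the claim to showing each $(\upsilon_{C,S})_s$ is \emph{final}. When $C \to S$ is a left fibration, $C_s$ is a Kan complex and the cocartesian pushforward functors $f_!$ are equivalences of Kan complexes, so the deformation retraction above upgrades to a categorical equivalence; combined with the finality of $\upsilon_{C_s}$ (Prp.~\ref{prp:cofinalityFirstVertex}) this forces $(\upsilon_{C,S})_s$ to be final. When $S$ is equivalent to the nerve of a $1$-category, each $\Map_S(\pi\sigma(n), s)$ is a discrete set, so $(\underline{\sigma})_s$ decomposes as a disjoint union of contractible pieces indexed by morphisms $\pi\sigma(n) \to s$; finality may then be verified level-wise by Joyal's criterion \cite[4.1.3.1]{HTT}, checking weak contractibility of the relevant undercategories directly from the combinatorial decomposition.

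The principal obstacle will be controlling the deformation retraction in the presence of the somewhat intricate definition of $\upsilon_{C,S}$, which passes through the auxiliary simplicial subsets $A_n \subset \sO(\Delta^n)$ of Lm.~\ref{lem:innerAnodyneMap} and a chosen section $\sigma: C'' \to C'$. Ensuring that the homotopy furnished by $\gamma$ is compatible with this section (and with the identity sections of Lm.~\ref{lm:fattenedSlice}) without destroying naturality in $s$ is the technical heart of the argument, and is also where the left-fibration hypothesis is used most crucially to obtain the strengthening from weak homotopy equivalence to categorical equivalence.
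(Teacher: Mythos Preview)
Your overall strategy---factor $(\upsilon_{C,S})_s$ through the ordinary first vertex map $\upsilon_{C_s}$ via an inclusion $j:\Delta^\op_{/C_s}\hookrightarrow N^+_{\Delta^\op}(i_s^\ast F_C)$ and then argue that $j$ is a weak homotopy equivalence---is reasonable, but the deformation retraction you sketch does not exist as a simplicial map. The proposed retract $r$ sends a vertex $(\sigma\in C_{a},\,\gamma_0\in(\underline{\sigma})_s)$ to the \emph{target} $\sigma'\in (C_s)_a$ of the cocartesian natural transformation $\gamma_0$. But for a $1$-simplex, the datum $\gamma_1:\Delta^1\to(\underline{\sigma_1})_s$ has $\gamma_1(0)=f_{01}^\ast\gamma_0$ while $\gamma_1(1)$ is an \emph{arbitrary} point of $(\underline{\sigma_1})_s$; its target agrees with $(\sigma')\circ f_{01}$ only up to the (generally non-degenerate) edge in $\Fun(\Delta^{a_1},C_s)$ coming from $\gamma_1$. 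So ``take the target'' is not functorial for the face map $d_0$, and no choice of cocartesian pushforward $P'$ repairs this on the nose. The homotopy you propose (``$\gamma$ extended in the $\Delta^1$-direction'') presupposes that $r$ is already defined.

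The paper avoids this by enlarging the comparison category: instead of $\Delta^\op_{/C_s}$, it uses $\Delta^\op_{/C\times_S S^{/s}}$, whose simplices record exactly a simplex of $C$ together with a choice of arrow to $s$ from its last vertex---precisely the data indexing the components of $G_s$. It constructs a map $\upsilon''_{C,s}:\Delta^\op_{/C\times_S S^{/s}}\to N^+_{\Delta^\op}(i_s^\ast F_C)$ via $P$, identifies $\theta_{C,s}\circ\upsilon''_{C,s}$ with a map built from the \emph{last} vertex map $(\mu_{C\times_S S^{/s}})^\op$, and deduces that $\upsilon''_{C,s}$ is a weak homotopy equivalence (and a categorical equivalence when $S$ is a $1$-category, by a fiberwise check). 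Your $j$ is the restriction of $\upsilon''_{C,s}$ along $\Delta^\op_{/j_s}$ for $j_s:C_s\hookrightarrow C\times_S S^{/s}$, so the weak-homotopy-equivalence conclusion you want does follow---but from this route, not from a retraction.

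Two smaller points. For the left-fibration case you do not need to upgrade anything to a categorical equivalence: once $(\upsilon_{C,S})_s$ is a weak homotopy equivalence and $C_s$ is a Kan complex, finality is immediate (a map to a Kan complex is final iff it is a weak homotopy equivalence). For the $1$-category case, ``check Joyal's criterion from the combinatorial decomposition'' is not yet an argument; the discreteness of $\Map_S(\pi\sigma(n),s)$ is what makes the fiberwise comparison of $\theta_{C,s}\circ\upsilon''_{C,s}$ an isomorphism, and that is where it enters the paper's proof.
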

\begin{proof} 
Let $t \in S$ be an object and $i_t: \{t\} \to S$ the inclusion. Then $N^+_{\Delta^\op,S}(F_C)_t \cong N^+_{\Delta^\op}(i_t^\ast F_C)$. We have a map $$N^+_{\Delta^\op}(i_t^\ast F_C) \to \Delta^\op_{/C} \cong N^+_{\Delta^\op}(C_\bullet)$$ of left fibrations over $\Delta^\op$ induced by the natural transformation $i_t^\ast F_C \to C_\bullet$ which collapses each $\underline{\sigma} \times_S \{t\}$ to a point. Moreover, this natural transformation is objectwise a Kan fibration, so the map itself is a left fibration. Also define a map $$N^+_{\Delta^\op}(i_t^\ast F_C) \to (S^{/t})^\op$$ as follows: in the above notation, the $\gamma_0$ map in the data of an $n$-simplex $(a_\bullet, \gamma_i: \Delta^i \to \underline{\sigma_i} \times_S \{t\})$ yields a map $\pi \gamma_0: \Delta^{a_0} \to \sO(S) \times_S \{t\} = S^{/t}$, and we send the $n$-simplex to
 \[ \Delta^n \xrightarrow{(\lambda a^{\rev})_0} (\Delta^{a_0})^\op \xrightarrow{(\pi \gamma_0)^\op} (S^{/t})^\op \]
where $a^\rev_\bullet$ is $(\Delta^{a_0})^\op \leftarrow ... \leftarrow (\Delta^{a_n})^\op$. Using these maps we obtain a commutative square
\[ \begin{tikzcd}[row sep=2em, column sep = 2em]
N^+_{\Delta^\op}(i_t^\ast F_C) \ar{r} \ar{d} & C^\op \times_{S^\op} (S^{/t})^\op \ar{d} \\
\Delta^\op_{/C} \ar{r}{\mu_{C}^\op} & C^\op.
\end{tikzcd} \]

We claim that the map
 \[ \theta_{C,t}: N^+_{\Delta^\op}(i_t^\ast F_C) \to (\Delta^\op_{/C}) \times_{C^\op} (C \times_S S^{/t})^\op \] 
is a categorical equivalence. Since $\theta_{C,t}$ is a map of left fibrations over $\Delta^\op_{/C}$, it suffices to check that for every object $\sigma \in \Delta^\op_{/C}$, the map on fibers
\[ \underline{\sigma} \times_S \{t\} \to (S^\op)^{t/} \times_{S^\op} \{\pi \sigma(n) \} \simeq \{ \pi \sigma (n)\} \times_S S^{/t} \]
is a homotopy equivalence. But this is the pullback of the trivial Kan fibration of Lemma~\ref{lm:fattenedSlice} over $\{t\}$.

We next define a map $N^+_{\Delta^\op}(i_t^\ast F_C) \to S^{/t}$ by sending $(a_\bullet,\gamma_i)$ to $\pi\gamma_0 \circ (\lambda a)_0$. Then the outer rectangle
\[ \begin{tikzcd}[row sep=2em, column sep = 2em]
N^+_{\Delta^\op}(i_t^\ast F_C) \ar[dotted]{r}[swap]{\upsilon'_{C,t}} \ar{d} \ar[bend left=15]{rr} & C \times_S S^{/t} \ar{d} \ar{r} & S^{/t} \ar{d} \\
\Delta^\op_{/C} \ar{r}{\upsilon_C} & C \ar{r}{\pi} & S
\end{tikzcd} \]
commutes so we obtain the dotted map $\upsilon'_{C,t}$.

Next, we choose a section $P$ of the trivial Kan fibration $\sO^\cocart(C) \to C \times_S \sO(S)$ which restricts to the identity section on $C$. $P$ restricts to a map $P_t: C \times_S S^{/t} \to \sO^\cocart(C) \times_S \{t\}$, and it is tedious but straightforward to construct a homotopy between the composition $(\ev_1 P_t) \circ \upsilon'_{C,t}$ and $(\upsilon_{C,S})_t$. Finally, we define a map $\upsilon''_{C,t}: \Delta_{/C \times_S S^{/t}}^\op \to N^+_{\Delta^\op}(i_t^\ast F_C)$ as follows: given an $n$-simplex
\[ \begin{tikzcd}[row sep = 2em, column sep = 2em]
\Delta^{a_0} \ar{d}{\tau_0} & \dots \ar{l} & \Delta^{a_n} \ar{lld}{\tau_n} \ar{l} \\
C \times_S S^{/t}
\end{tikzcd} \]
let $\sigma_i = \pr_C \circ \tau_i$, and define $\gamma_i: \Delta^i \to \underline{\sigma_i} \times_S \{t\}$ as the composition of the projection to $\Delta^0$ and the adjoint of the map $P_t \circ \tau_i$. Then $(a_\bullet, \gamma_i)$ assembles to yield an $n$-simplex of $N^+_{\Delta^\op}(i_t^\ast F_C)$.

Unwinding the definitions of the various maps, we identify the composition $\upsilon'_{C,t} \circ \upsilon''_{C,t}$ as given by $\upsilon_{C \times_S S^{/t}}$, and the composition $\theta_{C,t} \circ \upsilon''_{C,t}$ as given by the map $\Delta^\op_{/\pr_C}$ to the factor $\Delta^\op_{/C}$ and the map $(\mu_{C \times_S S^{/t}})^\op$ to the factor $(C \times_S S^{/t})^\op$. By Proposition~\ref{prp:cofinalityFirstVertex} and the fact that final maps pull back along cocartesian fibrations, we deduce that in
\[  \begin{tikzcd}[row sep=2em, column sep = 2em]
 \Delta^\op_{/C \times_S S^{/t}} \ar{r} \ar[bend left=15]{rr} & \Delta^\op_{/C} \times_{C^\op} (C \times_S S^{/t})^\op \ar{r} & (C \times_S S^{/t})^\op 
 \end{tikzcd} \]

the long composition and the second map are both final. Consequently, $\theta_{C,t} \circ \upsilon''_{C,t}$ is a weak homotopy equivalence. Moreover, if $S$ is equivalent to the nerve of a $1$-category then $\theta_{C,t} \circ \upsilon''_{C,t}$ is a categorical equivalence, as may be verified by checking that the map is a fiberwise equivalence over $\Delta^\op_{/C}$. Since $\theta_{C,t}$ is a categorical equivalence, $\upsilon''_{C,t}$ is then a weak homotopy equivalence resp. a categorical equivalence. Since $\upsilon_{C \times_S S^{/t}}$ is final, $\upsilon'_{C,t}$ is then a weak homotopy equivalence resp. final.

For the last step, let $j_t: C_t \to C \times_S S^{/t}$ denote the inclusion. As the inclusion of the fiber over a final object into a cocartesian fibration, $j_t$ is final. $(\ev_1 P_t) \circ j_t = \id_{C_t}$, so by right cancellativity of final maps, $\ev_1 P_t$ is final. We conclude that $(\upsilon_{C,S})_t$ is a weak homotopy equivalence resp. final. In addition, if $C \to S$ is a left fibration, $(\upsilon_{C,S})_t$ has target a Kan complex, so is final by \cite[Lemma~2.3.4.6]{HA}. Invoking the $S$-cofinality Theorem~\ref{thm:cofinality}, we conclude the proof.
\end{proof}

\begin{rem} The above proof that the $S$-first vertex map $\upsilon_{C,S}$ is final in special cases hinges upon the finality of the map $\theta_{C,t} \circ \upsilon''_{C,t}$. We believe, but are currently unable to prove, that this map is always final.
\end{rem}

We conclude this section with our main application to decomposing $S$-colimits.

\begin{cor} \label{cor:DecomposingColimits} Suppose that $S^\op$ admits multipullbacks. Then $C$ is $S$-cocomplete if and only $C$ admits all $S$-coproducts and geometric realizations.
\end{cor}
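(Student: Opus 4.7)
Plan: The ``only if'' direction is immediate from the definition of $S$-cocompleteness, since $S$-coproducts (see Proposition \ref{colimitOverCorepresentableDiagramIsLeftAdjointToRestriction}) and geometric realizations (i.e.\ $S$-colimits indexed by the constant $S$-category $S \times \Delta^\op$) are special instances of $S$-colimits. For the ``if'' direction, suppose $C$ admits all $S$-coproducts and all geometric realizations, and let $p: K \to C_{\underline{s}}$ be a small $S^{s/}$-diagram. Since $(S^{s/})^\op \simeq (\FF_{S^\op})_{/s}$ is orbital whenever $S^\op$ is, and every hypothesis on $C$ pulls back to $C_{\underline{s}}$, we may replace $S$ by $S^{s/}$ and reduce to showing that every small $S$-diagram $p:K \to C$ admits a $S$-colimit.

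The strategy is to invoke the relative Bousfield--Kan formula, Theorem \ref{thm:relativeBKwithCoproducts}: the $S$-first vertex map $\upsilon_{K,S} : N^+_{\Delta^\op,S}(F_K) \to K$ is always fiberwise a weak homotopy equivalence. Under the orbital hypothesis, the fibers $S^{/t} \times_S \{\pi c\}$ that arise in the analysis of the proof of that theorem decompose as disjoint unions of representables in $\FF_{S^\op}$, and one can adapt the argument of Theorem \ref{thm:relativeBKwithCoproducts} (in the spirit of the two special cases already handled there) to conclude that the fiber maps $(\upsilon_{K,S})_t$ are not merely weak equivalences but final, so that $\upsilon_{K,S}$ is $S$-final by Theorem \ref{thm:cofinality}. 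Granted this, Theorem \ref{thm:cofinality}(3) identifies the $S$-colimit of $p$ with the $S$-colimit of $p \circ \upsilon_{K,S}$, so it suffices to produce the latter.

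Now $N^+_{\Delta^\op,S}(F_K) \to \Delta^\op \times S$ is a $S$-cocartesian fibration over the constant $S$-category $\Delta^\op \times S$, whose fibers $F_K([n]) = \bigsqcup_{\sigma \in K_n} \underline{\sigma}^\sharp$ are coproducts of parametrized points. Applying Theorem \ref{thm:ExistenceAndUqnessOfParamColimit} to the $S$-cocartesian fibration $N^+_{\Delta^\op,S}(F_K) \to \Delta^\op \times S$, the $S$-colimit of $p \circ \upsilon_{K,S}$ exists provided (a) for each $[n]$, the restriction to $F_K([n])$ admits a $(\Delta^\op \times S)$-parametrized $S$-colimit, and (b) the resulting simplicial $S$-diagram $S \times \Delta^\op \to C$ admits a $S$-colimit. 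Condition (a) is precisely the existence of $S$-coproducts indexed by the families $\{\pi\sigma(n)\}_{\sigma \in K_n}$ along the structural morphisms coming from the target maps $\underline{\sigma} \to S$, which holds by hypothesis. Condition (b) is the existence of a geometric realization (Proposition \ref{prp:ColimitsOfConstantDiagrams} then packages this as a $S$-colimit), which holds by hypothesis. Hence $p$ admits a $S$-colimit.

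The main obstacle is upgrading Theorem \ref{thm:relativeBKwithCoproducts} to produce $S$-finality of $\upsilon_{K,S}$ under the orbital assumption (the author flags the general case as unresolved in the remark following that theorem). The orbital hypothesis enters precisely to decompose the pullbacks of corepresentable fibrations along one another into coproducts of corepresentables in $\FF_{S^\op}$, which is the missing input needed to push through the cofinality analysis in Theorem \ref{thm:relativeBKwithCoproducts}.
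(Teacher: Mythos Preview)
Your proposal has a genuine gap at the point you yourself flag as ``the main obstacle.'' You attempt to apply Theorem~\ref{thm:relativeBKwithCoproducts} to an arbitrary $S$-category $K$, asserting that the orbital hypothesis on $S^\op$ is ``the missing input needed to push through the cofinality analysis.'' But this is not so: the obstruction in the proof of Theorem~\ref{thm:relativeBKwithCoproducts} is the finality of the map $\theta_{C,t}\circ\upsilon''_{C,t}$, and orbitality of $S^\op$ has no evident bearing on that map. The two cases where the theorem does yield $S$-finality (when $K\to S$ is a left fibration, or when $S$ is a $1$-category) go through for reasons unrelated to any decomposition of pullbacks of corepresentables. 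You have conflated two separate roles: orbitality is used in the paper to ensure that the parametrized fibers of $\rho: N^+_{\Delta^\op,S^{s/}}(F_K)\to \Delta^\op\times S^{s/}$ decompose as coproducts of corepresentables (so that the relevant $S^{t/}$-colimits are $S$-coproducts), not to establish $S$-finality of $\upsilon_{K,S}$.

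The paper circumvents this entirely by a two-stage bootstrap. First, assuming $K\to S^{s/}$ is a \emph{left fibration}, Theorem~\ref{thm:relativeBKwithCoproducts} does apply and gives $S^{s/}$-finality of $\upsilon_{K,S^{s/}}$; then the orbital hypothesis decomposes the $\underline{(n,f)}$-fibers of $\rho$ as coproducts of corepresentables, so the left Kan extension along $\rho$ exists by the $S$-coproduct hypothesis, and one finishes with a geometric realization. Second, for general $K$, one instead invokes Theorem~\ref{thm:relativeBKwithSpaces}: the map $\Upsilon'_{K,S^{s/}}$ is always $S^{s/}$-final, and the $\underline{(n,f)}$-fibers of the structure map $\rho'$ to $\Delta^\op\times S^{s/}$ are now $\iota\,\underline{\Fun}_{S^{t/}}(\Delta^n\times S^{t/}, K\times_{S^{s/}}S^{t/})$, which are left fibrations. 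The first stage then supplies the needed $S^{t/}$-colimits of these, and the argument concludes as before. Your one-step approach needs this second Bousfield--Kan formula to close the gap.
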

\begin{proof} We prove the if direction, the only if direction being obvious. Let $K$ be a $S^{s/}$-category and $p: K \to C_{\underline{s}}$ a $S^{s/}$-diagram. First suppose that $K \to S^{s/}$ is a left fibration. Consider the diagram
\[ \begin{tikzcd}[row sep=2em, column sep=2em]
N^+_{\Delta^\op,S^{s/}}(F_K) \ar{r}{\upsilon_{K,S^{s/}}} \ar{d}{\rho} & K \ar{r}{p} & C_{\underline{s}} \\
\Delta^\op \times S^{s/}.
\end{tikzcd} \]
By Theorem~\ref{thm:relativeBKwithCoproducts}, the $S^{s/}$-colimit of $p$ is equivalent to that of $p \circ \upsilon_{K,S^{s/}}$. Since $\rho$ is $S$-cocartesian, by Theorem~\ref{thm:ExistenceAndUqnessOfParamColimit} the $S^{s/}$-left Kan extension of $p \circ \upsilon_{K,S^{s/}}$ along $\rho$ exists provided that for all $n \in \Delta^\op$ and $f: s \rightarrow t$, the $S^{t/}$-colimit exists for $(p \circ \upsilon_{K,S^{s/}})_{\underline{(n,f)}}$.  To understand the domain of this map, note that because the pullback of $\rho$ along $f^\ast: \Delta^\op \times S^{t/} \to \Delta^\op \times S^{s/}$ is given by $N^+_{\Delta^\op,S^{t/}}(f^\ast F_K)$, the assumption that $S^\op$ admits multipullbacks ensures that the $\underline{(n,f)}$-fibers of $\rho$ decompose as coproducts of representable left fibrations. Therefore, these colimits exist since $C$ is assumed to admit $S$-coproducts. Now by transitivity of left $S^{s/}$-Kan extensions, the $S^{s/}$-colimit of $p \circ \upsilon_{K,S^{s/}}$ is equivalent to that of $\rho_! (p \circ \upsilon_{K,S^{s/}})$, and this exists since $C$ is assumed to admit geometric realizations.

Now suppose that $K \to S^{s/}$ is any cocartesian fibration. Consider the diagram
\[ \begin{tikzcd}[row sep=2em, column sep=2em]
\iota \widetilde{\Fun}_{\Delta^\op \times S^{s/}}(W \times S^{s/}, \Delta^\op \times K) \ar{r}{\Upsilon'_{K,S^{s/}}} \ar{d}{\rho'} & K \ar{r}{p} & C_{\underline{s}} \\
\Delta^\op \times S^{s/}.
\end{tikzcd} \]
By Theorem~\ref{thm:relativeBKwithSpaces}, the $S^{s/}$-colimit of $p$ is equivalent to that of $p \circ \Upsilon'_{K,S^{s/}}$. By Proposition~\ref{prp:identifyFibersOfPairing}, the $\underline{(n,f)}$-fiber of $\rho'$ is equivalent to $\iota \underline{\Fun}_{S^{t/}}(\Delta^n \times S^{t/}, K \times_{S^{s/}} S^{t/})$, which in any case remains a left fibration. We just showed that for all $t \in S$, $C_{\underline{t}}$ admits $S^{t/}$-colimits indexed by left fibrations. We are thereby able to repeat the above proof in order to show that the $S^{s/}$-colimit of $p$ exists.
\end{proof}


\section{Appendix: Fiberwise fibrant replacement}

In this appendix, we formulate a result (Proposition~\ref{prp:fiberwiseFibrantReplacement}) which will allow us to recognize a map as a cocartesian equivalence if it is a marked equivalence on the fibers. We begin by introducing a marked variant of Lurie's mapping simplex construction.

\begin{dfn} Suppose a functor $\phi: [n] \to s\Set^+$, $A_0 \to ... \to A_n$. Define $M(\phi)$ to be the simplicial set which is the opposite of the mapping simplex construction of \cite[\S 3.2.2]{HTT}, so that a $m$-simplex of $M(\phi)$ is given by the data of a map $\alpha: \Delta^m \to \Delta^n$ together with a map $\beta: \Delta^m \to A_{\alpha(0)}$. Endow $M(\phi)$ with a marking by declaring an edge $e = (\alpha,\beta)$ of $M(\phi)$ to be marked if and only if $\beta$ is a marked edge of $A_{\alpha(0)}$. Note that if each $A_i$ is given the degenerate marking, then the marking on $M(\phi)$ is that of \cite[Notation~3.2.2.3]{HTT}.
\end{dfn}

\begin{lem} \label{MappingSimplexEqv} Suppose $\eta: \phi \to \psi$ is a natural transformation between functors $[n] \to s\Set^+$ such that for all $0 \leq i \leq n$, $\eta_i: A_i \to B_i$ is a cocartesian equivalence. Then $M(\eta): M(\phi) \to M(\psi)$ is a cocartesian equivalence in $s\Set^+_{/ \Delta^n}$.
\end{lem}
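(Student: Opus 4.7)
The plan is to argue by induction on $n$. The base case $n = 0$ is immediate: $M(\phi) = A_0$, $M(\psi) = B_0$, $M(\eta) = \eta_0$, and cocartesian equivalences in $s\Set^+_{/\Delta^0}$ coincide with marked equivalences in $s\Set^+$ (as discussed in the examples following the theorem establishing the cocartesian model structure).

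For the inductive step, my first move would be to reduce to the situation where every $A_i$ and $B_i$ is fibrant in $s\Set^+$. Applying the small object argument pointwise, I would construct a commutative square
\[
\begin{tikzcd}
\phi \ar[r, "\eta"] \ar[d, "\alpha"'] & \psi \ar[d, "\beta"] \\
\phi^{f} \ar[r, "\eta^{f}"'] & \psi^{f}
\end{tikzcd}
\]
in $\Fun([n], s\Set^+)$ with each $A_i^{f}$ and $B_i^{f}$ fibrant and each component $\alpha_i$, $\beta_i$ a marked trivial cofibration. Applying $M$ to this square and using two-out-of-three, the lemma would then reduce to two subclaims: (a) that $M$ sends pointwise marked trivial cofibrations to cocartesian trivial cofibrations in $s\Set^+_{/\Delta^n}$, and (b) that $M(\eta^{f})$ is a cocartesian equivalence.

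Subclaim (b) is the easier half. When each $A_i^{f}$ is a fibrant marked simplicial set, I would first verify that $M(\phi^{f}) \to \Delta^n$ is itself fibrant in $s\Set^+_{/\Delta^n}$, i.e., a cocartesian fibration whose marking agrees with its cocartesian marking. Outer-horn lifts are supplied by the structural maps $A_i^{f} \to A_j^{f}$, while inner-horn lifts come from the fibrancy of the fibers. The map $M(\eta^{f})$ is then a map between fibrant objects of $s\Set^+_{/\Delta^n}$ whose fiber over each vertex $i \in \Delta^n$ is the marked equivalence $\eta_i^{f}$. Since every edge of $\Delta^n$ is marked, by criterion (4') of the proposition characterizing cocartesian equivalences between fibrant objects, $M(\eta^{f})$ is a cocartesian equivalence.

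The main obstacle is subclaim (a). I would handle it by reducing to generators: it suffices to check that if $\gamma: \phi \to \phi'$ is obtained by pushing out a generating marked anodyne map $C \to D$ into a single value $A_i$ and propagating via the structure functors $\phi(i \to j)$ for $j > i$, then $M(\gamma)$ is a cocartesian trivial cofibration. Each of the four generating classes enumerated in \cite[3.1.1.1]{HTT} is handled separately. Concretely, the inclusion $M(\phi) \hookrightarrow M(\phi')$ admits a filtration indexed by the nondegenerate simplices of $\Delta^n$ starting at vertex $i$, with each successive attachment a pushout of a map built by joining/multiplying $C \to D$ with pieces of $\Delta^n$; these assemble into marked left anodyne inclusions by iterated appeal to the combinatorial machinery of Section 4 (in particular the proof techniques of Proposition \ref{joinprp} and Lemma \ref{markedAnodyneBasicJoinLemma}). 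The delicate part is keeping track of which edges acquire the marking at each stage, especially for the generating class $\leftnat{\Lambda^n_0} \to \leftnat{\Delta^n}$ where the marked edge may sit at the boundary between the "new" and "old" portions of the simplex filtration.
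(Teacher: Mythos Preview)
Your subclaim (b) has a genuine gap: even when each $A_i$ is fibrant in $s\Set^+$, the map $M(\phi) \to \Delta^n$ is in general \emph{not} a cocartesian fibration, nor even an inner fibration. Take $n=1$, $A_0 = (\Delta^0)^\sim$, $A_1 = (\Delta^1)^\sim$, and $\phi$ the inclusion of $\{0\}$. Consider the inner horn $\Lambda^2_1 \to M(\phi)$ lying over $\alpha = (0,1,1): \Delta^2 \to \Delta^1$, with face data $\beta_2: \Delta^{\{0,1\}} \to A_0$ the unique map and $\beta_0: \Delta^{\{1,2\}} \to A_1$ the nondegenerate edge; compatibility at vertex $1$ holds since $\phi(\ast) = 0 = \beta_0(1)$. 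A filler would require $\beta: \Delta^2 \to A_0 = \Delta^0$ with $\phi \circ \beta|_{\{1,2\}} = \beta_0$, which is impossible as the image of $\phi$ is $\{0\}$. This is exactly why the mapping simplex is treated only as \emph{quasi-equivalent} to a cocartesian fibration (cf.\ Construction~\ref{MappingSimplex} and \cite[3.2.2.6, 3.2.2.14]{HTT}), not as fibrant itself; criterion (4') is therefore unavailable here, and your fibrant-replacement strategy collapses.

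The paper bypasses all of this with a direct induction on $n$ via the pushout decomposition
\[
M(\phi) \;\cong\; M(\phi') \,\cup_{A_0 \times (\Delta^{\{1,\ldots,n\}})^\sharp}\, A_0 \times (\Delta^n)^\sharp,
\qquad \phi' = (A_1 \to \cdots \to A_n).
\]
The inclusion $A_0 \times (\Delta^{\{1,\ldots,n\}})^\sharp \hookrightarrow A_0 \times (\Delta^n)^\sharp$ is a cofibration, so left properness of $s\Set^+_{/\Delta^n}$ makes the pushouts for $\phi$ and $\psi$ homotopy pushouts. On the three input terms $M(\eta)$ restricts to a cocartesian equivalence---by the inductive hypothesis on $M(\phi') \to M(\psi')$, and because $(-) \times (\Delta^k)^\sharp$ preserves marked equivalences on the other two legs---hence so is $M(\eta)$. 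This is a one-line argument requiring neither fibrant replacement nor the generator-by-generator combinatorics your subclaim (a) would demand.
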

\begin{proof} Using the decomposition of $M(\phi)$ as the pushout $$M(\phi') \cup_{A_0 \times \Delta^{n-1}} A_0 \times \Delta^n$$ for $\phi': A_1 \to ... \to A_n$, this follows by an inductive argument in view of the left properness of $s\Set^+_{/ \Delta^n}$.
\end{proof}

\begin{cnstr} \label{MappingSimplex} Let $X \to \Delta^n$ be a cocartesian fibration, let $\sigma$ be a section of the trivial Kan fibration $\sO^\cocart(X) \to X \times_{\Delta^n} \sO(\Delta^n)$ which restricts to the identity section on $X$, and let $P = \ev_1 \circ \sigma$ be the corresponding choice of pushforward functor. For $0 \leq i < n$, define $f_i: X_i \times \Delta^1 \to X$ by $P \circ (\id_{X_i} \times f'_i)$ where $f'_i: \Delta^1 \to \sO(\Delta^n)$ is the edge $(i = i) \to (i \to i+1)$, and let $\phi: X_0^\sim \to ... \to X_n^\sim$ be the sequence obtained from the $f_i \times \{1\}$. We will explain how to produce a map $M(\phi) \to X$ over $\Delta^n$ via an inductive procedure. Begin by defining the map $M(\phi)_n = X_n \to X_n$ to be the identity. Proceeding, observe that $M(\phi)$ is the pushout
\[ \begin{tikzpicture}[baseline]
\matrix(m)[matrix of math nodes,
row sep=6ex, column sep=4ex,
text height=1.5ex, text depth=0.25ex]
 { X_0 \times \Delta^{ \{1,...,n\}} & X_0 \times \Delta^n \\
   M(\phi') & M(\phi)  \\ };
\path[>=stealth,->,font=\scriptsize]
(m-1-1) edge (m-1-2)
edge node[right]{$\gamma$} (m-2-1)
(m-1-2) edge (m-2-2)
(m-2-1) edge  (m-2-2);
\end{tikzpicture} \]
with $\phi'$ the composable sequence $X_1 \to ... \to X_n$ and the map $\gamma$ given by $X_0 \times \Delta^{n-1} \to X_1 \times \Delta^{n-1} \to M(\phi')$. Given a map $g': M(\phi') \to X$ over $\Delta^{n-1}$, we have a commutative square
\[ \begin{tikzpicture}[baseline]
\matrix(m)[matrix of math nodes,
row sep=6ex, column sep=4ex,
text height=1.5ex, text depth=0.25ex]
 {  X_0 \times \Delta^1 \cup_{X_0 \times \Delta^{ \{1\} }} X_0 \times \Delta^{\{1,...,n\}} & X \\
   X_0 \times \Delta^n & \Delta^n,  \\ };
\path[>=stealth,->,font=\scriptsize]
(m-1-1) edge node[above]{$(f_0,g' \circ \gamma)$} (m-1-2)
edge (m-2-1)
(m-1-2) edge (m-2-2)
(m-2-1) edge (m-2-2)
edge[dotted] (m-1-2);
\end{tikzpicture} \]
and the left vertical map is inner anodyne by \cite[Lemma~2.1.2.3]{HTT} and \cite[Corollary~2.3.2.4]{HTT}. Thus a dotted lift exists and we may extend $g'$ to $g: M(\phi) \to X$.

Note that $g_i$ is the identity for all $0 \leq i \leq n$. Therefore, if we instead take the marking on $M(\phi)$ which arises from the degenerate marking on the $X_i$, then $g$ is (the opposite of) a quasi-equivalence in the terminology of \cite[Definition~3.2.2.6]{HTT}, hence a cocartesian equivalence in $s\Set^+_{/ \Delta^n}$ by \cite[Proposition~3.2.2.14]{HTT}. Now by Lemma~\ref{MappingSimplexEqv}, $g$ with the given marking is a cocartesian equivalence.

This construction of $M(\phi) \to X$ enjoys a convenient functoriality property: given a cofibration $F: X \to Y$ between cocartesian fibrations over $\Delta^n$, we may first choose $\sigma_X$ as above, and then define $\sigma_Y$to be a lift in the diagram
\[ \begin{tikzpicture}[baseline]
\matrix(m)[matrix of math nodes,
row sep=6ex, column sep=4ex,
text height=1.5ex, text depth=0.25ex]
 { (X \times_{\Delta^n} \sO(\Delta^n)) \cup_X Y & \sO^\cocart(Y) \\
   Y \times_{\Delta^n} \sO(\Delta^n) & Y \times_{\Delta^n} \sO(\Delta^n).  \\ };
\path[>=stealth,->,font=\scriptsize]
(m-1-1) edge node[above]{$(F \circ \sigma_X, \iota)$} (m-1-2)
edge (m-2-1)
(m-1-2) edge[->>] node[left]{$\sim$} (m-2-2)
(m-2-1) edge node[above]{$=$} (m-2-2)
edge[dotted] node[above]{$\sigma_Y$} (m-1-2);
\end{tikzpicture} \]

Consequently, we obtain compatible pushforward functors and a natural transformation $\eta: \phi_X \to \phi_Y$, which yields, by a similar argument, a commutative square
\[ \begin{tikzpicture}[baseline]
\matrix(m)[matrix of math nodes,
row sep=6ex, column sep=4ex,
text height=1.5ex, text depth=0.25ex]
 { M(\phi_X) & M(\phi_Y) \\
   X & Y.  \\ };
\path[>=stealth,->,font=\scriptsize]
(m-1-1) edge node[above]{$M(\eta)$} (m-1-2)
edge (m-2-1)
(m-1-2) edge (m-2-2)
(m-2-1) edge node[above]{$F$} (m-2-2);
\end{tikzpicture} \]
where the vertical maps are cocartesian equivalences in $s\Set^+_{/\Delta^n}$.
\end{cnstr}

\begin{prp} \label{prp:fiberwiseFibrantReplacement} Let $p: X \to S$ and $q:Y \to S$ be cocartesian fibrations over $S$ and let $F: X \to Y$ be a $S$-functor. Suppose collections of edges $\sE_X$, $\sE_Y$ of $X$, $Y$ such that
\begin{enumerate} \item $\sE_X$ resp. $\sE_Y$ contains the $p$ resp. $q$-cocartesian edges;
\item For $\sE^0_X \subset \sE_X$ the subset of edges which are either $p$-cocartesian or lie in a fiber, we have that $(X, \sE^0_X) \subset (X,\sE_X)$ is a cocartesian equivalence in $s\Set^+_{/S}$, and ditto for $Y$;
\item $F(\sE_X) \subset \sE_Y$;
\item For all $s \in S$, $F_s: (X_s, (\sE_X)_s) \to (Y_s, (\sE_Y)_s)$ is a cocartesian equivalence in $s\Set^+$.
\end{enumerate}
Let $X' = (X,\sE_X)$, $Y' = (Y,\sE_Y)$, and $F': X' \to Y'$ be the map given on underlying simplicial sets by $F$.
Then for all simplicial sets $U$ and maps $U \to S$, $F'_U$ is a cocartesian equivalence in $s\Set^+_{/U}$.
\end{prp}
\begin{proof} Without loss of generality, we may assume that an edge $e$ is in $\sE_X$ if and only if either $e$ is $p$-cocartesian or $p(e)$ is degenerate, and ditto for $\sE_Y$. First suppose that $F$ is a trivial fibration in $s\Set^+_{/S}$ and for all $s \in S$, $F'_s$ reflects marked edges. Then $F'$ is again a trivial fibration because $F'$ has the right lifting property against all cofibrations. For the general case, factor $F$ as $X \xrightarrow{G} Z \xrightarrow{H} Y$ where $G$ is a cofibration and $H$ is a trivial fibration, and let $Z' = (Z,\sE_Z)$ for $\sE_Z$ the collection of edges $e$ where $e$ is in $\sE_Z$ if and only if $H(e)$ is in $\sE_Y$. Then for all $s \in S$, $Z'_s \to Y'_s$ is a trivial fibration in $s\Set^+$, so as we just showed $H': Z' \to Y'$ is a trivial fibration. We thereby reduce to the case that $F$ is a cofibration.

Let $\sU$ denote the collection of simplicial sets $U$ such that for every map $U \to S$, $F'_U$ is a cocartesian equivalence in $s\Set^+_{/U}$. We need to prove that every simplicial set belongs to $\sU$. For this, we will verify the hypotheses of \cite[Lemma~2.2.3.5]{HTT}. Conditions (i) and (ii) are obvious, condition (iv) follows from left properness of the cocartesian model structure and \cite[Proposition~B.2.9]{HA}, and condition (v) follows from the stability of cocartesian equivalences under filtered colimits and \cite[Proposition~B.2.9]{HA}. It remains to check that every $n$-simplex belongs to $\sU$, so suppose $S = \Delta^n$. Let
\[ \begin{tikzcd}[row sep=2em, column sep=2em]
M(\phi_X) \ar{r}{M(\eta)} \ar{d} & M(\phi_Y) \ar{d} \\ 
X \ar{r}{F} & Y
\end{tikzcd} \]
be as in Construction~\ref{MappingSimplex}. Let $\phi'_X$ be the sequence $X'_0 \to ... \to X'_n$, where the maps are the same as in $\phi_X$, and similarly define $\phi'_Y$ and $\eta'$. Then we have pushout squares
\[ \begin{tikzcd}[row sep=2em, column sep=2em]
M(\phi_X) \ar{r} \ar{d} & M(\phi'_X) \ar{d} \\ 
X \ar{r} & X''
\end{tikzcd}, \quad
\begin{tikzcd}[row sep=2em, column sep=2em]
M(\phi_Y) \ar{r} \ar{d} & M(\phi'_Y) \ar{d} \\ 
Y \ar{r} & Y''
\end{tikzcd} \]
with all four vertical maps cocartesian equivalences in $s\Set^+_{/\Delta^n}$. Here we replace $X'$ by $X''$, which has the same underlying simplicial set $X$ but more edges marked with $X' \subset X''$ left marked anodyne, so that the vertical maps $M(\phi'_X) \to X''$ are defined and the squares are pushout squares (again, ditto for $Y''$). Note that $F$ defines a map $F'': X'' \to Y''$.

Finally, we have the commutative square
\[ \begin{tikzcd}[row sep=2em, column sep=2em]
M(\phi_X') \ar{r}{M(\eta')} \ar{d} & M(\phi'_Y) \ar{d} \\ 
X'' \ar{r}{F''} & Y''.
\end{tikzcd} \]
By assumption, $\eta': \phi'_X \to \phi'_Y$ is a natural transformation through cocartesian equivalences in $s\Set^+$. By Lemma~\ref{MappingSimplexEqv}, $M(\eta')$ is a cocartesian equivalence in $s\Set^+_{/\Delta^n}$. We deduce that $F''$, hence $F'$, is as well.
\end{proof}

\begin{rem} By a simple modification of the above arguments, we may further prove that for any marked simplicial set $A \to S$, $F'_A$ is a cocartesian equivalence in $s\Set^+_{/A}$. We leave the details of this to the reader.
\end{rem}


\clearpage
\printnomenclature[1in]

\clearpage

\bibliographystyle{amsplain}
\bibliography{Gcats}

\end{document}